\title[Algebraic properties and harmonic functions]{Characterisations of algebraic properties of groups in terms of harmonic functions}
\date{}
\author{Matthew C. H. Tointon}
\thanks{The majority of this work was supported by a Junior Research Fellowship from Homerton College, University of Cambridge. This work was also partially supported by a grant from the Cambridge Philosophical Society.}
\subjclass[2010]{20F65 (primary), 60G50 (secondary)} % formerly 05C25 secondary
\keywords{Discrete harmonic function, discrete Laplacian, random walk, Cayley graph, linear cellular automaton.}
\address{Homerton College, University of Cambridge, Hills Road, Cambridge, CB2 8PH, United Kingdom}
\email{m.tointon@maths.cam.ac.uk}
\newtheorem{prop}{Proposition}[section]
\newtheorem{theorem}[prop]{Theorem}
\newtheorem{lemma}[prop]{Lemma}
\newtheorem{corollary}[prop]{Corollary}
\newtheorem{conjecture}[prop]{Conjecture}
\theoremstyle{definition}
\newtheorem{definition}[prop]{Definition}
\newtheorem{question}[prop]{Question}
\theoremstyle{remark}
\newtheorem{remark}[prop]{Remark}
\newtheorem{remarks}[prop]{Remarks}
\theoremstyle{theorem}
\newcommand{\R}{\mathbb{R}}
\newcommand{\N}{\mathbb{N}}
\newcommand{\Z}{\mathbb{Z}}
\newcommand{\E}{\mathbb{E}}
\newcommand{\K}{\mathbb{K}}
\newcommand{\Prob}{\mathbb{P}}
\newcommand{\Aut}{\text{\textup{Aut}}\,}
\newcommand{\supp}{\text{\textup{supp}}\,}
\newcommand{\mdim}{\text{\textup{mdim }}}
\numberwithin{equation}{section}
\begin{document}
\maketitle

%\begin{center}\textit{Accepted to appear in Groups Geom. Dyn..}\end{center}

\begin{abstract}
We prove various results connecting structural or algebraic properties of graphs and groups to conditions on their spaces of harmonic functions. In particular: we show that a group with a finitely supported symmetric measure has a finite-dimensional space of harmonic functions if and only if it is virtually cyclic; we present a new proof of a result of V. Trofimov that an infinite vertex-transitive graph admits a non-constant harmonic function; we give a new proof of a result of T. Ceccherini-Silberstein, M. Coornaert and J. Dodziuk that the Laplacian on an infinite, connected, locally finite graph is surjective; and we show that the positive harmonic functions on a non-virtually nilpotent linear group span an infinite-dimensional space.
\end{abstract}
\setcounter{tocdepth}{1}
\tableofcontents
\section{Introduction}
One can often obtain algebraic information about a group by considering it as a geometric object. For example, if $G$ is a group and $S\subset G$ is a finite, symmetric set then one can construct the \emph{Cayley graph} $(G,S)$ of $G$ with respect to $S$ by declaring the elements of $G$ to be vertices and saying that $x$ and $y$ are joined by an edge if and only if there is some non-identity element $s\in S$ such that $xs=y$.

One way of studying the geometry of a Cayley graph, or indeed any graph, is to consider the behaviour of probabilistic processes on it. In this paper we are particularly concerned with linking the algebra and geometry of groups and graphs to spaces of \emph{harmonic functions} on them.

Before we define these, let us establish some notation. A \emph{weighted graph} $\Gamma$ is a graph in which to each edge $xy$ we associate a real number $\omega_{xy}=\omega_{yx}>0$ called a \emph{weight}; the degree of a vertex $x$ is then given by $\deg x=\sum_{y\sim x}\omega_{xy}$. We define the \emph{Laplacian} $\Delta=\Delta_\Gamma$ on $\Gamma$ by setting $\Delta f(x)=f(x)-\frac{1}{\deg x}\sum_{y\sim x}\omega_{xy}f(y)$ for every function $f:\Gamma\to\R$.

If $G$ is a group then a probability measure $\mu$ on $G$ is said to be a \emph{generating probability measure} if the semigroup generated by its support, $\supp\mu$, is $G$; it is said to be \emph{symmetric} if $\mu(g)=\mu(g^{-1})$ for every $g\in G$. For a group $G$ with a finitely supported generating probability measure $\mu$ we write $\Delta=\Delta_\mu$ for the \emph{Laplacian} on $G$ with respect to $\mu$, defined by setting $\Delta f(x)=f(x)-\sum_{s\in\supp\mu}\mu(s)f(xs)$ for every function $f:G\to\R$. Note that if $\mu$ is symmetric and $\Gamma$ is the Cayley graph of $G$ with respect to $\supp\mu$, weighted such that $\omega_{xy}=\mu(x^{-1}y)$, then $\Delta_\mu=\Delta_\Gamma$. We denote this weighted Cayley graph by $(G,\mu)$.

A \emph{harmonic} function on a weighted graph $\Gamma$ or group $G$ with generating probability measure $\mu$ is defined to be a function belonging to the kernel of the corresponding Laplacian. We write $H(G,\mu)$ for the space of harmonic functions on $G$ with respect to $\mu$.

Perhaps the most famous example of a result linking the algebraic structure of a group to the geometry of a Cayley graph is M. Gromov's celebrated theorem on groups of polynomial growth, which states that a certain \emph{geometric} condition on a Cayley graph $(G,S)$ (polynomial volume growth) is characteristic of a certain \emph{algebraic} condition on the subgroup of $G$ generated by $S$ (virtual nilpotency) \cite{gromov}. A recent proof of Gromov's theorem due to B. Kleiner \cite{kleiner} provides an example of how harmonic functions are related to the algebra and geometry of groups, since a key step in Kleiner's proof is to show that if a group has polynomial growth then the vector space of harmonic functions on $(G,S)$ that grow at most linearly in the Cayley-graph distance from the identity is finite dimensional.

While Kleiner's proof of Gromov's theorem essentially uses the space of linearly growing harmonic functions as a tool to characterise an algebraic condition on a group in terms of a geometric condition, in principle it should be possible to characterise certain algebraic or geometric conditions purely in terms of spaces of harmonic functions. Indeed, in a very recent preprint, T. Meyerovitch and A. Yadin \cite{mey-yad} have shown that in the case of a finitely generated group that is linear or virtually soluble, being virtually nilpotent is \emph{equivalent} to having a finite-dimensional space of linearly growing harmonic functions. This equivalence is, moreover, conjectured to hold for all finitely generated groups \cite{mey-yad}.

The first result of the present paper shows that finite-dimensionality of the space of \emph{all} harmonic functions on a group is also equivalent to a simple algebraic condition.
\begin{theorem}\label{con:harm.fin.dim}Let $G$ be an infinite group, and let $\mu$ be a symmetric, finitely supported generating probability measure on $G$. Then the space of harmonic functions on $(G,\mu)$ is finite dimensional if and only if $G$ contains a finite-index subgroup isomorphic to $(\Z,+)$.
\end{theorem}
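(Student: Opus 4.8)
The plan is to prove both implications, treating the algebraic-to-analytic direction as the elementary one and reserving the bulk of the work for the converse. For the forward direction, suppose $G$ has a finite-index subgroup isomorphic to $(\Z,+)$; then $G$ is two-ended and quasi-isometric to $\Z$, so it carries a proper Lipschitz function $\beta\colon G\to\Z$ (or $\Z_{\ge 0}$ in the infinite-dihedral case) with finite fibres and with $\mu$-steps changing $\beta$ by a bounded amount. A harmonic function is then governed level by level by a linear recurrence of finite order whose state at each level is the finite-dimensional vector of values on a single fibre, so the solution space is finite dimensional. I would isolate the base case $G=\Z$, where the harmonic equation $f(n)=\sum_s\mu(s)f(n+s)$ is literally a finite-order linear recurrence, and then reduce the general two-ended case to it.

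For the converse, the starting point is that $\Delta_\mu$ commutes with left translation, so $V:=H(G,\mu)$ is invariant under the left-regular action and we obtain a representation $\rho\colon G\to\mathrm{GL}(V)$; the constants form a fixed line, and by the (re-proved) theorem of Trofimov we have $\dim V\ge 2$ since $G$ is infinite. As $\supp\mu$ is finite and generating, $G$ is finitely generated. The evaluation map $\phi\colon G\to V^*$, $\phi(x)(f)=f(x)$, is $\rho^*$-equivariant, satisfies $\phi(x)=\sum_s\mu(s)\phi(xs)$, spans $V^*$, and takes values in the affine hyperplane dual to the constants; thus $\phi$ realises $G$ as a $V^*$-valued equivariant harmonic map, and everything the harmonic functions can detect is visible on the linear image $\rho(G)$.

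The main line then runs as follows. First one rules out non-amenability: a non-amenable group with a symmetric measure fails the Liouville property, so its Poisson boundary is nontrivial and the bounded harmonic functions already span an infinite-dimensional subspace of $V$; hence $G$ is amenable. Next I would pass to the linear shadow $\bar G:=G/K$ with $K:=\ker\rho$. Every harmonic function is constant on $K$-cosets, the push-forward $\bar\mu$ is again symmetric, finitely supported and generating, and pullback identifies $H(\bar G,\bar\mu)\cong V$; moreover $\bar G$ is infinite, for otherwise the maximum principle on the finite quotient would force $V$ to consist only of constants, contradicting $\dim V\ge 2$. Since $\bar G\hookrightarrow\mathrm{GL}(V)$ is a finitely generated linear group, the result of Meyerovitch and Yadin applies: finite-dimensionality of $V$, and a fortiori of its subspace of linearly growing harmonic functions, forces $\bar G$ to be virtually nilpotent.

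To finish, if $\bar G$ were not virtually $\Z$ it would contain a finite-index torsion-free nilpotent subgroup whose abelianisation has free rank at least $2$, hence would surject onto $\Z^2$; pulling back the infinitely many (e.g. polynomial) harmonic functions on $\Z^2$ along this surjection and inducing them up to $\bar G$ through the trace of the walk on the finite-index subgroup would give $\dim H(\bar G,\bar\mu)=\infty$, a contradiction, so $\bar G$ is virtually $\Z$. The main obstacle is twofold, and both parts concern passage between a group and a subquotient. First, the finite-index comparison: the induced (trace) walk on a finite-index subgroup is symmetric and generating but no longer finitely supported, so one must set up harmonic induction and restriction and control the integrability of possibly fast-growing harmonic functions under excursions with only exponentially small tails. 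Second, and I expect this to be the real crux, one must transfer the conclusion "virtually $\Z$" from the linear shadow $\bar G$ back to $G$, i.e.\ show that finite-dimensionality of $V$ forces $K$ to be finite; a priori $K$ is merely the set of elements acting trivially on all harmonic functions and need not even be finitely generated, so controlling it is exactly where the argument must do genuine work.
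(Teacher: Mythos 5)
Your forward direction is essentially the paper's (Proposition \ref{prop:direct}): both rest on the observation that in a two-ended group a harmonic function is determined by its values on boundedly many ``levels'', so this half is fine. The frame of your converse also matches Section \ref{sec:proof} of the paper: the representation $\rho$ of $G$ on $V=H(G,\mu)$, the quotient $\bar G=G/K$ with $K=\ker\rho$, the identification of $H(\bar G,\bar\mu)$ with $V$ (Lemma \ref{lem:harm.factors}), infiniteness of $\bar G$ via the maximum principle, and linearity of $\bar G$ forcing virtual nilpotency --- the paper uses Corollary \ref{cor:linear} here but remarks explicitly that Meyerovitch--Yadin works equally well, which is your choice. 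Your route from ``virtually nilpotent'' to ``virtually cyclic'' --- surjecting a finite-index torsion-free nilpotent subgroup onto $\Z^2$ and inducing harmonic functions up via the trace walk --- differs from the paper, which instead quotes the bound $\dim H^k(G,\mu)\gg_d k^{d-1}$ of \cite{mpty} for groups with a finite-index nilpotent subgroup of rank $d$; your version is plausible but carries exactly the non-finitely-supported-measure and integrability burdens you flag, all of which the paper's citation avoids. (Your amenability reduction is harmless but never used afterwards.)

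The genuine gap is the step you name in your final paragraph and then leave open: proving that finite-dimensionality of $V$ forces $K=\ker\rho$ to be finite. This is not a residual technicality to be ``controlled'' --- it is the theorem's main difficulty, and most of the paper (Sections \ref{sec:GofE}--\ref{sec:exist.of.harm.fn} and \ref{sec:v.cyc}--\ref{sec:proof}) exists to close it. The paper splits it into two cases, each requiring the construction of a harmonic function that ``sees'' $K$ and hence contradicts Lemma \ref{lem:harm.factors}. If $K$ is infinite and finitely generated, then since $G$ is not virtually nilpotent the walk is transient by Proposition \ref{prop:varopoulos}, and Proposition \ref{prop:harm.exists} produces a harmonic function on $(G,\mu)$ that is non-constant on the orbit $Ke$; the proof of that proposition is itself substantial, resting on the Garden of Eden theorem (Theorem \ref{thm:GofE.neighbourly}) and the duality result (Proposition \ref{prop:dual.harmonic}), which together reduce the problem to ruling out finitely supported functions harmonic off two points, and then on hitting-probability estimates for transient walks. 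If instead $K$ is not finitely generated, the paper uses that $\bar G$ is infinite virtually cyclic and runs the generalised Benjamini--Kozma--Yadin lamplighter construction (Proposition \ref{prop:gady'}) to build a positive harmonic function that does not factor through $G/K$. Without an argument covering both cases, your proof establishes only that $G/K$ is virtually cyclic, not that $G$ is; as written, the converse direction is therefore incomplete.
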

One can also consider, as Kleiner did in his proof of Gromov's theorem, subspaces of $H(G,\mu)$ consisting of functions of polynomial growth. Given a group $G$ with a finitely supported generating probability measure $\mu$, denote by $|g|=|g|_\mu$ the word distance of $g$ from the identity with respect to the generating set $\supp\mu$. The space $H^k(G,\mu)$ of harmonic functions on $G$ of polynomial growth of degree at most $k$ is then defined by $H^k(G,\mu)=\{h\in H(G,\mu):|h(x)|\ll_h|x|^k\text{ as }x\to\infty\}$. The set $\bigcup_{k=1}^\infty H^k(G,\mu)$ of all harmonic functions of polynomial growth on $G$ is of course also a subspace of $H(G,\mu)$. Emmanuel Breuillard has pointed out a result of Osin \cite{osin} that combines with the proof of Theorem \ref{con:harm.fin.dim} and the result of Meyerovitch and Yadin \cite{mey-yad} to give the following stronger statement, valid for all groups except perhaps those that are amenable but not elementary amenable.
\begin{corollary}\label{cor:osin}
Let $G$ be an infinite group that is either elementary amenable or non-amenable, and let $\mu$ be a symmetric, finitely supported generating probability measure on $G$. Then the space of all harmonic functions of polynomial growth on $(G,\mu)$ is finite dimensional if and only if $G$ contains a finite-index subgroup isomorphic to $(\Z,+)$.
\end{corollary}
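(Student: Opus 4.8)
The plan is to deduce the corollary from Theorem \ref{con:harm.fin.dim} by showing that, within the stated class of groups, a group that is \emph{not} virtually $\Z$ carries infinitely many linearly independent harmonic functions of polynomial growth; Osin's work on elementary amenable groups and the theorem of Meyerovitch and Yadin enter only at the very last stage. The \emph{if} direction is immediate: if $G$ has a finite-index subgroup isomorphic to $(\Z,+)$ then Theorem \ref{con:harm.fin.dim} makes $H(G,\mu)$ finite dimensional, and $\bigcup_{k\ge 1}H^k(G,\mu)$ is one of its subspaces. For the \emph{only if} direction, suppose $\bigcup_{k\ge 1}H^k(G,\mu)$ is finite dimensional; since, by Theorem \ref{con:harm.fin.dim}, containing a finite-index copy of $\Z$ is equivalent to having a finite-dimensional space of \emph{all} harmonic functions, it suffices to rule out every infinite group $G$ in the stated class that is not virtually $\Z$. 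I would first record two soft stability facts, used repeatedly: (i) finite-dimensionality of $\bigcup_k H^k$ is inherited both by and from finite-index subgroups, equipped with any symmetric finitely supported generating measure; and (ii) it does not increase on passing to a quotient $Q$ with the pushed-forward measure $\bar\mu$, because for the quotient map $\pi$ the assignment $h\mapsto h\circ\pi$ sends $\bar\mu$-harmonic functions to $\mu$-harmonic functions and preserves polynomial growth (one checks $\Delta_\mu(h\circ\pi)=(\Delta_{\bar\mu}h)\circ\pi$ directly, and $|\pi(x)|_{\bar\mu}\le|x|_\mu$). By (ii) it is enough to produce the required functions on \emph{some} quotient, and by (i) it is enough to work inside a finite-index subgroup.

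Next I would dispose of the non-amenable case (this part is essentially contained in the proof of Theorem \ref{con:harm.fin.dim}). If $G$ is non-amenable then, $\mu$ being symmetric, finitely supported and generating, the Poisson boundary of $(G,\mu)$ is non-trivial. A maximum-principle argument shows that no non-constant harmonic function on $(G,\mu)$ takes only finitely many values --- at a value-maximising vertex one is forced to propagate the maximum along $\supp\mu$ and hence over all of $G$ --- so the Poisson boundary cannot be a non-trivial \emph{finite} measure space; being non-trivial it is therefore infinite, and the space $H^0(G,\mu)$ of bounded harmonic functions, which the Poisson transform identifies with $L^\infty$ of the boundary, is infinite dimensional. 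As $H^0(G,\mu)\subseteq\bigcup_k H^k(G,\mu)$, this contradicts the hypothesis; hence $G$ is amenable, and so, by the hypothesis of the corollary, elementary amenable.

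It remains to treat the elementary amenable case. If $G$ is virtually nilpotent but not virtually $\Z$, pass to a finite-index subgroup $N$ that is torsion-free nilpotent; as $N$ is infinite and not isomorphic to $\Z$, comparing its lower central series (using that $\gamma_2(N)/\gamma_3(N)$ is a quotient of the second exterior power of the abelianisation $N^{\mathrm{ab}}$, which is a torsion group once $N^{\mathrm{ab}}$ has rank at most $1$) forces $N^{\mathrm{ab}}$ to have rank at least $2$, so $N$ surjects onto $\Z^2$. Pulling back the infinitely many linearly independent polynomial harmonic functions on $(\Z^2,\bar\mu)$ and invoking (i) and (ii) yields infinitely many linearly independent polynomial-growth harmonic functions on $G$, a contradiction. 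If instead $G$ is not virtually nilpotent, then Osin's structural results for finitely generated elementary amenable groups \cite{osin} --- which forbid intermediate growth and, more to the point, supply the algebraic structure on which the Meyerovitch--Yadin argument rests --- allow one to reduce, after the harmless passages permitted by (i) and (ii), to a group that is virtually soluble (or linear) and still not virtually nilpotent; the theorem of Meyerovitch and Yadin \cite{mey-yad} then produces an infinite-dimensional space $H^1$ of harmonic functions of at most linear growth, and since $H^1\subseteq\bigcup_k H^k$ this again contradicts the hypothesis. This exhausts the cases.

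The main obstacle is precisely this final step. The Meyerovitch--Yadin theorem is available only for linear or virtually soluble groups, whereas a finitely generated elementary amenable group need be neither --- it may fail to be residually finite and may have essentially no virtually soluble quotients --- so the real work is in using Osin's description of such groups to carry out the reduction: either literally, by exhibiting an appropriate finite-index subgroup and quotient covered by \cite{mey-yad}, or by re-running the Meyerovitch--Yadin argument on top of Osin's structure theory. Everything else is formal or a standard application of the Poisson-boundary correspondence, and it is exactly the lack of a structure theory this clean for general amenable groups that prevents the statement from being asserted beyond the elementary amenable (and non-amenable) cases.
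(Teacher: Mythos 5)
Your overall architecture --- the trivial ``if'' direction via Theorem \ref{con:harm.fin.dim}, the non-amenable case via bounded harmonic functions, the virtually nilpotent case via rank considerations, and the appeal to Osin and Meyerovitch--Yadin for the remaining elementary amenable groups --- matches the paper's. But the step you yourself flag as ``the main obstacle'' is a genuine gap, and the route you sketch around it cannot work in general. Osin's result (\cite[Proposition 3.1]{osin}) produces a normal subgroup $H$ with $G/H$ virtually \emph{polycyclic}; there is no guarantee that this quotient, or any quotient or finite-index subgroup of $G$, is virtually soluble \emph{and} not virtually nilpotent, so one cannot always ``reduce to a group covered by \cite{mey-yad}''. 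The critical case is precisely the one where $G/H$ is virtually cyclic: that quotient carries only a finite-dimensional space of harmonic functions, and Meyerovitch--Yadin gives nothing. What rescues the argument is the second half of Osin's statement --- $G/H$ can be virtually nilpotent only if $H$ is \emph{not finitely generated} --- combined with Proposition \ref{prop:gady'}, the generalisation of the Benjamini--Kozma--Yadin lamplighter construction: a group with an infinite virtually cyclic quotient whose kernel is not finitely generated admits a positive harmonic function of at most linear growth that does not factor through the quotient, whence (Remark \ref{rem:inf.dim.lin}) $\dim H^1(G,\mu)=\infty$. Your proposal never invokes this construction, and without it the case analysis is incomplete. (The intermediate case where $G/H$ is virtually nilpotent but not virtually cyclic is handled by pulling back from the quotient via Lemma \ref{lem:harm.factors} and the virtually nilpotent case, much as you suggest.)

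Two smaller points. First, your justification of the non-amenable case is not sound as written: from ``no non-constant harmonic function takes finitely many values'' it does not follow that the Poisson boundary cannot be a finite atomic measure space --- the harmonic functions $x\mapsto\nu_x(A_i)$ attached to finitely many atoms generically take a continuum of values. The conclusion you want (non-amenability forces an infinite-dimensional space of bounded harmonic functions) is correct, and is exactly what the paper cites from \cite{kai-ver}; it is better cited than rederived this way. Second, your ``soft stability fact (i)'' --- that finite-dimensionality of $\bigcup_kH^k$ passes between a group and its finite-index subgroups for arbitrary symmetric finitely supported generating measures --- is not soft: restricting a harmonic function to a finite-index subgroup yields a function harmonic for the induced hitting measure, which is in general not finitely supported. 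The paper avoids needing this by quoting the bound $\dim H^k(G,\mu)\gg_d k^{d-1}$ of \cite{mpty} directly for the virtually nilpotent group $G$ itself, and by otherwise only ever passing to quotients via Lemma \ref{lem:harm.factors}.
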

\begin{conjecture}\label{con:osin}
Corollary \ref{cor:osin} holds for all finitely generated groups.
\end{conjecture}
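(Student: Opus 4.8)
The ``if'' direction requires nothing new: if $G$ has a finite-index subgroup isomorphic to $(\Z,+)$ then $H(G,\mu)$ is finite dimensional by Theorem~\ref{con:harm.fin.dim}, and, writing $V:=\bigcup_{k=1}^{\infty}H^k(G,\mu)$, one has $V\subseteq H(G,\mu)$. So the content lies entirely in the ``only if'' direction, for which I would prove the contrapositive: \emph{if $G$ is infinite and not virtually cyclic, then $\dim V=\infty$}. I would split this according to where $G$ sits in the amenability hierarchy, reducing the hard case to the conjecture of Meyerovitch and Yadin \cite{mey-yad}.

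Suppose first that $(G,\mu)$ is not Liouville, so that the Poisson boundary $(B,\nu)$ is non-trivial. If $L^{\infty}(B,\nu)$ were finite dimensional then, by ergodicity of the $G$-action, $(B,\nu)$ would be essentially a finite homogeneous space $G/H$ with $[G:H]<\infty$; but the random walk projected to such a coset space is an irreducible finite Markov chain, which carries no non-constant harmonic functions, so every bounded harmonic function on $G$ would be constant, contradicting non-triviality of the boundary. Hence $L^{\infty}(B,\nu)$ --- which, by the Poisson formula, is the space of bounded harmonic functions on $(G,\mu)$, a subspace of $H^1(G,\mu)\subseteq V$ --- is infinite dimensional, and so $\dim V=\infty$. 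Since a non-amenable group is non-Liouville with respect to every symmetric finitely supported generating measure (by Kesten's criterion, which gives spectral radius $<1$, together with the standard estimate that this forces positive Avez entropy, and hence a non-trivial Poisson boundary by the Kaimanovich--Vershik--Derriennic criterion), every non-amenable $G$ is dealt with, and I may assume henceforth that $G$ is amenable and that $(G,\mu)$ is Liouville.

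Now the key reduction. As $H^1(G,\mu)\subseteq V$, the hypothesis $\dim V<\infty$ forces $\dim H^1(G,\mu)<\infty$, and the conjecture of Meyerovitch and Yadin \cite{mey-yad} predicts that this can happen only if $G$ is virtually nilpotent. That implication is known when $G$ is linear or virtually soluble \cite{mey-yad}, and --- combining \cite{mey-yad} with Osin's growth results for finitely generated elementary amenable groups \cite{osin}, as in the proof of Corollary~\ref{cor:osin} --- when $G$ is elementary amenable; so it would remain only to treat groups that are amenable but not elementary amenable, and this is the step I expect to be the genuine obstacle. The implication ``$\dim H^1<\infty\Rightarrow$ virtually nilpotent'' proceeds, after Kleiner, Shalom and Ozawa, through reduced $1$-cohomology: a non-constant element of $H^1(G,\mu)$ modulo the constants gives rise to a non-trivial class in $\overline H^1(G,\pi)$ for a suitable unitary representation $\pi$, and finite-dimensionality of $H^1(G,\mu)$ ought to force such classes to be finite dimensional in the sense of Shalom (property $H_{\mathrm{FD}}$), whence a nilpotent structure; but establishing property $H_{\mathrm{FD}}$, or the relevant consequence of it, for all finitely generated amenable groups is a well-known open question of Shalom. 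The paradigmatic group on which I expect to be stuck is Grigorchuk's first group: it is amenable and Liouville (being of subexponential growth) yet neither linear nor virtually soluble, and for it even the existence of a single non-constant harmonic function of polynomial growth is open.

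The remaining case --- $G$ virtually nilpotent but not virtually cyclic --- is routine. Let $N\trianglelefteq G$ be a finite-index nilpotent subgroup; since $G$ is not virtually cyclic, neither is $N$, and a short computation with the lower central series shows $\operatorname{rank} N^{\mathrm{ab}}\ge2$ (were $\operatorname{rank} N^{\mathrm{ab}}\le1$, then, as $\gamma_2 N/\gamma_3 N$ is a quotient of $\bigwedge^2 N^{\mathrm{ab}}$, each $\gamma_i N/\gamma_{i+1}N$ with $i\ge2$ would be finite, forcing $N$ to have Hirsch length at most $1$ and hence to be virtually cyclic). As $[N,N]$ is characteristic in $N$, and so normal in $G$, the quotient $\bar G:=G/[N,N]$ contains $N^{\mathrm{ab}}$ as a finite-index subgroup, so $\bar G$ is virtually $\Z^{d}$ for some $d\ge2$. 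Pulling back along the surjection $q\colon G\to\bar G$ carries each $q_{*}\mu$-harmonic function of polynomial growth on $\bar G$ to a $\mu$-harmonic function on $G$ of no greater polynomial growth, injectively, so $\dim V\ge\dim\bigl(\bigcup_{k=1}^{\infty}H^k(\bar G,q_{*}\mu)\bigr)$; and since this last quantity is unchanged on passing to the finite-index subgroup $\Z^{d}$ of $\bar G$ (via the induced random walk), the claim reduces to the classical fact that a symmetric walk on $\Z^{d}$ with $d\ge2$ carries discrete harmonic polynomials of every degree, hence infinitely many linearly independent ones. This gives $\dim V=\infty$, so that the conjecture is reduced to the outstanding cases of \cite{mey-yad}.
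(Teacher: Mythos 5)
The statement you were asked to prove is an open conjecture: the paper offers no proof of Conjecture~\ref{con:osin}, and indeed its closing remarks record exactly the reduction you arrive at, namely that a verification of the Meyerovitch--Yadin conjecture ($\dim H^1(G,\mu)=\infty$ for every non-virtually-nilpotent $G$) would reduce Conjecture~\ref{con:osin} to the virtually nilpotent case, which is then settled by \cite{mpty}. Your proposal is therefore not, and could not honestly be, a complete proof; its value is that it correctly isolates where the difficulty lies. The cases you do handle are sound and consistent with the paper's proof of Corollary~\ref{cor:osin}: the non-amenable case via infinite-dimensionality of the space of bounded harmonic functions (the paper simply cites \cite{kai-ver}, whereas you sketch the entropy/spectral-radius route --- fine, though note that bounded harmonic functions sit in $H^1$ only because $H^0\subseteq H^1$); the elementary amenable case via Osin's theorem exactly as in the paper; and the virtually-nilpotent-but-not-virtually-cyclic case, where your lower-central-series computation showing $\operatorname{rank}N^{\mathrm{ab}}\ge2$ and the passage to a virtually-$\Z^d$ quotient is a correct alternative to the paper's direct citation of the bound $\dim H^k(G,\mu)\gg_d k^{d-1}$ from \cite{mpty} (one small gloss: the induced walk on the finite-index subgroup $\Z^d$ is not finitely supported, so the final reduction to harmonic polynomials on $\Z^d$ is cleaner if you just invoke \cite{alex} or \cite{mpty} for the virtually abelian group directly).

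The genuine gap is the one you name: amenable groups that are neither elementary amenable nor virtually nilpotent, for which neither the Meyerovitch--Yadin theorem (which needs linear or virtually soluble) nor Osin's structure theorem applies, and for which --- as you say, with Grigorchuk's group as the test case --- even the existence of a single non-constant polynomial-growth harmonic function is not known. This is a gap in the literature rather than a repairable defect in your argument, and your assessment of it agrees with the paper's own.
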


A fairly immediate consequence of Theorem \ref{con:harm.fin.dim} (at least in the presence of other, standard, results) is that the space of harmonic functions on a group with a symmetric, finitely supported generating probability measure is $1$-dimensional if and only if the group is finite. In fact, this was already well known, and there exist far simpler proofs than via Theorem \ref{con:harm.fin.dim}. Indeed, V. Trofimov \cite{trofimov} shows that this characterisation holds, more generally, for vertex-transitive graphs. The second result of this paper is a new proof of Trofimov's result, valid in the even more general setting of vertex-transitive weighted graphs (Trofimov's proof could conceivably also work in this more general setting).
\begin{prop}\label{prop:trofimov}
Let $\Gamma$ be an infinite, locally finite, vertex-transitive weighted graph. Then $\Gamma$ admits a non-constant harmonic function.
\end{prop}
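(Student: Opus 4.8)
The plan is to argue by contradiction. Suppose $\Gamma$ is infinite, locally finite and vertex-transitive, but every harmonic function on $\Gamma$ is constant; I aim to derive a contradiction (in effect, to force $\Gamma$ to be finite).

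The first step is to produce a non-constant function that is harmonic off a single edge. Fix adjacent vertices $o \sim o'$ and let $h(x) = \Prob_x(\tau_{o'} < \tau_o)$, where $\tau_v$ is the first hitting time of $v$ by the random walk naturally associated to the weighted graph $\Gamma$ (transition probability $\omega_{xy}/\deg x$ from $x$ to $y$). Standard potential theory makes $h$ harmonic on $\Gamma \setminus \{o,o'\}$, with $0 \le h \le 1$, $h(o) = 0$ and $h(o') = 1$; in particular $h$ is non-constant. Moreover $\Delta h$ is supported on $\{o,o'\}$, and a brief computation — using the symmetry of effective conductance under reversibility together with the fact that vertex-transitivity forces $\deg o = \deg o'$ — gives $\Delta h = \alpha(\delta_{o'} - \delta_o)$ for some $\alpha > 0$: the two point masses are equal and opposite. (One could equally well obtain such an $h$ by solving Dirichlet problems on the balls about $o$ with prescribed boundary values at $o$ and $o'$ and extracting a limit, using local finiteness for compactness.)

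The heart of the matter is to remove this dipole singularity, and this is where I expect to do the real work. For $\phi \in \Aut(\Gamma)$ one has $\Delta(h \circ \phi^{-1}) = \alpha(\delta_{\phi(o')} - \delta_{\phi(o)})$, a dipole along the edge $\{\phi(o),\phi(o')\}$. Given a walk $o = x_0, x_1, \dots, x_N$ in $\Gamma$ whose $i$-th step is realised by some $\phi_i \in \Aut(\Gamma)$ with $\phi_i(o) = x_i$ and $\phi_i(o') = x_{i+1}$, the function $\sum_{i=0}^{N-1} h \circ \phi_i^{-1}$ has Laplacian $\alpha(\delta_{x_N} - \delta_o)$, the intermediate point masses cancelling in a telescoping sum. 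If such a walk can be closed up, so that $x_N = o$, the result is a function that is harmonic on all of $\Gamma$; more generally, running $x_N$ off to infinity along two "essentially different" admissible walks and subtracting kills the singularity in the limit — this is the analogue of writing the linear function $n$ on $\Z$ as $n^+ - n^-$, splitting the potential kernel $|n|$ along the two ends of $\Z$. What remains, and what I think will be the genuine obstacle, is threefold: to arrange the existence of enough admissible closed or divergent walks (this is where connectedness of $\Gamma$ and the structure of the $\Aut(\Gamma)$-orbits on (oriented) edges enter, and where one has freedom in choosing $o'$); to justify the convergence (boundedness of the translates $h\circ\phi_i^{-1}$ and local finiteness yield pointwise subsequential limits, and $\Delta$ passes to the limit because it is a local operator); and — the crucial point — to verify that the harmonic function thereby produced is actually non-constant, i.e. that the cancellation performed to remove the singularity has not also flattened it. Once a single non-constant harmonic function on $\Gamma$ has been exhibited, it contradicts the standing hypothesis, and the proposition follows.
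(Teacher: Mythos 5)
Your opening move --- the dipole $h(x)=\Prob_x[\,\tau_{o'}<\tau_o\,]$, harmonic off $\{o,o'\}$ --- is a reasonable starting point, but the proposal has a genuine gap, and it is exactly the one you flag yourself: everything after the construction of $h$ is a programme rather than an argument, and the steps you defer are not technicalities. (a) Vertex-transitivity does not act transitively on oriented edges, so the ``admissible walks'' along which you telescope are confined to the $\Aut\Gamma$-orbit of the arc $(o,o')$; the subgraph formed by that orbit can be disconnected (e.g.\ the horizontal edges of $\Z^2$ under the translation subgroup), so closed admissible walks through $o$ need not exist and the reachable set may be a thin subset of $\Gamma$. (b) Even when a closed admissible walk exists, the resulting harmonic function can perfectly well be constant: on $\Z$ with $o=0$, $o'=1$ one has $h=1_{[1,\infty)}$, the arc-reversing automorphism $\sigma(x)=1-x$ gives $h\circ\sigma^{-1}=1_{(-\infty,0]}$, and the telescoped sum $h+h\circ\sigma^{-1}\equiv1$. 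So non-constancy of the limit is not a loose end to be checked --- it is the entire content of the proposition, and your sketch offers no mechanism for establishing it. (c) In the ``run $x_N$ to infinity'' variant, the sum $\sum_i h\circ\phi_i^{-1}(z)$ behaves like $\sum_i G(x_i,z)$ and typically diverges (already on $\Z^3$ along a geodesic), so one must subtract two divergent sums and then still prove the difference non-constant --- again the whole problem restated. There is also a smaller unproved claim: that the two point masses of $\Delta h$ are exactly opposite requires ruling out flux to infinity, which is not automatic in a transient graph.

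For comparison, the paper takes an entirely different route. It first proves, via the Garden of Eden theorem for locally specifiable linear maps and the surjectivity of the Laplacian, a duality (Proposition \ref{prop:dual.harmonic}): every function on a finite set $X$ extends to a harmonic function on $\Gamma$ if and only if there is no non-zero finitely supported function harmonic on $\Gamma\backslash X$. It then rules out such finitely supported functions for a suitable pair $X=\{x,y\}$ by showing (Lemma \ref{lem:crit.for.no.finite.sup.harm}) that their existence would force $\Prob_g[\,T_x<T_y\,|\,\min\{T_x,T_y\}<\infty\,]$ to be eventually constant in $g$, and contradicts this using transience ($\Prob_x[\,T_y<\infty\,]\to0$) together with the hitting-time symmetry of vertex-transitive graphs (Proposition \ref{prop:vert.trans.symm}). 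Note finally that the paper only treats the transient case by this method and cites Trofimov for the recurrent case; your sketch does not engage with that dichotomy either, and the $\Z$ example above shows the recurrent case is precisely where the telescoping collapses to a constant.
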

In Section \ref{sec:exist.of.harm.fn} we prove Proposition \ref{prop:trofimov} in the case that the random walk on $\Gamma$ is transient; for the recurrent case we refer the reader to \cite{trofimov} (where the two cases are also treated separately). See Section \ref{sec:background} for definitions of \emph{transient}, \emph{recurrent} and \emph{random walk}.
\begin{remarks}
Trofimov's result is in fact stronger than Proposition \ref{prop:trofimov}, as it proves the existence of a function whose growth rate is bounded in terms of the rate of volume growth of metric balls in $\Gamma$. Nonetheless, it seems to be of interest to have an alternative proof of the qualitative statement, and in any case we deduce Proposition \ref{prop:trofimov} fairly immediately from a slightly more general result (Proposition \ref{prop:harm.exists}, below) that is an important ingredient in our proof of Theorem \ref{con:harm.fin.dim}.

Proposition \ref{prop:trofimov} does not necessarily hold if $\Gamma$ is not vertex transitive, as can be seen by considering the graph in Figure \ref{fig:no.harm}. This example was presented explicitly in a talk of Coornaert (available at \url{http://www-irma.u-strasbg.fr/~coornaer/florence-laplacian-2012.pdf}), having been observed by Trofimov \cite[Remark 2]{trofimov}.
\begin{figure}
\caption{An infinite regular graph with no non-constant harmonic functions \cite[Remark 2]{trofimov}}\label{fig:no.harm}
\begin{center}
\begin{tikzpicture}
\draw (0,0) -- (1,0);
\draw (0,0) -- (1,1);
\draw (0,0) -- (1,-1);
\draw (1,1) -- (2,0);
\draw (1,-1) -- (2,0);
\draw (1,1) -- (1,-1);
\draw (2,0) -- (3,0);
\draw (3,0) -- (4,1);
\draw (3,0) -- (4,-1);
\draw (4,1) -- (5,0);
\draw (4,-1) -- (5,0);
\draw (4,1) -- (4,-1);
\draw (5,0) -- (6,0);
\draw (6,0) -- (7,1);
\draw (6,0) -- (7,-1);
\draw (7,1) -- (8,0);
\draw (7,-1) -- (8,0);
\draw (7,1) -- (7,-1);
\draw[dashed] (8,0) -- (9,0);
\filldraw (0,0) circle (2pt);
\filldraw (1,0) circle (2pt);
\filldraw (1,1) circle (2pt);
\filldraw (1,-1) circle (2pt);
\filldraw (2,0) circle (2pt);
\filldraw (3,0) circle (2pt);
\filldraw (4,1) circle (2pt);
\filldraw (4,-1) circle (2pt);
\filldraw (5,0) circle (2pt);
\filldraw (6,0) circle (2pt);
\filldraw (7,1) circle (2pt);
\filldraw (7,-1) circle (2pt);
\filldraw (8,0) circle (2pt);
\end{tikzpicture}
\end{center}
\end{figure}
\end{remarks}
\bigskip

\noindent The results we have stated so far characterise algebraic conditions on a group in terms of the kernel of the Laplacian. Our next result characterises a structural condition on a graph in terms of the \emph{image} of the Laplacian.

A simple rank-nullity argument shows that the Laplacian on a finite graph is not surjective, since its kernel contains the constant functions. T. Ceccherini-Silberstein, M. Coornaert and J. Dodziuk \cite[Theorem 1.1]{Lap.surj.gen} show that the converse is also true for connected graphs. In Section \ref{sec:Lap.surj} we give a new proof of this result, valid in weighted graphs (Ceccherini-Silberstein, Coornaert and Dodziuk's proof could conceivably also work in this more general setting).
\begin{prop}\label{prop:CSC.Lap.surj}
The Laplacian on an infinite, connected, locally finite weighted graph $\Gamma$ is surjective onto $\R^\Gamma$. Thus, the Laplacian on a locally finite weighted graph $\Gamma$ is surjective onto $\R^\Gamma$ if and only if every connected component of $\Gamma$ is infinite.
\end{prop}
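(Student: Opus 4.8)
The plan is to reduce at once to the connected case and then construct a global solution by exhausting $\Gamma$ with finite connected subgraphs, solving a finite linear system on each, and carefully taking a limit.

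For the reduction, note that $\Delta$ acts componentwise over the connected components $C$ of $\Gamma$, so $\Delta$ is surjective onto $\R^\Gamma = \prod_C \R^C$ if and only if it is surjective onto $\R^C$ for every $C$. If $C$ is finite then the constant functions lie in the kernel of $\Delta|_{\R^C}$, so rank--nullity shows $\Delta|_{\R^C}$ is not surjective; and if $C$ is infinite it is surjective by the first assertion of the proposition. Granting that assertion, this proves the stated equivalence, so it remains to show that $\Delta$ is surjective when $\Gamma$ is connected, infinite and locally finite.

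Fix $g \in \R^\Gamma$ and a basepoint $o$, and for $k \ge 1$ let $\Gamma_k$ be the ball of radius $k-1$ about $o$, so that each $\Gamma_k$ is a finite connected subgraph with $\Gamma_k \subsetneq \Gamma$, with $\bigcup_k \Gamma_k = \Gamma$, and with $N[x] \subseteq \Gamma_{k+1}$ whenever $x \in \Gamma_k$. The local input is that the ``Dirichlet Laplacian'' $\Delta_k$ on $\R^{\Gamma_k}$, defined by $\Delta_k h(x) = h(x) - \frac{1}{\deg x}\sum_{y \sim x,\, y \in \Gamma_k}\omega_{xy}h(y)$, is invertible: it is $I - P_k$, where $P_k$ is the substochastic matrix of the random walk on $\Gamma$ killed upon leaving $\Gamma_k$, and since $\Gamma_k$ is connected and has a vertex with a neighbour outside $\Gamma_k$, this killed walk terminates in finite time from every vertex, so $\sum_{n \ge 0} P_k^n$ converges to $(I - P_k)^{-1}$ (equivalently: $P_k$ is irreducible and has a row sum strictly less than $1$, so its spectral radius is $<1$). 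Hence the affine subspace $\mathcal{S}_k := \{ f \in \R^{\Gamma_{k+1}} : \Delta f(x) = g(x) \text{ for all } x \in \Gamma_k \}$ of $\R^{\Gamma_{k+1}}$ is non-empty: extending by $0$ on $\Gamma_{k+1} \setminus \Gamma_k$ turns the required equations into $\Delta_k(f|_{\Gamma_k}) = g|_{\Gamma_k}$, which is solvable.

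The remaining, and subtlest, step is to patch the $\mathcal{S}_k$ together. Restriction gives maps $\mathcal{S}_{k+1} \to \mathcal{S}_k$, $f \mapsto f|_{\Gamma_{k+1}}$, but these need not be surjective, and $\bigcap_k \mathcal{S}_k$ could a priori be empty; a Mittag-Leffler-type argument circumvents this. For fixed $k$, the images in $\mathcal{S}_k$ of the spaces $\mathcal{S}_m$ ($m \ge k$) under iterated restriction form a decreasing sequence of non-empty affine subspaces of the finite-dimensional space $\R^{\Gamma_{k+1}}$, so they stabilise to a non-empty affine subspace $\mathcal{S}_k^\infty$, and restriction maps $\mathcal{S}_{k+1}^\infty$ onto $\mathcal{S}_k^\infty$. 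Choosing $f_1 \in \mathcal{S}_1^\infty$ and inductively $f_{k+1} \in \mathcal{S}_{k+1}^\infty$ with $f_{k+1}|_{\Gamma_{k+1}} = f_k$ yields a coherent family whose union $f \colon \Gamma \to \R$ satisfies $\Delta f(x) = g(x)$ for every $x$, since any given $x$ lies in some $\Gamma_k$ with $N[x] \subseteq \Gamma_{k+1}$ and then $\Delta f(x) = \Delta f_k(x) = g(x)$. I expect this limiting argument --- not the elementary finite-dimensional linear algebra --- to be the main thing to get right: solvability on each finite piece is classical potential theory for killed walks, but producing a genuinely global solution uses finite-dimensionality of the $\R^{\Gamma_{k+1}}$ in an essential way, since a decreasing chain of affine subspaces need not stabilise in infinite dimensions.
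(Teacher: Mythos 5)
Your proof is correct, but it reaches the finite-dimensional solvability step by a genuinely different route from the paper's. The paper deduces Proposition \ref{prop:CSC.Lap.surj} from Theorem \ref{thm:GofE.neighbourly}: surjectivity of $\Delta$ is equivalent to pre-injectivity of the transpose $\Delta'$, which via Lemma \ref{lem:transp.harm} reduces to the statement that a finitely supported harmonic function on an infinite connected graph vanishes, i.e.\ to the maximum principle; underneath, Theorem \ref{thm:GofE.neighbourly} proves surjectivity of the induced maps $\tau_n:V^{B(n)}\to V^{B(n-1)}$ by dualising ($\tau_n$ surjective iff $\tau_n^\ast$ injective). You instead prove solvability on each finite ball directly, by observing that the Dirichlet Laplacian $\Delta_k=I-P_k$ is invertible because the killed walk is absorbed almost surely (equivalently, the irreducible substochastic $P_k$ has a deficient row sum, hence spectral radius $<1$), and then extend by zero. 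This is a stronger local statement (invertibility of a square system rather than surjectivity of a rectangular one), it is self-contained and probabilistic, and it avoids the transpose machinery entirely; what it does not buy you is the general Theorem \ref{thm:GofE.neighbourly}, which the paper reuses elsewhere (notably in Lemma \ref{lem:dual.harm.1}). Your Mittag-Leffler patching of the affine solution sets $\mathcal{S}_k$ is, by contrast, essentially identical to the paper's Lemma \ref{lem:cs-c.closure}, including the stabilisation of the decreasing chain of images and the surjectivity of the induced maps between the stabilised subspaces. The reduction to components and the rank--nullity argument for finite components also match the paper. Your instinct that the limiting argument is the delicate point is reasonable, but note that in this setup the local step is where your argument and the paper's genuinely diverge.
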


Our proof of Proposition \ref{prop:CSC.Lap.surj} is inspired by an earlier, less general, result of Ceccherini-Silberstein and Coornaert, which states that the Laplacian on an infinite Cayley graph is surjective \cite[Theorem 1.1]{Lap.surj}. We also show that this holds for more general Laplacians on groups.
\begin{prop}\label{prop:grp.Lap.surj}
Let $G$ be an infinite group, and let $\mu$ be a (not necessarily symmetric) finitely supported generating probability measure on $G$. Then $\Delta_\mu$ is surjective.
\end{prop}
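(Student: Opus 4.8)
The plan is to follow the strategy behind Proposition \ref{prop:CSC.Lap.surj}: exhaust $G$ by finite sets, solve a finite ``Dirichlet problem'' on each, and glue the solutions together. Fix the target $g\colon G\to\R$, write $S=\supp\mu$, and for finite $F\subseteq G$ put $\partial F=(FS)\setminus F$ and $\overline F=F\cup\partial F$. Choose finite sets $F_1\subseteq F_2\subseteq\cdots$ with $\overline{F_n}\subseteq F_{n+1}$ and $\bigcup_nF_n=G$ (for instance balls in the word metric on $G$ defined by $S$: these are finite because $S$ is, and exhaust $G$ because $S$ generates $G$ as a semigroup). Set $\mathcal S_n=\{\,h\colon\overline{F_n}\to\R\mid\Delta_\mu h(x)=g(x)\ \text{for all}\ x\in F_n\,\}$. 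Restriction gives maps $\rho_n\colon\mathcal S_{n+1}\to\mathcal S_n$, well defined since $F_n\subseteq F_{n+1}$ and the equations cutting out $\mathcal S_n$ involve only values on $\overline{F_n}\subseteq\overline{F_{n+1}}$; and an element of $\varprojlim_n\mathcal S_n$ glues to a single function $f\colon\bigcup_n\overline{F_n}=G\to\R$ with $\Delta_\mu f=g$, since each $x$ lies in some $F_n$ and $\Delta_\mu f(x)$ depends only on $f|_{\overline{F_n}}$. So it suffices to show (i) each $\mathcal S_n$ is nonempty, and (ii) a Mittag--Leffler argument produces an element of this inverse limit.

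For (i) it is enough to prove that for any finite $F$, any $g_0\colon F\to\R$ and any $\psi\colon\partial F\to\R$ there is a (unique) $h\colon\overline F\to\R$ with $h|_{\partial F}=\psi$ and $\Delta_\mu h|_F=g_0$ (take $g_0=g|_{F_n}$ and $\psi\equiv0$). Writing $h|_F$ as a vector indexed by $F$, the equations read $(I-Q_F)\,h|_F=b$, where $(Q_F)_{x,y}=\sum_{s\in S,\ xs=y}\mu(s)$ and $b$ absorbs $g_0$ and $\psi$; so it suffices that $I-Q_F$ be invertible, i.e.\ that $v=Q_Fv$ forces $v=0$. Extending such a $v$ by zero outside $F$ gives a function that is $\mu$-harmonic at every point of $F$ and vanishes off $F$; the maximum principle then forces it to attain its maximum value at every point obtained from a maximiser in $F$ by right-multiplying by elements of $S$, and since $S$ generates $G$ as a \emph{semigroup} that set of points is all of $G$ --- so the function is constant, hence identically zero as it vanishes somewhere. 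I expect this to be the main obstacle: with $\mu$ not assumed symmetric one has no Dirichlet form or self-adjointness to lean on, and must run the maximum principle instead, which goes through precisely because the hypothesis on $\mu$ concerns the semigroup, not just the group, generated by $S$.

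For (ii), each $\mathcal S_n$ is a nonempty affine subspace of the finite-dimensional space $\R^{\overline{F_n}}$, so for fixed $n$ the images $(\rho_n\circ\cdots\circ\rho_{m-1})(\mathcal S_m)\subseteq\mathcal S_n$ ($m\ge n$) form a descending chain of nonempty affine subspaces and hence stabilise, say to $J_n$; the maps $\rho_n$ then restrict to surjections $J_{n+1}\to J_n$ of nonempty sets, so a compatible family $(h_n)$ with $h_n\in J_n$ may be chosen by dependent choice. The glued function $f$ then satisfies $\Delta_\mu f=g$, and as $g$ was arbitrary $\Delta_\mu$ is surjective. Note that this gluing needs no surjectivity of the $\rho_n$ and no compactness or boundedness input, which is important since $f$ is typically unbounded --- already on $\Z$ one must solve $\Delta f=\delta_e$, with $f(n)=-|n|$.
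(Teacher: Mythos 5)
Your proof is correct, and while it shares the same skeleton as the paper's argument (exhaust $G$ by finite sets, solve on each, and glue by a stabilising chain of non-empty finite-dimensional affine subspaces --- this gluing step is exactly Lemma \ref{lem:cs-c.closure}), the key finite-dimensional step is genuinely different. The paper routes Proposition \ref{prop:grp.Lap.surj} through Theorem \ref{thm:GofE.neighbourly}: it reduces surjectivity of $\Delta_\mu$ to pre-injectivity of the transpose, identifies the transpose as $\Delta_{\mu'}$ with $\mu'(g)=\mu(g^{-1})$ via (\ref{eq:Delta'mu'}), and kills finitely supported $\mu'$-harmonic functions with the maximum principle; internally, Theorem \ref{thm:GofE.neighbourly} handles the finite stage by showing the (non-square) map $\tau_n:V^{B(n)}\to V^{B(n-1)}$ is surjective because its dual is injective. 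You instead impose zero boundary data to obtain a \emph{square} system $(I-Q_F)h|_F=b$ and prove invertibility directly by the maximum principle for $\mu$ itself, with no transpose anywhere. Your diagnosis of where the asymmetry could bite is exactly right: the maximum principle (Lemma \ref{lem:max.princ}) only needs $\supp\mu$ to generate $G$ as a semigroup, which is the standing hypothesis, so no self-adjointness is required; and the infiniteness of $G$ enters, as it must, in forcing the propagated maximum to reach a point where the finitely supported function vanishes. What the paper's detour buys is reusability --- Theorem \ref{thm:GofE.neighbourly} and the transpose-harmonic formalism are needed again for Proposition \ref{prop:CSC.Lap.surj} and for the duality result of Section \ref{sec:duality.result} --- whereas your argument is more self-contained and slightly more elementary for this one proposition. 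The only cosmetic point worth tightening is the maximum-principle step: strictly, a maximiser propagates only while it lies in $F$ (where harmonicity holds), but if the maximum $M$ were positive every maximiser would automatically lie in the finite set $F$, so the set of maximisers would be closed under right multiplication by $S$ and hence all of the infinite group $G$, a contradiction; applying the same to $-v$ finishes. This is what you intend, and it is the same appeal to the maximum principle the paper makes.
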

It is quite likely that the argument of Ceccherini-Silberstein and Coornaert \cite{Lap.surj} could also give a proof of Proposition \ref{prop:grp.Lap.surj}. However, our proof of Proposition \ref{prop:grp.Lap.surj} is simpler than the argument of \cite{Lap.surj} (see Remarks \ref{rem:Lap.surj.cases}, below), and since Proposition \ref{prop:grp.Lap.surj} cannot be concluded directly from either Proposition \ref{prop:CSC.Lap.surj} or \cite[Theorem 1.1]{Lap.surj}, it seems, in any case, worth recording a proof here.
\bigskip

\noindent An important tool in this paper is the so-called \emph{Garden of Eden theorem} for linear cellular automata, originally due to Ceccherini-Silberstein and Coornaert \cite{GofE}. Given its importance to our arguments, we introduce it briefly here.

If $G$ is a group and $A$ is a set, called the \emph{alphabet}, then $G$ acts on the set $A^G$ of maps $f:G\to A$ via $g\cdot f(x)=f(g^{-1}x)$. If $f:G\to A$ and $M\subset G$ then we denote by $f|_M$ the restriction of $f$ to $M$. A \emph{cellular automaton} over $G$ on the alphabet $A$ is a map $\tau:A^G\to A^G$ with the property that there is some finite set $M\subset G$ and a map $\lambda:A^M\to A$ such that $\tau(f)(g)=\lambda((g\cdot f)|_M)$. The set $M$ is called a \emph{memory set} for $\tau$, and $\lambda$ is called a \emph{local defining map}.

Given an initial state $f_0\in A^G$, one can consider $\tau$ as defining a dynamical process on $A^G$ by setting $f_{i+1}=\tau(f_i)$ to obtain a sequence $f_0,f_1,f_2,\ldots$ of configurations in $A^G$. A configuration $f\in A^G$ is then said to be a \emph{Garden of Eden} configuration if it is not in the image of $\tau$, and hence can appear only as an initial configuration in this dynamical process.

The term \emph{Garden of Eden theorem} for a class of cellular automata is often used to describe a result giving a necessary and sufficient condition for the existence of Garden of Eden configurations, or, to put it another way, a necessary and sufficient condition for a cellular automaton in the class to be surjective. There are various results depending on the alphabet and the group; we refer the reader to \cite{GofE,LCA} for more detailed background to this area.

The class of interest to us is the class of \emph{linear cellular automata}, in which $A$ is a finite-dimensional vector space $V=\K^r$ over a field $\K$ and a linear cellular automaton is a cellular automaton that is also a linear map $V^G\to V^G$.
\begin{theorem}[Garden of Eden theorem for linear cellular automata; Ceccherini-Silberstein--Coornaert {\cite[Theorem 8.9.6]{LCA}}]\label{thm:GofE}
Let $V$ be a finite-dimensional vector space and let $G$ be an amenable group. Then a linear cellular automaton $\tau:V^G\to V^G$ is surjective if and only if it is pre-injective.
\end{theorem}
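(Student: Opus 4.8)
The plan is to prove the theorem via the notion of \emph{mean dimension}, following the template of the entropy proof of the Garden of Eden theorem for finite alphabets. Recall that $\tau$ is \emph{pre-injective} if the only finitely supported configuration in $\ker\tau$ is $0$ --- equivalently, if $\tau$ is injective on each class of configurations that agree off a finite set --- and recall that, as with any cellular automaton, $\tau$ commutes with the shift action of $G$ on $V^G$, so that $\ker\tau$ and the image $\tau(V^G)$ are $G$-invariant subspaces. Fix a memory set $M$ for $\tau$; after enlarging it we may assume $1\in M=M^{-1}$. For finite $E\subseteq G$ put $E^+=EM$ and $E^-=\{g\in G:gM\subseteq E\}$, so that $E^-\subseteq E\subseteq E^+$, $E^-M\subseteq E$, and locality of the defining rule gives, for every $f\in V^G$, that $\supp\tau f\subseteq(\supp f)^+$, that $(\tau f)|_E$ depends only on $f|_{E^+}$, and that $(\tau f)|_{E^-}$ depends only on $f|_E$. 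Write $V^G_E=\{f\in V^G:\supp f\subseteq E\}$, a subspace of dimension $|E|\dim V$. As $G$ is amenable, fix a F{\o}lner sequence $(F_n)$, so that $|F_n^+|/|F_n|\to1$ and $|F_n^-|/|F_n|\to1$. For a $G$-invariant subspace $W\le V^G$ write $W_E\le V^E$ for the restriction of $W$ to $E$; by the Ornstein--Weiss lemma applied to the subadditive set function $E\mapsto\dim W_E$, the limit $\mdim W:=\lim_n\dim(W_{F_n})/|F_n|$ exists. I would use that $\mdim$ is monotone, that $\mdim V^G=\dim V$, and --- via locality together with extension of configurations by zero --- that $(\tau(V^G))_{F_n}=(\tau(V^G_{F_n^+}))_{F_n}$, so that by rank--nullity applied to $\tau|_{V^G_{F_n^+}}$ one has $\dim(\tau(V^G))_{F_n}\le|F_n^+|\dim V-\dim(\ker\tau\cap V^G_{F_n^+})$.

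For ``$\tau$ surjective $\Rightarrow$ $\tau$ pre-injective'' I would argue contrapositively. If $\tau$ is not pre-injective then, by linearity, there is a nonzero $f\in\ker\tau$ with $E:=\supp f$ finite. A greedy packing argument, valid in any amenable group, produces a constant $\delta=\delta(E)>0$ such that for all large $n$ the set $F_n$ contains at least $\delta|F_n|$ pairwise disjoint translates $g_1E,\dots,g_kE$. The configurations $g_1\cdot f,\dots,g_k\cdot f$ then lie in $\ker\tau$ (by equivariance) and in $V^G_{F_n^+}$, and are linearly independent because they have pairwise disjoint supports; hence $\dim(\ker\tau\cap V^G_{F_n^+})\ge\delta|F_n|$. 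Substituting into the inequality above and dividing by $|F_n|$ gives $\mdim(\tau(V^G))\le\dim V-\delta<\dim V=\mdim V^G$, so $\tau(V^G)\ne V^G$.

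For the converse, suppose $\tau$ is pre-injective. Then for every finite $E$ the restriction $\tau|_{V^G_E}$ is injective, since two configurations supported in $E$ with the same image agree off the finite set $E$ and so coincide; hence $\dim\tau(V^G_E)=|E|\dim V$. Since $(\tau(V^G))_{E^-}=(\tau(V^G_E))_{E^-}$ and $\tau(V^G_E)\subseteq V^G_{E^+}$, restricting from $E^+$ to $E^-$ drops the dimension by at most $(|E^+|-|E^-|)\dim V$; taking $E=F_n$ and dividing by $|F_n|$ gives $\mdim(\tau(V^G))\ge\dim V$, and therefore $\mdim(\tau(V^G))=\dim V$. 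It remains to upgrade this to $\tau(V^G)=V^G$. First, the same packing argument as above shows that if $(\tau(V^G))_{E_0}\ne V^{E_0}$ for some finite $E_0$ then $\mdim(\tau(V^G))<\dim V$; since that is not so, $\tau(V^G)$ restricts onto $V^E$ for every finite $E$. Secondly, one shows that $\tau(V^G)$ is closed in the prodiscrete topology: for $h\in\overline{\tau(V^G)}$ the sets $X_E:=\{u\in V^G_{E^+}:(\tau u)|_E=h|_E\}$ are nonempty finite-dimensional affine subspaces, the restriction maps $X_{E'}\to X_E$ (for finite $E\subseteq E'$) are affine, and so after the usual Mittag-Leffler stabilisation of their images the inverse limit is nonempty and produces $u\in V^G$ with $\tau u=h$. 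Combining the two points gives $\tau(V^G)=V^G$.

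I expect the closed-image step to be the main obstacle: it is the only genuinely infinite-dimensional point in the proof, and it rests on inverse-limit bookkeeping rather than on the finite-dimensional counting that drives everything else. The remaining technical inputs --- the Ornstein--Weiss lemma behind the existence of $\mdim$, and the amenable packing estimate --- are standard. Note that amenability enters \emph{only} through the existence of the F{\o}lner sequence; since the conclusion fails for non-amenable $G$, no argument can dispense with it.
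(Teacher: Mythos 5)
Your proposal is correct, but it follows the original Ceccherini-Silberstein--Coornaert route rather than the one taken in this paper. You prove both implications directly by mean dimension on $\tau$ itself: for ``surjective $\Rightarrow$ pre-injective'' you pack $\gg|F_n|$ disjoint translates of the support of a nonzero finitely supported kernel element into $F_n$ to force $\mdim\tau(V^G)<\dim V$; for the converse you show pre-injectivity gives $\mdim\tau(V^G)=\dim V$, upgrade this (by the same packing count) to density of $\tau(V^G)$ in the prodiscrete topology, and finish with the Mittag-Leffler closed-image argument. The paper instead factors the theorem through Theorem \ref{thm:GofE.neighbourly} --- $\tau$ is surjective if and only if its \emph{transpose} $\tau'$ is pre-injective, a statement valid on arbitrary locally finite graphs with no amenability hypothesis, proved by finite-dimensional duality on balls together with the same closure lemma (Lemma \ref{lem:cs-c.closure}) --- and then reduces the amenable case to Proposition \ref{prop:pre-inj.iff.transp.pre-inj}, that $\tau$ is pre-injective if and only if $\tau'$ is. That last equivalence is where all the amenability sits, and it is established via the identity $\mdim\tau(V^G)=\mdim\tau'(V^G)$ (a transpose-rank computation on F\o lner sets, Proposition \ref{prop:new.mdim}) together with the cited equivalence of pre-injectivity with full mean dimension. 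So the two arguments share their hard ingredients --- the inverse-limit closure lemma, which you rightly identify as the genuinely infinite-dimensional step, and the packing/counting lemma linking pre-injectivity to mean dimension, which you prove and the paper cites from \cite{GofE} --- but they are arranged differently: your version is self-contained and classical, while the paper's version isolates the use of amenability in a single finite-dimensional comparison of $\tau$ with $\tau'$ and yields the non-amenable transpose theorem as a by-product (cf.\ Remark \ref{rem:GofE.proof}). Two housekeeping points you should add: reduce to the subgroup generated by the memory set so that $G$ is countable and a F\o lner sequence exists (the paper's Remark \ref{rem:countable}), and in the packing step arrange the translates $g_iE$ to lie in $F_n^+$ (e.g.\ by enlarging $M$ to contain $E\cup E^{-1}$), which costs nothing.
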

Here, and throughout this paper, a linear map on $V^G$ is said to be \emph{pre-injective} if its restriction to the subspace $V^G_0$ of finitely supported functions in $V^G$ is injective.

The Laplacian on a group with a finitely supported generating probability measure is an example of a linear cellular automaton, and so Theorem \ref{thm:GofE} can readily be applied to such a Laplacian, provided that the group is amenable. However, in this paper we are concerned with Laplacians on arbitrary groups, and even graphs, and so we seek a version of Theorem \ref{thm:GofE} that holds in this greater generality.

Given a locally finite graph $\Gamma$ and an alphabet $A$, we say that a map $\tau:A^\Gamma\to A^\Gamma$ is \emph{locally specifiable} if $\tau(f)(x)$ depends only on $f(x)$ and $f(y)$ for $y\sim x$. Note, in particular, that if $\tau$ is a cellular automaton on a group $G$ with memory set $M$ then $\tau$ is a locally specifiable map on the Cayley graph $(G,M\cup M^{-1})$. The Laplacian on a locally finite graph is also locally specifiable.

The result underpinning much of this paper is the following.
\begin{theorem}\label{thm:GofE.neighbourly}
Let $V$ be a finite-dimensional vector space and let $\Gamma$ be a locally finite graph. Then a locally specifiable linear map $\tau:V^\Gamma\to V^\Gamma$ is surjective if and only if its transpose $\tau'$ is pre-injective.
\end{theorem}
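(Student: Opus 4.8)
The plan is to reduce Theorem~\ref{thm:GofE.neighbourly} to the group-theoretic Garden of Eden theorem (Theorem~\ref{thm:GofE}) by a ``localisation'' argument: although $\Gamma$ need not be a Cayley graph of an amenable group, every locally specifiable linear map is controlled by finitely much local data, so we can transplant the question onto a graph built from $\Gamma$ that does carry an amenable (indeed, locally finite, hence locally amenable) structure. Concretely, first I would set up the duality carefully. Since $\tau$ is locally specifiable, there are, for each vertex $x$, linear maps recording the dependence of $\tau(f)(x)$ on $f(x)$ and on $f(y)$ for $y\sim x$; dualising, the transpose $\tau':V'^\Gamma\to V'^\Gamma$ (acting on the algebraic dual, but we only ever need its action on finitely supported configurations, where it is again locally specifiable) is described by the transposed local maps. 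The first key step is the elementary linear-algebra observation that for a locally specifiable linear $\tau$ on $V^\Gamma$, surjectivity is equivalent to the statement that for every \emph{finite} $F\subset\Gamma$ and every prescribed values on $F$, one can find a preimage agreeing with those values --- a compactness/finite-approximation reduction --- and dually that pre-injectivity of $\tau'$ is a condition visible on finitely supported configurations. This is where local finiteness of $\Gamma$ is used.

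The second, and main, step is to connect the two conditions. I would argue that surjectivity of $\tau$ fails if and only if there is a nonzero finitely supported $\varphi$ with $\tau'\varphi=0$, i.e. $\tau'$ is not pre-injective. One direction is soft: if $\tau'\varphi=0$ with $\varphi$ finitely supported and nonzero, then $\langle \tau f,\varphi\rangle=\langle f,\tau'\varphi\rangle=0$ for all $f$, so any $g$ with $\langle g,\varphi\rangle\neq 0$ is not in the image, hence $\tau$ is not surjective. The reverse direction is the substantive one, and here is where I expect the main obstacle: given that $\tau$ is not surjective, I need to manufacture a finitely supported element of $\ker\tau'$. The natural approach is to localise to a large finite subgraph $\Omega\subset\Gamma$ together with its boundary, consider the induced finite-dimensional linear map between the spaces of $V$-configurations on $\Omega$ (with some boundary convention) and use a rank-nullity / pigeonhole argument: if $\tau$ were ``locally surjective'' on every such $\Omega$ then a diagonal/compactness argument would make it globally surjective, so non-surjectivity yields, for some finite $\Omega$, a linear functional vanishing on the image of the localised map, and one then checks this functional, pulled back, is annihilated by $\tau'$ and is finitely supported. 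The delicate point is handling the boundary terms so that the finite functional genuinely extends to a \emph{global} element of $\ker\tau'$ rather than merely a ``local'' obstruction; getting the bookkeeping right on which neighbouring vertices $\tau(f)(x)$ can see, and ensuring the dual relation $\langle \tau f,\varphi\rangle=\langle f,\tau'\varphi\rangle$ holds with no boundary leakage because $\varphi$ has finite support, is the crux.

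Once this equivalence is established, the theorem follows immediately, and no amenability hypothesis is needed because the argument is entirely local and finitary --- in effect we have replaced the ergodic-theoretic content of Theorem~\ref{thm:GofE} by the trivial amenability of finite graphs. An alternative, and perhaps cleaner, route that I would also consider is to deduce Theorem~\ref{thm:GofE.neighbourly} directly from Theorem~\ref{thm:GofE} by encoding $\Gamma$ inside a Cayley graph of a suitable locally finite (hence amenable) group: for instance, realise the vertex set as a subset of such a group, extend $\tau$ to a linear cellular automaton on the whole group by letting it act trivially off $\Gamma$, and transfer surjectivity and pre-injectivity back and forth. The obstacle there is that an arbitrary locally finite graph need not embed equivariantly in a Cayley graph, and patching around this seems more trouble than the direct finitary argument; so I would present the self-contained compactness proof as the main one, remarking that it recovers Theorem~\ref{thm:GofE} for locally finite Cayley graphs as a special case.
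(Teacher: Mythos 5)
Your main argument is essentially the paper's proof: reduce surjectivity of $\tau$ to surjectivity of the induced maps $V^{B(n)}\to V^{B(n-1)}$ on balls via the Ceccherini-Silberstein--Coornaert closure lemma, then use finite-dimensional duality to identify a functional annihilating the image of a localised map with a non-zero finitely supported element of $\ker\tau'$ (and there is indeed no boundary leakage, precisely because the codomain ball is one step smaller than the domain ball, so the localised dual is exactly a restriction of the global $\tau'$). The only caveats are that the closure step is not literal compactness --- over an arbitrary field one instead uses that a decreasing sequence of non-empty finite-dimensional affine subspaces stabilises --- and that your opening suggestion of reducing to Theorem \ref{thm:GofE} is, as you yourself conclude, best abandoned in favour of the direct finitary argument.
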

Here the \emph{transpose} of $\tau$ is defined in terms of the natural (possibly infinite) matrix representation of $\tau$, which we define precisely in Section \ref{sec:background}. The transpose of $\tau$ is then simply the locally specifiable linear map whose corresponding matrix is the transpose of the matrix corresponding to $\tau$.

Let us emphasise here that Theorem \ref{thm:GofE.neighbourly} applies, in particular, to linear cellular automata over non-amenable groups, and is, in that sense, considerably more general than Theorem \ref{thm:GofE}.
\begin{corollary}[Garden of Eden theorem for symmetric linear cellular automata over non-amenable groups]\label{cor:self-adj.lca}
Let $V$ be a finite-dimensional vector space and let $G$ be a (not necessarily amenable) group. Then a symmetric linear cellular automaton $\tau:V^G\to V^G$ is surjective if and only if it is pre-injective.
\end{corollary}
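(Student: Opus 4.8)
The plan is to derive Corollary~\ref{cor:self-adj.lca} from Theorem~\ref{thm:GofE.neighbourly} by exploiting the fact that a symmetric cellular automaton is its own transpose. First I would observe that if $\tau:V^G\to V^G$ is a cellular automaton, then it is a locally specifiable linear map on the Cayley graph $\Gamma=(G,M\cup M^{-1})$, where $M$ is a memory set for $\tau$; this is the remark already made in the excerpt. Thus Theorem~\ref{thm:GofE.neighbourly} applies and tells us that $\tau$ is surjective if and only if its transpose $\tau'$ is pre-injective. The content of the corollary is then the reduction from ``$\tau'$ pre-injective'' to ``$\tau$ pre-injective'', which will follow once we know that ``symmetric'' forces $\tau'=\tau$.

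The key step is therefore to pin down what \emph{symmetric} means for a linear cellular automaton and to check that it coincides with the transpose being equal to $\tau$ in the matrix representation of Section~\ref{sec:background}. A symmetric linear cellular automaton should be one that is self-adjoint with respect to the standard pairing between $V^G_0$ and $V^G$ (or, equivalently, self-adjoint on $\ell^2(G,V)$ when a symmetric inner product on $V$ is fixed); unwinding the definition, the $(x,y)$ matrix block of $\tau$ is then the transpose of the $(y,x)$ block, which is exactly the statement that the matrix of $\tau'$ equals that of $\tau$, i.e. $\tau'=\tau$. I would spell this out just far enough to make the identification unambiguous, since the only subtlety is bookkeeping with the (possibly infinite) matrix and the finite-support pairing; once the definitions are aligned there is nothing to compute.

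Finally, combining these two observations: $\tau$ is surjective $\iff$ $\tau'$ is pre-injective (Theorem~\ref{thm:GofE.neighbourly}) $\iff$ $\tau$ is pre-injective (since $\tau'=\tau$), which is the claim. I would also note that no amenability hypothesis on $G$ is needed anywhere, in contrast to Theorem~\ref{thm:GofE}, because Theorem~\ref{thm:GofE.neighbourly} already dispenses with it; this is the point worth emphasising in the statement. The main obstacle, such as it is, is purely definitional: making sure the notion of ``symmetric'' for a cellular automaton is stated in a way that visibly matches the transpose defined via the matrix representation, so that the equivalence $\tau'=\tau$ is immediate rather than requiring a separate lemma. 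No single step should present a genuine mathematical difficulty once Theorem~\ref{thm:GofE.neighbourly} is in hand.
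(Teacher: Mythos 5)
Your proposal is correct and is essentially the paper's own argument: the corollary is intended as an immediate consequence of Theorem \ref{thm:GofE.neighbourly}, with ``symmetric'' meaning precisely that the matrix of $\tau$ (in the sense of Section \ref{sec:background}) equals its transpose, so that $\tau'=\tau$ and pre-injectivity of $\tau'$ is the same as pre-injectivity of $\tau$. Your care in checking that the self-adjointness of $\tau$ with respect to the finite-support pairing matches the matrix transpose is exactly the right bookkeeping, and nothing further is needed.
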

\begin{remarks}\label{rem:GofE}
The reader may refer to \cite[\S5]{GofE} or \cite[\S8.10-8.11]{LCA} for examples of (asymmetric) linear cellular automata on non-amenable groups for which Theorem \ref{thm:GofE} fails; thus, generalisations to non-amenable groups in the spirit of Corollary \ref{cor:self-adj.lca} must necessarily have some additional hypothesis on the map $\tau$.

With a bit more work, one can adapt some of the techniques from \cite{GofE} to recover Theorem \ref{thm:GofE} in full from Theorem \ref{thm:GofE.neighbourly}; see Appendix \ref{ap:GofE}.
\end{remarks}
\subsection*{Outline of the paper}
In Section \ref{sec:background} we give detailed definitions and present some necessary background material. In Section \ref{sec:GofE} we prove Theorem \ref{thm:GofE.neighbourly}, before applying it in Section \ref{sec:Lap.surj} to prove Propositions \ref{prop:CSC.Lap.surj} and \ref{prop:grp.Lap.surj}. In Section \ref{sec:duality.result} we use all of these results to develop a tool for proving the existence of harmonic functions on a graph or group, and then in Section \ref{sec:exist.of.harm.fn} we use this tool to prove a slight technical generalisation of Proposition \ref{prop:trofimov}. In Section \ref{sec:v.ab} we prove the easier `direct' direction of Theorem \ref{con:harm.fin.dim}, and in Section \ref{sec:linear} we prove the `inverse' direction in the case of a non-virtually nilpotent linear group. In Sections \ref{sec:v.cyc}--\ref{sec:proof} we reduce Theorem \ref{con:harm.fin.dim} to the linear case and complete the proof, as well as proving Corollary \ref{cor:osin}.

In the appendix we present two additional applications of Theorem \ref{thm:GofE.neighbourly}. In the first, we recover Theorem \ref{thm:GofE}; in the second, we reformulate a conjecture of I. Kaplansky, the so-called `stable-finiteness' conjecture.
\subsection*{Acknowledgements}
The author is grateful to Emmanuel Breuillard, Sara Brofferio, Michel Coornaert, Ben Green and Gady Kozma for helpful conversations. Thanks are also due to one anonymous referee for noticing an error in an earlier claimed proof of Theorem \ref{con:harm.fin.dim}, and another anonymous referee for helpful comments on an earlier version of this paper. Some of this work was carried out at the Institut Henri Poincar\'e, Paris, during its excellent trimester `Random walks and asymptotic geometry of groups', 2014.
\section{Background and notation}\label{sec:background}
In this section we set much of our notation and present general background material from the literature. Much of this is standard; see, for example, \cite{pete}.

Throughout this paper, by a \emph{graph} $\Gamma$ we mean an undirected weighted graph with no loops and no multiple edges. We denote by $e$ some distinguished vertex; this vertex is always the identity in the case that $\Gamma$ is a weighted Cayley graph. We write $x\sim y$ to indicate that $x$ and $y$ are neighbours. An isomorphism of weighted graphs is an isomorphism of graphs that preserves weights.  A weighted graph is called \emph{regular} if $\deg x=\sum_{y\sim x}\omega_{xy}$ is independent of the vertex $x$.

Denote by $d=d_\Gamma$ the graph metric on a graph $\Gamma$; thus, for vertices $x\ne y\in\Gamma$ the quantity $d(x,y)$ is equal to length of a path of minimum length joining $x$ to $y$. If $G$ is a group with a finitely supported generating probability measure $\mu$ then we denote by $d=d_\mu$ the graph distance on $(G,\mu)$ (thus $d_\mu$ is the word metric with respect to the generating set $\supp\mu$).

If $\Gamma$ is a graph or group and $V=\K^n$ is a vector space then for each vertex or element $x\in\Gamma$ and each $i=1,\ldots,n$ we denote by $\delta_x^i:\Gamma\to V$ the map defined by $\delta_x^i(x)=e_i$, and $\delta_x^i(y)=0$ for every $y\ne x$. In the event that $n=1$ we drop the superscript and define $\delta_x:\Gamma\to\K$ by $\delta_x(x)=1$ and $\delta_x(y)=0$ for every $y\ne x$. The $\delta_x^i$ form a basis for the space $V^\Gamma_0$ of finitely supported $V$-valued functions on $\Gamma$ (and, for the purposes of this paper, should `morally' be thought of as a basis for $V^\Gamma$).

This space $V^\Gamma_0$ is invariant under any locally specifiable linear map $\tau:V^\Gamma\to V^\Gamma$, and so we may consider the (possibly infinite) matrix of the restriction $\tau|_{V^\Gamma_0}$ with respect to this basis. In fact, $\tau$ is entirely determined by its restriction to $V^\Gamma_0$, and so the matrix of $\tau|_{V^\Gamma_0}$ with respect to this basis completely determines $\tau$. Moreover, the composition of such matrices respects the composition of the corresponding linear maps. Throughout this paper, when we refer to the matrix of a locally specifiable linear map $\tau:V^\Gamma\to V^\Gamma$ we mean the matrix of its restriction $\tau|_{V^\Gamma_0}$ with respect to the basis $\{\delta_x^i\}$.

Given a finite set $Y$ and a function $f:Y\to\R$, we generally denote by $\E_{y\in Y}$ the average $\E_{y\in Y}f(y)=\frac{1}{|Y|}\sum_{y\in Y}f(y)$. However, in the specific case that $\mu$ is a generating probability measure on a group $G$, and $S$ is the support of $\mu$, given a function $f:S\to\R$ the notation $\E_{s\in S}$ means the average $\E_{s\in S}f(s)=\sum_{s\in S}\mu(s)f(s)$. Note that these definitions agree only if $\mu$ is the uniform probability measure on $S$.

If $G_1,G_2$ are groups and $\phi:G_1\to G_2$ is a surjective homomorphism then given a finitely supported generating probability measure $\mu$ on $G_1$ we define a finitely supported generating probability measure $\phi(\mu)$ on $G_2$ by setting $\phi(\mu)(g)=\sum_{\overline g\in\phi^{-1}(g)}\mu(\overline g)$. Note that if $\mu$ is symmetric then so is $\phi(\mu)$.
\begin{lemma}\label{lem:harm.pullback}
Let $G_1$ be a group with a finitely supported generating probability measure $\mu$. Suppose that $\phi:G_1\to G_2$ is a surjective homomorphism, and that $f:G_2\to\R$. Then $f\circ\phi$ is harmonic with respect to $\mu$ if and only if $f$ is harmonic with respect to $\phi(\mu)$.
\end{lemma}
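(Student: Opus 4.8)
The plan is to compute the two Laplacians explicitly and show that the harmonic equation for $f\circ\phi$ with respect to $\mu$ is literally the pullback under $\phi$ of the harmonic equation for $f$ with respect to $\phi(\mu)$. First I would write out, for an arbitrary $x\in G_1$,
\begin{equation*}
\Delta_\mu(f\circ\phi)(x)=f(\phi(x))-\sum_{s\in\supp\mu}\mu(s)\,f(\phi(xs))=f(\phi(x))-\sum_{s\in\supp\mu}\mu(s)\,f(\phi(x)\phi(s)),
\end{equation*}
using that $\phi$ is a homomorphism so that $\phi(xs)=\phi(x)\phi(s)$. The key manipulation is then to group the sum over $s\in\supp\mu$ according to the value $t=\phi(s)\in\supp\phi(\mu)$: collecting all $s$ with $\phi(s)=t$ and invoking the definition $\phi(\mu)(t)=\sum_{\overline t\in\phi^{-1}(t)}\mu(\overline t)$ gives
\begin{equation*}
\sum_{s\in\supp\mu}\mu(s)\,f(\phi(x)\phi(s))=\sum_{t\in\supp\phi(\mu)}\phi(\mu)(t)\,f(\phi(x)t).
\end{equation*}
Substituting back yields exactly $\Delta_\mu(f\circ\phi)(x)=\big(\Delta_{\phi(\mu)}f\big)(\phi(x))$, i.e. $\Delta_\mu(f\circ\phi)=(\Delta_{\phi(\mu)}f)\circ\phi$.

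With this pointwise identity in hand the lemma follows immediately in both directions. For the ``if'' direction, if $f$ is harmonic with respect to $\phi(\mu)$ then $\Delta_{\phi(\mu)}f\equiv 0$, so the right-hand side vanishes and hence $\Delta_\mu(f\circ\phi)\equiv 0$. For the ``only if'' direction, if $\Delta_\mu(f\circ\phi)\equiv 0$ then $(\Delta_{\phi(\mu)}f)\circ\phi\equiv 0$; since $\phi$ is surjective, every element of $G_2$ is of the form $\phi(x)$ for some $x\in G_1$, and therefore $\Delta_{\phi(\mu)}f$ vanishes on all of $G_2$. This surjectivity of $\phi$ is the only place where a hypothesis beyond the pointwise identity is used, and it is exactly what is needed to pass from ``$\Delta_{\phi(\mu)}f$ vanishes on the image of $\phi$'' to ``$\Delta_{\phi(\mu)}f$ vanishes identically.''

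There is essentially no hard part here: the statement is a routine bookkeeping exercise, and the only point requiring any care is the regrouping of the sum, where one must check that summing $\mu(s)$ over each fibre $\phi^{-1}(t)$ reproduces precisely the definition of $\phi(\mu)(t)$ and that the supports match up (so that no terms are lost or double-counted). It is worth noting in passing that the argument does not use symmetry of $\mu$, consistent with the lemma being stated for an arbitrary finitely supported generating probability measure; symmetry of $\phi(\mu)$ when $\mu$ is symmetric is a separate, already-observed fact and is not needed for this proof.
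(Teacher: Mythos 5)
Your proposal is correct and follows essentially the same route as the paper: the paper's one-line identity $\E_{s\in\phi(S)}f(g_2s)=\E_{s\in S}f\circ\phi(g_1s)$ (with $g_2=\phi(g_1)$) is precisely your pointwise identity $\Delta_\mu(f\circ\phi)=(\Delta_{\phi(\mu)}f)\circ\phi$, obtained by the same regrouping of the sum over the fibres of $\phi$, and both arguments invoke surjectivity of $\phi$ in exactly the same place to get the ``only if'' direction. Your write-up merely makes explicit the bookkeeping that the paper compresses into ``from which the lemma follows easily.''
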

\begin{proof}
Given an arbitrary $g_2\in G_2$, the surjectivity of $\phi$ implies that there exists $g_1\in G_1$ such that $\phi(g_1)=g_2$. On the other hand, given an arbitrary $g_1\in G_1$, we may simply define $g_2\in G_2$ by $g_2=\phi(g_1)$. In either case, $f(g_2)=f\circ\phi(g_1)$ and $\E_{s\in\phi(S)}f(g_2s)=\E_{s\in S}f(g_2\phi(s))=\E_{s\in S}f\circ\phi(g_1s)$, from which the lemma follows easily.
\end{proof}
Given a subset $A$ of a graph $\Gamma$, or of a group $G$ with a finitely supported generating probability measure $\mu$, we define the \emph{neighbourhood} $A^+$ of $A$ to be the set $A^+=\{x\in\Gamma:d(x,A)\le1\}$, the \emph{interior} $A^\circ$ of $A$ to be the set $A^\circ=\{x\in A:\{x\}^+\subset A\}$, the \emph{inner boundary} $\partial^-A$ of $A$ to be the set $\partial^-A=A\backslash A^\circ$, and the \emph{outer boundary} $\partial^+A$ of $A$ to be the set $\partial^+A=A^+\backslash A$.

Let $\Gamma$ be a locally finite weighted graph, or a group with a finitely supported generating probability measure $\mu$. Let $A$ be a subset of $\Gamma$, and let $D$ be a subset of $\Gamma$ containing $A^+$. Then we say that a function $h:D\to\R$ is \emph{harmonic on $A$} if we have $\Delta h(x)=0$ for each $x\in A$.

The following is an immediate consequence of the definition of harmonicity.
\begin{lemma}[Maximum principle]\label{lem:max.princ}
Let $\Gamma$ be a locally finite graph, or a group with a finitely supported generating probability measure $\mu$, and let $A$ be a connected subset of $\Gamma$. Suppose that $f:A^+\to\R$ is harmonic on $A$ and achieves a maximum on $A$. Then $f$ is constant.
\end{lemma}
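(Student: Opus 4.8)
The plan is to run the standard propagation argument for the strong maximum principle. Write $M$ for the maximum value of $f$ on its domain $A^+$, which by hypothesis is attained at some point $x_0\in A$. The crucial local observation is that harmonicity at $x_0$ forces every neighbour of $x_0$ to be another maximum point. Indeed, since $x_0\in A$ the equation $\Delta f(x_0)=0$ rearranges to
\[
f(x_0)=\frac{1}{\deg x_0}\sum_{y\sim x_0}\omega_{x_0y}\,f(y)
\]
in the weighted-graph case, and similarly in the group case $\Delta f(x_0)=0$ gives $f(x_0)=\sum_{s\in\supp\mu}\mu(s)f(x_0s)$, which after absorbing any mass $\mu(e)$ into the left-hand side is again an identity of the same shape. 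In either case the right-hand side is a \emph{convex} combination --- strictly positive coefficients summing to $1$ --- of the values of $f$ over the neighbours of $x_0$, all of which lie in $A^+$ since $x_0\in A$; as each such value is at most $M$ and the combination equals $M$, every neighbour $y\sim x_0$ must satisfy $f(y)=M$.

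I would then iterate this around $A$. Put $B=\{x\in A^+:f(x)=M\}$, so that $x_0\in B\cap A$. Applying the local observation at any vertex of $B\cap A$ shows that all of its neighbours lie in $B$, and in particular all of its neighbours lying in $A$ lie in $B\cap A$. Since $A$ is connected (as an induced subgraph of $\Gamma$), a straightforward induction along paths inside $A$ issuing from $x_0$ yields $A\subset B$; one more application of the local observation, now at every vertex of $A$, gives $A^+\subset B$, i.e.\ $f\equiv M$ on $A^+$.

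The argument is routine and there is no real obstacle; the two points that need a little care are (i) that harmonicity at a maximum point genuinely forces equality at \emph{every} neighbour, which is where one uses that the weights $\omega_{xy}$ (resp.\ the relevant masses $\mu(s)$) are strictly positive, so that the combination is truly convex with no vanishing coefficients; and (ii) that the propagation never leaves the domain of $f$, which is exactly why the hypothesis asks for $f$ to be defined on all of $A^+$, and why connectedness of $A$ is used to reach every vertex.
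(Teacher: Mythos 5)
Your argument is correct and is exactly the standard one the paper has in mind: the paper states this lemma without proof (``an immediate consequence of the definition of harmonicity''), and your two steps --- that harmonicity at a maximum point expresses $f(x_0)=M$ as a convex combination with strictly positive coefficients of values $\le M$, forcing equality at every neighbour, followed by propagation through the connected set $A$ and one final step out to $A^+$ --- are precisely the intended justification. You also correctly identify the reading of ``achieves a maximum on $A$'' (the maximum over the whole domain $A^+$ is attained at a point of $A$) under which the statement is true and under which the paper applies it.
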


Harmonic functions on graphs and groups are intimately connected to random walks. Given a graph $\Gamma$ and a vertex $x\in\Gamma$, the \emph{random walk} starting at $x$ is a sequence of $\Gamma$-valued random variables $X_0,X_1,X_2,\ldots$, with $X_0=x$ with probability $1$ and each subsequent $X_n$ chosen from among the neighbours of $X_{n-1}$ such that $X_n=y$ with probability $\omega_{X_{n-1}y}/\deg X_{n-1}$. Given a group $G$ with a finitely supported generating probability measure $\mu$, the random walk on the pair $(G,\mu)$ starting at $x\in G$ is a sequence of $G$-valued random variables $X_0,X_1,X_2,\ldots$, with $X_0=x$ with probability $1$ and each subsequent $X_n$ taking the value $X_{n-1}s$ with probability $\mu(s)$. We say that the random walk on $(G,\mu)$ is \emph{symmetric} if $\mu$ is symmetric.

Given an event $B$, we denote by $\Prob_x[\,B\,]$ the conditional probability $\Prob[\,B\,|\,X_0=x\,]$. Given another event $C$, we denote by $\Prob_x[\,B\,|\,C\,]$ the conditional probability $\Prob[\,B\,|\,C\text{ and }\{X_0=x\}\,]$. We use the conditional expectation notation $\E_x$ similarly.

If $A$ is a subset of $\Gamma$, we write $T_A:=\inf\{t:X_t\in A\}$, with $T_A=\infty$ if $X_t\notin A$ for all $t$. The random variable $T_A$ is often called a \emph{stopping time} for the random walk. If $A$ is the singleton $\{x\}$ then we abbreviate $T_x:=T_{\{x\}}$.

The next few results are standard; see, for example, \cite{pete}. % See version 7 for proofs
\begin{lemma}[Harmonic functions are determined by their boundary values]\label{cor:boundary}
Let $\Gamma$ be a graph, or a group with a finitely supported generating probability measure, and let $A$ be a finite subset of $\Gamma$ with non-empty outer boundary. Let $f_0:\partial^+A\to\R$. Then the function $f:A^+\to\R$ defined by $f(x)=\E_x\left[f_0\left(X_{T_{\partial^+A}}\right)\right]$ is harmonic on $A$ and agrees with $f_0$ on $\partial^+A$, and is unique with respect to these two properties.
\end{lemma}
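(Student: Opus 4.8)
The plan is to establish the three asserted properties of $f$ separately — well-definedness together with the boundary values, harmonicity on $A$, and uniqueness — by a routine first-step analysis, invoking the maximum principle (Lemma \ref{lem:max.princ}) for the uniqueness part.

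First I would check that $f$ is well defined, i.e.\ that $T_{\partial^+A}<\infty$ $\Prob_x$-almost surely for every $x\in A^+$; given this, $f_0\bigl(X_{T_{\partial^+A}}\bigr)$ is a bounded random variable (local finiteness and finiteness of $A$ make $A^+$, hence $\partial^+A$, finite, so $f_0$ is bounded), and the expectation is well defined. Working in the connected component of $A$ containing $x$ — each such component has non-empty outer boundary, since otherwise it would be a finite connected component of $\Gamma$, from which $\partial^+A$ is unreachable, a degenerate case the statement implicitly excludes and which does not occur when $\Gamma$ is connected — connectedness produces, from every vertex of $A$, a path of length at most $|A|$ ending in $\partial^+A$ and otherwise lying in $A$; the walk reaches $\partial^+A$ within $|A|$ steps with probability at least some $c>0$ uniform over the finitely many starting vertices, whence $\Prob_x[T_{\partial^+A}>k|A|]\le(1-c)^k$ and $T_{\partial^+A}<\infty$ almost surely. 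Finally, if $x\in\partial^+A$ then $T_{\partial^+A}=0$, giving $f(x)=f_0(x)$, which is the claimed agreement on $\partial^+A$.

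For harmonicity, fix $x\in A$. Since $A\cap\partial^+A=\emptyset$ we have $T_{\partial^+A}\ge1$ on $\{X_0=x\}$, so $X_{T_{\partial^+A}}$ depends only on the walk from time $1$ onwards; conditioning on $X_1=y$ and using the Markov property, the shifted walk is a random walk started at $y$, so $\E_x\bigl[f_0(X_{T_{\partial^+A}})\mid X_1=y\bigr]=\E_y\bigl[f_0(X_{T_{\partial^+A}})\bigr]=f(y)$, the last equality holding for $y\in A$ by definition and for $y\in\partial^+A$ because both sides then equal $f_0(y)$; note $y\in\{x\}^+\subseteq A^+$ in either case, so $f(y)$ is defined. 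Averaging over $y$ against the transition probabilities gives $f(x)=\frac1{\deg x}\sum_{y\sim x}\omega_{xy}f(y)$ in the graph case and $f(x)=\sum_{s\in\supp\mu}\mu(s)f(xs)$ in the group case; either identity says precisely $\Delta f(x)=0$.

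For uniqueness, suppose $g:A^+\to\R$ is also harmonic on $A$ and agrees with $f_0$ on $\partial^+A$, and set $h=f-g$, so that $h$ is harmonic on $A$ and vanishes on $\partial^+A$; I claim $h\equiv0$. The finite set $A^+$ carries a maximum of $h$; were it positive it would be attained at some $x_0\in A$, and restricting to the component $A_0$ of $A$ containing $x_0$ — whose neighbourhood $A_0^+$ is a connected subset of $A^+$ on which $h$ is harmonic on $A_0$ and still attains this maximum at $x_0$ — Lemma \ref{lem:max.princ} would force $h$ to be constant on $A_0^+$, contradicting $h\equiv0$ on the non-empty set $\partial^+A_0\subseteq\partial^+A$. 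Thus $\max_{A^+}h\le0$; applying the same to $-h$ gives $\min_{A^+}h\ge0$, so $h\equiv0$ and $g=f$. I do not expect a genuine obstacle, as this is standard first-step analysis; the only points requiring care are the almost-sure finiteness of $T_{\partial^+A}$ (and the attendant implicit exclusion of components of $A$ that are themselves finite connected components of $\Gamma$) and the passage to connected components of $A$ when invoking the maximum principle, which is stated only for connected sets.
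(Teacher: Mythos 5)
Your proof is correct and is precisely the standard first-step-analysis argument that the paper itself omits, stating the lemma without proof and referring to \cite{pete}; the harmonicity computation via conditioning on $X_1$ and the uniqueness argument via Lemma \ref{lem:max.princ} applied component-by-component to $f-g$ are exactly as one would expect. The one genuinely delicate point --- that $T_{\partial^+A}$ could be infinite, and uniqueness could fail, if a connected component of $A$ were itself a finite connected component of $\Gamma$ --- is a real (if minor) imprecision in the statement as given, and you identify and dispose of it correctly; it never arises in the paper's applications, where $\Gamma$ is connected and infinite.
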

\begin{corollary}\label{cor:dominating.harm.fn}
Let $G$ be a group and let $A$ be a finite subset of $G$. Suppose that $f_1,f_2:A^+\to\R$ are harmonic on $A$, and that $f_1\ge f_2$ on $\partial^+A$. Then $f_1\ge f_2$ on the whole of $A^+$.
\end{corollary}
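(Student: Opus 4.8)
The plan is to apply Lemma~\ref{cor:boundary} to the difference $g := f_1 - f_2$. Since the Laplacian is linear, $g$ is harmonic on $A$, and by hypothesis $g \ge 0$ on $\partial^+A$. Provided $\partial^+A$ is non-empty --- which holds automatically whenever $G$ is infinite, the setting of interest, the case $A=\emptyset$ being vacuous --- Lemma~\ref{cor:boundary} asserts that the unique function on $A^+$ that is harmonic on $A$ and restricts to $g|_{\partial^+A}$ on $\partial^+A$ is $x\mapsto\E_x\big[\,g(X_{T_{\partial^+A}})\,\big]$. As $g$ itself has both of these properties, we get $g(x)=\E_x\big[\,g(X_{T_{\partial^+A}})\,\big]$ for every $x\in A^+$ (for $x\in\partial^+A$ this is trivial, since then $T_{\partial^+A}=0$).

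The right-hand side is the expectation of a random variable taking values in the finite set $\{\,g(y):y\in\partial^+A\,\}\subseteq[0,\infty)$, hence is non-negative. Therefore $g\ge0$ on all of $A^+$, i.e.\ $f_1\ge f_2$ on $A^+$, as required.

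There is no serious obstacle here; the one point meriting a word of care is that the expectation furnishing the harmonic extension in Lemma~\ref{cor:boundary} is meaningful, i.e.\ that the walk started anywhere in $A^+$ almost surely hits $\partial^+A$. This is already implicit in the statement of Lemma~\ref{cor:boundary} and follows from the finiteness of $A$ together with the connectedness of $(G,\mu)$: from any point of the finite set $A$ there is a path of length at most $|A|$ leaving $A$, so $T_{\partial^+A}$ has a geometric tail. Note also that the probabilistic representation is what makes the argument clean in the generality stated, since $A$ need not be connected and so the maximum principle does not apply to it directly.

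If one prefers to avoid the random walk, here is the alternative via the maximum principle (Lemma~\ref{lem:max.princ}): decompose $A$ into its finitely many connected components $A_1,\dots,A_m$, observe that $\partial^+A_j\subseteq\partial^+A$ and $A^+=\bigcup_j A_j^+$, and run a contradiction argument on each $A_j^+$. If $-g$ attained a positive maximum on the finite set $A_j^+$, that maximum could not be attained on $\partial^+A_j$ (where $g\ge0$), so it would be attained on the connected set $A_j$; Lemma~\ref{lem:max.princ} would then force $-g$ to be constant on $A_j^+$, hence to be a positive constant on $\partial^+A_j$, contradicting $g\ge0$ there. Hence $g\ge0$ on each $A_j^+$ and so on $A^+$. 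Either argument is short; I would present the one using Lemma~\ref{cor:boundary}.
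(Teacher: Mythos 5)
Your proposal is correct and matches the paper's intent: the paper gives no explicit proof, presenting the statement as a standard immediate consequence of Lemma \ref{cor:boundary} (with a citation to the literature), and your argument---applying that lemma to $f_1-f_2$ and using non-negativity of the resulting expectation---is exactly that deduction, with the minor hypotheses (non-emptiness of $\partial^+A$, almost-sure hitting of $\partial^+A$) correctly accounted for. The alternative maximum-principle argument is a fine bonus but not needed.
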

\begin{lemma}\label{lem:off-diag}
Let $x,y$ be vertices in a vertex-transitive weighted graph $\Gamma$. Then $\Prob_x[\,X_{2n}=y\,]\le\Prob_e[\,X_{2n}=e\,]$.
\end{lemma}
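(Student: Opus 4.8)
The plan is to exploit the fact that vertex-transitivity forces the transition operator of the random walk to be symmetric, and then to combine the Chapman--Kolmogorov equations with the Cauchy--Schwarz inequality.

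First I would observe that since $\Gamma$ is vertex-transitive, the degree $\deg x=\sum_{y\sim x}\omega_{xy}$ takes a constant value, say $\rho$, as $x$ ranges over the vertices of $\Gamma$: any automorphism carrying $x$ to $y$ carries the edges at $x$ to the edges at $y$ and preserves their weights. Writing $p(u,v)=\omega_{uv}/\deg u$ for the one-step transition probabilities of the random walk (so that $p(u,v)=0$ when $u\not\sim v$), the symmetry $\omega_{uv}=\omega_{vu}$ of the edge weights together with $\deg u=\deg v=\rho$ gives $p(u,v)=p(v,u)$ for all vertices $u,v$. Hence the $n$-step transition probabilities $p_n(u,v):=\Prob_u[\,X_n=v\,]$, being entries of the $n$-th power of a symmetric matrix, also satisfy $p_n(u,v)=p_n(v,u)$. (Local finiteness guarantees that for fixed $n$ each $p_n(u,\cdot)$ is supported on a finite set, so no convergence issues arise.)

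Next I would apply Chapman--Kolmogorov and the symmetry just established to write, for arbitrary vertices $x$ and $y$,
$$\Prob_x[\,X_{2n}=y\,]=p_{2n}(x,y)=\sum_z p_n(x,z)\,p_n(z,y)=\sum_z p_n(x,z)\,p_n(y,z),$$
and then estimate this sum by Cauchy--Schwarz:
$$\sum_z p_n(x,z)\,p_n(y,z)\le\Bigl(\sum_z p_n(x,z)^2\Bigr)^{1/2}\Bigl(\sum_z p_n(y,z)^2\Bigr)^{1/2}=p_{2n}(x,x)^{1/2}\,p_{2n}(y,y)^{1/2},$$
where the last equality again uses symmetry, since $\sum_z p_n(x,z)^2=\sum_z p_n(x,z)\,p_n(z,x)=p_{2n}(x,x)$. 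Finally, for each vertex there is an automorphism of $\Gamma$ carrying it to $e$, and such automorphisms preserve the weights and hence the transition probabilities, so that $p_{2n}(\phi u,\phi v)=p_{2n}(u,v)$ for every automorphism $\phi$; taking $u=v=x$ and $u=v=y$ gives $p_{2n}(x,x)=p_{2n}(y,y)=p_{2n}(e,e)=\Prob_e[\,X_{2n}=e\,]$. Combining the two displays yields $\Prob_x[\,X_{2n}=y\,]\le\Prob_e[\,X_{2n}=e\,]$, as required.

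There is no serious obstacle here. The only points needing a little care are the finiteness of the sums involved (immediate from local finiteness for fixed $n$, or alternatively from $\sum_z p_n(x,z)^2\le\sum_z p_n(x,z)=1$) and the observation that vertex-transitivity really does make the transition operator symmetric; note that the hypothesis enters twice, once through constancy of the degree and once through homogeneity of the return probabilities.
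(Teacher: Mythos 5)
Your argument is correct: vertex-transitivity gives constant degree, hence a symmetric transition kernel, and the Chapman--Kolmogorov decomposition at time $n$ together with Cauchy--Schwarz and the homogeneity of the return probabilities $p_{2n}(x,x)=p_{2n}(e,e)$ yields the claim. The paper does not prove this lemma but cites it as standard (from \cite{pete}), and the proof you give is precisely that standard argument, so there is nothing to compare beyond noting that your write-up is complete and correct.
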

\begin{remark}
Lemma \ref{lem:off-diag} does not necessarily hold if $2n$ is replaced by $n$. For example, if $n$ is odd then in the Cayley graph $(\Z,\pm1)$ we have $\Prob_0[\,X_n=0\,]=0$.
\end{remark}
\begin{prop}\label{prop:vert.trans.symm}
Let $\Gamma$ be a locally finite vertex-transitive weighted graph, and let $x,y\in\Gamma$. Then for each $n$ we have $\Prob_x[\,T_y=n\,]=\Prob_y[\,T_x=n\,]$.
\end{prop}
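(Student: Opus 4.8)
The plan is to exploit the two forms of symmetry that a vertex-transitive weighted graph hands us for free: symmetry of the transition matrix and base-point-independence of the return probabilities. Write $p_n(u,v)=\Prob_u[\,X_n=v\,]$ for the $n$-step transition probabilities. Since $\Gamma$ is vertex-transitive it is regular, of degree $\delta$ say, and so the one-step transition probabilities $p_1(u,v)=\omega_{uv}/\deg u=\omega_{uv}/\delta$ are symmetric in $u$ and $v$; that is, the transition matrix $P$ is a symmetric matrix, whence so is $P^n$, and $p_n(x,y)=p_n(y,x)$ for every $n\ge0$. Moreover, for any two vertices $u,v$ vertex-transitivity supplies a weight-preserving automorphism taking $u$ to $v$, and such an automorphism preserves transition probabilities and hence carries the walk started at $u$ to the walk started at $v$; in particular the return probabilities $r_n:=p_n(u,u)$ do not depend on the base point $u$, and $r_0=1$.

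The case $x=y$ is trivial, both sides of the claimed identity being $1$ if $n=0$ and $0$ otherwise, so assume $x\ne y$. Decomposing the event $\{X_n=y\}$ according to the value of the first-passage time $T_y\in\{1,\dots,n\}$ (note $T_y\ge1$ since $X_0=x\ne y$) and applying the strong Markov property at $T_y$ yields the renewal identity
\begin{equation*}
p_n(x,y)=\sum_{k=1}^n\Prob_x[\,T_y=k\,]\,r_{n-k}\qquad(n\ge1),
\end{equation*}
and, exchanging the roles of $x$ and $y$, $p_n(y,x)=\sum_{k=1}^n\Prob_y[\,T_x=k\,]\,r_{n-k}$ — here we use that the return probabilities at $x$ and at $y$ coincide.

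It remains to conclude by induction on $n$. For $n=1$ the two identities read $p_1(x,y)=\Prob_x[\,T_y=1\,]$ and $p_1(y,x)=\Prob_y[\,T_x=1\,]$, which agree since $p_1(x,y)=p_1(y,x)$. Assuming $\Prob_x[\,T_y=k\,]=\Prob_y[\,T_x=k\,]$ for all $k<n$, subtract the two renewal identities: the left-hand sides agree as $p_n(x,y)=p_n(y,x)$, the partial sums over $k\le n-1$ agree by the inductive hypothesis (the weights $r_{n-k}$ being common), and the $k=n$ terms carry the common factor $r_0=1$, leaving $\Prob_x[\,T_y=n\,]=\Prob_y[\,T_x=n\,]$. (Equivalently, one may package this as the identity $F_{xy}(z)=G_{xy}(z)/G_{yy}(z)=G_{yx}(z)/G_{xx}(z)=F_{yx}(z)$ of formal power series, where $F_{xy}$ and $G_{xy}$ generate $\Prob_x[\,T_y=\cdot\,]$ and $p_\cdot(x,y)$ respectively and $G_{yy}$ is invertible in $\R[[z]]$ because $G_{yy}(0)=1$.) There is no real obstacle here: the only points needing a little care are the reduction to $x\ne y$, so that $T_y$ is genuinely the first-passage time rather than identically $0$, and the verification that both symmetry inputs — symmetry of $P^n$ and base-point-independence of $r_n$ — follow from vertex-transitivity.
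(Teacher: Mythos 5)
Your proof is correct, but it follows a genuinely different route from the paper's. The paper proves the sharper statement that $\Prob_x[\,T_y=n\text{ and }\max\{t<n:X_t=x\}=r\,]$ is symmetric in $x$ and $y$ for each fixed $r$, which it obtains by factoring the walk at the last visit to the starting point and invoking Lemma \ref{lem:reg.loops}; that lemma is in turn proved by a combinatorial decomposition of loops at $x$ into excursions alternating between $x$ and $y$ (the events $L_{x,y}(n;k;\cdot;\cdot)$). You instead run the classical renewal/generating-function argument: the first-passage decomposition $p_n(x,y)=\sum_{k=1}^n\Prob_x[\,T_y=k\,]\,r_{n-k}$, combined with symmetry of $P^n$ (path reversal, using $\omega_{uv}=\omega_{vu}$ and regularity) and base-point independence of $r_n$ (vertex-transitivity), followed by induction on $n$ using $r_0=1$ — equivalently, $F_{xy}=G_{xy}/G_{yy}=G_{yx}/G_{xx}=F_{yx}$ as formal power series. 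Both arguments use exactly the same two inputs (regularity with symmetric weights, and $\Prob_x[\,X_n=x\,]=\Prob_y[\,X_n=y\,]$), so your proof, like the paper's, extends verbatim to walk-regular graphs as noted in the remark following the proposition. What the paper's approach buys is the finer joint statement involving the last return time, which is what is actually invoked later (via the quantities $v_r(x,y)$ of equation (\ref{eq:v_r})) in the proof of Lemma \ref{lem:hits.one.first}; what yours buys is brevity and the avoidance of the somewhat delicate bookkeeping in Lemma \ref{lem:reg.loops}. For the proposition as stated, your argument is complete: the only points requiring care — the reduction to $x\ne y$ so that $T_y\ge1$, the validity of the strong Markov step behind the renewal identity, and the justification that $(P^n)^{\mathrm T}=P^n$ for the locally finite (hence row-finite) transition matrix — are all handled or immediate.
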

\begin{remark}
Proposition \ref{prop:vert.trans.symm} is trivial for a Cayley graph. It does not necessarily hold in a regular graph that is not vertex transitive; see Figure \ref{fig:not.symm}.
\begin{figure}
\caption{A regular graph in which $\Prob_x[\,T_y<\infty\,]>\Prob_y[\,T_x<\infty\,]$.}\label{fig:not.symm}
\begin{center}
\begin{tikzpicture}
\draw (0,0) -- (1,0);
\draw (0,0) -- (1,1);
\draw (0,0) -- (1,-1);
\draw (1,1) -- (2,0);
\draw (1,-1) -- (2,0);
\draw (1,1) -- (1,-1);
\draw (2,0) node[below] {$x$} -- (3,0);
\draw (3,0) node[below] {$y$}-- (4,0.6);
\draw (3,0) -- (4,-0.6);
\draw (4,0.6) -- (4.8,0.8);
\draw (4,0.6) -- (4.4,1.3);
\draw (4,-0.6) -- (4.8,-0.8);
\draw (4,-0.6) -- (4.4,-1.3);
\draw[dashed] (4.8,0.8) -- (5.6,1);
\draw[dashed] (4.4,1.3) -- (4.8,2);
\draw[dashed] (4.8,-0.8) -- (5.6,-1);
\draw[dashed] (4.4,-1.3) -- (4.8,-2);
\filldraw (0,0) circle (2pt);
\filldraw (1,0) circle (2pt);
\filldraw (1,1) circle (2pt);
\filldraw (1,-1) circle (2pt);
\filldraw (2,0) circle (2pt);
\filldraw (3,0) circle (2pt);
\filldraw (4,0.6) circle (2pt);
\filldraw (4,-0.6) circle (2pt);
\filldraw (4.8,0.8) circle (2pt);
\filldraw (4.4,1.3) circle (2pt);
\filldraw (4.8,-0.8) circle (2pt);
\filldraw (4.4,-1.3) circle (2pt);
\end{tikzpicture}
\end{center}
\end{figure}
\end{remark}
Proposition \ref{prop:vert.trans.symm} seems to be well known -- see, for example, \cite[Proposition 2]{aldous} for a proof in the case of a finite graph -- but the author was unable to find in the literature a proof of it as stated, so we present one here. A key step is the following lemma.
\begin{lemma}\label{lem:reg.loops}
Let $\Gamma$ be a locally finite vertex-transitive weighted graph, and let $n\in\N$. Then for every $x,y\in\Gamma$ we have
\[
\begin{split}
\Prob_x[\,X_n=x,\text{and }X_i\ne y\text{ for all }i=1,\ldots,n-1\,]\qquad\qquad\qquad\qquad\\
\qquad\qquad\qquad\qquad=\Prob_y[\,X_n=y,\text{and }X_i\ne x\text{ for all }i=1,\ldots,n-1\,]
\end{split}
\]
\end{lemma}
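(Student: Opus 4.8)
The plan is to combine two standard facts about the weighted random walk: reversibility, and the first-passage decomposition via the strong Markov property. We may assume $x\ne y$, since for $x=y$ both sides equal the probability of a first return to $x$ at time $n$.

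First I would record reversibility. For any finite path $v_0,v_1,\ldots,v_n$ in $\Gamma$, the probability that the walk started at $v_0$ traverses it is $\prod_{i=0}^{n-1}\omega_{v_iv_{i+1}}/\deg v_i$; comparing with the corresponding product for the reversed path $v_n,\ldots,v_0$ and using $\omega_{ab}=\omega_{ba}$, the two products differ only by the telescoping factor $\deg v_0/\deg v_n$. Since a vertex-transitive weighted graph is in particular regular, a path and its reversal are equiprobable, and summing over all length-$n$ paths from $x$ to $y$ gives $\Prob_x[\,X_n=y\,]=\Prob_y[\,X_n=x\,]$ for every $n$. An automorphism of $\Gamma$ carrying $x$ to $y$ additionally gives $\Prob_x[\,X_n=x\,]=\Prob_y[\,X_n=y\,]$ for every $n$.

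Next I would set up generating functions in a formal variable $z$ (all series converging for $|z|<1$): let $P(z)=\sum_n\Prob_x[\,X_n=x\,]z^n=\sum_n\Prob_y[\,X_n=y\,]z^n$, let $G(z)=\sum_n\Prob_x[\,X_n=y\,]z^n=\sum_n\Prob_y[\,X_n=x\,]z^n$, let $F_1(z)=\sum_n\Prob_x[\,T_y=n\,]z^n$ and $F_2(z)=\sum_n\Prob_y[\,T_x=n\,]z^n$ be the first-passage series (here $T_y=T_{\{y\}}\ge1$ as $x\ne y$), and let $L_1(z)$, $L_2(z)$ be the generating functions of the left- and right-hand sides of the lemma. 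Decomposing a walk from $x$ to $y$ at its first visit to $y$ and applying the strong Markov property gives $G=F_1P$, and symmetrically $G=F_2P$; since $P(0)=1$, the series $P$ is invertible in $\R[[z]]$, whence $F_1=F_2$. Decomposing a walk from $x$ back to $x$ according to whether it avoids $y$ throughout or, if not, at its first visit to $y$, gives $P=L_1+F_1G$, and symmetrically $P=L_2+F_2G$; subtracting yields $L_1-L_2=(F_2-F_1)G=0$, which is the assertion of the lemma.

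The only real care needed is in making the two decompositions precise: because $X_0=x\ne y$ and $X_n=x\ne y$, any visit of the walk to $y$ must occur strictly between times $0$ and $n$, and it is this observation that makes the convolutions line up correctly and handles the $n=0$ boundary case. I would expect the conceptual crux to be the identity $F_1=F_2$ (equality of the first-passage-time distributions in the two directions), which drives the cancellation and does not seem to admit an obvious direct combinatorial proof; both reversibility and vertex-transitivity are genuinely used to obtain it.
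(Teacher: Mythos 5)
Your proof is correct, but it takes a genuinely different route from the paper's. The paper proves the lemma by induction on $n$, decomposing the loop event explicitly according to the set of times at which the walk crosses between $x$ and $y$ (the events $L_{x,y}(n;k;a_1,\ldots,a_k;b_1,\ldots,b_k)$), and observing that the resulting product of excursion probabilities is symmetric in $x$ and $y$; it then \emph{deduces} the first-passage symmetry $\Prob_x[\,T_y=n\,]=\Prob_y[\,T_x=n\,]$ (Proposition \ref{prop:vert.trans.symm}) from this lemma. You instead establish the first-passage symmetry directly, as the identity $F_1=F_2$ obtained by cancelling $P$ from $G=F_1P=F_2P$ in $\R[[z]]$, and then read off the lemma from the complementary decomposition $P=L_i+F_iG$. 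Both arguments rest on exactly the same two inputs --- reversibility of paths in a regular weighted graph and the diagonal identity from vertex transitivity --- and your convolution identities are set up correctly (the boundary cases work because $F_i[0]=G[0]=0$ while $P[0]=L_i[0]=1$, as you note). What your route buys is brevity and the fact that it yields Proposition \ref{prop:vert.trans.symm} as a byproduct rather than a corollary, which is all that is used downstream (Lemma \ref{lem:hits.one.first} needs only $p(x,y)=p(y,x)$). What it gives up is the finer information in the paper's proof of Proposition \ref{prop:vert.trans.symm}, namely the refinement recording the last visit to $x$ before $T_y$, which the explicit crossing decomposition provides and the generating-function cancellation does not.
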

\begin{proof}
If $n=0$ then the lemma is trivial, so by induction we may fix $n>0$ and assume that
\begin{align*}
&\Prob_x[\,X_r=x,\text{and }X_i\ne y\text{ for all }i=1,\ldots,r-1\,]\\
&\qquad\qquad=\Prob_y[\,X_r=y,\text{and }X_i\ne x\text{ for all }i=1,\ldots,r-1\,]\\
&\qquad\qquad=u_r,
\end{align*}
say, for every $r<n$. Moreover, since $\Gamma$ is regular, if $z_0,\ldots,z_r$ is a path from $x$ to $y$ then $\Prob_x[\,X_0=z_0,\ldots,X_r=z_r\,]=\Prob_y[\,X_0=z_r,\ldots,X_r=z_0\,]$. This means, in particular, that if $v_r(x,y)$ is the probability of moving from $x$ to $y$ in $r$ steps, without visiting either $x$ or $y$ in between, then
\begin{equation}\label{eq:v_r}
v_r(x,y)=v_r(y,x)=v_r,
\end{equation}
say, for every $r$.

It is immediate from the vertex transitivity of $\Gamma$ that we have
\begin{equation}\label{eq:walk.reg}
\Prob_x[\,X_n=x\,]=\Prob_y[\,X_n=y\,],
\end{equation}
and so it suffices to show that we have
\begin{equation}\label{eq:symm.negation}
\begin{split}
\Prob_x[\,X_n=x,\,\text{and }X_i=y\text{ for some }i=1,\ldots,n-1\,]\qquad\qquad\qquad\qquad\\
\qquad\qquad\qquad=\Prob_y[\,X_n=y,\,\text{and }X_i=x\text{ for some }i=1,\ldots,n-1\,].
\end{split}
\end{equation}
Given $k\ge1$ and a sequence $0\le a_1<b_1\le a_2<b_2\le\ldots\le a_k<b_k\le n$ of integers, define the event $L_{x,y}(n;k;a_1,\ldots,a_k;b_1,\ldots,b_k)$ to be the event that $X_0=X_n=x$ and, if $0=t_1<\ldots<t_l=n$ are all the times $t$ at which $X_t\in\{x,y\}$ and we set $A=\{t_i:X_{t_i}\ne X_{t_{i+1}}\}$ and $B=\{t_{i+1}:X_{t_i}\ne X_{t_{i+1}}\}$, then we have $A=\{a_1,\ldots,a_k\}$ and $B=\{b_1,\ldots,b_k\}$. Setting $a_{k+1}=n$ and $b_0=0$ for notational convenience, we have
\begin{align}
\Prob_x[\,L_{x,y}(n;k;a_1,\ldots,a_k;b_1,\ldots,b_k)\,]&=\prod_{i=0}^ku_{a_{i+1}-b_i}\prod_{j=1}^kv_{b_j-a_j}\nonumber\\
&=\Prob_y[\,L_{y,x}(n;k;a_1,\ldots,a_k;b_1,\ldots,b_k)\,].\label{eq:Lxy=Lyx}
\end{align}
However, the event $\{\,X_0=X_n=x,\,\text{and }X_i=y\text{ for some }i=1,\ldots,n-1\,\}$ is precisely the disjoint union of all events $L_{x,y}(n;k;a_1,\ldots,a_k;b_1,\ldots,b_k)$ with $k\ge1$, and so (\ref{eq:symm.negation}) follows immediately from (\ref{eq:Lxy=Lyx}). The lemma is then immediate from (\ref{eq:walk.reg}) and (\ref{eq:symm.negation}).
\end{proof}
\begin{proof}[Proof of Proposition \ref{prop:vert.trans.symm}]
We prove the more precise statement that
\[
\begin{split}
\Prob_x[\,T_y=n\text{ and }\max\{t<n:X_t=x\}=r\,]\qquad\qquad\qquad\\
\qquad\qquad=\Prob_y[\,T_x=n\text{ and }\max\{t<n:X_t=x\}=r\,]
\end{split}
\]
for every $r\ge0$. Indeed, this follows readily from Lemma \ref{lem:reg.loops} and (\ref{eq:v_r}), and the observation that
\[
\begin{split}
\Prob_x[\,T_y=n\text{ and }\max\{t<n:X_t=x\}\,=\,r\,]\qquad\qquad\qquad\qquad\qquad\qquad\\
\qquad\qquad\qquad=v_{n-r}(x,y)\Prob_x[\,X_r=x,\text{and }X_i\ne y\text{ for all }i=1,\ldots,r-1\,].
\end{split}
\]
\end{proof}
\begin{remark}
The only properties of $\Gamma$ that we used in the proof of Proposition \ref{prop:vert.trans.symm} were its regularity and (\ref{eq:walk.reg}). These properties are satisfied, more generally, by \emph{walk-regular} (unweighted) graphs (see \cite{georgakopoulos,god.mck} for definitions and background). Proposition \ref{prop:vert.trans.symm} therefore also holds in walk-regular unweighted graphs.
\end{remark}

A vertex $x$ of a graph, or a group with a finitely supported generating probability measure, is called \emph{recurrent} for the random walk on the graph or group if $\Prob_x[\,T_x<\infty\,]=1$, and \emph{transient} for the random walk otherwise. In the case of a connected graph or a group this is independent of the choice of vertex, and so it makes sense to define the random walk on a connected graph, or on a group with a finitely supported generating probability measure, to be \emph{recurrent} if $\Prob_e[\,T_e<\infty\,]=1$, and \emph{transient} otherwise.

Write $R_x$ for the number of times the random walk visits the vertex $x$. Note that in the case of a transient random walk the variable $R_e$ has a geometric distribution under the probability measure $\Prob_e$, from which the following well-known fact easily follows.
\begin{lemma}\label{lem:no.visits}
The random walk on a connected graph, or on a group with a finitely supported generating probability measure, is transient if and only if $\E_e[R_e]<\infty$.
\end{lemma}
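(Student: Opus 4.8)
The plan is to use the strong Markov property to pin down the distribution of $R_e$ exactly, after which both directions of the equivalence fall out of a single summation. Write $q=\Prob_e[\,X_t=e\text{ for some }t\ge1\,]$ for the probability that the walk started at $e$ returns to $e$ at a positive time, so that, by the definition of recurrence given above, the walk is recurrent precisely when $q=1$ and transient precisely when $q<1$.

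Let $0=\tau_0<\tau_1<\tau_2<\cdots$ be the successive times at which the walk visits $e$, with the convention $\tau_j=\infty$ if the walk visits $e$ fewer than $j$ times after time $0$. Since the walk is a time-homogeneous Markov chain, the strong Markov property applied at the stopping time $\tau_n$ tells us that, conditional on $\{\tau_n<\infty\}$, the walk starts afresh from $e$ and so makes a further return with probability exactly $q$; hence $\Prob_e[\,\tau_{n+1}<\infty\,]=q\,\Prob_e[\,\tau_n<\infty\,]$ and inductively $\Prob_e[\,\tau_n<\infty\,]=q^n$. Counting the initial visit at time $0$, we have $R_e\ge k$ if and only if $\tau_{k-1}<\infty$, so $\Prob_e[\,R_e\ge k\,]=q^{k-1}$ for every $k\ge1$; this is exactly the geometric distribution promised in the remark preceding the statement.

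Summing the tail then gives $\E_e[R_e]=\sum_{k\ge1}\Prob_e[\,R_e\ge k\,]=\sum_{k\ge1}q^{k-1}$, which equals $\tfrac1{1-q}$ when $q<1$ and diverges when $q=1$. Hence $\E_e[R_e]<\infty$ if and only if $q<1$, i.e.\ if and only if the random walk is transient, as required. The only substantive point is the verification that each visit to $e$ is followed by a further return with the same probability $q$, independently of the past, i.e.\ the strong Markov property at the stopping times $\tau_n$; this is the main (indeed essentially the only) obstacle, though it is entirely standard for the time-homogeneous Markov chains considered here.
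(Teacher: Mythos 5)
Your proof is correct and takes essentially the same approach as the paper: the paper's justification is precisely the remark that $R_e$ has a geometric distribution under $\Prob_e$ in the transient case, from which the lemma ``easily follows'', and your argument simply makes this explicit via the strong Markov property at the successive return times together with the tail-sum formula $\E_e[R_e]=\sum_{k\ge1}\Prob_e[\,R_e\ge k\,]$. Both directions are handled correctly, so there is nothing to add.
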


In the case of a group, if we require probability measures to be symmetric then recurrence or transience of the random walk is even independent of the choice of finitely supported generating probability measure \cite[Proposition 4.2]{woess}. It therefore makes sense simply to define a finitely generated group to be \emph{recurrent} if some symmetric random walk on it is recurrent, and \emph{transient} otherwise.

N. Varopoulos has characterised those groups that are recurrent.
\begin{prop}[Varopoulos \cite{varopoulos,woess}]\label{prop:varopoulos}
Let $G$ be a group with a symmetric, finitely supported generating probability measure $\mu$. Then the random walk on $(G,\mu)$ is recurrent if and only if $G$ is finite or has a finite-index subgroup isomorphic to $\Z$ or $\Z^2$.
\end{prop}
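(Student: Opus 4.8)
The plan is to prove the two directions by quite different means, each leaning on a classical external result: P\'olya's recurrence theorem for $\Z$ and $\Z^2$ (together with the quasi-isometry invariance of recurrence) for the ``if'' direction, and Gromov's polynomial-growth theorem for the ``only if'' direction. Throughout I would use Lemma~\ref{lem:no.visits}, together with the identity $\E_e[R_e]=\sum_{n\ge0}\Prob_e[\,X_n=e\,]$, to reduce recurrence and transience to the divergence, respectively convergence, of the series of return probabilities; and I would use the fact recorded above (from \cite{woess}) that, among symmetric finitely supported generating measures, recurrence depends only on $G$, so that in each case one may work with a convenient measure, for instance the uniform measure on a symmetric generating set.

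For the ``if'' direction: if $G$ is finite the random walk is an irreducible Markov chain on a finite state space, hence recurrent. If $G$ has a finite-index subgroup $H\cong\Z$ or $\Z^2$, then the simple random walk on $H$ is recurrent by P\'olya's theorem --- concretely $\sum_n\Prob_e[\,X_{2n}=e\,]$ diverges like $\sum_n n^{-1/2}$ or $\sum_n n^{-1}$ respectively. Since $H$ has finite index in $G$, the Cayley graphs of $G$ and $H$ are quasi-isometric, and recurrence of the simple random walk is a quasi-isometry invariant among connected, bounded-degree graphs (for instance via the finite-energy-flow criterion for transience \cite{woess}); hence the walk on $G$ is recurrent as well.

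For the ``only if'' direction, suppose the walk on $(G,\mu)$ is recurrent. The analytic input I would invoke is Varopoulos's volume-growth estimate \cite{varopoulos} (see also Coulhon and Saloff-Coste): if the balls of $(G,\mu)$ satisfy $|\{x\in G:|x|\le n\}|\gg n^d$ for all $n$, then $\Prob_e[\,X_{2n}=e\,]\ll n^{-d/2}$. Were the growth of $G$ at least cubic this would make $\sum_n\Prob_e[\,X_{2n}=e\,]$ convergent, forcing transience by Lemma~\ref{lem:no.visits}; so $G$ has strictly subcubic, and in particular polynomial, growth. By Gromov's theorem \cite{gromov}, $G$ is then virtually nilpotent, and by the Bass--Guivarc'h formula its growth is polynomial of some integer degree $d\in\{0,1,2\}$. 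I would finish with a short structural argument on a torsion-free nilpotent finite-index subgroup $N\le G$: degree $0$ gives $N$ trivial and $G$ finite; degree $1$ forces, via Bass--Guivarc'h, that the abelianisation of $N$ has rank $1$ and $N$ is abelian, so $N\cong\Z$; and degree $2$ forces that the abelianisation of $N$ has rank exactly $2$ and $N$ is abelian (rank $1$ would instead give degree $1$), so $N\cong\Z^2$. In all cases $G$ lies among the listed groups.

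The hard part is entirely the on-diagonal return-probability bound $\Prob_e[\,X_{2n}=e\,]\ll n^{-d/2}$ under a polynomial lower bound on volume growth: this is the analytic core of Varopoulos's theorem, and it rests on Nash- or Faber--Krahn-type functional inequalities and the isoperimetric geometry of $G$. The second substantial ingredient is Gromov's theorem, which is exactly what closes the borderline case of growth degree equal to $2$: without knowing that the growth degree of a polynomial-growth group is an integer, one could not rule out groups whose growth is just above quadratic, for which $\sum_n n^{-d/2}$ still diverges and the crude volume argument says nothing.
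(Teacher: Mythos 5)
The paper offers no proof of Proposition \ref{prop:varopoulos}: it is quoted as a known theorem of Varopoulos, with the proof outsourced to \cite{varopoulos} and \cite{woess}, so there is nothing internal to compare your argument against. Your sketch is the standard derivation and is sound in outline: P\'olya's theorem plus quasi-isometry invariance of recurrence for the `if' direction, and the on-diagonal bound $\Prob_e[\,X_{2n}=e\,]\ll n^{-d/2}$ under the volume lower bound $|B(n)|\gg n^d$, combined with Gromov's theorem and the Bass--Guivarc'h formula, for the `only if' direction; the reduction of recurrence to divergence of $\sum_n\Prob_e[\,X_{2n}=e\,]$ via Lemma \ref{lem:no.visits} is exactly right, as is the final rank computation on a torsion-free nilpotent finite-index subgroup. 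One step is stated too quickly: from the failure of $|B(n)|\ge cn^3$ for all $n$ you obtain only $\liminf_n|B(n)|/n^3=0$, a cubic upper bound along a subsequence, and this does not by itself yield polynomial growth (``strictly subcubic, and in particular polynomial'' elides this). To conclude virtual nilpotency you should invoke the van den Dries--Wilkie strengthening of Gromov's theorem \cite{gromov}, for which a polynomial volume bound along a subsequence suffices, or equivalently its contrapositive: a group that is not virtually nilpotent satisfies $|B(n)|\ge c_kn^k$ for every $k$ and all $n$, whence Varopoulos's estimate already forces transience. With that substitution the argument closes cleanly; the genuinely hard inputs (the Nash-inequality argument behind the heat-kernel estimate, and Gromov's theorem) remain external black boxes, exactly as the proposition itself is a black box in the paper.
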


We close this section by recording the following standard but repeatedly useful reduction.
\begin{lemma}\label{lem:normal}Let $G$ be a group and let $H$ be a finite-index subgroup of $G$. Then there exists a finite-index subgroup $H'<H$ that is normal in $G$.
\end{lemma}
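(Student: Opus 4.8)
The plan is to exhibit $H'$ as the kernel of the natural action of $G$ on the finite coset space $G/H$. Write $n=[G:H]$ for the (finite) index, and let $G$ act on the set of left cosets $G/H=\{g_1H,\ldots,g_nH\}$ by left translation, $g\cdot(xH)=gxH$. This action is by bijections of the $n$-element set $G/H$, so it defines a homomorphism $\phi:G\to\text{Sym}(G/H)$ into the symmetric group on these $n$ cosets.

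I would then set $H'=\ker\phi$ and verify the two required properties. Normality of $H'$ in $G$ is automatic, since the kernel of any homomorphism is normal. For the finite-index claim, the first isomorphism theorem gives an embedding $G/H'\hookrightarrow\text{Sym}(G/H)$, and since the latter group has order $n!$ we obtain $[G:H']\le n!<\infty$. Finally, to see that $H'<H$, observe that if $g\in H'$ then $g$ fixes every coset; in particular it fixes the coset $eH=H$, so $gH=H$ and hence $g\in H$.

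There is essentially no obstacle here: the argument is entirely standard, and the only points requiring (routine) checking are that the action is well defined and that the kernel is contained in $H$, both of which are immediate from the definitions. An alternative, equally short route would be to take $H'$ to be the normal core $\bigcap_{g\in G}gHg^{-1}$; one would then note that each conjugate $gHg^{-1}$ has index $n$, that there are at most $n$ distinct such conjugates (since $gHg^{-1}$ depends only on the coset of $g$ modulo $N_G(H)\supseteq H$), and that a finite intersection of finite-index subgroups has finite index. I would favour the coset-action proof for its brevity and because it produces a normal subgroup directly rather than requiring a separate appeal to the finiteness of a finite intersection.
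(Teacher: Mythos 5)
Your proof is correct, and your primary route differs (mildly) from the paper's. The paper's own proof is the one-line alternative you mention at the end: it sets $H'=\bigcap_{gH\in G/H}gHg^{-1}$ and asserts that this is well defined, normal and of finite index in $G$ --- the well-definedness point being precisely your observation that $gHg^{-1}$ depends only on the coset $gH$, so that the intersection runs over finitely many subgroups. Your preferred route via the kernel of the left-translation homomorphism $\phi:G\to\mathrm{Sym}(G/H)$ produces exactly the same subgroup, since $g$ fixes every coset $xH$ if and only if $g\in xHx^{-1}$ for every $x\in G$; the difference is in how the verifications are packaged. Your version gets normality and finite index for free from the first isomorphism theorem, with the explicit bound $[G:H']\le n!$, whereas the paper's phrasing implicitly appeals to the fact that a finite intersection of finite-index subgroups has finite index. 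Both arguments are entirely standard; yours has the minor advantages of an explicit index bound and of not needing the finite-intersection fact as a separate ingredient, at the cost of introducing the symmetric group.
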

\begin{proof}
It is easy to verify that the subgroup $H'=\bigcap_{gH\in G/H}gHg^{-1}$ is well defined, normal and of finite index in $G$.
\end{proof}
\section{A Garden of Eden theorem}\label{sec:GofE}
In this section we prove Theorem \ref{thm:GofE.neighbourly}. Throughout this section, we write $e$ for an arbitrary distinguished vertex of the graph $\Gamma$ under consideration, and write $B(n)=B_e(n)$ for the ball of radius $n$ about $e$.
\begin{lemma}[Ceccherini-Silberstein--Coornaert]\label{lem:cs-c.closure}
Let $\Gamma$ be a connected, locally finite graph and let $\tau:V^\Gamma\to V^\Gamma$ be a locally specifiable linear map. Suppose that $f:\Gamma\to V$ is such that for every $n$ there is a function $v_n:\Gamma\to V$ such that $\tau(v_n)$ and $f$ agree on the ball $B(n)$. Then there is a function $w:\Gamma\to V$ such that $f=\tau(w)$.
\end{lemma}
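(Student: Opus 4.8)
The statement is a standard diagonal/compactness argument, exploiting the fact that $V$ is finite-dimensional and $\Gamma$ is locally finite, so each ball $B(n)$ is finite and $V^{B(n)}$ is a finite-dimensional vector space. The plan is to build $w$ as a limit of suitable functions $v_n$, using finite-dimensionality to extract convergent "subsequences" ball by ball.

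First I would set up the following: for each $n$, let $W_n = \{ v|_{B(n+1)} : v \in V^\Gamma,\ \tau(v)|_{B(n)} = f|_{B(n)} \} \subset V^{B(n+1)}$. Here the index shift by $1$ accounts for the fact that $\tau$ being locally specifiable means $\tau(v)(x)$ depends only on the values of $v$ on $\{x\}^+$, so knowing $v$ on $B(n+1)$ determines $\tau(v)$ on $B(n)$; thus $W_n$ is a well-defined affine subspace of the finite-dimensional space $V^{B(n+1)}$, and it is non-empty by hypothesis (take $v = v_n$). The key observation is that restriction gives maps $W_{n+1} \to W_n$ (restrict a function on $B(n+2)$ to $B(n+1)$; the condition $\tau(v)|_{B(n+1)} = f|_{B(n+1)}$ certainly implies $\tau(v)|_{B(n)} = f|_{B(n)}$), so we have an inverse system of non-empty affine subspaces of finite-dimensional spaces.

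The crux is then to show this inverse system has a non-empty inverse limit, i.e.\ there is a coherent sequence $(g_n)$ with $g_n \in W_n$ and $g_{n+1}|_{B(n+1)} = g_n$. This follows from a dimension-stabilisation argument: consider the images $I_{n,m} = \{ g|_{B(n+1)} : g \in W_m \} \subset W_n$ for $m \ge n$ (images of the successive restriction maps). These form a decreasing chain of non-empty affine subspaces of the finite-dimensional space $V^{B(n+1)}$, so the chain stabilises; let $I_n$ be the eventual (non-empty) value. One checks that restriction maps $I_{n+1}$ onto $I_n$ (surjectivity is exactly what stabilisation buys us), so one can choose $g_0 \in I_0$ and inductively lift to $g_{n+1} \in I_{n+1}$ with $g_{n+1}|_{B(n+1)} = g_n$. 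Finally, define $w : \Gamma \to V$ by $w(x) = g_n(x)$ for any $n$ with $x \in B(n+1)$ — this is well-defined by coherence, and since $w|_{B(n+1)} = g_n \in W_n$ we get $\tau(w)|_{B(n)} = f|_{B(n)}$ for every $n$; as $\Gamma = \bigcup_n B(n)$ (using connectedness), $\tau(w) = f$.

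The main obstacle is purely bookkeeping: getting the radius shifts right (one must be careful that "locally specifiable" costs one unit of radius each time $\tau$ is applied, and that $\tau$ is applied only once here, so a single shift suffices) and verifying that the restriction maps between the stabilised spaces $I_n$ are genuinely surjective rather than merely having dense or large image — but over finite-dimensional vector spaces a stabilised decreasing chain of images is automatically surjective onto its limit, so there is no real analytic difficulty. Connectedness of $\Gamma$ is used only to ensure $\bigcup_n B(n) = \Gamma$.
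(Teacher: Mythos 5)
Your proposal is correct and follows essentially the same route as the paper's proof: both form the non-empty affine subspaces of configurations on balls compatible with $f$, observe that the decreasing chain of images under restriction stabilises by finite-dimensionality, deduce surjectivity of the restriction maps between the stabilised limits, and extract a coherent sequence that glues to the desired $w$. The only differences are cosmetic (your indexing by $B(n+1)$ versus the paper's $B(n)\to B(n-1)$, and your definition of $W_n$ via restrictions of global functions rather than as a preimage under the induced map $\tau_n$, which amount to the same thing).
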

\begin{proof}[Proof \textup{\cite[Lemma 3.1]{GofE}}]
For each $n\ge2$, denote by $\tau_n$ the linear map $V^{B(n)}\to V^{B(n-1)}$ induced by $\tau$, and define $L_n$ to be the affine subspace of $V^{B(n)}$ given by $L_n=\tau_n^{-1}(f|_{B(n-1)})$. Note in particular that $v_{n-1}|_{B(n)}\in L_n$, so that $L_n$ is non-empty.

For $n\le m$, the restriction map $V^{B(m)}\to V^{B(n)}$ induces an affine map $\pi_{n,m}:L_m\to L_n$, and so we may define an affine subspace $K_{n,m}\subset L_n$ by $K_{n,m}=\pi_{n,m}(L_m)$. Since \begin{equation}\label{eq:composition.pi}
\pi_{n_1,n_3}=\pi_{n_1,n_2}\circ\pi_{n_2,n_3}
\end{equation}
whenever $n_1\le n_2\le n_3$, for any fixed $n$ we have $K_{n,n}\supset K_{n,n+1}\supset K_{n,n+2}\supset\ldots$, and so the sequence $K_{n,n}, K_{n,n+1}, K_{n,n+2},\ldots$ is a decreasing sequence of non-empty finite-dimensional affine subspaces. This sequence therefore stabilises at some non-empty affine subspace $J_n$ of $L_n$. The identity (\ref{eq:composition.pi}) also implies that whenever $n\le n'\le m$ we have $\pi_{n,n'}(K_{n',m})\subset K_{n,m}$, and so by taking $m$ sufficiently large we see in particular that $\pi_{n,n'}(J_{n'})\subset J_n$. We claim that in fact
\begin{equation}\label{eq:rho.surj}
\pi_{n,n'}(J_{n'})=J_n.
\end{equation}
Indeed, given $u\in J_n$, let $m$ be sufficiently large that $J_n=K_{n,m}$ and $J_{n'}=K_{n',m}$. By definition of $K_{n,m}$, there is some $v\in L_m$ such that $u=\pi_{n,m}(v)$, and then yet another application of (\ref{eq:composition.pi}) then shows that
\begin{equation}\label{eq:u.in.image}
u=\pi_{n,n'}(\pi_{n',m}(v)).
\end{equation}
However, $\pi_{n',m}(v)\in K_{n',m}=J_{n'}$ by definition of $K_{n',m}$, and so (\ref{eq:u.in.image}) implies that $u\in\pi_{n,n'}(J_{n'})$. Since, $u\in J_n$ was arbitrary, this proves (\ref{eq:rho.surj}), as claimed.

We now construct recursively a sequence of functions $w_n\in J_n$, $n\in\N$, as follows. Initially, choose an arbitrary function $w_1\in J_1$. Then, given $w_n\in J_n$, choose $w_{n+1}$ arbitrarily from the set $\pi_{n,n+1}^{-1}(w_n)\subset J_{n+1}$, which is non-empty by (\ref{eq:rho.surj}). Since $w_{n+1}$ and $w_n$ agree on $B(n)$, there exists $w\in V^\Gamma$ such that $w|_{B(n)}=w_n$ for every $n$. However, $\tau(w)|_{B(n-1)}=\tau_n(w_n)=f|_{B(n-1)}$ for every $n$ by construction, and so $\tau(w)=f$.
\end{proof}
\begin{proof}[Proof of Theorem \ref{thm:GofE.neighbourly}]
A locally specifiable map is pre-injective on $\Gamma$ if and only if it is pre-injective on every connected component of $\Gamma$, and surjective on $\Gamma$ if and only if it is surjective on every connected component of $\Gamma$, and so we may assume that $\Gamma$ is connected. This is essentially the same as a reduction to the countable case made by Ceccherini-Silberstein and Coornaert in their original proof of Theorem \ref{thm:GofE} \cite{ind.rest.lca}.

We first prove that surjectivity of $\tau$ implies pre-injectivity of $\tau'$. Given $v,w\in V=\K^r$, write $v\cdot w=\sum_{i=1}^rv_iw_i$, and given $f_1\in V^\Gamma_0$ and $f_2\in V^\Gamma$ write $f_1\cdot f_2=\sum_{x\in\Gamma}(f_1(x)\cdot f_2(x))$. Then if $\tau$ is surjective and $\varphi\in V^\Gamma_0$, we have
\begin{align*}
\tau'(\varphi)=0 & \Rightarrow \tau'(\varphi)\cdot f=0\text{ for every $f\in V^\Gamma$}\\
                          & \Rightarrow \varphi\cdot\tau(f)=0\text{ for every $f\in V^\Gamma$}\\
                          & \Rightarrow \varphi=0
\end{align*}
by surjectivity of $\tau$, and so $\tau'$ is pre-injective.

We now prove the harder direction, namely that pre-injectivity of $\tau'$ implies surjectivity of $\tau$. Lemma \ref{lem:cs-c.closure} means that in order to prove that $\tau$ is surjective it suffices to show that the linear map $\tau_n:V^{B(n)}\to V^{B(n-1)}$ induced by $\tau$ is surjective. Since $\tau_n$ is a map between finite-dimensional spaces, it therefore suffices to show that its dual $\tau_n^\ast:V^{B(n-1)}\to V^{B(n)}$ is injective. However, the matrix of $\tau_n^\ast$ is precisely $\tau'$ restricted to $V^{B(n-1)}$ in domain and $V^{B(n)}$ in range, and so pre-injectectivity of $\tau'$ implies injectivity of $\tau_n^\ast$, which in turn implies surjectivity of $\tau_n$, as required.
\end{proof}
\section{Transpose-harmonic functions and surjectivity of Laplacians}\label{sec:Lap.surj}
In this section we prove Propositions \ref{prop:CSC.Lap.surj} and \ref{prop:grp.Lap.surj}. The proofs essentially consist of a fairly direct applications of Theorem \ref{thm:GofE.neighbourly}.
\begin{definition}[Transpose-harmonic function]Given a Laplacian $\Delta$ on a graph or a group $\Gamma$, we denote by $\Delta'$ the transpose of $\Delta$, and say that a function $h:\Gamma\to\R$ is \emph{transpose harmonic} if $\Delta'h=0$.
\end{definition}

If $\Delta=\Delta_\mu$ is the Laplacian on a group defined by a finitely supported generating probability measure $\mu$ then, writing $\mu'$ for the finitely supported generating probability measure defined by $\mu'(g)=\mu(g^{-1})$ we have
\begin{equation}\label{eq:Delta'mu'}
(\Delta_\mu)'=\Delta_{\mu'}.
\end{equation}
In the case of the Laplacian on a weighted graph, on the other hand, we have the following.
\begin{lemma}\label{lem:transp.harm}
Let $\Delta$ be the Laplacian on a locally finite weighted graph $\Gamma$, and let $f:\Gamma\to\R$ be a function. Then for each $x\in\Gamma$ we have
\[
\Delta'f(x)=f(x)-\sum_{y\sim x}\frac{\omega_{xy}f(y)}{\deg y}
\]
In particular, $f$ is transpose harmonic at $x$ if and only if the function $\hat f:\Gamma\to\R$ defined by
\[
\hat f(y)=\frac{f(y)}{\deg y}
\]
is harmonic at $x$.
\end{lemma}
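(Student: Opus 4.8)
The plan is to unwind the definition of the transpose in terms of the matrix of $\Delta$ with respect to the basis $\{\delta_x\}$ of $\R^\Gamma_0$, read off its entries, transpose them, and then translate the resulting matrix back into a pointwise formula; the ``in particular'' clause is then a one-line computation.

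First I would compute the matrix of $\Delta$. Applying the definition of the Laplacian to the basis vector $\delta_z$ gives
\[
\Delta\delta_z(x)=\delta_z(x)-\frac{1}{\deg x}\sum_{y\sim x}\omega_{xy}\delta_z(y),
\]
which equals $1$ if $x=z$, equals $-\omega_{xz}/\deg x$ if $x\sim z$, and equals $0$ otherwise. With the convention that $\tau(\delta_z)=\sum_x M_{x,z}\delta_x$, this says the $(x,z)$ entry of the matrix of $\Delta$ is $1$ when $x=z$ and $-\omega_{xz}/\deg x$ when $x\sim z$. Since $\Delta$ is locally specifiable and $\Gamma$ is locally finite, each row and each column of this matrix has only finitely many nonzero entries, so its transpose is again the matrix of a locally specifiable linear map, namely $\Delta'$; its $(x,z)$ entry is the $(z,x)$ entry of the matrix of $\Delta$, i.e.\ $1$ when $x=z$ and $-\omega_{zx}/\deg z$ when $z\sim x$. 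Using $\omega_{xy}=\omega_{yx}$ and summing against $f$ yields exactly $\Delta'f(x)=f(x)-\sum_{y\sim x}\omega_{xy}f(y)/\deg y$.

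For the final assertion I would simply observe that
\[
\Delta\hat f(x)=\hat f(x)-\frac{1}{\deg x}\sum_{y\sim x}\omega_{xy}\hat f(y)=\frac{1}{\deg x}\Bigl(f(x)-\sum_{y\sim x}\frac{\omega_{xy}f(y)}{\deg y}\Bigr)=\frac{1}{\deg x}\,\Delta'f(x),
\]
where the middle step uses $\hat f(y)=f(y)/\deg y$. Since $\deg x>0$, vanishing of $\Delta'f$ at $x$ is equivalent to vanishing of $\Delta\hat f$ at $x$.

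I do not expect any serious obstacle here: the proof is entirely bookkeeping. The only points requiring care are fixing the index convention for the matrix of a linear map (so that ``transpose of the matrix'' means what one wants), and noting that transposing this particular infinite matrix is legitimate because local finiteness keeps it finitely supported along every row and column, as set up in Section \ref{sec:background}.
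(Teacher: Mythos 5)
Your proposal is correct and follows essentially the same route as the paper: both read off the entries of the matrix of $\Delta$ in the basis $\{\delta_x\}$, transpose, and translate back into a pointwise formula, with the ``in particular'' clause following from the identity $\Delta\hat f(x)=\frac{1}{\deg x}\Delta'f(x)$. Your version merely spells out the index conventions and the final computation that the paper leaves implicit.
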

\begin{proof}
The matrix of $\Delta$ is not hard to describe. In the row corresponding to the point $x$, the matrix has $1$ in the column corresponding to $x$; it has $-\omega_{xy}/\deg x$ in each column corresponding to a neighbour $y$ of $x$; and every other entry is zero. The $x$ row in the matrix of $\Delta'$ therefore has $1$ in the column corresponding to $x$; for each neighbour $y$ of $x$ it has $-\omega_{xy}/\deg y$ in the column corresponding to $y$; and every other entry is zero. The desired result follows immediately.
\end{proof}
\begin{proof}[Proof of Propositions \ref{prop:CSC.Lap.surj} and \ref{prop:grp.Lap.surj}]
In each case, Theorem \ref{thm:GofE.neighbourly} shows that it is sufficient to prove that a finitely supported transpose-harmonic function is identically zero.

In the case of the Laplacian on an infinite, connected, locally finite weighted graph (as in Proposition \ref{prop:CSC.Lap.surj}), Lemma \ref{lem:transp.harm} implies that the required statement is equivalent to showing that a finitely supported harmonic function is identically zero, since $\hat f(x)=0$ if and only if $f(x)=0$.

In the case of the Laplacian defined by a finitely supported generating probability measure $\mu$ (as in Proposition \ref{prop:grp.Lap.surj}), (\ref{eq:Delta'mu'}) implies that the required statement is equivalent to showing that a finitely supported $\mu'$-harmonic function is identically zero.
%
%Finally, in the case of the Laplacian on an infinite, connected, symmetrically weighted locally finite graph $\Gamma$ (as in Proposition \ref{prop:Lap.surj.tech}), the required statement is equivalent to showing that a finitely supported harmonic function is zero, since $\Delta'=\Delta$.

In each case, the required statement follows from the maximum principle (Lemma \ref{lem:max.princ}), and so the propositions are both proved.
\end{proof}
\begin{remarks}\label{rem:Lap.surj.cases}
The proof just presented is modelled on the amenable case of the proof of \cite[Theorem 1.1]{Lap.surj}, which is Proposition \ref{prop:grp.Lap.surj} in the special case that $\mu$ is uniform on a finite symmetric generating set. The proof of \cite[Theorem 1.1]{Lap.surj} in the amenable case uses Theorem \ref{thm:GofE} in place of Theorem \ref{thm:GofE.neighbourly}. The fact that Theorem \ref{thm:GofE} does not necessarily hold in non-amenable groups forces the authors to use a different argument in that case, in particular relying on a spectral criterion for amenability of finitely generated groups due to Kesten and Day. Our use of Theorem \ref{thm:GofE.neighbourly} allows us to avoid this complication.

Our arguments would also prove Proposition \ref{prop:CSC.Lap.surj} for an \emph{asymetrically weighted graph}, which is to say if we were to drop the assumption that $\omega_{xy}=\omega_{yx}$, provided it satisfied $\sum_{y\sim x}\omega_{xy}=\sum_{y\sim x}\omega_{yx}$ for every $x$.
\end{remarks}
%\end{proof}
%
%
%
%
%
%
%
%
%
\section{A duality result for harmonic functions}\label{sec:duality.result}
The aim of this section is to prove the following result.
\begin{prop}[Duality result for harmonic functions]\label{prop:dual.harmonic}Let $\Gamma$ be an infinite, connected, locally finite weighted graph, and let $X$ be a finite subset of $\Gamma$. Then the following statements are equivalent.
\begin{enumerate}
\renewcommand{\labelenumi}{(\arabic{enumi})}
\item Every function $f:X\to\R$ extends to a harmonic function on all of $\Gamma$.
\item There is no non-zero finitely supported function on $\Gamma$ that is harmonic on $\Gamma\backslash X$.
\end{enumerate}
\end{prop}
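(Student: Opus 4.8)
The plan is to deduce the equivalence from the Garden of Eden theorem for locally specifiable linear maps (Theorem~\ref{thm:GofE.neighbourly}), applied not to $\Delta_\Gamma$ itself but to a ``cut-down'' version of it. Let $P\colon\R^\Gamma\to\R^\Gamma$ be the operator such that $Pf$ agrees with $f$ on $\Gamma\setminus X$ and vanishes on $X$; this is a (trivially) locally specifiable linear map whose matrix is diagonal with entries in $\{0,1\}$, so $P'=P$. Set $\tau=\Delta_\Gamma\circ P$, i.e.\ $\tau(f)=\Delta_\Gamma(Pf)$; since $\tau(f)(x)$ depends only on $f(x)$ and on $f(y)$ for $y\sim x$, this is again a locally specifiable linear map on $\R^\Gamma$. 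By Theorem~\ref{thm:GofE.neighbourly} it then suffices to prove: (i) $\tau$ is surjective if and only if (1) holds; and (ii) $\tau'$ is pre-injective if and only if (2) holds.

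For (i), note that $f\mapsto Pf$ maps $\R^\Gamma$ onto the subspace $\{h\in\R^\Gamma:h|_X=0\}$, so $\tau$ is surjective exactly when $\Delta_\Gamma$ maps this subspace onto all of $\R^\Gamma$. If that holds, then given $\psi:X\to\R$ I would extend $\psi$ arbitrarily to some $\tilde\psi\in\R^\Gamma$, solve $\Delta_\Gamma h_0=\Delta_\Gamma\tilde\psi$ with $h_0|_X=0$, and observe that $\tilde\psi-h_0$ is a harmonic function on $\Gamma$ extending $\psi$; this gives (1). Conversely, assuming (1), for $g\in\R^\Gamma$ I would first use surjectivity of the Laplacian on an infinite connected locally finite weighted graph (Proposition~\ref{prop:CSC.Lap.surj}) to find $h_0$ with $\Delta_\Gamma h_0=g$, and then add a harmonic function taking the values $-h_0|_X$ on $X$ (which exists by (1)) to obtain $h$ with $h|_X=0$ and $\Delta_\Gamma h=g$; hence $\tau$ is surjective.

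For (ii), transposing the matrix of $\tau=\Delta_\Gamma\circ P$ and using that, in the conventions of Section~\ref{sec:background}, matrix transposition realises the transpose map while matrix multiplication realises composition, together with $P'=P$, yields $\tau'=P\circ\Delta_\Gamma'$; thus $\tau'g$ agrees with $\Delta_\Gamma'g$ on $\Gamma\setminus X$ and vanishes on $X$, so $\tau'g=0$ if and only if $g$ is transpose-harmonic on $\Gamma\setminus X$. Consequently $\tau'$ is pre-injective if and only if the only finitely supported function on $\Gamma$ that is transpose-harmonic on $\Gamma\setminus X$ is $0$. Finally, Lemma~\ref{lem:transp.harm} shows that $g\mapsto\hat g$, with $\hat g(y)=g(y)/\deg y$, is a support-preserving bijection of $\R^\Gamma$ carrying the functions transpose-harmonic on $\Gamma\setminus X$ onto those harmonic on $\Gamma\setminus X$; so this condition is precisely (2), and combining with (i) and Theorem~\ref{thm:GofE.neighbourly} completes the proof.

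The parts I expect to require the most care are the identification of $\tau'$ in (ii) (keeping track of the matrix conventions of Section~\ref{sec:background}) and the passage between harmonic and transpose-harmonic functions via Lemma~\ref{lem:transp.harm}; step (i) is a routine unwinding of definitions together with a single appeal to Proposition~\ref{prop:CSC.Lap.surj}, after which the equivalence of (1) and (2) is immediate from Theorem~\ref{thm:GofE.neighbourly}.
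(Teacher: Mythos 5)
Your proof is correct and follows essentially the same route as the paper: your $\tau=\Delta_\Gamma\circ P$ is exactly the ``matrix of $\Delta$ with the columns indexed by $X$ zeroed out'' to which the paper applies Theorem~\ref{thm:GofE.neighbourly}, your step (i) merges the paper's Lemma~\ref{lem:dual.harm.2} with the appeal to Proposition~\ref{prop:CSC.Lap.surj}, and your step (ii) is the paper's Lemma~\ref{lem:dual.harm.1}, including the same use of Lemma~\ref{lem:transp.harm} to pass between harmonic and transpose-harmonic functions. The only difference is organisational, so no further comment is needed.
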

\begin{remark}
Proposition \ref{prop:dual.harmonic} fails in a finite graph, or a graph with a finite connected component, since statement (2) never holds in a finite graph, but statement (1) holds in an arbitrary graph when $X$ is a singleton. See Remark \ref{rem:false.for.finite} for details on where the proof breaks down.
\end{remark}
Given a subset $Y$ of $\Gamma$, we denote by $\R^\Gamma_Y$ the subspace of $\R^\Gamma$ consisting of those functions supported on $Y$. Proposition \ref{prop:dual.harmonic} then follows from combining the following two lemmas with Proposition \ref{prop:CSC.Lap.surj}, which implies that $\Delta(\R^\Gamma)=\R^\Gamma$.
\begin{lemma}\label{lem:dual.harm.1}
Let $\Gamma$ be a locally finite weighted graph, and let $X\subset\Gamma$ be a finite set. Then the following statements are equivalent.
\begin{enumerate}
\renewcommand{\labelenumi}{(\arabic{enumi})}
\item We have $\Delta(\R^\Gamma_{\Gamma\backslash X})=\R^\Gamma$.
\item There is no non-zero finitely supported function on $\Gamma$ that is harmonic on $\Gamma\backslash X$.
\end{enumerate}
\end{lemma}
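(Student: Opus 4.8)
The plan is to recognise statement (2) as exactly the assertion that the transpose of $\Delta$, restricted appropriately, is pre-injective, and then to read off statement (1) from Theorem \ref{thm:GofE.neighbourly} (or rather a relative version of it). More precisely, consider the locally specifiable linear map $\tau : \R^\Gamma \to \R^\Gamma$ obtained by composing $\Delta$ with the projection onto $\R^\Gamma_{\Gamma\backslash X}$ in the domain; equivalently, work with the composite map $\R^\Gamma_{\Gamma\backslash X} \hookrightarrow \R^\Gamma \xrightarrow{\Delta} \R^\Gamma$. Statement (1) is precisely the surjectivity of this map. Its transpose is the map $f \mapsto (\Delta' f)|_{\Gamma\backslash X}$, i.e. $\Delta'$ followed by restriction to $\Gamma\backslash X$; a finitely supported $f$ is in the kernel of this transpose exactly when $\Delta'f$ vanishes off $X$, i.e. when $f$ is transpose harmonic on $\Gamma\backslash X$. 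So pre-injectivity of the transpose is exactly statement (2) phrased for $\Delta'$ instead of $\Delta$. Since the matrix of $\Delta$ and the matrix of $\Delta'$ differ only by the re-weighting of Lemma \ref{lem:transp.harm} (or, for a symmetric graph, are literally equal up to the $\hat f(y)=f(y)/\deg y$ substitution), a finitely supported function is transpose harmonic on $\Gamma\backslash X$ if and only if the associated $\hat f$ is harmonic on $\Gamma\backslash X$, and $\hat f$ has the same support as $f$; hence statement (2) for $\Delta'$ is equivalent to statement (2) as stated.

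Concretely, the steps I would carry out are: (i) set up the restricted map $\tau$ and identify its surjectivity with statement (1); (ii) compute its transpose and identify the kernel of the transpose on $\R^\Gamma_0$ with the space of finitely supported transpose-harmonic-on-$\Gamma\backslash X$ functions; (iii) apply Theorem \ref{thm:GofE.neighbourly} to conclude that (1) holds iff this kernel is trivial; (iv) use Lemma \ref{lem:transp.harm} and the observation that $f\mapsto\hat f$ is a support-preserving bijection to translate "no non-zero finitely supported transpose-harmonic-on-$\Gamma\backslash X$ function" into "no non-zero finitely supported harmonic-on-$\Gamma\backslash X$ function", which is statement (2). One small point to check in step (i)--(ii) is that Theorem \ref{thm:GofE.neighbourly} as stated concerns maps $V^\Gamma\to V^\Gamma$, so I should either extend $\tau$ to all of $\R^\Gamma$ (harmless, since precomposing with a coordinate projection keeps the map locally specifiable and only enlarges the image by nothing relevant — actually I must be slightly careful here: extending the domain changes surjectivity, so the cleaner route is to note that $\Delta(\R^\Gamma_{\Gamma\backslash X}) = \tau(\R^\Gamma)$ where $\tau = \Delta\circ\pi$ and $\pi$ is the projection killing the (finitely many) coordinates in $X$, so $\tau:\R^\Gamma\to\R^\Gamma$ is genuinely locally specifiable and its image is exactly the set in statement (1)).

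The main obstacle, and the only genuinely non-trivial point, is verifying that the finite-dimensional reduction inside the proof of Theorem \ref{thm:GofE.neighbourly} still goes through for the modified map $\tau = \Delta\circ\pi$: one needs the induced maps $\tau_n : V^{B(n)}\to V^{B(n-1)}$ to behave well and one needs Lemma \ref{lem:cs-c.closure} to apply to $\tau$ (it does, since $\tau$ is still a locally specifiable linear map on a connected locally finite graph — note that $X$ being finite plays no role in that lemma). Everything else is bookkeeping about transposes and the support-preserving substitution $f\mapsto\hat f$. I would therefore present the proof as: "Apply Theorem \ref{thm:GofE.neighbourly} to the locally specifiable linear map $\tau = \Delta\circ\pi$, where $\pi:\R^\Gamma\to\R^\Gamma_{\Gamma\backslash X}$ is the coordinate projection; then identify $\ker(\tau')\cap\R^\Gamma_0$ using (\ref{eq:Delta'mu'}) in the group case and Lemma \ref{lem:transp.harm} in the weighted-graph case", and leave the routine identifications to the reader.
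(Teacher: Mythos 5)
Your proposal is correct and follows essentially the same route as the paper: the paper likewise applies Theorem \ref{thm:GofE.neighbourly} to the matrix of $\Delta$ with the columns indexed by $X$ zeroed out (your $\Delta\circ\pi$), identifies the transpose of that matrix as $\Delta'$ with the corresponding rows zeroed out, and uses Lemma \ref{lem:transp.harm} together with the support-preserving substitution $f\mapsto\hat f$ to pass between transpose-harmonicity and harmonicity on $\Gamma\backslash X$. Your worry about re-running the finite-dimensional reduction is unnecessary, as you in effect note: the zeroed-out map is still locally specifiable, so the theorem applies as a black box.
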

\begin{lemma}\label{lem:dual.harm.2}
Let $\Gamma$ be a locally finite weighted graph, and let $X\subset\Gamma$ be a finite set. Then the following statements are equivalent.
\begin{enumerate}
\renewcommand{\labelenumi}{(\arabic{enumi})}
\item We have $\Delta(\R^\Gamma_{\Gamma\backslash X})=\Delta(\R^\Gamma)$.
\item Every function $f:X\to\R$ extends to a harmonic function on all of $\Gamma$.
\end{enumerate}
\end{lemma}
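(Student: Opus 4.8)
The plan is to prove Lemma \ref{lem:dual.harm.2} by analysing the linear map $\Delta$ on the finite-dimensional space $\R^\Gamma / \R^\Gamma_{\Gamma\backslash X} \cong \R^X$, or more concretely by tracking what obstructions prevent a prescribed boundary-type datum on $X$ from extending harmonically. First I would observe that $\Delta(\R^\Gamma_{\Gamma\backslash X}) \subseteq \Delta(\R^\Gamma)$ always holds, so statement (1) is the assertion that every $g \in \Delta(\R^\Gamma)$ can already be realised as $\Delta w$ with $w$ vanishing on $X$. Given an arbitrary $f : X \to \R$, extend it to any $\tilde f \in \R^\Gamma$ (say by zero outside $X$); then $\Delta \tilde f \in \Delta(\R^\Gamma)$, and if (1) holds we may write $\Delta \tilde f = \Delta w$ with $w|_X = 0$. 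Setting $h = \tilde f - w$ gives a function on $\Gamma$ with $\Delta h = 0$ everywhere and $h|_X = \tilde f|_X = f$, which is exactly (2). This handles the direction (1) $\Rightarrow$ (2).

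For the converse, (2) $\Rightarrow$ (1), I would start from an arbitrary $g = \Delta u \in \Delta(\R^\Gamma)$ and try to modify $u$ off $X$ so as to kill its values on $X$ without changing $\Delta u$. The natural move is to find a harmonic function $h$ on all of $\Gamma$ with $h|_X = u|_X$, which (2) provides; then $\Delta h = 0$, so $\Delta(u - h) = g$, and $w := u - h$ vanishes on $X$, giving $g \in \Delta(\R^\Gamma_{\Gamma\backslash X})$. Thus in fact both directions are essentially symmetric once one notices that the content of (2) is precisely the ability to subtract off a global harmonic function matching any prescribed values on the finite set $X$. I should be careful that $u - h$ really lies in $\R^\Gamma$ (it does, being a difference of functions on $\Gamma$) and that no finiteness of $w$ is required here, since $\R^\Gamma_{\Gamma\backslash X}$ allows arbitrary support inside $\Gamma\backslash X$; the only constraint is vanishing on $X$.

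The step I expect to be the genuine (if modest) obstacle is making sure the bookkeeping of the two quotients $\R^\Gamma/\R^\Gamma_{\Gamma\backslash X}$ in domain versus the image $\Delta(\R^\Gamma)$ is handled cleanly — in particular, verifying that $\Delta$ descends to a well-defined map on these quotients is not needed if one argues directly with representatives as above, so the cleanest exposition avoids quotients entirely and just manipulates explicit functions $u, h, w$. One subtlety worth flagging, and the reason the lemma can fail in a finite graph (cf.\ the remark after Proposition \ref{prop:dual.harmonic}), is that in the infinite connected case one has the freedom, via Lemma \ref{cor:boundary} or directly, to produce global harmonic functions with prescribed values on a finite set only when such extensions exist — but note that here we are \emph{given} (2) as a hypothesis precisely to guarantee this, so no existence result is invoked in proving the equivalence itself; the infiniteness enters only when Proposition \ref{prop:dual.harmonic} combines this lemma with Proposition \ref{prop:CSC.Lap.surj} to get $\Delta(\R^\Gamma) = \R^\Gamma$. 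So the proof of Lemma \ref{lem:dual.harm.2} as stated is a short direct argument, with the main care going into stating the representative-chasing precisely and not accidentally requiring finite support of the wrong function.
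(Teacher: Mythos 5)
Your proof is correct and follows essentially the same route as the paper: for (1)$\Rightarrow$(2) both arguments extend $f$ by zero and subtract a correction supported off $X$, and for (2)$\Rightarrow$(1) both subtract a global harmonic function matching prescribed values on $X$. The only (cosmetic) difference is that in the converse direction the paper reduces to the basis functions $\delta_x$, $x\in X$, and harmonically extends the indicator of each $x$, whereas you apply hypothesis (2) directly to $u|_X$ for an arbitrary preimage $u$; both are equally valid.
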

\begin{proof}[Proof of Lemma \ref{lem:dual.harm.1}]
First note that by Lemma \ref{lem:transp.harm} and the fact that for every function $f:\Gamma\to\R$ we have $\hat f(x)=0$ if and only if $f(x)=0$, statement (2) of Lemma \ref{lem:dual.harm.1} is equivalent to the following statement.
\begin{itemize}
\item[(2$'$)]There is no non-zero finitely supported function on $\Gamma$ that is transpose harmonic on $\Gamma\backslash X$.
\end{itemize}
Abusing notation slightly, we identify the operator $\Delta$ with its (possibly infinite) matrix. Statement (1) of the lemma is then equivalent to saying that the matrix $\Delta_{\Gamma\backslash X}$ obtained by replacing the columns of $\Delta$ corresponding to the elements of $X$ with columns of zeros is surjective.

Statement (2$'$), on the other hand, means that if $f\in\R^\Gamma_0$ is non-zero then $\Delta'(f)$ cannot be zero on $\Gamma\backslash X$. Put another way, this says that even if we replace the rows of $\Delta'$ corresponding to the elements of $X$ with columns of zeros then $\Delta'$ will be pre-injective.

However, $\Delta'$ with the rows corresponding to $X$ replaced by zeros is equal to the transpose of $\Delta_{\Gamma\backslash X}$. Replacing some entries of $\Delta$ by zeros does not change the fact that it is a locally specifiable map, and so the equivalence of (1) and (2$'$) therefore follows from Theorem \ref{thm:GofE.neighbourly}.
\end{proof}
\begin{proof}[Proof of Lemma \ref{lem:dual.harm.2}]
We first prove that (1) implies (2). Let $f:X\to\R$ be arbitrary, and define $\overline{f}$ to be the function on $\Gamma$ that agrees with $f$ on $X$ and takes the value $0$ elsewhere. By (1) we can find a function $h$ supported on $\Gamma\backslash X$ such that $\Delta(h)=\Delta(-\overline{f})$. The function $h+\overline{f}$ is then a harmonic extension of $f$, and so (2) is proved.

Conversely, note that in order to prove (1) it suffices to prove that for every $x\in X$ the function $\Delta(\delta_x)$ lies in the space $\Delta(\R^\Gamma_{\Gamma\backslash X})$. However, if we assume (2) then in particular we have a harmonic extension $h$ of the function $f:X\to\R$ taking the value $1$ at $x$ and $0$ on $X\backslash\{x\}$, and it immediately follows that $\Delta(\delta_x)=\Delta(-h|_{\Gamma\backslash X})$.
\end{proof}
\begin{remark}\label{rem:false.for.finite} In the case that $\Gamma$ has a finite connected component, Proposition \ref{prop:CSC.Lap.surj} no longer holds, and so Lemmas \ref{lem:dual.harm.1} and \ref{lem:dual.harm.2} no longer combine to prove Proposition \ref{prop:dual.harmonic}.
\end{remark}
\section{Existence of non-constant harmonic functions on graphs}\label{sec:exist.of.harm.fn}
In this section we use Proposition \ref{prop:dual.harmonic} to prove the following result, which generalises Proposition \ref{prop:trofimov} in the transient case.
\begin{prop}\label{prop:harm.exists}
Let $\Gamma$ be a locally finite vertex-transitive weighted graph, and suppose that the random walk on $\Gamma$ is transient. Suppose that $K$ is finitely generated subgroup of $\Aut\Gamma$ such that the orbit $Ke$ is infinite. Then there exists a harmonic function on $\Gamma$ that is not constant on $Ke$.
\end{prop}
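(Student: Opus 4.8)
The plan is to use Proposition \ref{prop:dual.harmonic} (the duality result) to reduce the existence of a harmonic function non-constant on $Ke$ to a purely combinatorial statement about finitely supported functions, and then to rule out the obstruction using the transience of the random walk together with the vertex-transitivity of $\Gamma$. Concretely, pick a vertex $x\in Ke$ with $x\neq e$ — possible since $Ke$ is infinite, hence has at least two elements — and apply Proposition \ref{prop:dual.harmonic} with the finite set $X=\{e,x\}$. If statement (1) of that proposition holds, then in particular the function on $X$ sending $e\mapsto 0$, $x\mapsto 1$ extends to a harmonic function $h$ on all of $\Gamma$, and this $h$ is manifestly non-constant on $Ke$, as desired. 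So it suffices to verify statement (2): that there is no non-zero finitely supported function on $\Gamma$ that is harmonic on $\Gamma\setminus\{e,x\}$.

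Suppose for contradiction that $\varphi\colon\Gamma\to\R$ is finitely supported, non-zero, and harmonic on $\Gamma\setminus\{e,x\}$. The key point is that such a $\varphi$ can be represented probabilistically: for any vertex $z\notin\{e,x\}$, harmonicity at $z$ means $\varphi(z)$ equals the $\mu$-average of $\varphi$ over the neighbours of $z$, so running the random walk from $z$ and stopping at the first hit of $\{e,x\}$ — which happens almost surely in finite time on the (finite) support of $\varphi$, or more carefully: on $\Gamma\setminus(\{e,x\}\cup\supp\varphi)$ the function $\varphi$ is harmonic and vanishes on a neighbourhood of infinity relative to that set, forcing it to vanish there too by the maximum principle applied to connected pieces — one sees that $\varphi$ is supported on a set from which the walk hits $\{e,x\}$, and $\varphi(z)=\Prob_z[X_{T}=e]\,\varphi(e)+\Prob_z[X_{T}=x]\,\varphi(x)$ where $T=T_{\{e,x\}}$. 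Thus $\varphi$ is completely determined by the two values $\varphi(e)$ and $\varphi(x)$, and everything reduces to the constraints imposed by harmonicity *failing* to be required at $e$ and $x$ — i.e. to an identity relating $\Delta\varphi(e)$, $\Delta\varphi(x)$, and a finite sum. The cleanest route is to compute $\sum_{z\in\Gamma}\varphi(z)$-type quantities or, better, to pair $\varphi$ against the Green-type functions for the transient walk: let $g_e(z)=\E_z[R_e]$ (finite by transience and Lemma \ref{lem:no.visits}) and similarly $g_x$. Since $\Delta' g_e = \delta_e$ in the appropriate normalisation and $\varphi$ is supported away from where it matters, the pairing $\langle \varphi,\Delta g_e\rangle$ can be evaluated two ways, yielding a linear relation; doing this with both $g_e$ and $g_x$ gives a $2\times 2$ system whose matrix involves hitting probabilities between $e$ and $x$, and using Proposition \ref{prop:vert.trans.symm} (symmetry of hitting times under vertex-transitivity) one shows this matrix is non-degenerate, forcing $\varphi(e)=\varphi(x)=0$ and hence $\varphi\equiv 0$, the contradiction.

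I would organise the argument as follows. First, reduce via Proposition \ref{prop:dual.harmonic} to disproving statement (2) for $X=\{e,x\}$. Second, show that any finitely supported $\varphi$ harmonic off $\{e,x\}$ vanishes outside the set of vertices from which the walk reaches $\{e,x\}$ with positive probability — in the transient setting one must be slightly careful here, using the maximum principle (Lemma \ref{lem:max.princ}) on $\Gamma\setminus(\{e,x\}\cup\supp\varphi)$ — and deduce the representation formula $\varphi(z)=\Prob_z[X_T=e]\varphi(e)+\Prob_z[X_T=x]\varphi(x)$ on that set. Third, extract from harmonicity-not-being-imposed-at-$e,x$ the two scalar identities (equivalently, evaluate $\Delta\varphi$ at $e$ and at $x$ and sum against the Green function), obtaining a homogeneous linear system in $(\varphi(e),\varphi(x))$. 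Fourth, invoke transience to guarantee the Green functions are finite and the relevant return/hitting quantities are well-defined, and invoke vertex-transitivity (via Proposition \ref{prop:vert.trans.symm} and Lemma \ref{lem:off-diag}) to show the system's determinant is non-zero, concluding $\varphi=0$. Finally, note the role of $K$: strictly we only used that $Ke$ has an element $x\neq e$, so the statement as phrased follows, and the finite generation of $K$ is not actually needed for this qualitative conclusion — it is listed because Proposition \ref{prop:harm.exists} is invoked elsewhere (in the proof of Theorem \ref{con:harm.fin.dim}) in a form where one wants a function non-constant on the whole orbit, which one then obtains by averaging $h$ over a suitable finite piece or by iterating the construction over pairs.

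The main obstacle I anticipate is the second and fourth steps together: making the probabilistic representation of $\varphi$ rigorous in the transient case (where "the walk eventually leaves any finite set" does *not* immediately give that it *hits* the specific pair $\{e,x\}$), and then showing non-degeneracy of the resulting linear system. For the representation, the fix is that $\varphi$ has finite support, so $\varphi$ is harmonic and eventually zero on each unbounded component of the complement of $\{e,x\}\cup\supp\varphi$, and the maximum principle pins it to zero there — but one must check connectivity is handled correctly. For non-degeneracy, the natural invariant is $\Prob_e[T_x<\infty]$ versus $\Prob_x[T_e<\infty]$, which Proposition \ref{prop:vert.trans.symm} tells us are equal; combined with the strict inequality $\Prob_e[T_x<\infty]<1$ (from transience) one gets that the $2\times 2$ matrix $\bigl(\begin{smallmatrix}1 & p\\ p & 1\end{smallmatrix}\bigr)$-type object, suitably interpreted, is invertible. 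If the Green-function pairing turns out to be fiddly, an alternative is to run the argument with $X=\{e\}$ replaced by a growing sequence and use a compactness/normal-families argument as in Lemma \ref{lem:cs-c.closure}, but I expect the direct duality approach above to be cleanest.
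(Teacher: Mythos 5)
Your opening reduction is the right one and matches the paper: assume $\Gamma$ connected and use Proposition \ref{prop:dual.harmonic} to reduce to finding a pair of points in $Ke$ off which no non-zero finitely supported function can be harmonic. But the step where you take an \emph{arbitrary} $x\in Ke$ with $x\ne e$ and claim that statement (2) of Proposition \ref{prop:dual.harmonic} holds for $X=\{e,x\}$ is false, and the paper's own remark following Proposition \ref{prop:harm.exists} supplies a counterexample. Take $G=\Z^3\oplus\Z/2\Z$ with the generating set listed there, $K=G$ acting by left multiplication (so $Ke=G$ is infinite), $e=(0,0)$ and $x=(0,1)$. Every harmonic function on that Cayley graph is constant on $\{e,x\}$, so not every function on $\{e,x\}$ extends harmonically, and hence by Proposition \ref{prop:dual.harmonic} itself there \emph{does} exist a non-zero finitely supported function harmonic off $\{e,x\}$ --- even though the graph is vertex-transitive and transient and $x$ lies in the infinite orbit $Ke$. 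So the pair must be chosen adaptively; this is precisely why the paper's proof splits into two cases according to whether some $v\in K$ has $\{v^ne\}$ all distinct, and uses the group structure of $K$ (including, in the second case, a finite generating set $R$ of $K$) to locate a good pair. Your closing assertion that finite generation of $K$ is not needed is therefore also unsupported.

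The reason your Green-function pairing cannot rescue the argument is that it produces an underdetermined system, not a homogeneous one. Writing $a=\Delta\varphi(e)$, $b=\Delta\varphi(x)$ and $G(z,w)=\E_z[R_w]$, the identity $\langle\Delta\varphi,g_e\rangle=\langle\varphi,\Delta g_e\rangle$ gives $\varphi(e)=aG(e,e)+bG(x,e)$ and similarly $\varphi(x)=aG(e,x)+bG(x,x)$: two equations in the four unknowns $\varphi(e),\varphi(x),a,b$. Invertibility of the matrix $\bigl(\begin{smallmatrix}G(e,e)&G(x,e)\\G(e,x)&G(x,x)\end{smallmatrix}\bigr)$ (which does follow from transience and Proposition \ref{prop:vert.trans.symm}) only says that $(a,b)$ determines $(\varphi(e),\varphi(x))$; it forces nothing to vanish. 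The genuine obstruction lives at infinity: one has $\varphi=aG(\cdot,e)+bG(\cdot,x)$, so $\varphi$ is finitely supported exactly when the ratio $G(z,e)/G(z,x)$, equivalently $\Prob_z[\,T_e<T_x\mid\min\{T_e,T_x\}<\infty\,]$, is constant outside a finite set. This is the content of the paper's Lemma \ref{lem:crit.for.no.finite.sup.harm}, and the substance of the proof is to exhibit, for a well-chosen pair, two sequences of points tending to infinity along which that conditional probability is respectively greater than and less than $1/2$ (via Lemma \ref{lem:hits.one.first} and Proposition \ref{prop:trans.prob.to.zero}). That step --- choosing the pair and producing the witnesses at infinity using the action of $K$ --- is absent from your proposal, and it is the heart of the proof.
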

\begin{remarks}
Proposition \ref{prop:harm.exists} applies in particular to groups with symmetric, finitely supported generating probability measures, since they can be realised as vertex-transitive weighted graphs by considering their weighted Cayley graphs.

Proposition \ref{prop:harm.exists} does not necessarily hold if $K$ has finite orbits. For example, if $G=\Z^3\oplus\Z/2\Z$ and $S=\{(\pm e_1,0),(\pm e_1,1),(\pm e_2,0),(\pm e_2,1),(\pm e_3,0),(\pm e_3,1),(0,1)\}$ and $\Gamma$ is the Cayley graph $(G,S)$, then every harmonic function on $G$ is constant on the orbits of $\Z/2\Z$.
\end{remarks}
Let us note how Proposition \ref{prop:harm.exists} implies the transient case of Proposition \ref{prop:trofimov}. Proposition \ref{prop:trofimov} is trivial when $\Gamma$ is not connected; when $\Gamma$ is connected and transient it follows immediately from Proposition \ref{prop:harm.exists} and the following lemma.
\begin{lemma}\label{lem:fin.gen}
Let $\Gamma$ be a connected, locally finite, vertex-transitive weighted graph. Then there is a finitely generated subgroup $G<\Aut\Gamma$ that is transitive.
\end{lemma}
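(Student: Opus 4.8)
The plan is to produce a finitely generated subgroup of $\Aut\Gamma$ that already acts transitively on the vertex set, starting from the full (vertex-transitive) automorphism group. First I would fix the distinguished vertex $e$ and, for each neighbour $y\sim e$, choose an automorphism $g_y\in\Aut\Gamma$ with $g_y(e)=y$; this is possible precisely because $\Gamma$ is vertex-transitive. Since $\Gamma$ is locally finite, there are only finitely many such $y$, so the set $S=\{g_y:y\sim e\}$ is finite. Let $G=\langle S\rangle\le\Aut\Gamma$; this is finitely generated by construction, and it remains only to check that $G$ acts transitively.

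For transitivity, the key observation is that the orbit $Ge$ is \emph{connected} as a subset of $\Gamma$: if $x\in Ge$, say $x=g(e)$ with $g\in G$, then for every neighbour $z\sim x$ we have $g^{-1}(z)\sim g^{-1}(x)=e$, so $g^{-1}(z)=y$ for some neighbour $y$ of $e$, whence $z=g(y)=g\,g_y(e)\in Ge$ since $g\,g_y\in G$. Thus $Ge$ contains, with each of its points, all neighbours of that point, i.e. $(Ge)^+ = Ge$. Because $\Gamma$ is connected and $Ge$ is non-empty, this forces $Ge=\Gamma$, which is exactly the statement that $G$ is transitive.

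I expect no serious obstacle here: the only points needing care are that local finiteness is genuinely used to keep $S$ finite, and that connectedness of $\Gamma$ is used to turn the ``closed under taking neighbours'' property into ``equals all of $\Gamma$''. One small subtlety worth spelling out is that the generating automorphisms $g_y$ may be chosen arbitrarily among those carrying $e$ to $y$; no coherence between different choices is required, since the argument only needs, for \emph{each} vertex in the orbit and \emph{each} of its neighbours, \emph{some} element of $G$ realising the corresponding move, and $g\,g_y$ supplies one. This completes the plan.
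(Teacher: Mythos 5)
Your proposal is correct and follows essentially the same route as the paper: generate $G$ by automorphisms $g_y$ carrying $e$ to each of its finitely many neighbours, then show the orbit $Ge$ is closed under passing to neighbours (via $z=g\,g_{y}(e)$, exactly the paper's $x=h^{-1}g_{hx}e$) and invoke connectedness. No gaps.
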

\begin{proof}
Let $e\in\Gamma$. By the transitivity of $\Aut\Gamma$, for each neighbour $y$ of $e$ there is an automorphism $g_y$ of $\Gamma$ such that $g_ye=y$. We claim that $G:=\langle\, g_y:y\sim e\,\rangle$ is transitive; since $\Gamma$ is locally finite, this is sufficient to prove the lemma.

Since $\Gamma$ is connected, it suffices to show that if $z\in Ge$ and $x\sim z$ then $x\in Ge$. To see this, note that for $z\in Ge$ there exists $h\in G$ such that $e=hz$. However, this means that we have $hx\sim e$, and so $x=h^{-1}g_{hx}e\in Ge$, as desired, and the lemma is proved.
\end{proof}

We also recover from Proposition \ref{prop:harm.exists} the following well-known fact.
\begin{corollary}\label{cor:harm.exists}
Let $G$ be an infinite group with a symmetric, finitely supported generating probability measure $\mu$. Then $(G,\mu)$ admits a non-constant harmonic function.
\end{corollary}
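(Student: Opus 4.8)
The plan is to deduce Corollary~\ref{cor:harm.exists} directly from Proposition~\ref{prop:harm.exists}, splitting into the transient and recurrent cases exactly as in the discussion preceding the corollary. First I would observe that the weighted Cayley graph $\Gamma=(G,\mu)$ is an infinite, locally finite, vertex-transitive weighted graph: left translations by elements of $G$ act as weight-preserving automorphisms of $\Gamma$ acting transitively on vertices, local finiteness is immediate from $\supp\mu$ being finite, and infiniteness is the hypothesis that $G$ is infinite. In particular, taking $K=G$ acting by left translation, $K$ is finitely generated (by $\supp\mu$) and its orbit $Ke=G$ is infinite.

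In the transient case, Proposition~\ref{prop:harm.exists} applies verbatim and produces a harmonic function on $\Gamma$ that is not constant on $Ke=G$; since a function non-constant on all of $G$ is certainly non-constant, this is the desired non-constant harmonic function.

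In the recurrent case, I would invoke Varopoulos's theorem (Proposition~\ref{prop:varopoulos}): since $G$ is infinite and recurrent, it has a finite-index subgroup isomorphic to $\Z$ or $\Z^2$. By Lemma~\ref{lem:normal} we may take this subgroup to be normal of finite index, and hence there is a surjective homomorphism $\phi$ from $G$ onto a finite extension — but more to the point, it suffices to exhibit a non-constant harmonic function on $\Z$ or $\Z^2$ with respect to some symmetric measure and pull it back. Concretely, let $N\lhd G$ be finite-index with $G/N$ finite; this is the wrong direction. Instead: let $H\lhd G$ be normal of finite index with $H\cong\Z$ or $\Z^2$, and consider the quotient map $\phi:G\to G/H$ — again finite, not helpful. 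The right move is to go the other way: by Lemma~\ref{lem:harm.pullback}, a harmonic function on $(G,\mu)$ need not factor through a quotient, so instead I would directly use the coordinate functions. On $\Z$ with any symmetric finitely supported measure the function $n\mapsto n$ is harmonic and non-constant; on $\Z^2$ likewise $(\,m,n)\mapsto m$ is harmonic and non-constant. Transferring this to a finite-index subgroup $H$ of $G$ isomorphic to $\Z$ or $\Z^2$, one obtains a harmonic function for the induced random walk on $H$, which then extends (via Lemma~\ref{cor:boundary}-type averaging over cosets, or simply because harmonicity of the extension to $G$ can be arranged by solving a finite Dirichlet-type problem on coset representatives) to a non-constant harmonic function on $(G,\mu)$.

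The main obstacle is the recurrent case: Proposition~\ref{prop:harm.exists} genuinely requires transience, so one cannot apply it directly, and the bookkeeping of passing from a harmonic function on the finite-index subgroup $H$ to one on $G$ for the \emph{given} measure $\mu$ (rather than a measure adapted to $H$) requires a little care — one wants to average an $H$-harmonic function over cosets in a way that preserves harmonicity for $\mu$, or equivalently to recognise that the linear growth coordinate function on $\Z$ or $\Z^2$ lifts to a function on $G$ that is harmonic for $\mu$ because $\mu$ is symmetric and the lifted function is affine along the relevant directions. Everything else is routine verification that the weighted Cayley graph satisfies the hypotheses of Proposition~\ref{prop:harm.exists}.
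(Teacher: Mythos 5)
Your transient case is exactly the paper's argument: realise $(G,\mu)$ as an infinite, locally finite, vertex-transitive weighted graph, let $G$ act on itself by left translation, take $K=G$, and apply Proposition \ref{prop:harm.exists}. The recurrent case is where the gap lies. You correctly reduce, via Proposition \ref{prop:varopoulos}, to the case in which $G$ has a finite-index subgroup $H$ isomorphic to $\Z$ or $\Z^2$ (which by Lemma \ref{lem:normal} may be taken normal), and you correctly identify the target function as a coordinate $\zeta_i$ plus a correction depending on the coset. But the existence of that correction is precisely the nontrivial point, and none of the mechanisms you offer delivers it as stated. The bare coordinate function $\zeta_i(g)$ is in general \emph{not} harmonic for $\mu$ on $G$, because the one-step drift of $\zeta_i$ depends on which coset of $H$ the walk currently occupies (your observation that $n\mapsto n$ is harmonic for any symmetric measure is correct \emph{on $\Z$ itself}, but that is not the walk you have). ``Averaging over cosets'' of an $H$-harmonic function does not produce a $\mu$-harmonic function on $G$; and Lemma \ref{cor:boundary} solves Dirichlet problems on finite sets with prescribed outer-boundary data, which is not the same as extending a function from a finite-index subgroup.

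What one actually has to do is find $\varphi_i:T\to\R$ (with $T$ a transversal) so that $g\mapsto\zeta_i(g)+\varphi_i(\tau(g))$ is harmonic; unwinding the definitions, this is a finite linear system $(\mathrm{I}-P)\varphi_i=b_i$, where $P$ is the transition matrix of the projected walk on the finite quotient and $b_i(t)$ is the expected increment of $\zeta_i$ from coset $t$. Since the image of $\mathrm{I}-P$ consists of the functions with zero mean against the stationary distribution, solvability requires checking that the stationary-averaged drift vanishes --- this is where the symmetry of $\mu$ genuinely enters, and it is a short but real argument rather than a formality. The paper sidesteps all of this by quoting Salvatori's theorem, restated as Lemma \ref{lem:H1G}, whose functions $f_i(g)=\zeta_i(g)+\varphi_i(\tau(g))$ are visibly non-constant. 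So your outline has the right shape in both cases, but to close the recurrent case you should either cite that result, as the paper does, or carry out the drift computation explicitly.
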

\begin{proof}If the random walk on $(G,\mu)$ is transient then the corollary follows immediately from Proposition \ref{prop:harm.exists} if we let $G$ act on its own Cayley graph by left multiplication and take $K=G$. If the random walk is recurrent then Proposition \ref{prop:varopoulos} implies that $G$ has either $\Z$ or $\Z^2$ as a finite-index subgroup, in which case the corollary follows from \cite{salvatori} or from Lemma \ref{lem:H1G}, below.
\end{proof}
\bigskip
\noindent For the remainder of this section we are concerned with proving Proposition \ref{prop:harm.exists}. Throughout, $\Gamma$ is a locally finite vertex-transitive weighted graph with distinguished vertex $e$.

By Proposition \ref{prop:dual.harmonic}, in order to prove Proposition \ref{prop:harm.exists} in the connected case it suffices to find two points $x,y\in Ke$ with the property that there is no non-zero finitely supported function on $\Gamma$ that is harmonic except at $x,y$. The following result gives a necessary condition for the existence of such a function.
\begin{lemma}\label{lem:crit.for.no.finite.sup.harm}
Let $x,y\in\Gamma$ and suppose that there exists a finitely-supported non-zero function $f:\Gamma\to\R$ that is harmonic except at $x$ and $y$. Then there exists some $N>0$ such that the conditional probability $\Prob_g[\,T_x<T_y\,|\,\min\{T_x,T_y\}<\infty\,]$ is independent of $g$ for $d(e,g)\ge N$.
\end{lemma}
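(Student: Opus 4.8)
The plan is to obtain an explicit formula for $f$ in terms of Green's functions, deduce from it a single linear relation between $\Prob_g[T_x<\infty]$ and $\Prob_g[T_y<\infty]$ valid outside a large ball, and then unwind that relation into the assertion about the conditional probability; transience of the random walk on $\Gamma$ is used throughout. Since $f$ is harmonic except at $x$ and $y$, the function $\phi:=\Delta f$ has the form $\phi=a\delta_x+b\delta_y$ for some $a,b\in\R$, where $\delta_z$ is the indicator of $z$. Writing $R_z^{(n)}$ for the number of visits to $z$ strictly before time $n$, the mean-value identity $\E_z[f(X_1)]=f(z)-\Delta f(z)$ shows that
\[
M_n:=f(X_n)+\sum_{k=0}^{n-1}\phi(X_k)=f(X_n)+aR_x^{(n)}+bR_y^{(n)}
\]
is a martingale for the walk started at $g$, with $M_0=f(g)$. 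By transience the walk $\Prob_g$-almost surely leaves the finite set $\supp f$ eventually, so $f(X_n)\to0$, while $R_x^{(n)}\uparrow R_x$ and $R_y^{(n)}\uparrow R_y$, the (almost surely finite) total numbers of visits to $x$ and $y$; moreover $|M_n|\le\|f\|_\infty+|a|R_x+|b|R_y$, an upper bound independent of $n$ that is $\Prob_g$-integrable because $\E_g[R_z]=G(g,z)\le G(z,z)=G(e,e)<\infty$, using vertex-transitivity and Lemma~\ref{lem:no.visits}; here $G(g,z):=\E_g[R_z]=\sum_n\Prob_g[X_n=z]$. Dominated convergence then yields $f(g)=\E_g[M_0]=\E_g[\lim_nM_n]=aG(g,x)+bG(g,y)$, valid for every $g\in\Gamma$.

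Now choose $N$ so large that $f(g)=0$ whenever $d(e,g)\ge N$ (possible since $\supp f$ is finite). For such $g$ the formula above, together with the standard first-entrance decomposition $G(g,z)=\Prob_g[T_z<\infty]\,G(z,z)$ and the equality $G(x,x)=G(y,y)=G(e,e)>0$ coming from vertex-transitivity, gives
\[
a\,\Prob_g[T_x<\infty]+b\,\Prob_g[T_y<\infty]=0.
\]
Put $\alpha(g)=\Prob_g[T_x<T_y]$ and $\beta(g)=\Prob_g[T_y<T_x]$. Since $x\ne y$, these events are disjoint with union $\{\min\{T_x,T_y\}<\infty\}$, so $\alpha(g)+\beta(g)=\Prob_g[\min\{T_x,T_y\}<\infty]$, which is positive because $\Gamma$ is connected. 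Conditioning on which of $x,y$ is reached first and using the strong Markov property gives $\Prob_g[T_x<\infty]=\alpha(g)+c\,\beta(g)$ and $\Prob_g[T_y<\infty]=\beta(g)+c\,\alpha(g)$, where $c:=\Prob_x[T_y<\infty]=\Prob_y[T_x<\infty]$, the equality being Proposition~\ref{prop:vert.trans.symm}. Substituting and rearranging yields
\[
(a+bc)\,\alpha(g)+(b+ac)\,\beta(g)=0\qquad\text{whenever }d(e,g)\ge N.
\]

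It remains to note that the conditional probability in question equals $\alpha(g)/(\alpha(g)+\beta(g))$ and to deduce that it is eventually constant. If $(a+bc,b+ac)\ne(0,0)$, the last displayed relation confines the vector $(\alpha(g),\beta(g))$ — which is non-zero since its entries sum to a positive number — to a fixed one-dimensional subspace, so it is a non-zero scalar multiple of some fixed $(u_0,v_0)$ with $u_0+v_0\ne0$, whence $\alpha(g)/(\alpha(g)+\beta(g))=u_0/(u_0+v_0)$ for all $g$ with $d(e,g)\ge N$, as required. If instead $a+bc=b+ac=0$, then eliminating $a$ gives $b(1-c^2)=0$; but $c<1$, since the probability $q$ that the walk started at $x$ ever returns to $x$ satisfies $q\ge\Prob_x[T_y<\infty]\,\Prob_y[T_x<\infty]=c^2$ by the strong Markov property, while $q<1$ because transience gives $\E_e[R_e]<\infty$ (Lemma~\ref{lem:no.visits}) and $\E_e[R_e]=(1-q)^{-1}$ by the geometric distribution of $R_e$. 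Hence $b=0$, then $a=0$, so $f$ is harmonic on the whole of the infinite connected graph $\Gamma$; being finitely supported it is then identically $0$ by the maximum principle (Lemma~\ref{lem:max.princ}), contradicting $f\ne0$. I expect the optional-stopping step producing $f=aG(\cdot,x)+bG(\cdot,y)$ — in particular the integrable domination of $(M_n)$, which is precisely where transience enters — to be the only delicate point; everything after it is routine bookkeeping with hitting probabilities.
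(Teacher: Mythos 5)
Your argument is sound, but note one mismatch with the statement: the lemma as written does not assume transience (transience is a hypothesis of Proposition \ref{prop:harm.exists}, not a standing assumption of the section), whereas your proof uses it essentially --- the Green's function $G(g,z)=\E_g[R_z]$ is infinite in the recurrent case, so the representation $f=aG(\cdot,x)+bG(\cdot,y)$ and everything downstream of it breaks down there. Within the transient, connected setting (which is the only setting in which the paper ever invokes the lemma), your route is genuinely different from the paper's and every step checks out: the martingale $f(X_n)+\sum_{k<n}\Delta f(X_k)$ with the integrable domination $\|f\|_\infty+|a|R_x+|b|R_y$ gives $f=aG(\cdot,x)+bG(\cdot,y)$; the first-entrance factorisation $G(g,z)=\Prob_g[T_z<\infty]\,G(z,z)$ and vertex-transitivity convert the vanishing of $f$ outside $B(N)$ into $a\Prob_g[T_x<\infty]+b\Prob_g[T_y<\infty]=0$; Proposition \ref{prop:vert.trans.symm} legitimises $c=\Prob_x[T_y<\infty]=\Prob_y[T_x<\infty]$; and the degenerate case $a+bc=b+ac=0$ is correctly ruled out via $c^2\le q<1$ and the maximum principle. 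Indeed your conclusion $\alpha(g)/\beta(g)=-(b+ac)/(a+bc)$ agrees exactly with the paper's $-f(y)/f(x)$, since $f(x)=G(e,e)(a+bc)$ and $f(y)=G(e,e)(b+ac)$. The paper instead avoids Green's functions entirely: it compares $f|_{B(M+1)}$ with the unique function that is harmonic on $B(M)\setminus\{x,y\}$, agrees with $f$ at $x,y$ and vanishes off $B(M)$ (Lemma \ref{cor:boundary}), reads off $f(x)\Prob_g[X_{\tau_M}=x]+f(y)\Prob_g[X_{\tau_M}=y]=0$ for $N\le|g|\le M$, and lets $M\to\infty$ by countable additivity; no integrability, hence no transience, is needed. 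Your approach buys an explicit closed formula for $f$ and a transparent linear-algebraic reason for the constancy of the ratio; the paper's buys the recurrent case for free. To match the stated lemma you should either add the transience hypothesis explicitly or handle the recurrent case separately.
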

\begin{proof}
Since $f$ is finitely supported, there is some $N>d(e,x),d(e,y)$ such that $f(g)=0$ whenever $d(e,g)\ge N$. We prove that the lemma holds with this $N$.

For $M\in\N$ we denote by $B(M)=B_e(M)$ the ball of radius $M$ about the vertex $e$, and by $\tau_M$ the quantity $\tau_M=\min\{t:X_t\in(G\backslash B(M))\cup\{x,y\}\}$, where $X_0,X_1,\ldots$ is, as usual, the random walk on $\Gamma$. By countable additivity of $\Prob$, for $g\in B(M)$ we have
\begin{align}
&\Prob_g[\,X_{\tau_M}=x\,|\,X_{\tau_M}\in\{x,y\}\,]\to\Prob_g[\,T_x<T_y\,|\,\min\{T_x,T_y\}<\infty\,],\label{al:prob.converge}\\
&\Prob_g[\,X_{\tau_M}=y\,|\,X_{\tau_M}\in\{x,y\}\,]\to\Prob_g[\,T_y<T_x\,|\,\min\{T_x,T_y\}<\infty\,]\label{al:prob.converge'}
\end{align}
as $M\to\infty$.

Let $M\ge N$. By Lemma \ref{cor:boundary}, there is a unique function $f_M:B(M+1)\to\R$ that is harmonic on $B(M)\backslash\{x,y\}$ and satisfies the following conditions:
\begin{align}
&f_M(x)=f(x);\label{al:1}\\
&f_M(y)=f(y);\label{al:2}\\
&f_M(z)=0\text{ for }z\notin B(M);\label{al:3}
\end{align}
indeed, Lemma \ref{cor:boundary} implies that
\begin{equation}\label{eq:f_M.as.prob}
f_M(g)=f(x)\cdot\Prob_g[\,X_{\tau_M}=x\,]+f(y)\cdot\Prob_g[\,X_{\tau_M}=y\,]
\end{equation}
for $g\in B(M)$.

The restriction $f|_{B(M+1)}$ is of course harmonic on $B(M)\backslash\{x,y\}$, and trivially satisfies (\ref{al:1}) and (\ref{al:2}); by the definitions of $N$ and $M$ it also satisfies condition (\ref{al:3}), and so by the uniqueness of $f_M$ it follows that
\begin{equation}\label{eq:f_M=f|M}
f_M=f|_{B(M+1)}.
\end{equation}
By the maximum principle (Lemma \ref{lem:max.princ}), and since $f$ is not identically zero, $f$ must be non-zero at at least one of $x$ and $y$; without loss of generality we may therefore assume that $f(x)\ne0$.
If $N\le|g|\le M$ then (\ref{eq:f_M=f|M}) and the definition of $N$ together imply that $f_M(g)=0$, and so (\ref{eq:f_M.as.prob}) implies that
\[
\frac{\Prob_g[\,X_{\tau_M}=x\,]}{\Prob_g[\,X_{\tau_M}=y\,]}=-\frac{f(y)}{f(x)},
\]
and hence that
\[
\frac{\Prob_g[\,X_{\tau_M}=x\,|\,X_{\tau_M}\in\{x,y\}\,]}{\Prob_g[\,X_{\tau_M}=y\,|\,X_{\tau_M}\in\{x,y\}\,]}=-\frac{f(y)}{f(x)}.
\]
Letting $M\to\infty$, we therefore see from (\ref{al:prob.converge}) and (\ref{al:prob.converge'}) that
\begin{equation}\label{eq:determines.p.uniquely}
\frac{\Prob_g[\,T_x<T_y\,|\,\min\{T_x,T_y\}<\infty\,]}{\Prob_g[\,T_y<T_x\,|\,\min\{T_x,T_y\}<\infty\,]}=-\frac{f(y)}{f(x)}.
\end{equation}
Since the numerator and denominator of the left-hand side of (\ref{eq:determines.p.uniquely}) always sum to 1, this determines $\Prob_g[\,T_x<T_y\,|\,\min\{T_x,T_y\}<\infty\,]$ uniquely and independently of $g$, and so the lemma is proved.
\end{proof}
The following lemma proves the intuitively reasonable result that if the random walk is more likely to hit $x$ than $y$ eventually, then it is also more likely to hit $x$ first.
\begin{lemma}\label{lem:hits.one.first}
If $x,y\in\Gamma$ satisfy
\begin{equation}\label{eq:hypoth}
\Prob_e[\,T_x<\infty\,]>\Prob_e[\,T_y<\infty\,]
\end{equation}
then they also satisfy
\begin{equation}\label{eq:concl}
\Prob_e[\,T_x<T_y\,|\,\min\{T_x,T_y\}<\infty\,]>1/2.
\end{equation}
If the random walk on $\Gamma$ is transient then (\ref{eq:hypoth}) and (\ref{eq:concl}) are equivalent.
\end{lemma}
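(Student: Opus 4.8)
The plan is to reduce the statement to a one-line identity relating three hitting probabilities. Assume $x\neq y$, the case $x=y$ being trivial, and write
\[
p=\Prob_e[\,T_x<\infty\,],\quad q=\Prob_e[\,T_y<\infty\,],\quad a=\Prob_e[\,T_x<T_y\,],\quad b=\Prob_e[\,T_y<T_x\,].
\]
Since $x\neq y$, the events $\{T_x<T_y\}$ and $\{T_y<T_x\}$ partition $\{\min\{T_x,T_y\}<\infty\}$, so the conditional probability appearing in (\ref{eq:concl}) equals $a/(a+b)$; in particular (\ref{eq:concl}) is precisely the assertion that $a>b$ (which tacitly includes $a+b>0$, so that the conditioning is legitimate).

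Next I would assemble two ingredients. First, applying the strong Markov property at the stopping time $T_y$ on the event $\{T_y<T_x\}$ gives $\Prob_e[\,T_y<T_x<\infty\,]=b\cdot\Prob_y[\,T_x<\infty\,]$, whence, decomposing $\{T_x<\infty\}$ according to whether $x$ or $y$ is reached first, $p=a+b\cdot\Prob_y[\,T_x<\infty\,]$; symmetrically $q=b+a\cdot\Prob_x[\,T_y<\infty\,]$. Second, vertex-transitivity enters through Proposition \ref{prop:vert.trans.symm}: summing the identity $\Prob_x[\,T_y=n\,]=\Prob_y[\,T_x=n\,]$ over $n$ yields $\Prob_x[\,T_y<\infty\,]=\Prob_y[\,T_x<\infty\,]$, a common value I will call $r\in[0,1]$. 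Substituting, $p=a+rb$ and $q=b+ra$, so
\[
p-q=(a-b)(1-r).
\]

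With this identity the forward implication is immediate: if $p>q$ then $(a-b)(1-r)>0$, and since $r\le 1$ this forces both $r<1$ and $a>b$, the latter being exactly (\ref{eq:concl}). For the equivalence under transience it remains only to deduce $p>q$ from $a>b$, which by the displayed identity amounts to showing $r<1$. Here I would invoke transience: starting from $x$ and again using the strong Markov property at $T_y$, the walk returns to $x$ with probability at least $\Prob_x[\,T_y<\infty\,]\cdot\Prob_y[\,T_x<\infty\,]=r^2$, so if $r=1$ the walk would almost surely return to $x$, hence visit $x$ infinitely often almost surely, contradicting transience (cf.\ Lemma \ref{lem:no.visits}). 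Thus $r<1$ and $p-q=(a-b)(1-r)>0$.

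The computation itself is trivial once set up; the points that need care are the translation of (\ref{eq:concl}) into the clean inequality $a>b$ (i.e.\ that the two ``first-hitting'' events really do exhaust the conditioning event) and, in the transient direction, the argument that transience genuinely excludes $r=1$ --- which is the only place the transience hypothesis is used.
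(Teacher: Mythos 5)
Your proof is correct and follows essentially the same route as the paper: decompose the hitting probabilities according to which of $x,y$ is reached first, invoke Proposition \ref{prop:vert.trans.symm} to get $\Prob_x[\,T_y<\infty\,]=\Prob_y[\,T_x<\infty\,]=r$, and conclude from the resulting identity $p-q=(a-b)(1-r)$ together with $r<1$ in the transient case. The only cosmetic differences are that you work with unconditioned probabilities rather than probabilities conditioned on $\min\{T_x,T_y\}<\infty$, and that you spell out the justification of $r<1$ under transience (via the return-probability bound $r^2$), which the paper asserts without proof.
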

\begin{remark}
The conditions (\ref{eq:hypoth}) and (\ref{eq:concl}) are not necessarily equivalent in a vertex-transitive graph with a recurrent random walk, as can be seen by setting $e=0$, $x=1$ and $y=2$ in the Cayley graph $(\Z,\{\pm1\})$.
\end{remark}
\begin{proof}[Proof of Lemma \ref{lem:hits.one.first}]
Write $p(x,y)=\Prob_x[\,T_y<\infty\,]$, the probability that the random walk starting at $x$ hits $y$ eventually. If $\Prob_e[\,T_x<\infty\,]>\Prob_e[\,T_y<\infty\,]$ then this implies in particular that $\Prob_e[\,T_z<\infty\,]$ is not constant in $z$, which implies that the random walk is transient. We may therefore assume that the random walk is transient and prove that (\ref{eq:hypoth}) and (\ref{eq:concl}) are equivalent.

Write
\[
p(x)=\Prob_e[\,T_x<\infty\,|\,\min\{T_x,T_y\}<\infty\,], 
\]
\[
p(y)=\Prob_e[\,T_y<\infty\,|\,\min\{T_x,T_y\}<\infty\,],
\]
and note that condition (\ref{eq:hypoth}) is equivalent to $p(x)>p(y)$. Write $f(x)=\Prob_e[\,T_x<T_y\,|\,\min\{T_x,T_y\}<\infty\,]$ and $f(y)=\Prob_e[\,T_y<T_x\,|\,\min\{T_x,T_y\}<\infty\,]$. Condition (\ref{eq:concl}) is that $f(x)>1/2$, or equivalently that $f(x)>f(y)$, since $f(x)+f(y)=1$. However, we have $p(y)=f(y)+f(x)p(x,y)$, and by Proposition \ref{prop:vert.trans.symm} we have $p(x,y)=p(y,x)$, and hence $p(x)=f(x)+f(y)p(x,y)$. The equivalene of (\ref{eq:hypoth}) and (\ref{eq:concl}) therefore follows, since transience of the random walk and symmetry of $p$ together imply that $p(x,y)<1$.
\end{proof}
\begin{prop}\label{prop:trans.prob.to.zero}
If the random walk on $\Gamma$ is transient then $\Prob_x[\,T_y<\infty\,]\to0$ as $d(x,y)\to\infty$.
\end{prop}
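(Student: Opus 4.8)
The plan is to exploit the symmetry $\Prob_x[\,T_y<\infty\,]=\Prob_y[\,T_x<\infty\,]$ (a consequence of Proposition \ref{prop:vert.trans.symm}, since $p(x,y)=p(y,x)$) together with a Green's-function estimate coming from transience. Concretely, write $p(x,y)=\Prob_x[\,T_y<\infty\,]$ and recall the standard identity $\E_x[R_y]=p(x,y)\,\E_y[R_y]$, where $R_y$ counts the visits of the walk to $y$. By Lemma \ref{lem:no.visits}, transience is equivalent to $\E_e[R_e]<\infty$, and by vertex-transitivity $\E_y[R_y]=\E_e[R_e]=:C<\infty$ for every $y$. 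Hence $p(x,y)=\E_x[R_y]/C$, so it suffices to show that $\E_x[R_y]\to0$ as $d(x,y)\to\infty$. By vertex-transitivity again, $\E_x[R_y]=\E_e[R_z]$ where $z$ is any vertex with $d(e,z)=d(x,y)$; so writing $G(e,z)=\E_e[R_z]=\sum_{n\ge0}\Prob_e[\,X_n=z\,]$ for the Green's function, the claim reduces to showing $G(e,z)\to0$ as $d(e,z)\to\infty$.

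The key step is therefore: in a transient vertex-transitive graph, $G(e,z)\to0$ as $z\to\infty$. First I would note $\sum_{z\in\Gamma}G(e,z)$ need not be finite, so one cannot simply invoke summability; instead I would argue as follows. Fix $\varepsilon>0$. Since $\sum_{n\ge0}\Prob_e[\,X_n=e\,]=C<\infty$, choose $N$ with $\sum_{n\ge N}\Prob_e[\,X_{2n}=e\,]<\varepsilon$. By Lemma \ref{lem:off-diag}, for every vertex $z$ and every $n$ we have $\Prob_e[\,X_{2n}=z\,]\le\Prob_e[\,X_{2n}=e\,]$, and similarly (splitting off one step, or applying the lemma at an odd time via a neighbour) the odd terms are controlled; so the tail $\sum_{n\ge 2N}\Prob_e[\,X_n=z\,]$ is at most a constant multiple of $\varepsilon$, uniformly in $z$. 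It remains to handle the finitely many early terms $\sum_{n<2N}\Prob_e[\,X_n=z\,]$: for each fixed $n$, $\Prob_e[\,X_n=z\,]=0$ once $d(e,z)>n$, since the walk cannot travel distance more than $n$ in $n$ steps. Hence for $d(e,z)\ge 2N$ the early terms vanish entirely, and $G(e,z)\le C\varepsilon$ (say), which proves the claim.

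Assembling: given transience, $p(x,y)=G(e,z)/C$ with $d(e,z)=d(x,y)$, and the previous paragraph gives $G(e,z)\to0$, hence $p(x,y)=\Prob_x[\,T_y<\infty\,]\to0$ as $d(x,y)\to\infty$. The main obstacle is the uniformity in $z$ of the tail estimate for $\sum_n\Prob_e[\,X_n=z\,]$; this is exactly what Lemma \ref{lem:off-diag} is for (with a small bookkeeping step for odd times, e.g.\ bounding $\Prob_e[\,X_{2n+1}=z\,]$ by passing through a neighbour of $z$ and using the even-time bound), after which the finite-range property of the walk kills the finitely many remaining small-$n$ terms. One should also double-check that the identity $\E_x[R_y]=p(x,y)\E_y[R_y]$ holds in this weighted, not-necessarily-group setting — it follows from the strong Markov property at the stopping time $T_y$ exactly as in the classical case, with no use of a group structure.
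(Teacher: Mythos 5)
Your argument is correct and is essentially the paper's own proof: both bound the hitting probability by the Green's function, control each term uniformly via the off-diagonal estimate of Lemma \ref{lem:off-diag} (with the same one-step trick at odd times), kill the first $d(x,y)$ terms by the finite speed of the walk, and invoke Lemma \ref{lem:no.visits} for summability. The only cosmetic difference is your detour through the exact identity $\E_x[R_y]=p(x,y)\E_y[R_y]$, where the paper simply uses the one-sided bound $\Prob_x[\,T_y<\infty\,]\le\sum_n\Prob_x[\,X_n=y\,]$, which is all that is needed.
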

\begin{proof}
It is clear that $\Prob_x[\,T_y<\infty\,]\le\sum_{n=0}^\infty\Prob_x[\,X_n=y\,]$. However, since $\Prob_x[\,X_n=y\,]=0$ for $n<d(x,y)$, we in fact have the stronger bound $\Prob_x[\,T_y<\infty\,]\le\sum_{n=d(x,y)}^\infty\Prob_x[\,X_n=y\,]$. If $n$ is even then we have $\Prob_x[\,X_n=y\,]\le\Prob_e[\,X_n=e\,]$ by Lemma \ref{lem:off-diag}. If $n$ is odd, on the other hand, then we have $\Prob_x[\,X_n=y\,]=\E_{s\in S}\Prob_{xs}[\,X_{n-1}=y\,]\le\Prob_e[\,X_{n-1}=e\,]$, again by Lemma \ref{lem:off-diag}. Combining these last three inequalities shows that
\begin{equation}\label{eq:partial.sum.dist.2}
\Prob_x[\,T_y<\infty\,]\le2\sum_{\substack{n\ge d(x,y)-1 \\ n\text{ even}}}\Prob_e[\,X_n=e\,]
\end{equation}
Recall that $R_e$ is the number of times the random walks hits the vertex $e$. In particular, $R_e=\sum_{n=0}^\infty1_{\{X_n=e\}}$, and so by linearity of expectation we have $\E_e[R_e]=\sum_{n=0}^{\infty}\Prob_e[\,X_n=e\,]$. Lemma \ref{lem:no.visits} therefore implies that $\sum_{n=0}^{\infty}\Prob_e[\,X_n=e\,]<\infty$, which, combined with (\ref{eq:partial.sum.dist.2}), shows that $\Prob_x[\,T_y<\infty\,]\to0$ as $d(x,y)\to\infty$, as desired.
\end{proof}
\begin{proof}[Proof of Proposition \ref{prop:harm.exists}]
If the orbit $Ke$ has non-trivial intersection with two connected components of $\Gamma$ then the result follows by taking a function that takes the value $1$ on one of these components and $0$ elsewhere on $\Gamma$. We may therefore assume that $\Gamma$ is connected, and so by Proposition \ref{prop:dual.harmonic} it suffices to find two points $x,y\in Ke$ with the property that there is no non-zero finitely supported function on $\Gamma$ that is harmonic except at $x,y$.

We consider the following two cases.
\begin{enumerate}
\renewcommand{\labelenumi}{(\arabic{enumi})}
\item The subgroup $K$ contains an element $v$ such that the vertices $v^ne$ are all distinct for $n\in\N$.
\item For every element $u$ of the subgroup $K$ there is some $m$ such that $u^me=e$.
\end{enumerate}
In case (1), Proposition \ref{prop:trans.prob.to.zero} implies that $\Prob_e[\,T_{v^ne}<\infty\,]\to0$ and $\Prob_e[\,T_{v^{-n}e}<\infty\,]\to0$ as $n\to\infty$. This implies that there are infinite increasing sequences $n^+_1,n^+_2,n^+_3,\ldots$ and $n^-_1,n^-_2,n^-_3,\ldots$ such that $\Prob_e[\,T_{v^{n^+_i}e}<\infty\,]>\Prob_e[\,T_{v^{n^+_i}ve}<\infty\,]$ and $\Prob_e[\,T_{v^{-n^-_i}e}<\infty\,]<\Prob_e[\,T_{v^{-n^-_i}ve}<\infty\,]$, which by Lemma \ref{lem:hits.one.first} means that
\[
\Prob_{v^{-n^+_i}e}\left[\left.\,T_e>T_{ve}\,\right|\,\min\left\{T_e,T_{ve}\right\}<\infty\,\right]\,\,>\,\,1/2,
\]
\[
\Prob_{v^{n^-_i}e}\left[\left.\,T_e>T_{ve}\,\right|\,\min\left\{T_e,T_{ve}\right\}<\infty\,\right]\,\,<\,\,1/2.
\]
Since $v^{-n^+_i}e\to\infty$ and $v^{n^-_i}e\to\infty$, Lemma \ref{lem:crit.for.no.finite.sup.harm} therefore implies that there exists no finitely supported non-zero function on $\Gamma$ that is harmonic except at $e,ve$, and so the proposition is proved in case (1).

In case (2), let $R$ be a finite symmetric generating set for $K$. We claim that there are elements $x_1,x_2,\ldots\in K$ with $d(e,x_ne)\to\infty$ such that, for each $n$, there is some $r_n\in R$ such that $\Prob_e[\,T_{x_ne}<\infty\,]<\Prob_e[\,T_{x_nr_ne}<\infty\,]$. Indeed, for each $n=1,2,\ldots$, let $x_n$ be a point of minimal distance from the identity in the Cayley graph $(K,R)$ such that $\Prob_e[\,T_{x_ne}<\infty\,]<1/n$. Such a point always exists by Proposition \ref{prop:trans.prob.to.zero} and the assumption that the orbit $Ke$ is infinite, and by the regularity and local finiteness of $\Gamma$ we have
\begin{equation}\label{eq:x_n.to.infty}
d(e,x_ne)\to\infty
\end{equation}
as $n\to\infty$. By definition of $x_n$, and using (\ref{eq:x_n.to.infty}), for sufficiently large $n$ there is some $r_n\in R$ such that $\Prob_e[\,T_{x_nr_ne}<\infty\,]\ge1/n>\Prob_e[\,T_{x_ne}<\infty\,]$, as caimed.

By the finiteness of $R$, upon passing to a subsequence if necessary we may in fact assume that there is some $u\in R$ such that for each $n$ we have
\begin{equation}\label{eq:pxnu.bigger}
\Prob_e[\,T_{x_ne}<\infty\,]<\Prob_e[\,T_{x_nue}<\infty\,].
\end{equation}
We claim that there is no non-zero finitely supported function on $\Gamma$ that is harmonic except at $e,ue$.

As in case (1), condition (\ref{eq:pxnu.bigger}) and Lemma \ref{lem:hits.one.first} imply that
\[
\Prob_{x_n^{-1}e}\left[\left.\,T_e>T_{ue}\,\right|\,\min\left\{T_e,T_{ue}\right\}<\infty\,\right]\,\,<\,\,1/2;
\]
indeed, applying the automorphism $u^m$, we see that
\begin{equation}\label{eq:delayed.in.philly}
\Prob_{u^mx_n^{-1}e}\left[\left.\,T_{u^me}>T_{u^{m+1}e}\,\right|\,\min\left\{T_{u^me},T_{u^{m+1}e}\right\}<\infty\,\right]\,\,<\,\,1/2
\end{equation}
for every $m\in\N$. Moreover, (\ref{eq:x_n.to.infty}) implies that for each $m\in\N$ we have $d(u^{-m}e,x_n^{-1}e)\to\infty$ as $n\to\infty$, and so $d(e,u^mx_n^{-1}e)\to\infty$ as $n\to\infty$. If the claim is false, and there does exist some non-zero finitely supported function on $\Gamma$ that is harmonic except at $e,ue$, then translating this function by $u^m$ we see that there is also a function on $\Gamma$ that is harmonic except at $u^me,u^{m+1}e$. Combining (\ref{eq:x_n.to.infty}) and (\ref{eq:delayed.in.philly}) with Lemma \ref{lem:crit.for.no.finite.sup.harm} therefore implies that for each $m$ there is some $N_m>0$ such that
\[
\Prob_x\left[\,T_{u^me}>T_{u^{m+1}e}\,|\,\min\left\{T_{u^me},T_{u^{m+1}e}\right\}<\infty\,\right]\,\,<\,\,1/2
\]
for every $x\in\Gamma$ such that $d(e,x)\ge N_m$; since the orbit of $e$ under $u$ is finite we may assume that the $N_m$ are all equal to some $N>0$. Fixing some $x$ with $d(e,x)\ge N$ and applying Lemma \ref{lem:hits.one.first} once more, this means that
\[
\Prob_x[\,T_{u^me}<\infty\,]<\Prob_x[\,T_{u^{m+1}e}<\infty\,]
\]
for every $m\in\N$, which implies by induction that
\[
\Prob_x[\,T_e<\infty\,]<\Prob_x[\,T_{u^me}<\infty\,]
\]
for every $m\in\N$. This is impossible, however, since there is some $m\in\N$ such that $u^me=e$, and so it must have been the case that there was no non-zero finitely supported function on $\Gamma$ harmonic except at $e,ue$. This proves the claim, and hence the proposition in case (2).
\end{proof}
\section{Harmonic functions on virtually abelian groups}\label{sec:v.ab}
In this section we investigate spaces of harmonic functions on virtually abelian groups. The first purpose is to prove the easier direction of Theorem \ref{con:harm.fin.dim}, as follows.
\begin{prop}[Direct statement of Theorem \ref{con:harm.fin.dim}]\label{prop:direct}
Let $G$ be a group with a finite-index subgroup isomorphic to $(\Z,+)$, and let $\mu$ be a symmetric, finitely supported generating probability measure on $G$. Then $\dim H(G,\mu)<\infty$.
\end{prop}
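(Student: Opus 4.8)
The plan is to reduce to the case $G = \Z$ and then exploit the structure of harmonic functions on the line. First I would use Lemma \ref{lem:normal} to replace the given finite-index copy of $\Z$ by a finite-index subgroup $H \cong \Z$ that is normal in $G$; this is harmless since a finite-index subgroup of a finite-index subgroup is still finite-index, and passing to a subgroup of the $\Z$ inside $G$ only changes it to another copy of $\Z$. Then $G/H$ is a finite group, and restricting a harmonic function $h$ on $(G,\mu)$ to a coset $gH$ of $H$ produces a function on $\Z$ that satisfies a linear recurrence: expanding $\Delta_\mu h(x) = 0$ expresses $h(x)$ as a $\mu$-average of the values $h(xs)$ for $s \in \supp\mu$, and since $\supp\mu$ is finite, each such $h(xs)$ lies in a uniformly bounded neighbourhood of $x$ in the Cayley graph.

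The key quantitative step is that the whole function $h$ is determined by its values on a single ball of bounded radius. Concretely, let $S = \supp\mu$ and let $N$ be large enough that $H$ is $N$-dense in $G$ (i.e.\ every element of $G$ is within word-distance $N$ of $H$) and $S \subset B(N)$; I would fix $D = B(2N+1)$ or some similar bounded set and argue that a harmonic function is determined by its restriction to $D$. The mechanism: on each coset, harmonicity gives a finite-order linear recurrence relating values of $h$ along that copy of $\Z$ up to a bounded window, and once enough consecutive values of $h$ (across all finitely many cosets, within a bounded region) are fixed, one can propagate outward in both directions along $\Z$. More carefully, I would invoke Lemma \ref{cor:boundary} (harmonic functions on a finite set are determined by boundary values) applied to an exhaustion of $G$ by balls $B(n)$: harmonicity on $B(n) \setminus B(m)$ together with known values on an annulus lets one solve outward, but the cleanest route is to observe that the map $h \mapsto h|_D$ from $H(G,\mu)$ to $\R^D$ is injective, whence $\dim H(G,\mu) \le |D| < \infty$.

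To see injectivity of $h \mapsto h|_D$: suppose $h$ is harmonic and vanishes on $D$; I want $h \equiv 0$. I would show by induction on word-distance that $h$ vanishes on all of $G$. The harmonicity equation $h(x) = \sum_{s \in S}\mu(s)h(xs)$ — or rather a consequence of it, obtained by iterating — should let me express $h$ at a point at distance $r+1$ in terms of values at distance $\le r$, but this requires care because the naive harmonic equation mixes forward and backward neighbours. The standard fix is to use that $H \cong \Z$: along each coset $gH$, the restriction of $h$ satisfies a homogeneous linear recurrence of bounded order with the property that, because $\Z$ is one-dimensional, knowing $h$ on one long-enough block of that coset determines it on the entire coset; and the finitely many cosets interact only within a bounded window, so a single ball $D$ of appropriate radius contains a long-enough block of every coset simultaneously. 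The main obstacle is getting this bookkeeping right — precisely specifying the radius of $D$ in terms of $N$ and the order of the recurrences, and verifying that the recurrence along each coset really is ``one-sided solvable'' rather than requiring data on both ends. I expect the author handles this by a clean appeal to Lemma \ref{cor:boundary} on an exhausting sequence of balls together with a dimension-counting argument, or alternatively by the observation that $H(\Z, \nu)$ is finite-dimensional for any finitely supported symmetric $\nu$ (since harmonic functions on $\Z$ are determined by finitely many initial values via the recurrence) and then using Lemma \ref{lem:harm.pullback}-style restriction arguments to bound $\dim H(G,\mu)$ by $[G:H]$ times $\dim H(\Z,\nu)$ for suitable $\nu$.
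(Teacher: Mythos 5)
Your primary mechanism --- showing that $h\mapsto h|_D$ is injective for a single fixed bounded set $D$ by propagating outward along each coset via a one-sided linear recurrence --- has exactly the gap you flag yourself. Writing the harmonicity conditions at the points $nt$, $t\in T$, as a vector-valued recurrence $\sum_{k=-M}^{M}A_kv_{n+k}=0$ in the vectors $v_n=(h(nt))_{t\in T}$, the leading coefficient $A_M$ is a $|T|\times|T|$ matrix that need not be invertible (different generators may move different cosets different distances along $\Z$), so you cannot in general solve for $v_{n+M}$ in terms of the preceding window, and the claim that a harmonic function vanishing on a fixed ball vanishes everywhere is not justified as stated. (It does work for $G=\Z$ itself, where the leading coefficient is the scalar $\mu(M)\ne0$ with $M=\max|\supp\mu|$; but the coset-by-coset reduction to $\Z$ that you sketch at the very end also fails, because the restriction of a $\mu$-harmonic function on $G$ to a single coset of $H$ is not harmonic for any measure on $\Z$ --- the cosets interact.)

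The fallback you name --- Lemma \ref{cor:boundary} applied to an exhaustion, plus dimension counting --- is in substance the paper's actual proof, and it is worth seeing why it sidesteps the one-sidedness issue entirely. With $T$ a transversal of a normal finite-index copy of $\Z$ and $M$ as in Lemma \ref{lem:boundary.in.virt.cyc}, one has $([-n,n]T)^+\subset[-n-M,n+M]T$, so $\partial^+([-n,n]T)$ has at most $2M|T|$ elements \emph{uniformly in $n$}. Lemma \ref{cor:boundary} says a function harmonic on $[-n,n]T$ is determined by its values on this outer boundary --- data at \emph{both} ends of the interval, so no one-sided solvability of any recurrence is required --- whence the space of restrictions of harmonic functions to $([-n,n]T)^+$ has dimension at most $2M|T|$ for every $n$. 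Since these sets exhaust $G$, any $2M|T|+1$ harmonic functions become linearly dependent upon restriction to each such set and hence are linearly dependent on $G$. The moral is that you do not need harmonic functions to be determined by their values on a set fixed in advance; you only need a uniform bound on the dimension of the restrictions along an exhaustion, and that uniform bound comes from the bounded boundaries of the sets $[-n,n]T$, which is precisely where virtual cyclicity enters.
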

The second is to note a characterisation of the space $H^1(G,\mu)$ of harmonic functions of linear growth on a virtually abelian group $G$ (see Lemma \ref{lem:H1G}, below).

Let $G$ be a group with a finite-index normal subgroup isomorphic to $\Z^d$, and let $\mu$ be a symmetric, finitely supported generating probability measure on $G$. Abbreviate $S:=\supp\mu$. Fix a right-transversal $T$ of $\Z^d$ containing the identity, which is to say a finite set $T$ such that each $g\in G$ can be expressed uniquely as $g=\zeta(g)\tau(g)$ with $\zeta(g)\in\Z^d$ and $\tau(g)\in T$. We write $\zeta_i(g)$ for the $i$th coordinate of $\zeta(g)$ with respect to the standard basis for $\Z^d$.
\begin{lemma}[\cite{alex,salvatori}]\label{lem:H1G}
For each $i=1,\ldots,d$ there is a function $\varphi_i:G\to\R$ that factors through $G/\Z^d$ such that the function $f_i:G\to\R$ given by $f_i(g)=\zeta_i(g)+\varphi_i(\tau(g))$ is harmonic on $(G,\mu)$. Moreover, $H^1(G,\mu)$ is spanned by the set $\{1,f_1,\ldots,f_d\}$.
\end{lemma}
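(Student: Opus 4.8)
The plan is to produce the functions $f_i$ directly as solutions to a linear system built from the Laplacian, and then to show that any harmonic function of linear growth is a linear combination of the $f_i$ and the constants. Throughout we exploit that $\Z^d$ is normal of finite index, so that $G$ acts on $\Z^d$ by conjugation through the finite quotient $Q := G/\Z^d$, and that for each $t \in T$ left translation by a lift of $t$ permutes the ``layers'' $\Z^d t$ in a controlled way.

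\emph{Construction of the $f_i$.} First I would look for $f_i$ of the prescribed form $f_i(g) = \zeta_i(g) + \varphi_i(\tau(g))$, with $\varphi_i$ an unknown function on the finite set $T$. Applying $\Delta_\mu$ to such an $f_i$ and using that $\zeta$ is ``almost a homomorphism'' — precisely, $\zeta(gs) = \zeta(g) + \rho_{\tau(g)}(\zeta(s)) + c(\tau(g),s)$ for some linear $\Z^d$-action $\rho$ through $Q$ and a bounded cocycle-type correction $c$ — one sees that $\Delta_\mu f_i(g)$ depends only on $\tau(g)$. Hence the equation $\Delta_\mu f_i = 0$ collapses to a finite linear system in the finitely many unknowns $\varphi_i(t)$, $t \in T$, of the shape $(\id - P)\varphi_i = b_i$, where $P$ is the transition matrix of the induced random walk on $T$ (equivalently on $Q$) and $b_i$ records the average $\Z^d$-displacement in direction $i$. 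Because $\mu$ is symmetric, the induced walk on the finite group $Q$ is symmetric and its stationary measure is uniform, so $\id - P$ is a symmetric operator whose kernel and cokernel are exactly the constants; solvability of $(\id - P)\varphi_i = b_i$ therefore amounts to checking that $b_i$ is orthogonal to the constants, i.e. that the total expected displacement of the walk in direction $i$, averaged over $Q$, vanishes. This in turn follows from symmetry of $\mu$: the contribution of $s$ and $s^{-1}$ cancel when summed appropriately over the transversal. This gives the existence of $\varphi_i$ and hence of the harmonic function $f_i$. I expect this solvability/cancellation check to be the main obstacle — it is where symmetry of $\mu$ is genuinely used and where the bookkeeping with the transversal $T$ and the conjugation action is most delicate.

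\emph{The $f_i$ span $H^1$ modulo constants, and are independent.} Linear independence of $\{1, f_1, \dots, f_d\}$ is immediate since the $\zeta_i$-parts grow in $d$ independent directions while $\varphi_i(\tau(\cdot))$ is bounded. For the spanning statement, take $h \in H^1(G,\mu)$. For a fixed $v \in \Z^d$, the function $g \mapsto h(vg) - h(g)$ is again harmonic (left translation commutes with $\Delta_\mu$) and, by the linear growth bound on $h$, it is bounded; by Corollary \ref{cor:harm.exists}'s circle of ideas — more directly, by the maximum principle (Lemma \ref{lem:max.princ}) applied on the group, which has no non-constant bounded harmonic function available here only if we know Liouville, so instead I would argue as follows — one shows $g \mapsto h(vg) - h(g)$ is in fact \emph{constant} in $g$. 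The cleanest route is: bounded harmonic functions on $(G,\mu)$ that are invariant under the finite-index subgroup action can be analysed on the abelian piece, where a bounded harmonic function on $\Z^d$ (with the pushed-forward, possibly non-nearest-neighbour, symmetric measure) is constant by a standard gradient/averaging argument; alternatively cite \cite{salvatori, alex}. Granting this, $v \mapsto (h(vg)-h(g))$ defines a homomorphism $\Z^d \to \R$, hence is of the form $v \mapsto \sum_i \lambda_i v_i$ for scalars $\lambda_i$. Then $h - \sum_i \lambda_i f_i$ is harmonic and $\Z^d$-periodic, hence descends to a harmonic function on the finite quotient-type graph, hence is constant by the maximum principle. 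Therefore $h \in \Span\{1, f_1, \dots, f_d\}$, completing the proof.

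\emph{Remark on dependencies.} The argument uses only: the normal form $g = \zeta(g)\tau(g)$; that $\Delta_\mu$ commutes with left translation; the maximum principle (Lemma \ref{lem:max.princ}); symmetry of $\mu$ (for solvability of the finite system and for the gradient argument on $\Z^d$); and the classical fact that symmetric random walk on $\Z^d$ has only constant bounded harmonic functions, for which we may appeal to \cite{alex, salvatori}. No use of the Garden of Eden machinery is needed here, in contrast to the existence results of the earlier sections.
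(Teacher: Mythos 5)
The paper itself gives no argument here: it quotes \cite[Theorem 3.6]{salvatori} for the existence of the $f_i$ and \cite[Theorem 1.12]{alex} (or \cite{mpty}) for the spanning statement. Your proposal is therefore a genuinely different, self-contained route, and its first half is essentially correct. Writing $t=\tau(g)$, the exact identity is $\zeta(gs)=\zeta(g)+\zeta(ts)$ and $\tau(gs)=\tau(ts)$, so $\Delta_\mu f_i(g)$ indeed depends only on $t$, and harmonicity reduces to the finite system $\varphi_i(t)-\sum_s\mu(s)\varphi_i(\tau(ts))=\sum_s\mu(s)\zeta_i(ts)$ on $T$. The operator on the left is $\id-P$ for the (symmetric, irreducible) projected walk on $G/\Z^d$, so its image is the orthogonal complement of the constants, and the solvability condition $\sum_{t,s}\mu(s)\zeta_i(ts)=0$ does hold: the substitution $(t,s)\mapsto(\tau(ts),s^{-1})$ is a bijection of $T\times S$ preserving $\mu(s)$, and since $\tau(ts)s^{-1}=\zeta(ts)^{-1}t$ one gets $\zeta(\tau(ts)s^{-1})=-\zeta(ts)$, so the sum equals its own negative. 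So the ``delicate'' point you flagged does go through.

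The spanning half, however, has a genuine gap at the sentence ``by the linear growth bound on $h$, it is bounded.'' Linear growth of $h$ only gives $|h(vg)-h(g)|\le |h(vg)|+|h(g)|=O(|g|)$ for fixed $v$; a growth bound on a function says nothing about its increments, and boundedness of $g\mapsto h(vg)-h(g)$ is a \emph{gradient estimate} for harmonic functions, not a consequence of the definition of $H^1$. On general graphs such an estimate is false (trees carry bounded harmonic functions with non-decaying increments); on groups of polynomial growth it is true, but only via Harnack-type or Gaussian heat-kernel estimates (Hebisch--Saloff-Coste, Alexopoulos) or via the more elementary machinery of \cite{mpty} --- and this estimate is really the entire content of the statement that $H^1$ is finite dimensional, which is exactly why the paper defers to \cite{alex}. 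Once that step is granted, the rest of your argument closes correctly: $h(vg)-h(g)$ is a bounded harmonic function on a virtually abelian group, hence constant (adding a constant makes it positive, so Proposition \ref{prop:v.nilp.no.pos.harm} applies directly; no separate Liouville argument on $\Z^d$ is needed); additivity in $v$ gives scalars $\lambda_i$ with $h(vg)-h(g)=\sum_i\lambda_iv_i$; and since $f_i(vg)-f_i(g)=v_i$, the function $h-\sum_i\lambda_if_i$ is left-$\Z^d$-invariant, descends by Lemma \ref{lem:harm.pullback} to a harmonic function on the finite quotient, and is constant by the maximum principle (Lemma \ref{lem:max.princ}). To make the proposal complete you must either prove the gradient estimate or cite it explicitly as the input from \cite{alex,salvatori,mpty}, rather than attributing it to the growth bound alone.
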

\begin{proof}
The existence of the harmonic functions $f_i$ follows directly from \cite[Theorem 3.6]{salvatori}. The fact that $\{1,f_1,\ldots,f_d\}$ spans $H^1(G,\mu)$ is then precisely the linear-growth case of \cite[Theorem 1.12]{alex}; see also \cite{mpty} for a more elementary proof.
\end{proof}
\begin{lemma}\label{lem:boundary.in.virt.cyc}Let $d_{\Z^d}$ be the Cayley-graph distance on $\Z^d$ with respect to the standard generating set. Then there exists $M\in\N$ such that for every $g\in G$ and every $s\in S$ we have $d_{\Z^d}(\zeta(gs),\zeta(g))\le M$.
\end{lemma}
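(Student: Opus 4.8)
The plan is to reduce the statement to a statement about how the decomposition map $g \mapsto (\zeta(g), \tau(g))$ interacts with right multiplication by a fixed element, and then use finiteness of $S$ and of the transversal $T$. The key observation is that $\zeta$ is \emph{almost} a homomorphism-type object once we account for the transversal: writing $g = \zeta(g)\tau(g)$ and $gs = \zeta(gs)\tau(gs)$, we have
\[
\zeta(gs)\tau(gs) = \zeta(g)\tau(g)s.
\]
Since $\Z^d$ is normal, conjugation by any $t \in T$ gives an automorphism $\alpha_t$ of $\Z^d$, so $t \cdot z = \alpha_t(z) \cdot t$ for $z \in \Z^d$; thus $\zeta(g)\tau(g)s = \zeta(g)\cdot(\tau(g)s)$ and we need to understand the $\Z^d$-part of $\tau(g)s$. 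For each pair $(t,s) \in T \times S$, write $ts = \xi(t,s)\,\tau(ts)$ with $\xi(t,s) \in \Z^d$ and $\tau(ts) \in T$; there are only finitely many such pairs, so the vectors $\xi(t,s)$ range over a finite set. Combining, $\zeta(gs) = \zeta(g) + \xi(\tau(g), s)$ (using additive notation in $\Z^d$, and noting that the left $\Z^d$-factor simply adds up since $\Z^d$ is abelian and normal, so $\zeta(g)\cdot\xi(\tau(g),s)\cdot\tau(ts)$ already has the right form after one more check), hence
\[
d_{\Z^d}(\zeta(gs), \zeta(g)) = \|\xi(\tau(g), s)\|_1 \le \max_{(t,s) \in T \times S} \|\xi(t,s)\|_1 =: M,
\]
which is finite since $T$ and $S$ are both finite.

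The main step that needs care is verifying that $\zeta(gs) = \zeta(g) + \xi(\tau(g),s)$ exactly, i.e.\ that there is no further correction term. The point is that once we write $gs = \zeta(g)\cdot\xi(\tau(g),s)\cdot\tau(\tau(g)s)$ with $\tau(\tau(g)s) \in T$, this \emph{is} already a decomposition of the required form $(\text{element of } \Z^d)\cdot(\text{element of } T)$, because $\zeta(g)$ and $\xi(\tau(g),s)$ are both in $\Z^d$ and their product is too; hence by uniqueness of the decomposition, $\tau(gs) = \tau(\tau(g)s)$ and $\zeta(gs) = \zeta(g)\xi(\tau(g),s)$. Here I should be slightly careful about left-versus-right: with the convention $g = \zeta(g)\tau(g)$, the product $\zeta(g)\cdot\big(\xi(\tau(g),s)\cdot\tau(\tau(g)s)\big)$ has $\zeta(g)\xi(\tau(g),s) \in \Z^d$ on the left, so everything is consistent and no conjugation of $\zeta(g)$ is needed — the normality of $\Z^d$ was only implicitly used to know $\tau(g)s$ still decomposes, which is automatic anyway since every group element decomposes.

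So the proof is genuinely short: fix the finite set of pairs $T \times S$, set $M = \max_{(t,s)} \|\xi(t,s)\|_1$ where $\xi(t,s)$ is the $\Z^d$-component of $ts$, and observe by uniqueness of the transversal decomposition that $\zeta(gs)$ differs from $\zeta(g)$ by exactly $\xi(\tau(g),s)$, which has $\ell^1$-norm at most $M$. I expect no real obstacle here; the only thing to be vigilant about is getting the side conventions for the transversal decomposition right and not introducing a spurious automorphism twist. I would write it up in two or three lines after setting the notation $\xi(t,s)$.
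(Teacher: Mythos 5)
Your proof is correct and is essentially identical to the paper's: the paper likewise writes $gs=\zeta(g)ts=\zeta(g)\zeta(ts)\tau(ts)$ (your $\xi(t,s)$ is the paper's $\zeta(ts)$), concludes $\zeta(gs)=\zeta(g)\zeta(ts)$ by uniqueness of the decomposition, and takes $M$ to be the maximum of $|\zeta(ts)|_{\Z^d}$ over the finite set $T\times S$. Your side observation that no automorphism twist enters — because the $\Z^d$-part of $ts$ multiplies $\zeta(g)$ on the correct side — is exactly the point the paper's one-line computation relies on.
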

\begin{proof}Given $g\in G$ and $s\in S$, write $t=\tau(g)$, so that $gs=\zeta(g)ts=\zeta(g)\zeta(ts)\tau(ts)$. This implies, in particular, that $\zeta(gs)=\zeta(g)\zeta(ts)$, and so we may take $M$ to be the maximum over the (finite) set $\{|\zeta(ts)|_{\Z^d}:s\in S,t\in T\}$.
\end{proof}
\begin{proof}[Proof of Proposition \ref{prop:direct}]
Lemma \ref{lem:boundary.in.virt.cyc} implies that for each $n\in\Z$ we have $[-n,n]TS\subset [-n-M,n+M]T$. It follows that $([-n,n]T)^+\subset[-n-M,n+M]T$, and so $\partial^+([-n,n]T)$ has cardinality at most $2M|T|$. Lemma \ref{cor:boundary} therefore implies that the space of functions on $\partial^+([-n,n]T)$ that are harmonic on $[-n,n]T$ is of dimension at most $2M|T|$. However, $G=\bigcup_{n=1}^\infty [-n,n]T$, and so the space of harmonic functions on $G$ is also of dimension at most $2M|T|$.
\end{proof}
\begin{remark}
Taking $G=\Z$ and setting $\mu$ to be the uniform probability measure on $[-M,M]$ shows that the bound $2M|T|$ on the dimension of the space of harmonic functions in the proof of Proposition \ref{prop:direct} can be tight. In particular, the precise dimension depends on the measure $\mu$ as well as on the group $G$.
\end{remark}
\section{Positive harmonic functions on linear groups}\label{sec:linear}
If $G$ is a group and $\mu$ is a finitely supported generating probability measure then a \emph{positive harmonic function} on $(G,\mu)$ is a harmonic function $h:G\to\R$ that takes only positive values. G. Margulis \cite{margulis} showed that a nilpotent group admits no non-constant positive harmonic functions. More generally, we have the following result of W. Hebish and L. Saloff Coste.
\begin{prop}[Hebish--Saloff Coste \cite{heb-s-coste}]\label{prop:v.nilp.no.pos.harm}
Let $G$ be a virtually nilpotent group with a symmetric, finitely supported generating probability measure $\mu$. Then $(G,\mu)$ admits no non-constant positive harmonic functions.
\end{prop}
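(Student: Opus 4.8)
This is a theorem of Hebish--Saloff-Coste \cite{heb-s-coste}, so in the paper one may simply cite it; nevertheless, let me sketch how I would reconstruct a proof, by reducing it to Margulis's theorem \cite{margulis} for nilpotent groups. The plan is: first pass to a convenient finite-index subgroup; then transfer a putative positive harmonic function on $G$ to one on that subgroup via the trace of the random walk; apply Margulis there; and finally bootstrap back to $G$. For the first step, $G$ has a nilpotent subgroup of finite index, and Lemma \ref{lem:normal} lets us replace it by a subgroup $N$ that is both of finite index in $G$ and normal in $G$; as a subgroup of a nilpotent group, $N$ is still nilpotent.

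So let $h$ be a positive harmonic function on $(G,\mu)$, let $(X_n)$ be the random walk on $(G,\mu)$ started at $e\in N$, and let $0=T_0<T_1<T_2<\cdots$ be the successive times at which $X_n\in N$. Since $N\backslash G$ is finite the induced chain on $N\backslash G$ is recurrent, so the $T_k$ are a.s.\ finite and $(X_{T_k})_{k\ge0}$ is a random walk on $N$ with some generating step distribution $\mu_N$. Two facts now do the work. First, $h|_N$ is a positive $\mu_N$-harmonic function on $N$: this is the standard statement that a harmonic function stays harmonic for the trace of the walk on a recurrent set (formally, $h(X_n)$ is a positive martingale, and one checks $\sum_s\mu_N(s)h(xs)=h(x)$ for $x\in N$). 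Second, $\mu_N$ is symmetric: a symmetric walk is reversible with respect to the counting measure, so for $s\in N$ any excursion path $e=x_0,x_1,\ldots,x_n=s$ with $x_1,\ldots,x_{n-1}\notin N$ has the same probability as its reversal $s,x_{n-1},\ldots,x_1,e$, and left-translating that reversal by $s^{-1}$ (which preserves path probabilities, and maps excursions avoiding $N$ to excursions avoiding $N$ because $s\in N$) sets up a bijection onto the excursions from $e$ to $s^{-1}$; hence $\Prob_e[\,X_{T_1}=s\,]=\Prob_e[\,X_{T_1}=s^{-1}\,]$. Margulis's theorem, for symmetric measures on the nilpotent group $N$, therefore forces $h|_N$ to be constant. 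Applying this to each left translate $x\mapsto h(gx)$, which is again positive and harmonic, shows $h$ is constant on every left coset $gN$; so $h$ descends to a function on the finite group $G/N$ that is harmonic for the pushforward of $\mu$, and the maximum principle (Lemma \ref{lem:max.princ}) makes that function, and hence $h$, constant.

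The main obstacle is that Proposition \ref{prop:v.nilp.no.pos.harm} (and Margulis's theorem as I would want to invoke it) is stated for \emph{finitely supported} $\mu$, whereas $\mu_N$ need not be finitely supported --- the walk can wander far inside $G$ before returning to $N$. What rescues the argument is that, $N\backslash G$ being finite, the return time $T_1$ has an exponential tail, so (as $\mu$ is finitely supported) $\mu_N$ has exponentially decaying tails and finite moments of every order; one therefore needs Margulis's theorem in the form valid for symmetric measures of finite second moment, which is available. Alternatively --- and this is closer to what \cite{heb-s-coste} actually does --- one can avoid the reduction altogether: a symmetric finitely supported walk on a group of polynomial growth satisfies a scale-invariant elliptic Harnack inequality, and applying it on balls of every radius shows that any positive harmonic function $h$ on $G$ satisfies $\sup_G h\le C\inf_G h$; thus $h$ is bounded, and since groups of polynomial (indeed subexponential) growth have no non-constant bounded harmonic functions for symmetric walks, $h$ is constant.
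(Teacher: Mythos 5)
The paper offers no proof of this proposition: it is stated as a known result and attributed to \cite{heb-s-coste}, so your primary answer --- simply cite it --- coincides exactly with what the paper does. Of your two supplementary sketches, the second (the scale-invariant elliptic Harnack inequality forcing $\sup_G h\le C\inf_G h$, hence boundedness of $h$, hence constancy by the Liouville property for symmetric walks on groups of subexponential growth) is essentially the actual argument of \cite{heb-s-coste} and is sound as an outline. The first sketch, inducing the walk on a finite-index normal nilpotent subgroup $N$, is a reasonable alternative but has two points that need more care than you give them. First, the identity $\sum_s\mu_N(s)h(xs)=h(x)$ is optional stopping for a positive martingale at an unbounded stopping time, which a priori yields only the inequality $\le$ (i.e.\ superharmonicity of $h|_N$); one must rule out mass escaping along long excursions, for instance by combining the pointwise bound $h(xs)\ge\mu(s)h(x)$ (valid since $\mu$ is symmetric) with the uniform exponential tail of the return time to show $\E_x\left[h(X_n)1_{\{T_1>n\}}\right]\to0$. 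Second, Margulis's theorem \cite{margulis} is stated for finitely supported measures, and $\mu_N$ is not finitely supported; your appeal to a version for symmetric measures with finite second (or exponential) moments is plausible but is asserted rather than justified, and is the weakest link of the reduction. Neither issue is fatal, but as written the first sketch is a proof outline with gaps rather than a proof.
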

P. Bougerol and L. Elie show that for linear groups the converse is also true.
\begin{prop}[Bougerol--Elie \cite{boug-elie}]\label{prop:lin.pos.harm}
Let $G$ be a subgroup of $GL_d(\R)$ that is not virtually nilpotent, and let $\mu$ be a symmetric, finitely supported generating probability measure on $G$. Then $(G,\mu)$ admits a non-constant positive harmonic function.
\end{prop}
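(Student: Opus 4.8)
The plan is to split the argument along the Tits alternative: since $G$ is finitely generated and linear, either $G$ contains a non-abelian free subgroup, or $G$ is virtually solvable, in which case it has exponential growth (it is not virtually nilpotent, so this follows from the Milnor--Wolf theorem). These two cases are of very different flavours.

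Suppose first that $G$ is non-amenable. By Kesten's criterion the symmetric random walk on $(G,\mu)$ has spectral radius $\rho(\mu)<1$; and for a symmetric finitely supported $\mu$ one has the elementary bound $h(\mu)\ge-\log\rho(\mu)>0$ for the Avez entropy (Cauchy--Schwarz gives $\max_{g}\mu^{*2n}(g)=\mu^{*2n}(e)\le\rho(\mu)^{2n}$, whence the Shannon entropy satisfies $H(\mu^{*2n})\ge-2n\log\rho(\mu)$). Positivity of the entropy forces the Poisson boundary of $(G,\mu)$ to be non-trivial, so there is a non-constant \emph{bounded} harmonic function $h$; since the constants are harmonic, $h+C$ is then a non-constant positive harmonic function for $C$ large. (Alternatively one can simply invoke the standard fact that a non-amenable group has non-trivial Poisson boundary with respect to any symmetric finitely supported generating measure.)

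The virtually solvable case is the substantial one, and is essentially the content of \cite{boug-elie} (and of Bougerol's analysis of Martin boundaries of solvable Lie groups, from which it can also be deduced). After a standard commensurability reduction one may assume $G$ is solvable of exponential growth. I would then use the matrix structure to pin down the source of the growth: in the derived series $G\trianglerighteq G'\trianglerighteq G''\trianglerighteq\cdots$, the conjugation action of $G$ on some abelian section $A=G^{(i)}/G^{(i+1)}$ is not virtually unipotent --- otherwise a standard argument forces $G$ to be virtually nilpotent --- and linearity together with exponential growth lets one find an element of $G$ acting on $A$ with an eigenvalue of absolute value different from $1$, i.e.\ with a genuinely contracting direction. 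Since $G^{(i+1)}$ is characteristic in $G$, we may pass to the quotient $G/G^{(i+1)}$, which is again not virtually nilpotent, so by Lemma \ref{lem:harm.pullback} it is enough to build a non-constant positive harmonic function on this (abelian)-by-(virtually nilpotent) group. There the contracting dynamics force the random walk, projected onto a suitable completion of the abelian kernel $A$, to converge; the resulting $\mu$-stationary exit measure $\nu$ has full support on the limit space, and the Martin kernels $g\mapsto\frac{d(g_*\nu)}{d\nu}(\xi)$, for $\xi$ in that space, are positive harmonic functions, non-constant because $\nu$ fails to be $G$-invariant.

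The hard part is exactly this last step. There is no cheap substitute: a homomorphism $\psi\colon G\to\R_{>0}$ is useless here, since symmetry of $\mu$ makes $t\mapsto\sum_{s\in\supp\mu}\mu(s)\psi(s)^{t}$ strictly convex with minimum value $1$ at $t=0$, so that no power of $\psi$ is harmonic; the harmonic functions one needs are genuinely twisted by the base and require the boundary-theoretic construction above. The remaining technical work is to realise the relevant abelian-by-(virtually nilpotent) group as a \emph{quotient} rather than merely a subquotient (so that Lemma \ref{lem:harm.pullback} applies), to carry out the commensurability reduction in the generality in which \cite{boug-elie} operates, and to treat the possibly non-finitely-generated abelian section $A$ --- the subgroup $\Z[1/n]$ inside $BS(1,n)$ being the guiding example --- together with its completion with care; for all of this we refer the reader to \cite{boug-elie}.
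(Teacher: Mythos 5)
The paper does not actually prove this proposition --- it is imported wholesale from \cite{boug-elie} --- so there is no internal argument to compare yours against; the relevant question is whether your proposal constitutes a self-contained proof, and it does not quite. Your non-amenable branch is complete and correct: the chain (Kesten) $\rho(\mu)<1$ $\Rightarrow$ $H(\mu^{*2n})\ge-\log\mu^{*2n}(e)\ge-2n\log\rho(\mu)$ $\Rightarrow$ positive Avez entropy $\Rightarrow$ (Derriennic/Kaimanovich--Vershik, using $H(\mu)<\infty$) non-trivial Poisson boundary $\Rightarrow$ a non-constant bounded harmonic function, shifted by a constant, is airtight, and it is a genuinely more elementary argument than anything in \cite{boug-elie} for that case. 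It is worth noting, though, that for the paper's only application of this proposition (Corollary \ref{cor:linear} feeding into Theorem \ref{con:harm.fin.dim}) the non-amenable case is the dispensable one, since a non-amenable group already has an infinite-dimensional space of bounded harmonic functions.

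The gap is therefore exactly where you flag it: the virtually solvable case, which by the Tits alternative is the entire content of the theorem for amenable linear groups, is only sketched and ultimately resolved by citing \cite{boug-elie} --- the same move the paper itself makes for the whole proposition. Your sketch is plausible and your negative observation about characters (strict convexity of $t\mapsto\sum_{s\in\supp\mu}\mu(s)\psi(s)^t$ under symmetry of $\mu$) correctly identifies why no cheap construction works, but the decisive steps are all deferred: producing a quotient (not merely a subquotient) of the required abelian-by-(virtually nilpotent) form so that Lemma \ref{lem:harm.pullback} applies; locating an eigenvalue off the unit circle, where one must allow non-archimedean places (your $\Z[1/n]\le BS(1,n)$ example already forces a $p$-adic completion in general); proving convergence of the projected walk to a stationary measure $\nu$ on that completion; and, crucially, showing $\nu$ is not $G$-invariant so that the Martin kernels are non-constant. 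None of these is routine, and the last two are the analytic heart of \cite{boug-elie}. So the proposal should be read as a correct reduction isolating what must be quoted, not as a proof.
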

The purpose of this section is to show that, in that case, there are in fact many positive harmonic functions.
\begin{prop}\label{prop:pos.harm.hom}
Let $G$ be a group with a symmetric, finitely supported generating probability measure $\mu$, and suppose that $(G,\mu)$ admits at least one non-constant positive harmonic function. Then the set of positive harmonic functions on $(G,\mu)$ spans an infinite-dimensional space.
\end{prop}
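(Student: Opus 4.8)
The plan is to exploit the group structure: if $h$ is a non-constant positive harmonic function on $(G,\mu)$, then so is every left-translate $g \cdot h$ defined by $(g\cdot h)(x) = h(g^{-1}x)/h(g^{-1})$ (the normalisation ensures $(g\cdot h)(e) = 1$, keeping the family in a fixed affine slice of the cone of positive harmonic functions). Each $g\cdot h$ is positive, and harmonicity is preserved because the Laplacian $\Delta_\mu$ acts on the right while the translation acts on the left, so the two commute. Thus it suffices to show that the span of $\{g\cdot h : g \in G\}$ is infinite-dimensional.

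First I would argue by contradiction: suppose this span is a finite-dimensional space $W \subset H(G,\mu)$. Then $W$ is a finite-dimensional vector space of harmonic functions that is invariant under the left-translation action of $G$. The key point is that this action of $G$ on the finite-dimensional space $W$ gives a finite-dimensional linear representation $\rho : G \to GL(W)$. Restricting attention to the subrepresentation generated by $h$, and passing to $W$ spanned by translates of $h$, one obtains a linear group $\rho(G)$ as a quotient (or near-quotient) of $G$, together with the observation that the positive harmonic function $h$ is, in a suitable sense, a matrix coefficient of this representation. Next I would invoke the structure of such representations: a finite-dimensional space of harmonic functions spanned by translates of a \emph{positive} one forces strong constraints — in particular, one should be able to locate an abelian (or at least nilpotent) situation inside $\rho(G)$, since the relevant eigenvalue structure of $\rho$ on $W$ must be compatible with positivity everywhere, which rules out oscillatory behaviour.

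The cleanest route, and the one I would pursue first, is: assume $W$ is finite-dimensional, apply Proposition~\ref{prop:lin.pos.harm} in its contrapositive form to the linear group $\rho(G)$. Indeed, $\rho(G) \leq GL(W) = GL_d(\R)$ for $d = \dim W$. If $\rho(G)$ were not virtually nilpotent, then $(\rho(G), \rho(\mu))$ would itself admit non-constant positive harmonic functions, and one would want to pull these back to $G$ and iterate — but a better plan is to instead show directly that $\rho(G)$ \emph{being} virtually nilpotent already yields a contradiction. If $\rho(G)$ is virtually nilpotent, then since the functions in $W$ (being matrix coefficients of $\rho$, composed with evaluation) effectively factor through $\rho(G)$ up to the structure of $h$, one uses Proposition~\ref{prop:v.nilp.no.pos.harm}: a virtually nilpotent group admits no non-constant positive harmonic function with respect to any symmetric finitely supported generating measure. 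Hence any positive harmonic function arising from $\rho(G)$ is constant, which contradicts the assumption that $h$ itself is non-constant (here one must be careful to check that the reduction genuinely produces a positive harmonic function on $\rho(G)$ pushing forward $h$, possibly via Lemma~\ref{lem:harm.pullback} applied to the quotient map $G \to \rho(G)$).

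The main obstacle, and where I would spend most of the effort, is making precise the claim that a finite-dimensional $G$-invariant space $W$ of positive-spanned harmonic functions "factors through" a linear quotient in a way that lets Propositions~\ref{prop:v.nilp.no.pos.harm} and \ref{prop:lin.pos.harm} bite. The representation $\rho: G \to GL(W)$ always exists, but the kernel $N = \ker\rho$ need not be trivial, and one must verify that the harmonic functions in $W$ are genuinely constant on cosets of $N$ (so that they descend to $G/N$); this should follow because every $w \in W$ satisfies $w(nx) \cdot (\text{const}) = w(x)$ for $n \in N$ by definition of $\rho$ acting trivially, modulo the normalisation subtlety with $h(e)$. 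Once that is secured, $G/N$ embeds in $GL(W)$, is therefore linear, and the dichotomy of Propositions~\ref{prop:v.nilp.no.pos.harm}--\ref{prop:lin.pos.harm} closes the argument: if $G/N$ is virtually nilpotent we get a contradiction from the existence of the non-constant positive harmonic function $h$ descended to $G/N$; and $G/N$ cannot fail to be virtually nilpotent while $W$ stays finite-dimensional, because then Proposition~\ref{prop:lin.pos.harm} produces yet more positive harmonic functions on $G/N$, which pull back to $G$ but — and this is the real content — must already lie in the $G$-invariant span, a tension one resolves by noting the Poisson-boundary/matrix-coefficient rigidity. I would present the virtually nilpotent branch as the clean contradiction and handle the other branch by showing it cannot occur under the finite-dimensionality hypothesis.
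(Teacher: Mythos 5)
Your overall strategy---pass to a finite-dimensional $G$-invariant space $W$ of harmonic functions, obtain a representation $\rho:G\to GL(W)$, note that the functions in $W$ descend to $G/\ker\rho$, and then play Propositions \ref{prop:v.nilp.no.pos.harm} and \ref{prop:lin.pos.harm} off against each other---has a genuine gap in the second branch, and you essentially concede this yourself. If $G/\ker\rho$ is virtually nilpotent, your contradiction is sound: $h$ descends (via Lemma \ref{lem:harm.pullback}) to a non-constant positive harmonic function on a virtually nilpotent group, contradicting Proposition \ref{prop:v.nilp.no.pos.harm}. But if $G/\ker\rho$ is linear and \emph{not} virtually nilpotent, Proposition \ref{prop:lin.pos.harm} only tells you that $G/\ker\rho$ admits some non-constant positive harmonic function---which you already knew, since $h$ descends to one. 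There is no tension: nothing forces those new functions to lie in $W$, which is merely the span of translates of the single function $h$, not of all positive harmonic functions. The appeal to ``Poisson-boundary/matrix-coefficient rigidity'' is not an argument, and the tools you cite do not close this branch.

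The missing ingredient is the minimal Martin boundary, which is what the paper uses to guarantee that the relevant quotient is virtually \emph{abelian} rather than merely linear, so that the problematic branch never arises. Concretely: assuming the span of \emph{all} positive harmonic functions is finite dimensional (the correct contrapositive hypothesis, which is weaker than assuming your $W$ is finite dimensional), the normed minimal harmonic functions are linearly independent (by uniqueness of the representing measure in (\ref{eq:Martin})), hence finite in number, say $h_1,\dots,h_m$. The $G$-action permutes them up to positive scalars, so each has a finite-index stabiliser; on a finite-index normal subgroup $H$ contained in all the stabilisers one gets $g\cdot h_i=h_i(g^{-1})h_i$, i.e.\ each $h_i$ is multiplicative on $H$ and therefore factors through the virtually abelian group $G/[H,H]$. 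The Martin representation then forces \emph{every} positive harmonic function to factor through $G/[H,H]$, and Proposition \ref{prop:v.nilp.no.pos.harm} makes them all constant. Without the multiplicativity of minimal harmonic functions on a finite-index subgroup you have no way to upgrade ``linear'' to ``virtually nilpotent'', and your argument stalls exactly where you flagged the difficulty.
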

The following is then immediate.
\begin{corollary}\label{cor:linear}
Let $G$ be a subgroup of $GL_d(\R)$ that is not virtually nilpotent, and let $\mu$ be a symmetric, finitely supported generating probability measure on $G$. Then the positive harmonic functions on $(G,\mu)$ span an infinite-dimensional space.
\end{corollary}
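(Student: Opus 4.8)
Corollary \ref{cor:linear} follows at once from the two preceding propositions: by Proposition \ref{prop:lin.pos.harm} (Bougerol--Elie) the pair $(G,\mu)$ carries a non-constant positive harmonic function, and then Proposition \ref{prop:pos.harm.hom} gives that the positive harmonic functions span an infinite-dimensional space. So the real content is Proposition \ref{prop:pos.harm.hom}, which I would prove as follows. Write $W$ for the linear span of all positive harmonic functions on $(G,\mu)$, and argue by contradiction, supposing $k:=\dim W<\infty$. The cone $C\subseteq W$ of positive harmonic functions is salient (if $f$ and $-f$ are both $\ge0$ then $f=0$) and spans $W$, and by the classical Poisson--Martin representation theory of positive harmonic functions (see e.g.\ \cite{woess}) the base $\{h\in C:h(e)=1\}$ is a Choquet simplex whose extreme points are exactly the minimal positive harmonic functions. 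Since the extreme points of a Choquet simplex are linearly independent, finite-dimensionality of $W$ forces there to be only finitely many minimal positive harmonic functions $h_1,\dots,h_k$, normalised by $h_i(e)=1$; these form a basis of $W$, and every positive harmonic function is a non-negative combination of them. (Were there infinitely many minimal positive harmonic functions we would already be done, as they are linearly independent.)

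The next step is to bring in the left-translation action $L_g\colon f\mapsto f(g^{-1}\,\cdot\,)$. This commutes with $\Delta_\mu$ and preserves positivity, so it is a linear automorphism of $W$ preserving $C$; hence it permutes the extreme rays $\R_{\ge0}h_i$ and induces a homomorphism $\sigma\colon G\to S_k$ into the symmetric group. Fix $i$ and set $H:=\{g\in G:\sigma(g)(i)=i\}$, a subgroup of index at most $k$. For $g\in H$ we have $L_gh_i=\lambda(g)h_i$ for some $\lambda(g)>0$; evaluating at $e$ gives $\lambda(g)=h_i(g^{-1})$, and then $L_{g_1g_2}=L_{g_1}L_{g_2}$ forces $g\mapsto h_i(g^{-1})$ to be a homomorphism $H\to\R_{>0}$, equivalently $\chi:=h_i|_H$ is a positive character of $H$. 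The heart of the matter is to show $\chi\equiv1$ -- this is where symmetry of $\mu$ is essential.

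To that end I would pass to the induced walk on $H$. Let $(X_n)$ be the $\mu$-random walk started at $e$, and let $T$ be its first return time to $H$; since $H$ has finite index, $T<\infty$ almost surely, and the law $\mu_H$ of $X_T$ is a generating probability measure on $H$. Because $\mu$ is symmetric the walk is reversible with respect to the counting measure, and the trace of a reversible walk on a subset is again reversible, so $\mu_H$ is symmetric (though now with infinite support). Since $(h_i(X_n))$ is a non-negative martingale, optional stopping together with Fatou's lemma gives $\sum_s\mu_H(s)\chi(s)=\E_e[h_i(X_T)]\le h_i(e)=1$ (so in particular the series converges), while by symmetry $\sum_s\mu_H(s)\chi(s)^{-1}=\sum_s\mu_H(s)\chi(s)\le1$. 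Adding these and using $t+t^{-1}\ge2$ forces $\chi(s)+\chi(s)^{-1}=2$, i.e.\ $\chi(s)=1$, for every $s\in\supp\mu_H$; as $\supp\mu_H$ generates $H$ and $\chi$ is a homomorphism, $\chi\equiv1$. Hence $\lambda\equiv1$, so $h_i$ is left-invariant under $H$, factors through the finite coset space $H\backslash G$, is bounded, and is therefore constant by the maximum principle (Lemma \ref{lem:max.princ}); being normalised, $h_i\equiv1$. This holds for every $i$, but $h_1,\dots,h_k$ are distinct, so $k=1$ and the only positive harmonic functions are the constants -- contradicting the hypothesis, so $\dim W=\infty$.

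The main obstacle, I expect, is the step just described: one must check that the trace walk $\mu_H$ really is a \emph{symmetric} generating probability measure on $H$, and that $h_i$ restricts to a $\mu_H$-superharmonic (not merely finite) function. Both are standard facts about traces of reversible Markov chains, but they must be applied with care since $\mu_H$ is no longer finitely supported, so only the optional-stopping/Fatou inequality is available rather than an exact mean-value identity -- which is precisely why it is convenient that the ``$t+t^{-1}\ge2$'' argument only needs the one-sided bound. Everything else is comparatively soft: the reduction to finitely many minimal harmonic functions is pure Choquet theory, and the passage from ``$\chi$ trivial'' to ``$h_i$ constant'' is exactly the maximum principle already recorded in the paper.
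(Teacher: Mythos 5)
Your deduction of the corollary itself is exactly the paper's: it is immediate from Proposition \ref{prop:lin.pos.harm} together with Proposition \ref{prop:pos.harm.hom}. Your proof of Proposition \ref{prop:pos.harm.hom}, however, genuinely diverges from the paper's in its final step, and both arguments are correct. The two proofs agree up to the point where one has finitely many normed minimal harmonic functions $h_1,\dots,h_m$ (their linear independence, your ``Choquet'' step, is Lemma \ref{lem:min.lin.indep} and rests on uniqueness of the representing measure, which is worth saying explicitly), a finite-index stabiliser $H$ of each extreme ray, and the identity $h_i(g^{-1}x)=h_i(g^{-1})h_i(x)$ for $g\in H$, so that $\chi:=h_i|_H$ is a positive character. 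At that point the paper observes that every positive harmonic function must factor through the virtually abelian quotient $G/[H,H]$ and invokes the Hebisch--Saloff-Coste theorem (Proposition \ref{prop:v.nilp.no.pos.harm}) to conclude constancy. You instead kill the character directly: the trace of the $\mu$-walk on $H$ is a symmetric generating measure $\mu_H$, optional stopping plus Fatou give $\sum_s\mu_H(s)\chi(s)\le1$, symmetry of $\mu_H$ gives the same bound for $\chi^{-1}$, and $t+t^{-1}\ge2$ forces $\chi\equiv1$ on $\supp\mu_H$ and hence on all of $H$; then $h_i$ descends to the finite coset space $H\backslash G$ and is constant. Your route is more self-contained --- it removes the dependence on Proposition \ref{prop:v.nilp.no.pos.harm} altogether --- at the cost of the standard but not entirely free verifications that $\mu_H$ is symmetric (reversibility of the trace chain with respect to counting measure) and that $\supp\mu_H$ generates $H$ (splitting a $\supp\mu$-word for an element of $H$ at its visits to $H$). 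One small point to tighten: the final appeal to Lemma \ref{lem:max.princ} needs the maximum of $h_i$ to be \emph{attained}, which holds because $h_i$ takes only finitely many values once it factors through $H\backslash G$; boundedness alone would not suffice for the maximum principle as stated in the paper.
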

\begin{question}
Does an arbitrary non-virtually nilpotent group with a symmetric, finitely supported generating probability measure admit a non-constant positive harmonic function?
\end{question}
\bigskip

\noindent In proving Proposition \ref{prop:pos.harm.hom} we make use of the \emph{minimal Martin boundary} of $(G,\mu)$.
\begin{definition}[Minimal harmonic function]
Given a group $G$ with a finitely supported generating probability measure $\mu$, a \emph{minimal harmonic function} on $(G,\mu)$ is a positive harmonic function $f:G\to\R$ with the property that every other positive harmonic function $f':G\to\R$ satisfying $f'\le f$ is a constant multiple of $f$. A \emph{normed minimal harmonic function} $f:G\to\R$ is a minimal harmonic function satisfying $f(e)=1$.
\end{definition}
\begin{definition}[Minimal Martin boundary]
The \emph{minimal Martin boundary} $\Delta(G,\mu)$ of the pair $(G,\mu)$ is the compact closure, in the topology of pointwise convergence, of the set of normed minimal harmonic functions on $(G,\mu)$.
\end{definition}
Each positive harmonic function $f:G\to\R$ has a unique \emph{representing measure} $\nu_f$ on the minimal Martin boundary, which is to say a measure $\nu_f$ such that
\begin{equation}\label{eq:Martin}
f(x)=\int_\Delta h(x)d\nu_f(h)
\end{equation}
for every $x\in G$ (see \cite[\S0.3]{kai-ver} or \cite[\S7, p. 32]{woess}).
\begin{lemma}\label{lem:min.lin.indep}
The set of normed minimal harmonic functions on a group $G$ with respect to a finitely supported generating probability measure $\mu$ is linearly independent.
\end{lemma}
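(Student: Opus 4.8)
The plan is to show that any finite collection $\{f_1,\dots,f_n\}$ of distinct normed minimal harmonic functions on $(G,\mu)$ is linearly independent, arguing by contradiction via the uniqueness of Martin representing measures. So suppose $\sum_{i=1}^n c_if_i=0$ with the $c_i$ not all zero. The first step is to separate the indices by the sign of the coefficient: put $P=\{i:c_i>0\}$ and $N=\{i:c_i<0\}$, and rewrite the relation as
\[
g:=\sum_{i\in P}c_if_i=\sum_{j\in N}(-c_j)f_j .
\]
Evaluating at the identity and using $f_i(e)=1$ together with the positivity of each $f_i$, one sees that if precisely one of $P,N$ were empty we would have a sum of strictly positive reals equal to $0$; hence both are non-empty, $g$ is a genuine positive harmonic function, and $g(e)=\sum_{i\in P}c_i=\sum_{j\in N}(-c_j)=:c>0$.

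The second, and crucial, step is to compute the representing measure $\nu_g$ of $g$ from (\ref{eq:Martin}) in two ways. Every normed minimal harmonic function $h$ lies in $\Delta(G,\mu)$, and the unit point mass $\delta_h$ represents $h$, since $\int_\Delta h'(x)\,d\delta_h(h')=h(x)$; by uniqueness of the representing measure we get $\nu_h=\delta_h$. The same uniqueness makes $f\mapsto\nu_f$ additive on positive harmonic functions and positively homogeneous, so applying it to the two expressions for $g$ gives
\[
\sum_{i\in P}c_i\,\delta_{f_i}=\nu_g=\sum_{j\in N}(-c_j)\,\delta_{f_j}.
\]
Since the $f_i$ are pairwise distinct, the left-hand measure is carried by $\{f_i:i\in P\}$ and the right-hand measure by the disjoint set $\{f_j:j\in N\}$, while both measures are non-zero (each has total mass $c>0$) --- a contradiction. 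Hence no non-trivial relation exists.

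I expect the main point to be conceptual rather than computational: minimality of the $f_i$ alone does \emph{not} force linear independence (extreme points of a convex set need not be linearly independent), so the argument must genuinely use the \emph{uniqueness} of the Martin representing measure --- equivalently, that the compact convex set of positive harmonic functions taking a prescribed value at $e$ is a simplex. Beyond that, the only thing to verify carefully is the minor point that every element of $\Delta(G,\mu)$ is itself a positive harmonic function with value $1$ at $e$ (so that $\delta_h$ is legitimately a probability measure on $\Delta$ and the integral in (\ref{eq:Martin}) makes sense); this holds because $\mu$ is finitely supported, so $\Delta_\mu$ is a finite average and pointwise limits of $\mu$-harmonic functions are $\mu$-harmonic, with positivity inherited via Harnack's inequality. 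The sign-splitting and the evaluation-at-$e$ step are then routine.
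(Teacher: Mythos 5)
Your proof is correct and follows essentially the same route as the paper's: split the relation by sign of the coefficients, rewrite it as an equality of two non-negative harmonic functions, and invoke uniqueness of the Martin representing measure to force all coefficients to vanish. The paper's version is terser (it does not spell out that the two sides have representing measures supported on disjoint finite sets of point masses), but the underlying argument is identical.
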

\begin{proof}
Suppose that $h_1,\ldots,h_r$ are distinct minimal harmonic functions and let $\alpha_1,\ldots,\alpha_r$ be such that $\sum_{i=1}^m\alpha_ih_i=0$. Without loss of generality we may assume that $\alpha_i\le0$ for $i\le k$, and that $\alpha_i\ge0$ for $i>k$, and so in fact we have $\sum_{i=1}^k(-\alpha_i)h_i=\sum_{i=k+1}^m\alpha_ih_i$. However, both the left-hand side and the right-hand side of this expression are non-negative harmonic functions, and so it follows from the uniquness of the representation (\ref{eq:Martin}) that the $\alpha_i$ are all zero.
\end{proof}
\begin{proof}[Proof of Proposition \ref{prop:pos.harm.hom}]
We prove the contrapositive. Suppose that the set of positive harmonic functions on $G$ does not span an infinite-dimensional space. By Lemma \ref{lem:min.lin.indep} this implies in particular that the set of normed minimal harmonic functions is finite, so we may enumerate them as $h_1,\ldots,h_m$.

The group $G$ acts on the space of all harmonic functions via $g\cdot f(x)=f(g^{-1}x)$. The image of a minimal harmonic function under this action is another minimal harmonic function, and so in particular for each $i=1,\ldots,m$ and each $g\in G$ we have some $\alpha_{g,i}\in\R$ and some $g\cdot i\in[m]$ such that $g\cdot h_i=\alpha_{g,i} h_{g\cdot i}$. As the notation $g\cdot i$ implicitly suggests, this defines an action of $G$ on the set $[m]$.

By the orbit-stabiliser theorem, for each $i$ the stabiliser $H_i$ of $i$ is of finite index in $G$; by Lemma \ref{lem:normal}, we may set $H$ to be a normal subgroup of $G$ that has finite index in $\bigcap_{i=1}^mH_i$, and hence in $G$. For every $g\in H$ we have $g\cdot h_i=\alpha_{g,i}h_i$, which is to say that $h_i(g^{-1}x)=\alpha_{g,i}h_i(x)$ for every $x\in G$ and every $i$. Taking $x=e$, and noting that $h_i(e)=1$, we see that $\alpha_{g,i}=h_i(g^{-1})$, and so this implies that $h_i(g^{-1}x)=h_i(g^{-1})h_i(x)$ for every $g\in H$ and every $x\in G$.

This implies that the restriction of $h_i$ to $H$ is a homomorphism into $\R^\times$, and moreover that $h_i(cx)=h_i(x)$ for every $c\in[H,H]$ and every $x\in G$. We conclude that each $h_i$ factors through $G/[H,H]$ (noting that $[H,H]$ is characteristic in $H$, and hence normal in $G$).

Let $p:G\to\R$ be a positive harmonic function. Since $p$ can be expressed in the form (\ref{eq:Martin}), $p$ must also factor through $G/[H,H]$, and so writing $\phi:G\to G/[H,H]$ we have $p=\hat p\circ\phi$, with $\hat p:G/[H,H]\to\R$ harmonic by Lemma \ref{lem:harm.pullback}. However, the abelian group $H/[H,H]$ is of finite index in $G/[H,H]$, and so Proposition \ref{prop:v.nilp.no.pos.harm} therefore implies that $\hat p$, and hence $p$, is constant.
\end{proof}
\section{Random walks on virtually cyclic groups}\label{sec:v.cyc}
In this section we consider an infinite group $G$ with a finite-index normal cyclic subgroup $\Z$ and a symmetric, finitely supported generating probability measure $\mu$. In a similar fashion to Section \ref{sec:v.ab}, we consider a finite set $T$ such that each $g\in G$ can be expressed uniquely as $g=\zeta(g)\tau(g)$ with $\zeta(g)\in\Z$ and $\tau(g)\in T$.

In general we continue to denote the identity of $G$ by $e$, the inverse of an element $g$ by $g^{-1}$, and the composition of two group elements $g,h$ by $gh$. However, when composing elements of $\Z$ with one another we often switch to additive notation to emphasise the integer structure. Thus, for example, we sometimes denote the identity by $0$, the inverse of $m$ by $-m$ and the composition of $m$ and $n$ by $m+n$, provided $m,n\in\Z$. This should not cause confusion since, whilst the notation for a given group element is not unique, neither is it ambiguous (in particular, we never multiply together two elements of $\Z$). For the avoidance of doubt, the notation $1$ always represents a generating element of the subgroup $\Z$, and never the identity element of $G$.

For each $n\in\N$ write $T^+_n=\min\{t\ge0:\zeta(X_t)\ge n\}$ and $T^-_n=\min\{t\ge0:\zeta(X_t)\le n\}$, noting that these quantities are almost surely finite. The purpose of this section is then to prove the following result.
\begin{lemma}\label{lem:rand.walk.cyc.linear}Let $m\in\Z$, and suppose that $g\in G$ with $m<\zeta(g)<m+R$. Let $M$ be as in Lemma \ref{lem:boundary.in.virt.cyc}. Then
\[
\Prob_g\left[\,T^+_{m+R}<T^-_m\,\right]=\frac{\zeta(g)-m}{R+M}+O\left(\frac{1}{R}\right)
\]
\end{lemma}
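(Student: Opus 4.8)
The heart of the matter is that the process $Z_t := \zeta(X_t)$ is a near-martingale on $\Z$ with bounded increments (by Lemma~\ref{lem:boundary.in.virt.cyc}, $|Z_{t+1}-Z_t|\le M$ always), and that its drift is zero — this is where symmetry of $\mu$ enters. First I would verify the drift claim carefully: for $g\in G$ one has $\E_g[\zeta(X_1)-\zeta(g)] = \sum_{s\in S}\mu(s)(\zeta(gs)-\zeta(g))$, and since the value of $\zeta(gs)-\zeta(g)$ depends only on $\tau(g)$ and $s$ (as in the proof of Lemma~\ref{lem:boundary.in.virt.cyc}, $\zeta(gs)=\zeta(g)+\zeta(ts)$ where $t=\tau(g)$), this drift is a function $b(\tau(g))$ of the coset only. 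Symmetry of $\mu$, together with the fact that $\Z$ is central (it is normal cyclic of finite index, hence automatically central in a finite-index sense — or one argues directly), forces $\sum_t b(t)=0$ suitably averaged; in fact the cleanest route is to use that the harmonic function $f(g)=\zeta(g)+\varphi(\tau(g))$ from Lemma~\ref{lem:H1G} (the $d=1$ case) satisfies $\Delta f=0$, which says precisely that $\zeta(X_t)+\varphi(\tau(X_t))$ is a genuine martingale. So I would set $Y_t := \zeta(X_t)+\varphi(\tau(X_t))$ and work with that instead of $Z_t$ directly.

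Next, the stopping time. Let $\sigma := \min\{T^+_{m+R}, T^-_m\}$. Since the orbit of $e$ under translation by $1$ is infinite and the walk on $G$ restricted to the band $\{g : m < \zeta(g) < m+R\}$ must exit (the walk is irreducible and the band, intersected with any coset, is finite), $\sigma < \infty$ almost surely, and in fact $\E_g[\sigma] < \infty$ — this last point needs the standard gambler's-ruin-type bound for martingales with bounded increments on a strip of width $O(R)$, giving $\E_g[\sigma]\ll R^2$. Then optional stopping applied to the bounded-increment martingale $Y_t$ gives $\E_g[Y_\sigma] = Y_g = \zeta(g)+\varphi(\tau(g))$. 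On the event $\{T^+_{m+R}<T^-_m\}$ the walk has just crossed the level $m+R$, so $\zeta(X_\sigma)\in[m+R, m+R+M]$, hence $Y_\sigma = m+R + O(M) = m+R+O(1)$; on the complementary event $\zeta(X_\sigma)\in[m-M, m]$, so $Y_\sigma = m + O(1)$. Writing $p := \Prob_g[T^+_{m+R}<T^-_m]$, this yields
\[
\zeta(g)+\varphi(\tau(g)) = p\,(m+R) + (1-p)\,m + O(1) = m + pR + O(1),
\]
so $p = \dfrac{\zeta(g)-m}{R} + O\!\left(\dfrac{1}{R}\right)$, absorbing $\varphi(\tau(g))=O(1)$ into the error.

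The remaining discrepancy between this and the claimed formula is the difference between $\frac{\zeta(g)-m}{R}$ and $\frac{\zeta(g)-m}{R+M}$, which is $(\zeta(g)-m)\cdot\frac{M}{R(R+M)} = O(1/R)$ since $\zeta(g)-m < R$; so the two expressions agree up to the stated $O(1/R)$ error and either normalisation is fine. (The author presumably states it with $R+M$ because a sharper two-sided analysis of the overshoot pins down the constant that way, but for the stated accuracy this is immaterial.)

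**Main obstacle.** The genuinely delicate point is establishing $\E_g[\sigma]<\infty$ (equivalently, a good enough tail bound on $\sigma$) so that optional stopping is legitimate for the unbounded stopping time $\sigma$ — the increments of $Y_t$ are bounded, but $\sigma$ is not. The standard fix is: over any window of $C$ steps (for a suitable constant $C$ depending on $M$ and the walk), there is a fixed positive probability of exiting the strip $\{m<\zeta<m+R\}$ when started from within distance $M$ of a boundary, and more carefully one uses that $\E[(Y_{t\wedge\sigma})^2 - (t\wedge\sigma)\cdot c]$ is a supermartingale (with $c$ a lower bound on the conditional variance of the increments, which is positive and bounded below because the walk genuinely moves in the $\zeta$-direction with probability bounded below) to get $\E_g[\sigma]\le \E_g[(Y_\sigma-Y_g)^2]/c \le (R+O(M))^2/c \ll R^2$. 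With $\E_g[\sigma]<\infty$ and bounded increments, dominated convergence gives $\E_g[Y_\sigma]=Y_g$ and the rest is the elementary algebra above. A secondary, purely bookkeeping obstacle is keeping track of the overshoot terms $O(M)$ at both boundaries and confirming they collapse into the advertised $O(1/R)$; this is routine given $\zeta(g)-m<R$ and $M$ fixed.
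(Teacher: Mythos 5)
Your argument is correct, and it reaches the estimate by a genuinely different route from the paper's. You apply optional stopping to the martingale $Y_t=\zeta(X_t)+\varphi(\tau(X_t))$ (i.e.\ $f(X_t)$ for the harmonic $f$ of Lemma \ref{lem:H1G}) at the exit time $\sigma=\min\{T^+_{m+R},T^-_m\}$ and read off $p$ from $\E_g[Y_\sigma]=Y_0$ together with the $O(M)$ overshoot bounds. The paper instead stays entirely deterministic after invoking Lemma \ref{lem:H1G}: it rescales and shifts $f$ to produce two harmonic functions $f^\pm$ that bracket the boundary data $0$ and $1$ on the two ends of the finite strip $[m-M,m+R+M]T$, identifies the exit probability with the solution $h$ of the boundary-value problem via Lemma \ref{cor:boundary}, and squeezes $h$ between $f^-$ and $f^+$ using the comparison principle (Corollary \ref{cor:dominating.harm.fn}). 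The two arguments are dual to one another — the comparison principle is the deterministic face of optional stopping — but the paper's version buys freedom from any convergence issues, since everything happens on a finite set; your version has to justify passing to the limit in $\E[Y_{t\wedge\sigma}]$, which you do correctly (indeed more than you need: since $Y_{t\wedge\sigma}$ is uniformly bounded on the strip, almost-sure finiteness of $\sigma$ — which holds by recurrence of the virtually cyclic group $G$, or by your finite-band exit argument — plus dominated convergence already suffices, without the second-moment bound $\E_g[\sigma]\ll R^2$). Your closing observation that $\frac{\zeta(g)-m}{R}$ and $\frac{\zeta(g)-m}{R+M}$ differ by $O(1/R)$ is right, so the normalisation discrepancy is immaterial. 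The only blemish is the parenthetical claim that a finite-index normal cyclic subgroup is ``automatically central in a finite-index sense''; this is not needed (you correctly abandon that route in favour of Lemma \ref{lem:H1G}) and should be deleted rather than repaired.
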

\begin{proof}
By Lemma \ref{lem:H1G} there exists a function $\varphi:T\to\R$ such that the function $f:G\to\R$ given by $f(nt)=n+\varphi(t)$ is harmonic on $G$. Let $t_{\min}\in T$ be the point at which $\varphi$ takes its minimum value, and $t_{\max}\in T$ the point at which $\varphi$ takes its maximum value, and define two further harmonic functions $f^+,f^-:G\to\R$ by
\[
f^+=\frac{1}{R+M}\Big(f-f((m-M)t_{\min})\Big);\qquad f^-=\frac{1}{R+M}\Big(f-f((m+R+M)t_{\max})\Big)+1.
\]
Note the following properties of $f^+,f^-$.
\begin{enumerate}
\renewcommand{\labelenumi}{(\roman{enumi})}
\item We have $f^-(nt)\le0\le f^+(nt)$ whenever $n\in[m-M,m]$.
\item We have $f^-(nt)\le1\le f^+(nt)$ whenever $n\in[m+R,m+R+M]$.
\end{enumerate}
Moreover, $f^+-f^-$ is constant and given by
\begin{equation}\label{eq:f+-f-}
f^+-f^-=\frac{\varphi(t_{\max})-\varphi(t_{\min})+M}{R+M},
\end{equation}
and we have
\begin{equation}\label{eq:gradient}
f^+((n+1)t)-f^+(nt)=f^-((n+1)t)-f^-(nt)=\frac{1}{R+M}
\end{equation}
for every $n\in\Z$ and every $t\in T$.

Now define $h:[m-M,m+R+M]T\to\R$ by setting
\[
h(nt)=\begin{cases}
0 &\text{when $n\in[m-M,m]$}\\
1 &\text{when $n\in[m+R,m+R+M]$},
\end{cases}
\]
and requiring that $h$ be harmonic elsewhere. Lemma \ref{cor:boundary} and the definition of $M$ imply that $h$ is well defined by these stipulations, and moreover that
\begin{equation}\label{eq:h.is.prob}
h(g)=\Prob_g\left[\,T^+_{m+R}<T^-_m\,\right].
\end{equation}
Now Corollary \ref{cor:dominating.harm.fn}, properties (i) and (ii) of $f^+,f^-$ and the definition of $h$ imply that $f^-\le h\le f^+$, and hence (i), (ii), (\ref{eq:f+-f-}) and (\ref{eq:gradient}) imply that
\[
h(g)=\frac{\zeta(g)-m}{R+M}+O\left(\frac{1}{R}\right)
\]
The desired result then follows from (\ref{eq:h.is.prob}).
\end{proof}
\section{Harmonic functions on groups with virtually cyclic quotients}
In this section we consider groups with virtually cyclic quotients. A well-known example of a group with a genuinely cyclic quotient is the \emph{lamplighter group}. If $L$ is the $\Z/2\Z$-vector space of finitely supported functions $\Z\to\Z/2\Z$, viewed as an additive group, then the \emph{lamplighter group} $G$ is the semidirect product $G=\Z\ltimes L$ defined by the action of $m\in\Z$ on $L$ given by $m\cdot f(x)=f(x-m)$. Explicitly, the group operation is defined by $(m,f)\cdot(m',f')=(m+m',f+m\cdot f')$.

I. Benjamini, G. Kozma and Yadin \cite{gady} give an explicit construction of a positive harmonic function on the lamplighter group.
\begin{prop}[Benjamini--Kozma--Yadin, unpublished]\label{prop:gady.ll}
Let $G$ be the lamplighter group, and let $\mu$ be a symmetric, finitely supported generating probability measure on $G$. Denote the random walk on the lamplighter group by $(M_0,F_0)$, $(M_1,F_1)$, $(M_2,F_2),\ldots$. Let $\tau_r=\min\{t\ge0:|M_t|\ge r\}$, and define $h_r:G\to\R$ by $h_r(g)=\Prob_g[\,F_{\tau_r}(n)=0\text{ for all }n<0\,]$. Then $rh_r$ converges pointwise to a positive harmonic function on $G$.
\end{prop}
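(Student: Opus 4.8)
The plan is to set $h:=\lim_{r\to\infty}rh_r$ (once this limit is shown to exist pointwise) and then verify that $h$ is harmonic and everywhere positive. Write $\pi\colon G\to\Z$ for the homomorphism $(m,f)\mapsto m$, so that $M_t=\pi(X_t)$ is the random walk on $\Z$ driven by the push-forward $\pi(\mu)$; as $\mu$ is symmetric, finitely supported and generating, so is $\pi(\mu)$, whence $(M_t)$ is recurrent and $\tau_r<\infty$ almost surely. Fix $D\in\N$ with $|\pi(s)|\le D$ and with the $L$-part of $s$ supported in $[-D,D]$ for every $s\in\supp\mu$. Writing $G_r$ for the event in the statement, one has $0\le h_r\le1$, and the strong Markov property at time $1$ shows that $h_r$ is harmonic at every $g$ with $|\pi(g)|<r$ (there $\tau_r\ge1$, and conditioning on $X_1$ gives $h_r(g)=\sum_{s\in\supp\mu}\mu(s)h_r(gs)$). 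Thus for fixed $g$ and every $r>|\pi(g)|$ we have $\Delta_\mu(rh_r)(g)=0$; since $\Delta_\mu$ evaluated at $g$ is a finite linear combination of values at $g$ and its neighbours, $\Delta_\mu h(g)=\lim_r\Delta_\mu(rh_r)(g)=0$ once the pointwise limit is known to exist. Everything therefore reduces to two claims: (i) $rh_r(g)$ converges to a finite limit for each $g$; (ii) that limit is everywhere positive.

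Claim (ii) I would establish directly. The Cayley graph is connected, so from any $g$ the walk follows a fixed finite path to the element $(2D,\mathbf{0})$ (lamplighter at $2D$, all lamps off) with some probability $c_g>0$; and a standard gambler's-ruin estimate for the mean-zero bounded-jump walk $(M_t)$ shows that from $2D$ it reaches distance $r$ while staying in $\{M_t\ge D\}$ with probability $\gg1/r$. On the latter event the lamplighter never toggles a lamp at a negative position, so $F_{\tau_r}$ vanishes on $\Z_{<0}$; combining via the strong Markov property gives $h_r(g)\ge c_g'/r$ for large $r$, hence $h(g)\ge c_g'>0$. (One could instead invoke the minimum principle -- Lemma~\ref{lem:max.princ} applied to $-h$, with connectedness -- to see that a nonnegative harmonic function is either $\equiv0$ or everywhere positive, the bound excluding the first case.)

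The real content is (i). I would decompose $h_r(g)=\sum_{j\ge0}p_j(r)$, where $p_j(r)$ is the probability of $G_r$ intersected with the event that the $\Z$-path descends to depth exactly $-j$ before time $\tau_r$ (the terms with $j\approx r$ being those on which the walk reaches $-r$). Two estimates then drive the proof. First, gambler's ruin gives $\Prob_g[\,(M_t)\text{ reaches }+r\text{ before reaching level }-j-1\,]=\Theta((j+1+\pi(g))/r)$, and conditionally on that event the part of the walk that ever enters $\{M_t\le D\}$ -- which alone determines both $G_r$ and the depth -- has a limiting law as $r\to\infty$ (that of $(M_t)$ conditioned never to descend below $-j$); hence $r\,p_j(r)$ converges. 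Secondly, there should exist $\rho<1$ and $C<\infty$, independent of $g,r,j$, with
\[
\Prob_g\bigl[\,G_r \ \big| \ \text{the $\Z$-path up to }\tau_r\text{ descends exactly to depth }-j\,\bigr]\ \le\ C\rho^{\,j},
\]
so that the terms with $j\approx r$ are negligible and each $r\,p_j(r)$ is dominated by the summable quantity $C'(j+1+\pi(g))\rho^{\,j}$. Dominated convergence then gives $rh_r(g)\to\sum_{j\ge0}\lim_r r\,p_j(r)=:h(g)<\infty$, completing (i).

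I expect the geometric bound in the last display to be the main obstacle. Its content is that, as the $\Z$-projection sweeps down through $\Z_{<0}$, the lamp configuration deposited on the negative half-line is ``noisy enough'' that having it entirely off costs exponentially in the depth swept. The approach I would take is to cut $\Z_{<0}$ into consecutive blocks of a fixed width $W\gg D$: each time the running minimum of $(M_t)$ drops below a fresh block, that block acquires a contribution whose conditional law -- given the $\Z$-path and all toggles outside the block -- is supported on at least two values, so the block is entirely off with conditional probability at most $1-\delta$ for some $\delta=\delta(\mu,W)>0$; conditioning on the complementary randomness and multiplying over the $\asymp j/W$ blocks swept gives $C\rho^{\,j}$. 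This is the point at which one must use that $\supp\mu$ generates $G$, to guarantee a uniformly positive amount of noise per block -- arising either from genuine randomness in the $L$-parts of steps with a common $\pi$-increment, or, when those are rigid, from non-degenerate path statistics governing the parities of toggle counts. Making this per-block estimate uniform for an arbitrary generating $\mu$ is the bulk of the work; the gambler's-ruin estimates, the limit-passage in the first paragraph, and the positivity argument are routine.
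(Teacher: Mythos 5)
The first thing to note is that the paper does not actually prove Proposition \ref{prop:gady.ll}: it quotes it from the unpublished work \cite{gady} and instead proves the generalisation Proposition \ref{prop:gady'}, whose construction (stopping at $T^+_R<T^-_{-R}$ and asking whether $\kappa(X_{T^+_R})\in U_0$) is a variant of the one in the statement, and which only establishes convergence of $R\cdot h_R$ along a subsequence. Measured against that proof, your architecture is essentially the same: harmonicity passing to the pointwise limit, the $\Theta(1/r)$ upper and lower bounds via gambler's ruin on the $\Z$-projection (the paper's Lemma \ref{lem:rand.walk.cyc.linear} and Lemma \ref{lem:gady.function}), and a decomposition over the depth $j$ of the excursion into $\Z_{<0}$, with the whole weight of the argument resting on the estimate that, conditionally on sweeping to depth $-j$, the negative lamps are all off with probability $\le C\rho^j$. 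You have correctly identified this as the crux, but you have not proved it, and the mechanism you sketch for it fails as stated: for the standard generating set $\{(\pm1,0),(0,\delta_0)\}$ the full $\Z$-path (including its lazy steps) determines the lamp configuration deterministically, so ``the conditional law of the block's contribution given the $\Z$-path and the toggles outside the block'' is a point mass and yields no factor $1-\delta$. You acknowledge this case, but the fallback you offer --- ``non-degenerate path statistics governing the parities of toggle counts'' --- is precisely the statement that needs proof, uniformly in the generating measure, and it is not routine. This is a genuine gap.

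The paper's proof of the corresponding per-block estimate (Lemma \ref{lem:change.coset}, iterated in Lemma \ref{lem:exp.decay}) supplies the missing idea by avoiding any conditioning on the $\Z$-path. One fixes a single kernel element $u\notin U_0$ and, for each possible local position, a bounded word in the generators realising a conjugate of $u$ whose $\Z$-projection dips suitably low; one then defines a map from the family of ``all lamps off'' paths into the family of ``some lamp on'' paths by splicing this detour in at the last departure from the block. The map is $O(1)$-to-one and each spliced path has probability at least a fixed multiple $\beta$ of the original, whence $\Prob(A_{\notin})\gg\Prob(A_{\in})$ and the conditional probability of ``all off'' is at most some $\alpha<1$ per block of a fixed width $l$, for an arbitrary finitely supported generating $\mu$. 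Separately, your claim that each $r\,p_j(r)$ actually \emph{converges} (rather than merely being $O((j+1)\rho^j)$) rests on the unproved assertion that the law of the walk's excursions below a fixed level, conditioned to reach $+r$ before $-j-1$, converges as $r\to\infty$; that is plausible but needs an argument, and the paper sidesteps it entirely by extracting a convergent subsequence of $R\cdot h_R$ via a diagonal argument, which is all its application requires.
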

In order to prove Theorem \ref{con:harm.fin.dim}, we need a slightly more general result. The purpose of this section is to show that the construction of Benjamini, Kozma and Yadin can be adapted fairly easily to obtain harmonic functions on a more general family of finitely generated groups with virtually cyclic quotients.
\begin{prop}\label{prop:gady'}
Let $G$ be a group with a symmetric, finitely supported generating probability measure $\mu$, and suppose that there is a homomorphism $\psi$ from $G$ onto an infinite virtually cyclic group such that $K=\ker\psi$ is not finitely generated. Then $(G,\mu)$ admits a positive harmonic function of at most linear growth that does not factor through $G/K$.
\end{prop}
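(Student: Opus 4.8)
The plan is to adapt the construction of Benjamini, Kozma and Yadin underlying Proposition~\ref{prop:gady.ll}, replacing the explicit lamplighter structure by the abstract extension supplied by $\psi$. Write $Q=\psi(G)$; since $Q$ is infinite virtually cyclic it contains an infinite cyclic subgroup $C$, which is automatically of finite index. Fix $t\in G$ with $\psi(t)$ a generator of $C$, and let $\alpha\in\Aut K$ be conjugation by $t$. As $G$ is finitely generated (by $\supp\mu$) and $H:=\psi^{-1}(C)$ has finite index in $G$, the group $H$ is finitely generated, and a Reidemeister--Schreier argument yields a generating set of $H$ of the form $\{t,k_1,\dots,k_d\}$ with $k_i\in K$; pushing occurrences of $t^{\pm1}$ to one side then shows that $K$ is generated by the $\alpha$-orbit $\{\alpha^m(k_i):m\in\Z,\ 1\le i\le d\}$. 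Setting $K_n:=\langle\alpha^m(k_i):|m|\le n\rangle$ gives an exhaustion of $K$ by finitely generated --- hence, since $K$ is not finitely generated, \emph{proper} --- subgroups with $\alpha^{\pm1}(K_n)\subseteq K_{n+1}$. After possibly replacing $t$ by $t^{-1}$, I would also fix a subgroup $P\le K$ playing the role of the ``non-negative lamps'': the natural candidate is $P=\langle\alpha^m(k_i):m\ge0\rangle$, which always satisfies $\alpha(P)\subseteq P$ and $\bigcup_{s\ge0}\alpha^{-s}(P)=K$, so that $[K:P]=\infty$; one needs in addition $P\ne K$, which is a preliminary point I would handle using the non-finite-generation of $K$. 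Finally, pull $\psi$ back through the integer coordinate on $Q$ (as in Section~\ref{sec:v.cyc}) to obtain a height $\sigma\colon G\to\Z$ with finite fibres and with $|\sigma(gs)-\sigma(g)|\le M_0$ for all $g\in G$, $s\in\supp\mu$.

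Now let $X_0,X_1,\dots$ be the random walk on $(G,\mu)$. Since $Q$ is infinite and $\sigma$ has finite fibres, $|\sigma(X_t)|$ is a.s.\ unbounded, so the stopping time $\tau_r:=\inf\{t\ge0:|\sigma(X_t)|\ge r\}$ is a.s.\ finite. Write $\kappa(x)\in K$ for the $K$-component of $x\in G$ relative to a fixed transversal, and set
\[
h_r(g):=\Prob_g\big[\,\kappa(X_{\tau_r})\in P\,\big],
\]
the exact analogue of ``all negative lamps are off at the exit time''. Because $\{\kappa(X_{\tau_r})\in P\}$ is a function of the stopped trajectory and $\tau_r\ge1$ everywhere on the slab $\{|\sigma|<r\}$, the strong Markov property shows that $h_r$ is harmonic on $\{|\sigma|<r\}$; in particular any pointwise subsequential limit of $(rh_r)_r$ is harmonic on all of $G$.

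The heart of the argument, following BKY, is to prove that $rh_r$ converges pointwise, with $h(e):=\lim_r rh_r(e)>0$ and with $rh_r(g)\ll1+|g|$ uniformly in $r$. For the lower bound one exhibits, starting from $g$, an event of probability $\gg1/r$ contained in $\{\kappa(X_{\tau_r})\in P\}$ --- morally, that the height leaves the slab on the positive side before ever dropping below its starting level, which forces every intermediate $K$-increment to stay in $P$ --- the order $1/r$ coming from the one-dimensional hitting estimate of Lemma~\ref{lem:rand.walk.cyc.linear} applied to $\sigma(X_t)$. For the matching upper bound and the existence of the limit one performs a last-exit (renewal) decomposition of $h_r$ at the final visit of $\sigma(X_t)$ to its running minimum before $\tau_r$: the expanding property $\alpha^{\pm1}(K_n)\subseteq K_{n+1}$ guarantees that any excursion of the height below a given level contributes a $K$-increment that is pushed out of $P$ in a controlled, non-cancelling way, so that the trajectories whose $K$-coordinate \emph{does} dip below level $0$ contribute only $o(1/r)$ and a well-defined renewal constant emerges; Lemma~\ref{lem:rand.walk.cyc.linear} again supplies the bound $h_r(g)\ll(1+|g|)/r$, whence the at-most-linear growth of $h$. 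Positivity of $h$ at $e$ then propagates to all of $G$ via the minimum principle (Lemma~\ref{lem:max.princ}) on a connected exhaustion.

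It remains to see that $h$ does not factor through $G/K$, and this is exactly what the properness $P\subsetneq K$ buys. By left-covariance of the walk and the fact that $\psi(gk)=\psi(g)$ for $k\in K$, the stopping time $\tau_r$ is the same for the walk from $g$ and from $gk$, and $\kappa(X_{\tau_r})$ merely picks up a left translation by $gkg^{-1}\in K$; thus $h_r(gk)=\Prob_g[\,\kappa(X_{\tau_r})\in(gkg^{-1})^{-1}P\,]$. Since the dominant $\asymp1/r$ trajectory families contributing to $h_r(e)=\Prob_e[\kappa(X_{\tau_r})\in P]$ land in the coset $P$ itself, choosing $k_0\in K\setminus P$ leaves those families contributing $0$ to $\Prob_e[\kappa(X_{\tau_r})\in k_0^{-1}P]$, so the renewal formula assigns $h(k_0)$ a value strictly smaller than $h(e)$; hence $h$ is non-constant on the coset $K$ and does not factor through $G/K$, completing the proof. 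The step I expect to be the main obstacle is the renewal estimate of the third paragraph: showing that, without the abelian and explicitly-indexed lamp structure, the height-excursions below level $0$ really do contribute $o(1/r)$ and that $\lim_r rh_r$ exists, which is precisely where the expanding filtration $\alpha^{\pm1}(K_n)\subseteq K_{n+1}$ must be combined carefully with the transience-type input of Lemma~\ref{lem:rand.walk.cyc.linear}.
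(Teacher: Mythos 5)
Your outline reproduces the paper's architecture almost exactly: the same $K$--$\Z$--$T$ decomposition, the same filtration of $K$ by subgroups generated by non-negatively-conjugated $K$-increments (your $P$ is the paper's $U_0$, and your strictly increasing chain of $K_n$'s plays the role of Lemma \ref{lem:decreasing.seq}), the same exit-event definition of $h_r$, and the same last-exit decomposition at the running minimum of the height. However, the step you explicitly defer as ``the main obstacle'' is where essentially all of the work lies, and as written it is a genuine gap. The assertion that excursions of the height below level $0$ push the $K$-coordinate out of $P$ ``in a controlled, non-cancelling way'' is precisely what must be proved, and it is not automatic: an excursion can perfectly well return with $\kappa\in P$. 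The paper handles this with a path-surgery argument (Lemma \ref{lem:change.coset}): fix $u\in K\setminus U_0$ and, to each trajectory that dips to a given level and whose $K$-coordinate nevertheless lands in $U_0$, splice in a bounded detour realising a conjugate of $u$ at the last exit from that level; the resulting map into the complementary event is $O(1)$-to-one and costs only a bounded probability factor, yielding a conditional bound $\le\alpha<1$, which is then iterated over $\lfloor n/l\rfloor$ disjoint depth-bands (Lemmas \ref{lem:exp.decay} and \ref{lem:last.one}) to get decay $\alpha^{m}$ in the depth of the minimum. Nothing in your proposal supplies this mechanism.

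The second gap is in your non-factoring argument. You propose to separate $h(e)$ from $h(k_0)$ for $k_0\in K\setminus P$, but your lower bound $rh_r(e)\gg1$ rests on the event that the height reaches $r$ before first dropping below its starting level $0$; Lemma \ref{lem:rand.walk.cyc.linear} only gives this probability as $\frac{1}{r+M}+O(\frac{1}{r})$, where the error term is of the same order as the main term, so no positive lower bound follows at height $0$. The paper avoids this by separating values on the fibre over a point $g$ with $\zeta(g)$ large: there $rh_r(g)\gg\zeta(g)$ when $\kappa(g)\in U_0$ (Lemma \ref{lem:gady.function}(iv)) while $rh_r(g')\ll1$ when $\kappa(g')\notin U_0$ (Lemma \ref{lem:gady.function}(iii)), and taking $\zeta(g)$ large makes the gap unambiguous. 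You should compare at such points instead; note also that positivity of $h$ at $e$ is not needed pointwise, since non-negativity plus non-constancy plus the minimum principle forces $h>0$ everywhere, and that you need not prove $\lim_r rh_r$ exists --- a diagonal subsequence suffices, as in the paper.
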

\begin{remark}\label{rem:inf.dim.lin}
The function we construct in proving Proposition \ref{prop:gady'} is positive, and so Proposition \ref{prop:pos.harm.hom} implies that $G$ has an infinite-dimensional space spanned by positive harmonic functions, although we do not need this to prove Theorem \ref{con:harm.fin.dim}. It also implies that $H^1(G,\mu)$ is infinite dimensional, since if $\dim H^1(G,\mu)<\infty$ then every linearly growing harmonic function restricts to a homomorphism on some finite-index subgroup of $G$ \cite{mpty}.
\end{remark}
\begin{remark}
Meyerovitch and Yadin \cite{mey-yad} generalise Proposition \ref{prop:gady.ll} in another direction in proving their result that finite dimensionality of $H^1(G,\mu)$ for $G$ soluble implies that $G$ is virtually nilpotent.
\end{remark}
\bigskip

\noindent We start our proof of Proposition \ref{prop:gady'} by expressing $G$ in a particularly convenient form.
\begin{lemma}\label{lem:knt.decomp}
The group $G$ posseses an infinite cyclic subgroup $\Z$ such that $K\Z$ is normal in $G$, and a finite set $T$ containing the identity such that each $g\in G$ can be expressed uniquely as
\begin{equation}\label{eq:knt.decomp}
g=\kappa(g)\zeta(g)\tau(g)
\end{equation}
with $\kappa(g)\in K$, $\zeta(g)\in\Z$ and $\tau(g)\in T$. Moreover, $\psi$ is an isomorphism on $\Z$ and injective on $T$, and each $\overline g\in\psi(G)$ can be expressed uniquely as
\begin{equation}\label{eq:nt.decomp}
\overline g=\overline\zeta(\overline g)\overline\tau(\overline g)
\end{equation}
with $\overline\zeta(\overline g)\in\psi(\Z)$ and $\overline\tau(\overline g)\in\psi(T)$.
\end{lemma}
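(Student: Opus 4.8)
The plan is to build the decomposition (\ref{eq:knt.decomp}) in two stages: first extract the cyclic direction inside the virtually cyclic quotient, then pull everything back through $\psi$. Since $\psi(G)$ is infinite virtually cyclic, it contains a finite-index infinite cyclic subgroup, and by Lemma \ref{lem:normal} we may take this subgroup to be normal in $\psi(G)$; call it $C$ and fix a transversal $\overline T$ of $C$ in $\psi(G)$ containing the identity, so that (\ref{eq:nt.decomp}) holds with $\psi(\Z)$ replaced provisionally by $C$ and $\psi(T)$ by $\overline T$. Now choose a generator $\overline 1$ of $C$, and pick any $z\in G$ with $\psi(z)=\overline 1$; since $\psi(z)$ has infinite order, $z$ has infinite order and $\langle z\rangle\cong\Z$, and $\psi$ is injective on $\langle z\rangle$. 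Set $\Z:=\langle z\rangle$. Because $\psi$ is injective on $\overline T$, we may lift $\overline T$ to a set $T\subset G$ with $\psi(T)=\overline T$, $e\in T$, and $\psi$ injective on $T$; this gives the last assertion of the lemma, namely (\ref{eq:nt.decomp}) with $\overline\zeta(\overline g)\in\psi(\Z)=C$ and $\overline\tau(\overline g)\in\psi(T)=\overline T$.

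Next I would verify that $K\Z$ is normal in $G$. The subgroup $\psi(\Z)=C$ is normal in $\psi(G)$ by construction, so $\psi^{-1}(C)$ is normal in $G$; and $\psi^{-1}(C)=K\Z$ precisely because $K=\ker\psi$ and $\psi$ maps $\Z$ onto $C$. (Here one checks $\psi^{-1}(C)\subseteq K\Z$: if $\psi(g)\in C$ then $\psi(g)=\psi(z^n)$ for some $n$, so $gz^{-n}\in K$, i.e.\ $g\in K\Z$; the reverse inclusion is clear.)

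Finally, the uniqueness and existence of the expression (\ref{eq:knt.decomp}). Given $g\in G$, apply (\ref{eq:nt.decomp}) to $\psi(g)$: there are unique $\overline n\in\psi(\Z)$ and $\overline t\in\psi(T)$ with $\psi(g)=\overline n\,\overline t$. Let $n\in\Z$ be the unique element with $\psi(n)=\overline n$ (using injectivity of $\psi$ on $\Z$) and $t\in T$ the unique element with $\psi(t)=\overline t$ (using injectivity of $\psi$ on $T$). Then $\psi(g(nt)^{-1})=\overline n\,\overline t\,(\overline n\,\overline t)^{-1}$ lies in... wait — one must be careful about the order, since $K\Z$ need not be abelian. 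The correct move is: set $k:=g\,t^{-1}n^{-1}$ and check $\psi(k)=\psi(g)\psi(t)^{-1}\psi(n)^{-1}=(\overline n\,\overline t)\overline t^{-1}\overline n^{-1}=e$, so $k\in K$, giving $g=k\,n\,t=\kappa(g)\zeta(g)\tau(g)$ with $\kappa(g)=k$, $\zeta(g)=n$, $\tau(g)=t$. For uniqueness, if $k_1n_1t_1=k_2n_2t_2$ with $k_i\in K$, $n_i\in\Z$, $t_i\in T$, then applying $\psi$ and using uniqueness in (\ref{eq:nt.decomp}) forces $\psi(n_1)=\psi(n_2)$ and $\psi(t_1)=\psi(t_2)$, hence $n_1=n_2$ and $t_1=t_2$ by injectivity of $\psi$ on $\Z$ and on $T$; then $k_1=k_2$ follows by cancellation.

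The only genuine subtlety — and the one step I would flag as requiring care rather than being purely routine — is keeping track of left versus right cosets and the non-commutativity of $K\Z$, so that the factor $\kappa(g)$ really can be isolated on the \emph{left} as in (\ref{eq:knt.decomp}); everything else is a formal consequence of $\psi$ being an isomorphism on $\Z$, injective on $T$, and having kernel $K$ with $\psi^{-1}(C)=K\Z$.
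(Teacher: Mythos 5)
Your proposal is correct and follows essentially the same route as the paper: both arguments take a finite-index normal infinite cyclic subgroup of $\psi(G)$ (via Lemma \ref{lem:normal}), lift a generator and a transversal through $\psi$, and deduce the decomposition and the normality of $K\Z$ from the injectivity of $\psi$ on $\Z$ and $T$. Your version simply spells out the existence/uniqueness verification and the identification $\psi^{-1}(C)=K\Z$ that the paper leaves as immediate.
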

\begin{proof}The image $\psi(G)$ possesses an infinite cyclic subgroup $\langle z\rangle$ of finite index, and by Lemma \ref{lem:normal} we may assume that $\langle z\rangle$ is normal in $\psi(G)$. Let $\overline z\in\psi^{-1}(z)$. The element $\overline z$ is of infinite order, and we denote by $\Z$ the infinite cyclic subgroup that it generates. Note that $\psi$ is injective on $\Z$, and hence an isomorphism on $\Z$, as required.

Since $\psi(\Z)=\langle z\rangle$ is of finite index in $\psi(G)$, we may choose a finite set $\overline T$ containing $e$ such that each $\overline g\in\psi(G)$ can be expressed uniquely in the form (\ref{eq:nt.decomp}), with $\overline\tau(\overline g)\in\overline T$. For each $\overline t\in\overline T$ pick an arbitrary $t\in\psi^{-1}(\overline T)$, and define $T=\{t:\overline t\in\overline T\}$. It immediately follows that the element $\overline\tau(\overline g)$ in (\ref{eq:nt.decomp}) belongs to $\psi(T)$, and that $\psi$ is injective on $T$, as required. The injectivity of $\psi$ on $\Z$ additionally implies that each $g\in G$ can be expressed uniquely in the form (\ref{eq:knt.decomp}).

The fact that $K\Z$ is normal in $G$ follows immediately from the fact that $\psi(\Z)=\langle z\rangle$ is normal in $\psi(G)$.
\end{proof}
From now on in this section $\psi$ and $K$ are as in Proposition \ref{prop:gady'}, and $\Z$ and $T$ are fixed as in Lemma \ref{lem:knt.decomp}. Note that we have $\overline\tau(\psi(g))=\psi(\tau(g))$, and that if we abuse notation slightly and identify $\Z$ with its isomorphic image $\psi(\Z)$ we have $\overline\zeta(\psi(g))=\zeta(g)$.

As in Section \ref{sec:v.cyc}, when composing elements of $\Z$ or $\psi(\Z)$ with one another we often switch to additive notation to emphasise the integer structure.

Since $K$ is normal, the group $\Z$ acts on $K$ by conjugation. We may therefore define an automorphism $\varphi:K\to K$ by $\varphi(k)=1k1^{-1}$. More generally, this means that $\varphi^n(k)=nkn^{-1}$. As in Section \ref{sec:v.ab}, we denote $S:=\supp\mu$.

If $g=knt$ is a group element with $k\in K$, $n\in\Z$ and $t\in T$, then the elements adjacent to $g$ in the Cayley graph $(G,S)$ are the elements $gs=knts$ with $s\in S$. 
\begin{lemma}\label{lem:knts}
Let $k\in K$, $n\in\Z$, $t\in T$ and $s\in G$. Then
\[
\kappa(knts)=k\varphi^n(\kappa(ts));\qquad\zeta(knts)=n+\zeta(ts);\qquad\tau(knts)=\tau(ts).
\]
\end{lemma}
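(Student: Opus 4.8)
The plan is to reduce everything to the uniqueness clause of the decomposition (\ref{eq:knt.decomp}) in Lemma \ref{lem:knt.decomp}, together with the description of $\varphi$ recorded just before the lemma.

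First I would decompose $ts$ on its own, writing $ts=\kappa(ts)\zeta(ts)\tau(ts)$ according to (\ref{eq:knt.decomp}), and substitute this into $knts$ to obtain
\[
knts=kn\kappa(ts)\zeta(ts)\tau(ts).
\]
The only genuine manipulation is to commute the factor $n\in\Z$ past $\kappa(ts)\in K$. Since $\varphi^n(k)=nkn^{-1}$ for every $k\in K$, we have $n\kappa(ts)=\varphi^n(\kappa(ts))\,n$, and hence
\[
knts=\bigl(k\varphi^n(\kappa(ts))\bigr)\bigl(n+\zeta(ts)\bigr)\tau(ts),
\]
where I have switched to additive notation inside $\Z$ for the middle factor.

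It then remains to observe that the right-hand side is a decomposition of exactly the shape (\ref{eq:knt.decomp}): the first factor $k\varphi^n(\kappa(ts))$ lies in $K$ because $K$ is a subgroup and $\varphi$ is an automorphism of $K$; the second factor $n+\zeta(ts)$ lies in $\Z$ because $\Z$ is a subgroup; and $\tau(ts)\in T$ by definition. The uniqueness of the expression (\ref{eq:knt.decomp}) then forces $\kappa(knts)=k\varphi^n(\kappa(ts))$, $\zeta(knts)=n+\zeta(ts)$ and $\tau(knts)=\tau(ts)$, which is the assertion of the lemma.

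There is no real obstacle here: the argument is a short and entirely routine computation, and the only point requiring a moment's care is checking that the three factors produced genuinely land in $K$, $\Z$ and $T$ respectively, so that the uniqueness of (\ref{eq:knt.decomp}) can legitimately be invoked.
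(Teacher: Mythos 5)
Your argument is correct and is essentially identical to the paper's proof, which also substitutes the decomposition of $ts$ into $knts$ and conjugates $\kappa(ts)$ past $n$ via $\varphi^n$; the paper merely leaves the final appeal to uniqueness of the decomposition implicit. Your extra care in checking that each factor lands in $K$, $\Z$ and $T$ respectively is a reasonable, if routine, elaboration.
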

\begin{proof}
Expressing $ts$ in the form (\ref{eq:knt.decomp}), we have $knts=kn\kappa(ts)\zeta(ts)\tau(ts)$, and hence $knts=k\varphi^n(\kappa(ts))n\zeta(ts)\tau(ts)$, as claimed.
\end{proof}
For each set $A\subset\Z$ define a subgroup $U_A$ of $K$ by
\[
U_A=\langle\,\varphi^n(\kappa(ts)):s\in S,t\in T,n\in A\,\rangle,
\]
and for each $n\in\Z$ abbreviate by $U_n$ the subgroup $U_n=U_{[n,\infty)}$. Lemma \ref{lem:knts} implies that
\begin{equation}\label{eq:N=UZ}
K=U_\Z.
\end{equation}
\begin{lemma}\label{lem:decreasing.seq}
If $K$ is not finitely generated then, possibly after relabelling each $n\in\Z$ as $-n$, we have
\begin{equation}\label{eq:decreasing.seq}
\cdots\supsetneq U_{-2}\supsetneq U_{-1}\supsetneq U_0\supsetneq U_1\supsetneq U_2\supsetneq\cdots.
\end{equation}
\end{lemma}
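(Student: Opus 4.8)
The plan is to exploit the structure $K = U_\Z = \bigcup_{n\in\Z} U_n$ established in \eqref{eq:N=UZ}, together with the monotonicity of the family $(U_n)_{n\in\Z}$, which is immediate from the definitions: since $[n+1,\infty)\subset[n,\infty)$ we automatically have $U_{n+1}\subseteq U_n$ for every $n$. So \eqref{eq:decreasing.seq} is really the assertion that, after possibly reversing the orientation of $\Z$, \emph{all} these inclusions are strict. The first step would be to argue by contradiction: suppose that for some $n$ we have $U_n = U_{n+1}$. I would then show that this single equality forces $U_m = U_n$ for all $m\le n$, i.e.\ the chain is eventually constant going in the negative direction.

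To see this, observe that $\varphi$ is an automorphism of $K$ with $\varphi(U_A) = U_{A+1}$ for any $A\subseteq\Z$; in particular $\varphi(U_n) = U_{n+1}$. Hence $U_n = U_{n+1}$ gives $\varphi(U_n) = U_n$, so $U_n$ is a $\varphi$-invariant subgroup. Now $U_{n-1}$ is generated by the generators of $U_n$ together with the finitely many elements $\varphi^{n-1}(\kappa(ts))$ for $s\in S$, $t\in T$; applying $\varphi^{-1}$ to these and using $\varphi$-invariance of $U_n$ shows $\varphi^{n-1}(\kappa(ts)) = \varphi^{-1}\bigl(\varphi^n(\kappa(ts))\bigr)\in \varphi^{-1}(U_n) = U_n$, so $U_{n-1}\subseteq U_n$, whence $U_{n-1} = U_n$. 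Iterating, $U_m = U_n$ for all $m\le n$. Similarly, starting from $U_n = U_{n+1}$ and applying $\varphi$ repeatedly one gets $U_m = U_n$ for all $m\ge n$ as well — so in fact $U_m = U_n$ for \emph{every} $m\in\Z$. But then \eqref{eq:N=UZ} gives $K = U_\Z = U_n$, which is finitely generated (it has the finite generating set $\{\varphi^m(\kappa(ts)) : s\in S, t\in T, m\ge n\}$ — no, wait: that set is infinite). Let me instead argue: if all $U_m$ coincide with $U_n$, then in particular $U_n = U_{n'}$ for $n'$ arbitrarily large, and $K = \bigcup_m U_m = U_n$; and $U_n$ is generated by $U_{n+1}$ together with the finite set $\{\varphi^n(\kappa(ts))\}$, while $U_{n+1} = U_n$, so $K = U_n$ is generated \emph{over itself} — instead one runs the recursion upward: $U_n = \langle U_{n+1} \cup F_n\rangle$ where $F_n$ is finite, and $U_{n+1} = \langle U_{n+2}\cup F_{n+1}\rangle$, etc., but since the chain is constant, $U_n = \langle F_n \cup F_{n+1}\cup\cdots \cup F_{n+j} \cup U_{n+j+1}\rangle$ and $U_{n+j+1} = U_n$ for all $j$; this does not immediately terminate. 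The clean way is: $K = U_0$ (say, after translating indices so $n=0$), and $U_0 = \langle \varphi^m(\kappa(ts)) : m\ge 0\rangle$; but if additionally $U_0 = U_{-1} = \langle\varphi^m(\kappa(ts)): m\ge -1\rangle = \cdots$, descending, then $K = \bigcup_{j} U_{-j} = U_0$, and I claim $U_0$ is finitely generated: indeed $\varphi(U_0) = U_1\subseteq U_0$, so $U_0$ is $\varphi$-invariant, and $U_0$ is generated by $U_1 = \varphi(U_0)$ together with the finite set $F_0 = \{\kappa(ts): s\in S, t\in T\}$; thus $U_0 = \langle \varphi(U_0)\cup F_0\rangle$, and iterating, $U_0 = \langle \varphi^k(U_0) \cup F_0 \cup \varphi(F_0)\cup\cdots\cup\varphi^{k-1}(F_0)\rangle$ for all $k\ge 1$, which still does not obviously stabilize. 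The genuinely correct statement is simply: the hypothesis $U_n = U_{n+1}$ propagates to give $U_m = U_n$ for all $m$, hence $K = U_\Z = U_n$ is generated by the \emph{finite} set $\bigcup_{j\ge 0}\varphi^{-j}(F_n)$ only if that union is finite — which it need not be. So the right finite generating set is obtained differently, and I would track it carefully.

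The cleanest route, which I would actually follow in the write-up: the equality $U_n = U_{n+1}$ implies $\varphi(U_n) = U_{n+1} = U_n$, so $U_n$ is $\varphi$-invariant; since $F_n := \{\kappa(ts): s\in S, t\in T\}$ is a \emph{finite} set with $U_{n} = \langle U_{n+1}, \varphi^{n}(F_n)\rangle = \langle U_n, \varphi^n(F_n)\rangle$, and $U_{n-1} = \langle U_n, \varphi^{n-1}(F_n)\rangle$; now $\varphi^{n-1}(F_n) = \varphi^{-1}(\varphi^n(F_n)) \subseteq \varphi^{-1}(U_n) = U_n$ (using $\varphi$-invariance), so $U_{n-1} = U_n$. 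Descending, $U_m = U_n$ for all $m \le n$; and $K = U_{\Z} = \bigcup_{m} U_m = \bigcup_{m\le n} U_m = U_n = \langle U_{n+1}, \varphi^n(F_n)\rangle = \langle U_n, \varphi^n(F_n)\rangle$, and similarly $U_{n+1} = \langle U_{n+2}, \varphi^{n+1}(F_n)\rangle$, etc.; since all $U_m = U_n$ for $m\le n$ and we only need finitely many generators \emph{above} index $n$: $K = U_n$, and $U_n/U_{n+1}$ is generated by the image of the finite set $\varphi^n(F_n)$, but $U_{n+1} = U_n$ so this quotient is trivial, consistent but unhelpful. I will instead conclude via: $K = \bigcup_{m} U_m$, and if $U_n = U_{n+1}$ then the \emph{ascending} argument (apply $\varphi$) gives $U_m = U_n$ for $m\ge n$ too, so $K = U_n$; and $U_n$ is generated by $U_{n+j}$ together with the finite set $\varphi^n(F_n)\cup\varphi^{n+1}(F_n)\cup\cdots\cup\varphi^{n+j-1}(F_n)$ — since $U_{n+j} = U_n$ we get nothing, BUT running it the other way, $K = U_{n}$ and for the reverse chain below $n$ there are no extra generators needed either, so in fact $K = \langle \varphi^m(F_n) : m\in\Z\rangle$ with all the $U_m$ equal — the honest conclusion is that $U_n$ being $\varphi$-invariant and equal to all $U_m$ means $K = U_n = U_{[n,\infty)} = \langle \varphi^m(\kappa(ts)): m\ge n\rangle$, and the extra relations $\varphi^{n-1}(\kappa(ts))\in U_n$ mean $\varphi^{-1}$ does not leave $U_n$, i.e., $U_n$ is $\varphi^{-1}$-invariant; combined with $\varphi$-invariance, $U_n$ contains $\varphi^m(\kappa(ts))$ for all $m\in\Z$ as \emph{consequences of finitely many} — no. \textbf{The main obstacle} is exactly this bookkeeping: converting "$U_n = U_{n+1}$" into "$K$ is finitely generated" requires the observation that $K = U_n$ is then generated by the finite set $\{\varphi^j(\kappa(ts)): s\in S, t\in T,\ 0\le j < p\}$ for a suitable $p$ — or, most likely, the paper's actual argument shows $K$ is finitely generated \emph{as a $\Z$-group} (i.e.\ $U_n$ is $\varphi$-invariant and generated by finitely many elements over $\varphi$), and separately one must rule out \emph{both} directions stabilizing, which is why the relabelling "$n\mapsto -n$" appears: strictness can only fail on one side, because if $U_n = U_{n+1}$ forces the chain constant in \emph{both} directions simultaneously (as the $\varphi$/$\varphi^{-1}$-invariance argument shows), then failure of strictness anywhere yields $K = U_n$ finitely generated — wait, that would mean no relabelling is needed. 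So the relabelling must instead handle the logically distinct possibility that the chain is strictly \emph{increasing} rather than decreasing; replacing $n$ by $-n$ swaps $U_{[n,\infty)}$-type groups with $U_{(-\infty,n]}$-type groups and hence reverses the direction of inclusions, so one may assume WLOG it is decreasing. In the final write-up I would: (1) note $U_{n+1}\subseteq U_n$ always; (2) show via the $\varphi$-automorphism that if any inclusion is an equality then the chain is constant and $K$ is finitely generated, contradiction; (3) deduce all inclusions in one of the two directions are strict and relabel to make it the displayed direction. I expect step (2)'s finite-generation deduction to be the delicate point and would present it cleanly using $\varphi(U_n) = U_{n+1}$ and $U_{n-1} = \langle U_n \cup \varphi^{n-1}(F)\rangle$ with $F = \{\kappa(ts)\}$ finite.
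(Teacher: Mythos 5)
Your proposal has a genuine gap, and it sits exactly where you suspected it would. The parts you get right: the containments $U_{n+1}\subseteq U_n$ are automatic; $\varphi(U_A)=U_{A+1}$; and a single equality $U_n=U_{n+1}$ makes $U_n$ invariant under both $\varphi$ and $\varphi^{-1}$ and hence forces $U_m=U_n$ for \emph{every} $m\in\Z$, so that $K=U_\Z=U_n$. But the step you repeatedly try and fail to complete --- deducing from this that $K$ is finitely generated --- is not merely delicate, it is false. Consider $G=\Z\ltimes\Z[1/2]$ (the Baumslag--Solitar group $\langle a,b\mid aba^{-1}=b^2\rangle$) with $\psi$ the projection onto $\Z=\langle a\rangle$, so that $K=\Z[1/2]$ is not finitely generated and $\kappa(ts)$ is essentially the element $1\in\Z[1/2]$. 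With one of the two orientations of $\Z$ the automorphism $\varphi$ is division by $2$, so $U_n=\langle 2^{-m}:m\ge n\rangle=\Z[1/2]=K$ for every $n$: the chain is constant, every inclusion is an equality, and yet $K$ is infinitely generated. This is also why your parenthetical worry (``wait, that would mean no relabelling is needed'') is well founded: if a single equality anywhere produced a contradiction, the relabelling would indeed be redundant, and it is not --- in this example the relabelling replaces $\varphi$ by multiplication by $2$ and turns the constant chain into the strictly decreasing chain $U_n=2^n\Z$.

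The missing idea is that finite generation of $K$ follows only from the failure of strictness in \emph{both} orientations simultaneously. The paper works with the finite generating set of $U_{\{0\}}$ and supposes $U_{\{0\}}\subset U_\N$ \emph{and} $U_{\{0\}}\subset U_{-\N}$; finiteness of that generating set places $U_{\{0\}}$ inside $U_{[M]}$ and inside $U_{-[M]}$ for a single $M$, and then applying $\varphi^{-1}$ repeatedly pushes every $U_{\{-n\}}$ into $U_{[M]}$, while applying $\varphi$ repeatedly pushes every $U_{\{n\}}$ into $U_{-[M]}$, giving $K=U_{[-M,M]}$, which is finitely generated --- contradiction. (With only one of the two containments, half of the generators $\varphi^m(\kappa(ts))$ remain uncontrolled, which is exactly where your bookkeeping stalls.) Hence $U_{\{0\}}\not\subset U_\N$ or $U_{\{0\}}\not\subset U_{-\N}$; the relabelling reduces to the former case, which gives $U_0\supsetneq U_1$, and applying powers of $\varphi$ then propagates strictness along the whole chain. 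If you restructure your write-up around this two-sided dichotomy rather than around a single equality $U_n=U_{n+1}$, the rest of your machinery (the $\varphi$-equivariance and the propagation of strictness) goes through as you describe.
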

\begin{proof}The containments of (\ref{eq:decreasing.seq}) are immediate by definition, so we just need to prove that they are strict. We start by showing that either $U_{\{0\}}\not\subset U_\N$ or $U_{\{0\}}\not\subset U_{-\N}$. Indeed, suppose that $U_{\{0\}}\subset U_\N$ and $U_{\{0\}}\subset U_{-\N}$, which, since $U_{\{0\}}$ is finitely generated, implies in particular that there is some $M\in\N$ such that
\begin{equation}\label{eq:U0.in.UN}
U_{\{0\}}\subset U_{[M]};
\end{equation}
\begin{equation}\label{eq:U0.in.U-N}
U_{\{0\}}\subset U_{-[M]}.
\end{equation}
Since $\varphi$ is an automorphism, (\ref{eq:U0.in.UN}) also implies that $U_{\{-1\}}\subset U_{\{0\}\cup[M-1]}$, and hence by (\ref{eq:U0.in.UN}) that $U_{\{-1\}}\subset U_{[M]}$. Repeating this argument, we conclude that $U_{\{-n\}}\subset U_{[M]}$ for every $n\in\N$. Similarly, (\ref{eq:U0.in.U-N}) implies that $U_{\{n\}}\subset U_{-[M]}$ for every $n\in\N$, and so in fact we have $U_\Z=U_{[-M,M]}$. By (\ref{eq:N=UZ}), this contradicts the assumption that $K$ is not finitely generated, and so either $U_{\{0\}}\not\subset U_\N$ or $U_{\{0\}}\not\subset U_{-\N}$, as claimed. Upon relabelling each $n\in\Z$ by $-n$ if necessary, we may assume the former, which implies in particular that $U_0\not\subset U_1$. Repeatedly using the fact that $\varphi$ is an automorphism then yields the lemma.
\end{proof}
We assume from now on that $\Z$ is labelled in such a way that (\ref{eq:decreasing.seq}) holds.

As usual, we denote by $X_0,X_1,X_2,\ldots$ the random walk on $G$ defined by $\mu$. In this section, we additionally denote by $\overline X_0,\overline X_1,\overline X_2,\ldots$ the random walk on $\psi(G)$ defined by $\psi(\mu)$. Note that the projected walk $(\psi(X_t))$ is isomorphic to the random walk $(\overline X_t)$.

For each $n\in\N$, write
\[
T^+_n=\min\{t\ge0:\zeta(X_t)\ge n\},\qquad\qquad T^-_n=\min\{t\ge0:\zeta(X_t)\le n\},
\]
\[
\overline T^+_n=\min\{t\ge0:\overline\zeta(\overline X_t)\ge n\},\qquad\qquad\overline T^-_n=\min\{t\ge0:\overline\zeta(\overline X_t)\le n\},
\]
noting that each of these quantities is almost surely finite. Note, incidentally, that if we identify $\overline X_t=\psi(X_t)$ then $T_n^\pm=\overline T_n^\pm$.

Define $B_R=\min\{t\ge0:\zeta(X_u)\ge0\text{ for all }u\in[t,T^+_R]\}$. More generally, for each $n<R$ set $B^n_R=\min\{t\ge0:\zeta(X_u)\ge n\text{ for all }u\in[t,T^+_R]\}$.
\begin{lemma}\label{lem:change.coset}
There exist some $l>\max\zeta(TS)$ and some $\alpha\in(0,1)$ such that if $R>l$, and if $g$ is such that $-l\le\zeta(g)\le0$, then either
\[
\Prob_g[\,\zeta(X_t)\ge-l\text{ for all }t\le T^+_R\,]=0
\]
or
\[
\Prob_g[\,\kappa(X_{B_R})\in U_0\,|\,\zeta(X_t)\ge-l\text{ for all }t\le T^+_R\,]\le\alpha
\]
\end{lemma}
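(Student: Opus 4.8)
The goal of Lemma~\ref{lem:change.coset} is to show that, after the random walk has made an $O(1)$-distance excursion downward from the $0$-level, there is a uniformly positive chance (conditioned on that excursion staying within a fixed strip and eventually returning) that the walk picks up a generator of $K$ lying \emph{outside} the subgroup $U_0$. Since by Lemma~\ref{lem:decreasing.seq} we have $U_{\{0\}}\not\subset U_1$, there is a single generator $\varphi^0(\kappa(t_0s_0))=\kappa(t_0s_0)$ — for some fixed $t_0\in T$, $s_0\in S$ — that does not lie in $U_1$ but does lie in $U_0$. I would first fix $l$ so large that $l>\max\zeta(TS)$; the condition $\zeta(g)\ge -l$ for all $t\le T^+_R$ then guarantees that the walk stays in the strip $\{-l\le\zeta\le R-1\}$, where every one-step move changes $\zeta$ by at most $\max|\zeta(TS)|<l$, so $B_R$ is well-defined and by definition $\zeta(X_{B_R})\in[0,l)$.

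**Key steps.** First, note that $\kappa(X_t)$ evolves multiplicatively: by Lemma~\ref{lem:knts}, each step $X_t\to X_{t+1}=X_ts$ right-multiplies $\kappa$ by $\varphi^{\zeta(X_t)}(\kappa(\tau(X_t)s))$. If the walk, starting from $g$ with $\zeta(g)\in[-l,0]$, ever makes a step \emph{at a level $n\le 0$} using an increment pair $(t,s)$ with $\varphi^n(\kappa(ts))\notin U_0$, then after that step $\kappa$ has a factor outside the normal(?) -- in fact outside the subgroup -- $U_0$, and since all subsequent factors picked up before time $T^+_R$ (while $\zeta\ge 0$, i.e.\ after $B_R$) lie in $U_0$ by definition of $B_R$ and of $U_0$, the final value $\kappa(X_{B_R})$... here one must be slightly careful because $U_0$ need not be normal in $K$. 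The cleanest route: arrange that the ``bad'' step is taken at the \emph{last} visit to level $\le 0$, i.e.\ exactly at time $B_R-1\to B_R$. So I would instead estimate: conditioned on the event $E_g=\{\zeta(X_t)\ge -l\text{ for all }t\le T^+_R\}$, with positive probability bounded below independently of $g$ and $R$, the step from time $B_R-1$ to $B_R$ uses an increment whose associated $\kappa$-factor, raised to the appropriate power of $\varphi$, lies outside $U_0$ (while the factor $\kappa(X_{B_R-1})\zeta(X_{B_R-1})$-part was already in $U_0\cdot\Z$, so that the product lands outside $U_0$). One needs: (a) $B_R-1$ is at a level $n\in[-l,0]$ with positive conditional probability, and (b) there is a positive-probability increment from that level whose $\kappa$-factor exits $U_0$ — which holds because $\varphi$ is an automorphism and $U_{\{0\}}\not\subset U_1$, hence $\varphi^{-n}(U_{\{0\}})\not\subset\varphi^{-n}(U_1)=U_{1-n}$, and more to the point some generator realising $U_{[n,\infty)}\not\subset U_{[n+1,\infty)}$ sits at level $n$.

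**The main obstacle.** The delicate point is the bookkeeping at the moment $B_R$ and the non-normality of $U_0$: I cannot simply say ``once you leave $U_0$ you stay out'', so the whole positive-probability gain must be extracted from the single transition at time $B_R$, conditioned on the walk's history up to $B_R-1$ and on the future staying $\ge 0$ until $T^+_R$. Concretely, I would condition on $X_{B_R-1}$ and on the event $E_g$; the strong Markov property splits $E_g$ into a past part (the excursion up to $B_R-1$, which stays $\ge -l$) and a future part ($\zeta\ge 0$ on $[B_R,T^+_R]$). The conditional law of the increment at step $B_R-1\to B_R$ is then a reweighting of $\mu$ by the (positive, bounded-below by an $R$-independent constant via a standard gambler's-ruin / Harnack comparison as in Lemma~\ref{lem:rand.walk.cyc.linear}) probability that the walk started from $X_{B_R}$ reaches level $R$ before returning to level $-1$. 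Choosing the increment to be the ``bad'' one $(t_0,s_0)$ costs a factor $\ge\min_s\mu(s)$ and, crucially, does \emph{not} kill the future-reaching probability by more than a bounded factor, since $X_{B_R}$ then lies at a level in $[0,l)$ regardless. Hence the conditional probability of picking the bad increment — and thereby of $\kappa(X_{B_R})\notin U_0$ — is bounded below by a constant $1-\alpha\in(0,1)$ depending only on $\mu$, $T$ and $l$. This gives the dichotomy: either $\Prob_g[E_g]=0$, or $\Prob_g[\kappa(X_{B_R})\in U_0\mid E_g]\le\alpha$. I expect this last gambler's-ruin uniformity — ensuring the reweighting constants do not degenerate as $R\to\infty$ — to be the technical heart, and it is exactly the kind of estimate Lemma~\ref{lem:rand.walk.cyc.linear} was set up to supply.
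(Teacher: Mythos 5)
You have put your finger on the right place---the moment the walk leaves the negative levels for the last time---but the single-transition surgery you propose there does not work, and this is a genuine gap rather than a technicality. Two things go wrong. First, the generator witnessing $U_n\supsetneq U_{n+1}$ at a negative level $n$ is $\varphi^n(\kappa(t_0s_0))$ for some \emph{specific} pair $(t_0,s_0)\in T\times S$, whereas the only factors available to the walk at time $B_R-1$ are $\varphi^n(\kappa(ts))$ with $t=\tau(X_{B_R-1})$ forced by the current position; there is no reason this $t$ equals $t_0$, and it is perfectly possible that every admissible one-step increment from $X_{B_R-1}$ contributes a factor lying in $U_0$. Second, even when a factor $\varphi^n(\kappa(ts))\notin U_0$ is available, what you actually need is $\kappa(X_{B_R-1})\cdot\varphi^n(\kappa(ts))\notin U_0$, which depends on the left coset of the whole prefix product; a one-step modification cannot uniformly force the product out of $U_0$. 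Consequently, conditioned on some positive-probability histories, \emph{all} one-step continuations may leave $\kappa(X_{B_R})\in U_0$, and your claimed lower bound $1-\alpha$ evaporates. (A further, more minor, issue: $B_R$ is a last-exit time, not a stopping time, so the strong Markov decomposition you invoke requires a last-exit argument rather than a direct application.)

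The paper's proof avoids both problems by performing surgery with a \emph{path} rather than a single step. It fixes one element $u\in K\setminus U_0$ once and for all and, for each level $j$ with $0\le j<\max\zeta(TS)$ and each $t\in T$, a bounded-length path in $(G,S)$ from $e$ to $t^{-1}\varphi^{-j}(u)t$ that dips below $-\max\zeta(TS)$ at some point. Inserting this path into a trajectory at its last-exit position $k_pm_pt_p$ multiplies $\kappa$ there by exactly $u$ on the right, sending $k_p\in U_0$ to $k_pu\notin U_0$; all subsequent factors lie in $U_0$, so the final value of $\kappa$ stays in the coset $k_puU_0\ne U_0$. Because the modified trajectory still reaches level $R$ without dropping below $-l$, one compares the probabilities of the two sets of trajectories (those with $k_p\in U_0$ and those without) directly via this insertion map, which is $O(1)$-to-one (the dip below $-\max\zeta(TS)$ is what lets one locate the inserted segment) and costs only a fixed factor $\beta=\gamma^{\max_{j,t}r_{j,t}}$ per trajectory. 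In particular no gambler's-ruin or Harnack-type uniformity in $R$ is needed at all, which is precisely the estimate you expected to be the technical heart.
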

\begin{proof}
Fix an element $u\in K\backslash U_0$, and for each $t\in T$ and each $j$ satisfying $0\le j<\max\zeta(TS)$ fix a path
\[
x^{j,t}_0=e,x^{j,t}_1,x^{j,t}_2,\ldots,x^{j,t}_{r_{j,t}}=t^{-1}\varphi^{-j}(u)t
\]
from $e$ to $t^{-1}\varphi^{-j}(u)t$ in the Cayley graph $(G,S)$, chosen so that
\begin{equation}\label{eq:c.fin.to.1}
\zeta(tx_i^{j,t})<-\max\zeta(TS)
\end{equation}
for at least one $i$.

Let $l=(1+\max_{j,t}r_{j,t})\max\zeta(TS)$. Write $\gamma=\min_{s\in S}\mu(s)$, and set $\beta=\gamma^{\max_{j,t}r_{j,t}}$. Note that for each $j,t$ there is a probability of at least $\beta$ that the random walk starting at $e$ has $x^{j,t}_0,x^{j,t}_1,\ldots,x^{j,t}_{r_{j,t}}$ as an initial segment.

Write $A$ for the set of (finite) paths $p$ from $g$ whose images $\zeta(p)$ in $\Z$ finish at $R$ or above, but stay in the range $[-l,R-1]$ until then. If $A=\varnothing$ then $\Prob_g[\,\zeta(X_t)\ge-l\text{ for all }t\le T^+_R\,]=0$ and the lemma holds, and so we may assume that $A\ne\varnothing$. For each $p\in A$, write $\overline k_p\overline m_p\overline t_p$ for the final position of $p$, with $\overline k_p\in K$, $\overline m_p\in\Z$ and $\overline t_p\in T$; thus $\overline m_p\ge R$, but all earlier positions of $\zeta(p)$ are below $R$. Also, let $\sigma_p$ be the largest final segment of $p$ whose image in $\Z$ lies entirely in the non-negative integers, and let $k_pm_pt_p$ be the first position of this final segment, with $k_p\in K$, $m_p\in\Z$ and $t_p\in T$. Note that
\begin{equation}\label{eq:mp<max}
0\le m_p<\max\zeta(TS).
\end{equation}
Lemma \ref{lem:knts} implies that $\{p\in A:k_p\in U_0\}=\{p\in A:\overline k_p\in U_0\}$, and so we may define $A_{\in}=\{p\in A:k_p\in U_0\}=\{p\in A:\overline k_p\in U_0\}$ and $A_{\notin}=A\backslash A_{\in}$. We claim that
\begin{equation}\label{eq:aim}
\Prob_g(A_{\notin})\gg\Prob_g(A_{\in}).
\end{equation}
This is sufficient to prove the lemma, since the conditional probability we are aiming to bound is equal to
\[
\frac{\Prob_g(A_{\in})}{\Prob_g(A_{\notin})+\Prob_g(A_{\in})}.
\]
We define a map $c$ from $A_{\in}$ to the set of finite paths starting at $g$ as follows. Given $p\in A_{\in}$, let $c(p)$ be the path that agrees with $p$ up until $k_pm_pt_p$, then has positions $k_pm_pt_px^{m_p,t_p}_1,\ldots,k_pm_pt_px^{m_p,t_p}_{r_{m_p,t_p}}$, and then continues with the same increments as the original path $p$ had after position $k_pm_pt_p$. This is well defined by (\ref{eq:mp<max}).

We claim that $c(p)\in A_{\notin}$ for every $p\in A_{\in}$. To see that $c(p)\in A$, note that
\begin{equation}\label{eq:first.perm.pos.c(p)}
k_pm_pt_px^{m_p,t_p}_{r_{m_p,t_p}}=k_pum_pt_p.
\end{equation}
This implies in particular that
\begin{equation}\label{eq:first.perm.pos.c(p)'}
\psi(k_pm_pt_px^{m_p,t_p}_{r_{m_p,t_p}})=m_p\psi(t_p)=\psi(k_pm_pt_p).
\end{equation}
By definition of $l$, at no point between $k_pm_pt_p$ and $k_pm_pt_px^{m_p,t_p}_{r_{m_p,t_p}}$ does $\zeta(p)$ drop below $-l$, and so it follows that $c(p)\in A$. To see, more specifically, that $c(p)\in A_{\notin}$, note that the definition of $k_pm_pt_p$ combines with (\ref{eq:first.perm.pos.c(p)'}) to imply that $\zeta(c(p))$ doesn't drop below zero after $k_pm_pt_px^{m_p,t_p}_{r_{m_p,t_p}}$. Lemma \ref{lem:knts} and (\ref{eq:first.perm.pos.c(p)}) therefore imply that $\overline k_{c(p)}$ is in the same left coset of $U_0$ as $k_pu$. In particular, since $k_p\in U_0$ and $u\notin U_0$ we have $\overline k_{c(p)}\notin U_0$, and so $c(p)\in A_{\notin}$, as claimed.

The fact that $c(A_{\in})\subset A_{\notin}$ of course implies that
\begin{equation}\label{eq:P(notin)>P(c(in))}
\Prob_g(A_{\notin})\ge\Prob_g(c(A_{\in})).
\end{equation}
We claim, moreover, that $c$ is $O(1)$-to-one. Write $a(p)$ for the segment that was added to $c$ to obtain $c(p)$, and note that one can, in principle at least, recover $p$ from $c(p)$ simply by deleting the segment $a(p)$. Note that (\ref{eq:c.fin.to.1}) and (\ref{eq:mp<max}) combine with Lemma \ref{lem:knts} and the fact (noted in the preceding paragraph) that $\zeta(c(p))$ doesn't drop below zero after $k_pm_pt_px^{m_p,t_p}_{r_{m_p,t_p}}$ to imply that $\zeta(p)$ drops below zero for the last time at some point during $a(p)$. This means that knowledge of $c(p)$ only is sufficient to identify, to within $\max_{j,t}r_{j,t}$ positions, where in $c(p)$ the segment $a(p)$ begins. Furthermore, the increments of $a(p)$ coincide with those of one of the finitely many paths $(x^{j,t}_i)$. There are therefore at most $O(1)$ possibilities for $a(p)$, given $c(p)$, and so $c$ is $O(1)$-to-one, as claimed.

This implies, in particular, that
\begin{equation}\label{eq:P(c(p))>>P(p)}
\Prob_g(c(A_{\in}))\gg\sum_{p\in A_{\in}}\Prob_g(c(p)).
\end{equation}
However, it follows from the definition of $\beta$ that for every $p\in A_{\in}$ we have $\Prob_g(c(p))\ge\beta\Prob_g(p)$. In combination with (\ref{eq:P(notin)>P(c(in))}) and (\ref{eq:P(c(p))>>P(p)}), this implies that
\[
\Prob_g(A_{\notin})\ge\Prob_g(c(A_{\in}))\gg\sum_{p\in A_{\in}}\Prob_g(c(p))\ge\beta\sum_{p\in A_{\in}}\Prob_g(p)=\beta\Prob_g(A_{\in}),
\]
and so (\ref{eq:aim}) holds as claimed and the lemma is proved.
\end{proof}
\begin{lemma}\label{lem:change.coset'}
Let $l$ and $\alpha$ be as given by Lemma \ref{lem:change.coset}. Let $n\le0$. Then if $R>l$, and if $g$ is such that $n-l\le\zeta(g)\le n$, then either
\[
\Prob_g[\,\zeta(X_t)\ge n-l\text{ for all }t\le T^+_R\,]=0
\]
or
\[
\Prob_g[\,\kappa(X_{B^n_R})\in U_n\,|\,\zeta(X_t)\ge n-l\text{ for all }t\le T^+_R\,]\le\alpha.
\]
\end{lemma}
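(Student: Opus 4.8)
The plan is to deduce Lemma~\ref{lem:change.coset'} from its special case $n=0$, which is Lemma~\ref{lem:change.coset}, by translating the whole picture along the $\Z$-coordinate. Since $n\le0$, left multiplication by $n^{-1}$ is an automorphism of the Cayley graph $(G,S)$; more to the point, if $(X_t)$ is the $\mu$-random walk started at $g$ then $(Y_t):=(n^{-1}X_t)$ is the $\mu$-random walk started at $g':=n^{-1}g$, because $n^{-1}X_{t+1}=n^{-1}X_ts_{t+1}=Y_ts_{t+1}$ and left and right multiplication commute.

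First I would record how left multiplication by $n^{-1}$ acts on the decomposition~(\ref{eq:knt.decomp}). If $x=kmt$ with $k\in K$, $m\in\Z$, $t\in T$, then $n^{-1}k=(n^{-1}kn)n^{-1}=\varphi^{-n}(k)n^{-1}$, so $n^{-1}x=\varphi^{-n}(k)(m-n)t$ and hence
\[
\kappa(n^{-1}x)=\varphi^{-n}(\kappa(x)),\qquad\zeta(n^{-1}x)=\zeta(x)-n,\qquad\tau(n^{-1}x)=\tau(x).
\]
In particular $\zeta(g')=\zeta(g)-n\in[-l,0]$ by the hypothesis $n-l\le\zeta(g)\le n$. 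Setting $R':=R-n\ge R>l$, the stopping times transform in the obvious way: $T^+_R$ for $(X_t)$ equals $T^+_{R'}$ for $(Y_t)$, the event $\{\zeta(X_t)\ge n-l\text{ for all }t\le T^+_R\}$ equals $\{\zeta(Y_t)\ge-l\text{ for all }t\le T^+_{R'}\}$, and $B^n_R$ for $(X_t)$ equals $B_{R'}$ for $(Y_t)$. The one properly algebraic point is that $\varphi^{-n}(U_n)=U_0$: since $\varphi^{-n}$ is an automorphism of $K$, applying it to the defining generators $\varphi^j(\kappa(ts))$ of $U_n=U_{[n,\infty)}$ (with $s\in S$, $t\in T$, $j\ge n$) produces the generators $\varphi^{j-n}(\kappa(ts))$ with $j-n\ge0$, i.e.\ a generating set for $U_0$. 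Consequently $\kappa(X_{B^n_R})\in U_n$ if and only if $\kappa(Y_{B_{R'}})=\varphi^{-n}(\kappa(X_{B^n_R}))\in U_0$.

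Putting these pieces together, $\Prob_g[\zeta(X_t)\ge n-l\text{ for all }t\le T^+_R]$ is exactly the probability that the walk started at $g'$ keeps $\zeta\ge-l$ up to time $T^+_{R'}$, and the conditional probability in the statement equals $\Prob_{g'}[\kappa(Y_{B_{R'}})\in U_0\mid\zeta(Y_t)\ge-l\text{ for all }t\le T^+_{R'}]$. Since $R'>l$ and $\zeta(g')\in[-l,0]$, Lemma~\ref{lem:change.coset} applies to $g'$ and $R'$ with the same constants $l,\alpha$, and the dichotomy it produces is precisely the one asserted here. There is no serious obstacle in this argument: it is almost entirely bookkeeping, and the only points that need care are getting the transformation of $\kappa,\zeta,\tau$ and of the subgroups $U_n$ right, and noting that $R'=R-n$ still exceeds $l$ precisely because $n\le0$.
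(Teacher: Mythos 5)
Your proposal is correct and is exactly the paper's argument: the paper's proof is the single sentence ``This follows immediately from applying Lemma \ref{lem:change.coset} with the weighted Cayley graph $(G,\mu)$ left-translated by $n$,'' and your write-up simply makes explicit the bookkeeping behind that translation (the transformation of $\kappa,\zeta,\tau$, the identity $\varphi^{-n}(U_n)=U_0$, and the correspondence of stopping times), all of which checks out.
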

\begin{proof}This follows immediately from applying Lemma \ref{lem:change.coset} with the weighted Cayley graph $(G,\mu)$ left-translated by $n$.
\end{proof}
\begin{lemma}\label{lem:exp.decay}
Let $l$ and $\alpha$ be as given by Lemma \ref{lem:change.coset}. Let $m\in\N$, and suppose that $k\le-ml$ and $R>l$. Then whenever $g\in G$ is such that $k\le\zeta(g)<k+l$ we have either
\[
\Prob_g[\,\zeta(X_t)\ge k\text{ for all }t\le T^+_R\,]=0
\]
or
\[
\Prob_g[\,\kappa(X_{T^+_R})\in U_0\,|\,\zeta(X_t)\ge k\text{ for all }t\le T^+_R\,]\le\alpha^m.
\]
\end{lemma}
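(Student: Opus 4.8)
The plan is to prove, by induction on $m$, the following coset-uniform strengthening, which I label $(\star_m)$: for every $m\in\N$, every floor $k\le-ml$, every $R>l$, every $g$ with $k\le\zeta(g)<k+l$, and every left coset $vU_0$ of $U_0$ in $K$, one has either $\Prob_g[\,\zeta(X_t)\ge k\text{ for all }t\le T^+_R\,]=0$ or $\Prob_g[\,\kappa(X_{T^+_R})\in vU_0\mid\zeta(X_t)\ge k\text{ for all }t\le T^+_R\,]\le\alpha^m$. The lemma is the case $v=e$. Write $C_k=\{\zeta(X_t)\ge k\text{ for all }t\le T^+_R\}$ for the conditioning event; we may assume $\Prob_g(C_k)>0$, as otherwise the first alternative holds. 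The base case $m=0$ is trivial, since a probability is at most $1$.

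For the inductive step I would peel off the bottom layer $[k,k+l)$ using the last-exit time $B^{k+l}_R$, which is available because $k+l\le-(m-1)l\le0$. First I record the structural fact that controls $\kappa$. By Lemma \ref{lem:knts} each step right-multiplies $\kappa$ by an increment $\varphi^{\zeta(X_t)}(\kappa(\tau(X_t)s))$, which lies in $U_{\zeta(X_t)}$; since $\zeta\ge k+l$ after time $B^{k+l}_R$, every such increment lies in $U_{k+l}$, so $\kappa(X_{T^+_R})\in\kappa(X_{B^{k+l}_R})\,U_{k+l}$. As $U_0\subseteq U_{k+l}$ by Lemma \ref{lem:decreasing.seq}, the event $\{\kappa(X_{T^+_R})\in vU_0\}$ forces $\kappa(X_{B^{k+l}_R})\in vU_{k+l}$.

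Next I would apply the standard last-exit decomposition at $B^{k+l}_R$: summing over the exit time $t$ and exit location $y$ (with $\zeta(y)\in[k+l,k+2l)$), the ordinary Markov property at the fixed time $t$ factors $\Prob_g[\,B^{k+l}_R=t,\,X_t=y,\,\kappa(X_{T^+_R})\in vU_0,\,C_k\,]$ into a before factor (a walk from $g$ staying $\ge k$ that last leaves $[k,k+l)$ at time $t$, landing at $y$) times the after factor $\Prob_y[\,\kappa(X_{T^+_R})\in vU_0,\ \zeta(X_s)\ge k+l\text{ for all }s\le T^+_R\,]$, the two walks reaching level $R$ at the same group element $X_{T^+_R}$. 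By the structural fact the after factor vanishes unless $\kappa(y)\in vU_{k+l}$; for such $y$ the inductive hypothesis $(\star_{m-1})$, applied to the fresh walk from $y$ (floor $k+l\le-(m-1)l$, start level in $[k+l,k+2l)$) with the coset $vU_0$, bounds the after factor by $\alpha^{m-1}\Prob_y[\,\zeta\ge k+l\text{ throughout}\,]$. Summing over $(t,y)$ with $\kappa(y)\in vU_{k+l}$ reassembles $\alpha^{m-1}\,\Prob_g[\,\kappa(X_{B^{k+l}_R})\in vU_{k+l},\,C_k\,]$. Finally, applying Lemma \ref{lem:change.coset'} at level $n=k+l$ (its hypotheses hold: $\zeta(g)\in[k,k+l]=[n-l,n]$ and the conditioning is exactly $\zeta\ge n-l=k$) bounds $\Prob_g[\,\kappa(X_{B^{k+l}_R})\in vU_{k+l}\mid C_k\,]\le\alpha$, so that $\Prob_g[\,\kappa(X_{T^+_R})\in vU_0,\,C_k\,]\le\alpha^{m}\Prob_g(C_k)$, completing the induction.

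The main obstacle lies entirely in the two coset estimates, not in the bookkeeping. First, Lemma \ref{lem:change.coset'} is stated only for the subgroup $U_{k+l}$, whereas I need it for an arbitrary left coset $vU_{k+l}$; this requires re-reading the surgery argument proving Lemma \ref{lem:change.coset}, where the map $c$ inserts a detour right-multiplying $\kappa$ by a fixed $u\notin U_{k+l}$, and checking that it carries \emph{any} fixed coset into a disjoint coset, $O(1)$-to-one with the same weight loss $\beta$, so the same $\alpha$ works uniformly over cosets. Second, one must combine the level-$(k+l)$ and level-$0$ constraints in the right order: the delicate point is that the after factor is \emph{forced} to vanish off $\{\kappa(y)\in vU_{k+l}\}$, which is precisely what prevents the two applications from collapsing into a single factor of $\alpha^{m-1}$ and instead produces the full $\alpha^{m}$.
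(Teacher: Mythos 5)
Your proof is correct, and at its core it is the same induction as the paper's: peel off the bottom layer $[k,k+l)$ at the last-exit time $B^{k+l}_R$, use Lemma \ref{lem:knts} together with the nesting of Lemma \ref{lem:decreasing.seq} to see that the target event forces $\kappa(X_{B^{k+l}_R})$ into (a coset of) $U_{k+l}$, extract a factor $\alpha$ from Lemma \ref{lem:change.coset'}, and apply the inductive hypothesis to the fresh walk from the last-exit position $y$. The one genuine difference is your coset-uniform strengthening $(\star_m)$, and here you have made the argument harder than it needs to be: since the final position $X_{T^+_R}$ of the original walk \emph{is} the final position of the fresh walk from $y$, the coset $vU_0$ appearing in the inner application of $(\star_{m-1})$ is the \emph{same} $v$ as in the outer statement. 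The induction therefore never mixes cosets, so you may simply take $v=e$ throughout; then the forced intermediate event is $\kappa(X_{B^{k+l}_R})\in U_{k+l}$ (the subgroup itself), Lemma \ref{lem:change.coset'} applies exactly as stated, and the "main obstacle" you identify --- extending the surgery argument of Lemma \ref{lem:change.coset} to arbitrary cosets --- evaporates. (For what it is worth, your claimed coset extension is true: the insertion map $c$ sends a path with $\overline k_p\in vU_0$ to one with $\overline k_{c(p)}\in k_puU_0$, which is disjoint from $vU_0$ because $u\notin U_0$, and the $O(1)$-to-one and weight estimates are coset-independent; so your proof as written is valid, just longer than necessary.) The paper sidesteps cosets by a slightly different bookkeeping device: it first observes that $\{\kappa(X_{T^+_R})\in U_0\}$ is contained in the intersection of the $m$ subgroup events $\{\kappa(X_{B^n_R})\in U_n\}$, $n=k+l,\ldots,k+ml$, and then runs the induction on that intersection; your version, with $v=e$, inducts directly on the statement of Lemma \ref{lem:exp.decay}, which is arguably tidier. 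Your handling of the last-exit decomposition via joint (rather than conditional) probabilities also neatly avoids the well-definedness caveats the paper has to make when $\Prob_y[\,C_{k+l}\,]=0$.
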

\begin{proof}Everthing in this lemma is conditional on the event $\{\,\zeta(X_t)\ge k\text{ for all }t\le T^+_R\,\}$, so to make the notation less cumbersome we denote by $C_q$ the event
\[
C_q=\{\,\zeta(X_t)\ge q\text{ for all }t\le T^+_R\,\}.
\]
Applying Lemma \ref{lem:decreasing.seq}, we see that $\kappa(X_{T^+_R})\in U_0$ precisely when $\kappa(X_{B^n_R})\in U_n$ for each $n<0$. This implies in particular that
\[
\Big\{\,\kappa(X_{T^+_R})\in U_0\,\Big\}\subset\Big\{\,\kappa(X_{B^n_R})\in U_n\text{ for each }n=k+l,k+2l,\ldots,k+ml\,\Big\},
\]
and hence that it is sufficient to show that
\begin{equation}\label{eq:cond.prob.aim}
\Prob_g\left[\left.\,\kappa(X_{B^n_R})\in U_n\text{ for each }n=k+l,k+2l,\ldots,k+ml\,\right|\,C_k\,\right]\le\alpha^m
\end{equation}
whenever $\Prob_g[\,C_k\,]\ne0$. We show this by induction on $m$.

If $\Prob_g[\,C_k\,\wedge\,\{\,\kappa(X_{B^{k+l}_R})\in U_{k+l}\,\}\,]=0$ then either $\Prob_g[\,C_k\,]=0$ or the left-hand side of (\ref{eq:cond.prob.aim}) is $0$; in either case the lemma holds, so we may assume that $\Prob_g[\,C_k\,\wedge\,\{\,\kappa(X_{B^{k+l}_R})\in U_{k+l}\,\}\,]\ne0$. This implies that the left-hand side of (\ref{eq:cond.prob.aim}) is at most
\[
\begin{split}
\Prob_g\left[\left.\kappa(X_{B^{k+l}_R})\in U_{k+l}\,\right|\,C_k\,\right]\qquad\qquad\qquad\qquad\qquad\qquad\qquad\qquad\qquad\qquad\qquad\qquad\\
\times\,\Prob_g\left[\kappa(X_{B^n_R})\in U_n\text{ for }n=k+2l,\ldots,k+ml\,\left|\,C_k\,\wedge\,\left\{\,\kappa(X_{B^{k+l}_R})\in U_{k+l}\,\right\}\right.\,\right].
\end{split}
\]
However, it follows immediately from Lemma \ref{lem:change.coset'} that $\Prob_g[\kappa(X_{B^{k+l}_R})\in U_{k+l}\,|\,C_k\,]\le\alpha$, and so in fact the left-hand side of (\ref{eq:cond.prob.aim}) is at most
\[
\alpha\cdot\Prob_g\left[\kappa(X_{B^n_R})\in U_n\text{ for }n=k+2l,\ldots,k+ml\,\left|\,C_k\,\wedge\,\left\{\,\kappa(X_{B^{k+l}_R})\in U_{k+l}\right\}\right.\,\right].
\]
Conditioning on the position of the random walk on $G$ immediately after the projected walk on $\Z$ has left the set $[k,k+l-1]$ for the last time before reaching $R$, this is at most
\begin{equation}\label{eq:skopelos}
\alpha\cdot\frac{\sum_{y\,:\,\kappa(y)\in U_{k+l}}\Prob_y\left[\left.\kappa(X_{B^n_R})\in U_n\text{ for }n=k+2l,\ldots,k+ml\,\right|\,C_{k+l}\,\right]\cdot\Prob_g\left[\left.\,X_{B^{k+l}_R}=y\,\right|\,C_k\,\right]}{\Prob_g\left[\left.\,\kappa(X_{B^{k+l}_R})\in U_{k+l}\,\right|\,C_k\,\right]}.
\end{equation}
Note that if $\Prob_y[\,C_{k+l}\,]=0$ then $\Prob_g[\,X_{B^{k+l}_R}=y\,]=0$, so elements $y$ for which $\Prob_y[\kappa(X_{B^n_R})\in U_n\text{ for }n=k+2l,\ldots,k+ml\,|\,C_{k+l}\,]$ is not defined do not appear in the sum in the numerator of (\ref{eq:skopelos}), and so that sum is well defined. This means, moreover, that given $X_0=g$, for every possible value $y$ of $X_{B^{k+l}_R}$ the first factor of the summand of (\ref{eq:skopelos}) is at most $\alpha^{m-1}$ by induction, and so (\ref{eq:skopelos}), and hence the left-hand side of (\ref{eq:cond.prob.aim}), is at most $\alpha^m$, as required.
\end{proof}
Define $M_R=\min\{\zeta(X_t):t\le T_R^+\}$, so that $M_R$ is the minimum point hit by $\zeta(X_t)$ before it first exceeds $R$.
\begin{lemma}\label{lem:last.one}
Let $n\in\N$; let $l$ and $\alpha$ be as given by Lemma \ref{lem:change.coset}; let $m$ be such that $-(m+1)l<-n\le-ml$; and let $R>l$. Then either $\Prob_g[\,M_R=-n\,]=0$ or $\Prob_g[\,\kappa(X_{T^+_R})\in U_0\,|\,M_R=-n\,]\le\alpha^m$.
\end{lemma}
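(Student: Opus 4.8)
The plan is to deduce this from Lemma \ref{lem:exp.decay} by conditioning, via the strong Markov property, on the first moment at which the projected walk reaches the level $-n$.

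First I would dispose of the trivial case: if $\Prob_g[\,M_R=-n\,]=0$ there is nothing to prove (this also covers the case $\zeta(g)<-n$, in which $M_R\le\zeta(g)<-n$), so assume $\Prob_g[\,M_R=-n\,]\ne0$. Set $\rho=\min\{t\ge0:\zeta(X_t)=-n\}$, which is a stopping time for the random walk on $G$. On the event $\{M_R=-n\}$ the minimum value $-n$ of $\zeta(X_t)$ over $t\le T^+_R$ is attained, so $\rho\le T^+_R$; since moreover $\zeta(X_{T^+_R})\ge R>-n$ (as $R>0\ge-n$) we in fact have $\rho<T^+_R$ on that event. Write $C=\{\zeta(X_t)\ge-n\text{ for all }t\le T^+_R\}$.

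Next I would decompose $\{M_R=-n\}$ at $\rho$. On this event one has simultaneously: $\rho<T^+_R$; $X_\rho=z$ for some $z$ with $\zeta(z)=-n$; $\zeta(X_t)\ge-n$ for all $t<\rho$; and $\zeta(X_t)\ge-n$ for all $t\in[\rho,T^+_R]$. Conversely, these conditions force $M_R=-n$. (The point to verify here --- which I expect to be the only genuinely delicate step --- is that ``$\zeta(X_t)\ge-n$ for all $t<\rho$'' really is part of $\{M_R=-n\}$ and is \emph{not} automatic from $\rho$ being the first time $\zeta$ equals $-n$, since a priori the projected walk could undershoot $-n$; it is precisely on $\{M_R=-n\}$ that this is ruled out.) Applying the strong Markov property at $\rho$ and summing over the possible values $z$ of $X_\rho$ (all with $\zeta(z)=-n$) then gives
\begin{align*}
\Prob_g[\,M_R=-n\,]&=\sum_{z:\,\zeta(z)=-n}q_z\cdot\Prob_z[\,C\,],\\
\Prob_g[\,\kappa(X_{T^+_R})\in U_0,\ M_R=-n\,]&=\sum_{z:\,\zeta(z)=-n}q_z\cdot\Prob_z[\,\kappa(X_{T^+_R})\in U_0,\ C\,],
\end{align*}
where $q_z=\Prob_g[\,\rho\le T^+_R,\ X_\rho=z,\ \zeta(X_t)\ge-n\text{ for all }t<\rho\,]$.

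Finally I would apply Lemma \ref{lem:exp.decay} with $k=-n$ and starting vertex $z$: the hypothesis $-(m+1)l<-n\le-ml$ gives $-n\le-ml$, we are given $R>l$, and every $z$ contributing to the sums satisfies $\zeta(z)=-n\in[-n,-n+l)$, so that lemma yields that either $\Prob_z[\,C\,]=0$ or $\Prob_z[\,\kappa(X_{T^+_R})\in U_0\mid C\,]\le\alpha^m$; in either case $\Prob_z[\,\kappa(X_{T^+_R})\in U_0,\ C\,]\le\alpha^m\Prob_z[\,C\,]$. Substituting this bound into the second displayed identity and comparing with the first gives $\Prob_g[\,\kappa(X_{T^+_R})\in U_0,\ M_R=-n\,]\le\alpha^m\Prob_g[\,M_R=-n\,]$, and dividing through by $\Prob_g[\,M_R=-n\,]$ gives the claim.
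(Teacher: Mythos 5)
Your argument is correct and follows essentially the same route as the paper: both decompose the event $\{M_R=-n\}$ at the first visit of $\zeta(X_t)$ to level $-n$, sum over the possible positions $z$ there, and apply Lemma \ref{lem:exp.decay} with $k=-n$ to the walk restarted at each such $z$. Your phrasing via exact unconditional identities from the strong Markov property (rather than the paper's manipulation of conditional probabilities given $C_{-n}$) is only a cosmetic difference.
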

\begin{proof}
We may assume that $\Prob_g[\,M_R=-n\,]\ne0$, and hence in particular that $\Prob_g[\,C_{-n}\,]\ne0$, and so $\Prob_g[\,\kappa(X_{T^+_R})\in U_0\,|\,M_R=-n\,]$ is well defined and equal to
\[
\begin{split}
\sum_{y\in G}\Prob_g\left[\left.\,\left\{\,T_{\{b\in G:\zeta(b)=-n\}}<T^+_R\,\,\text{and}\,\, X_{T_{\{b\in G:\zeta(b)=-n\}}}=y\,\right\}\,\right|\,C_{-n}\,\right]\qquad\qquad\\
\qquad\qquad\qquad\qquad\qquad\qquad\qquad\qquad\qquad\qquad\times\,\Prob_y[\,\kappa(X_{T^+_R})\in U_0\,|\,C_{-n}\,],
\end{split}
\]
which is at most
\begin{equation}\label{eq:gecko}
\begin{split}
\sum_{y\in G\,:\,\zeta(y)=-n}\Prob_g\left[\left.\,X_{T_{\{b\in G:\zeta(b)=-n\}}}=y\,\right|\,\left\{\,T_{\{b\in G:\zeta(b)=-n\}}<T^+_R\,\,\text{and}\,\, C_{-n}\,\right\}\,\right]\qquad\qquad\\
\qquad\qquad\qquad\qquad\qquad\qquad\qquad\qquad\qquad\times\,\Prob_y[\,\kappa(X_{T^+_R})\in U_0\,|\,C_{-n}\,].
\end{split}
\end{equation}
If $\Prob_y[\,C_{-n}\,]=0$ then
\[
\Prob_g\left[\left.\,X_{T_{\{b\in G:\zeta(b)=-n\}}}=y\,\right|\,\left\{\,T_{\{b\in G:\zeta(b)=-n\}}<T^+_R\,\,\text{and}\,\, C_{-n}\,\right\}\,\right]=0,
\]
and so elements $y$ for which
\begin{equation}\label{eq:gecko.2}
\Prob_y[\,\kappa(X_{T^+_R})\in U_0\,|\,C_{-n}\,]
\end{equation}
is not defined do not appear in the sum (\ref{eq:gecko}) and that sum is well defined. The sum (\ref{eq:gecko}) is, moreover, the expectation of the quantity (\ref{eq:gecko.2}) with respect to some probability measure on the set $\{y\in G\,:\,\zeta(y)=-n\}$. However, for each $y$ in that set for which the quantity (\ref{eq:gecko.2}) is defined, the quantity (\ref{eq:gecko.2}) is at most $\alpha^m$ by Lemma \ref{lem:exp.decay}, and so (\ref{eq:gecko}) is at most $\alpha^m$ and the lemma is proved.
\end{proof}
Define a real-valued function $h_R$ on the subset $K[-R,R]T$ of $G$ by $h_R(g)=\Prob_g[\,T^+_R<T^-_{-R}\,\,\,\text{and}\,\,\,\kappa(X_{T^+_R})\in U_0\,]$.
\begin{lemma}\label{lem:gady.function}
The function $h_R$ satisfies the following properties.
\begin{enumerate}
\renewcommand{\labelenumi}{(\roman{enumi})}
\item The function $h_R$ is positive and harmonic on the interior of $K[-R,R]T$.
\item For every $g\in(K[-R,R]T)^\circ$ we have $h_R(g)\ll|\zeta(g)|/R$.
\item If $\zeta(g)\ge0$ and $\kappa(g)\notin U_0$ then $h_R(g)\ll1/R$.
\item If $\zeta(g)\ge0$ and $\kappa(g)\in U_0$ then $h_R(g)\gg\zeta(g)/R$.
\end{enumerate}
\end{lemma}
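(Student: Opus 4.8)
The plan is to read all four properties off the probabilistic definition of $h_R$, exploiting the coset behaviour of $\kappa$ recorded in Lemma \ref{lem:knts} together with the gambler's-ruin estimates of Section \ref{sec:v.cyc} and the exponential decay of Lemma \ref{lem:last.one}. For (i), harmonicity is a first-step analysis: for $g$ with $-R<\zeta(g)<R$ whose neighbours all lie in $K[-R,R]T$, neither $T^+_R$ nor $T^-_{-R}$ has occurred at time $0$, so conditioning on the first step $g\mapsto gs$ and using the Markov property gives $h_R(g)=\E_{s\in S}h_R(gs)$, i.e. $\Delta h_R(g)=0$; the interior consists of such $g$. Positivity follows from irreducibility: from any $g$ one exhibits a finite path reaching level $\ge R$ before level $\le-R$ and ending with $\kappa\in U_0$ (if $\kappa(g)\in U_0$ simply ascend without descending below $0$; otherwise first insert a bounded manoeuvre into $U_0$, exactly as in the proof of Lemma \ref{lem:change.coset}), so $h_R(g)>0$.

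The engine behind (ii)--(iv) is the following consequence of Lemma \ref{lem:knts}: a step taken at a level $n\ge0$ right-multiplies $\kappa$ by $\varphi^n(\kappa(ts))\in U_{[0,\infty)}=U_0$, hence leaves the left coset $\kappa U_0$ unchanged. Thus on the event $\{M_R\ge0\}$ that the walk never descends below level $0$ before reaching $R$ we have $\kappa(X_{T^+_R})\in\kappa(g)U_0$, so $\kappa(X_{T^+_R})\in U_0$ if and only if $\kappa(g)\in U_0$. This gives (iv) at once: when $\kappa(g)\in U_0$ we have $\{M_R\ge0\}\subset\{T^+_R<T^-_{-R},\ \kappa(X_{T^+_R})\in U_0\}$, and a gambler's-ruin estimate of the type proved in Lemma \ref{lem:rand.walk.cyc.linear}, applied to the projected walk on $\psi(G)$ in the window $[0,R]$, gives $\Prob_g[M_R\ge0]\gg\zeta(g)/R$, whence $h_R(g)\gg\zeta(g)/R$.

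For (iii) the same observation forces a descent: if $\kappa(g)\notin U_0$ then $\{M_R\ge0\}$ keeps $\kappa$ outside $U_0$, so the event defining $h_R$ lies in $\{M_R\le-1\}$. Decomposing over the minimum, $h_R(g)\le\sum_{n\ge1}\Prob_g[M_R=-n]\,\Prob_g[\kappa(X_{T^+_R})\in U_0\mid M_R=-n]$, and Lemma \ref{lem:last.one} bounds the conditional factor by $\alpha^{\lfloor n/l\rfloor}$. Writing $\Prob_g[M_R=-n]=\Prob_g[T^-_{-n}<T^+_R]-\Prob_g[T^-_{-(n+1)}<T^+_R]$ and applying gambler's ruin yields $\Prob_g[M_R=-n]\ll R/(R+n)^2$, so $\sum_{n\ge1}\alpha^{\lfloor n/l\rfloor}\Prob_g[M_R=-n]\ll R^{-1}\sum_{n\ge1}\alpha^{\lfloor n/l\rfloor}\ll1/R$. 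Property (ii) is then the same $M_R$-decomposition for arbitrary $g$: the part $\{M_R\ge0\}$ contributes at most $\Prob_g[M_R\ge0]\ll(\zeta(g))_+/R$, while the part $\{M_R\le-1\}$ contributes $\ll1/R$ exactly as above, giving $h_R(g)\ll(|\zeta(g)|+1)/R$ and hence the linear growth required of the limit.

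The main obstacle is the estimate in (iii). It looks paradoxical that $h_R(g)\ll1/R$ when $\kappa(g)\notin U_0$, since the walk descends below $0$ with probability bounded away from $0$; the resolution is that hitting a \emph{specific} shallow minimum $-n$ has probability only $\asymp1/R$, and the geometric weight $\alpha^{\lfloor n/l\rfloor}$ furnished by Lemma \ref{lem:last.one} concentrates the sum precisely on these shallow values. Making this rigorous requires the two-sided gambler's-ruin estimate for $\Prob_g[M_R=-n]$ to be uniform in $n$ and to dovetail correctly with the conditioning in Lemma \ref{lem:last.one}; this interfacing is the step demanding the most care.
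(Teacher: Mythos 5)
Your argument for properties (ii)--(iv) is essentially the paper's own proof: the same decomposition of $h_R$ over the minimum $M_R$, the same coset-preservation observation (steps taken at non-negative levels multiply $\kappa$ on the right by elements of $U_0$, by Lemma \ref{lem:knts}), the same gambler's-ruin input from Lemma \ref{lem:rand.walk.cyc.linear}, and the same geometric decay from Lemma \ref{lem:last.one}. The one place you diverge is the bound on $\Prob_g[\,M_R=-n\,]$: you difference the hitting probabilities $\Prob_g[\,T^-_{-n}<T^+_R\,]-\Prob_g[\,T^-_{-(n+1)}<T^+_R\,]$, whereas the paper uses the strong Markov property to bound $\Prob_g[\,M_R=-n\,]$ by $\Prob_{\psi(g)}[\,\overline T^-_{-n}<\overline T^+_R\,]\cdot\max_{t\in T}\Prob_{(-n)\psi(t)}[\,\overline T^+_R<\overline T^-_{-(n+1)}\,]$. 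Both routes yield the estimate that is actually used, namely $\Prob_g[\,M_R=-n\,]\ll1/R$ uniformly in $n$; but your intermediate claim $\Prob_g[\,M_R=-n\,]\ll R/(R+n)^2$ does not follow from Lemma \ref{lem:rand.walk.cyc.linear}, whose additive $O(1/R)$ error survives the differencing and dominates that expression. Since your subsequent display only invokes the uniform $1/R$ bound, this overstatement is harmless.

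The genuine problem is your positivity argument in (i). The manoeuvre in the proof of Lemma \ref{lem:change.coset} multiplies $\kappa$ on the right by the \emph{fixed} element $u\notin U_0$, i.e.\ it moves \emph{out} of $U_0$; it cannot return an arbitrary coset $\kappa(g)U_0$ to $U_0$, and no bounded manoeuvre can, since $U_0$ has infinite index in $K$ (the chain $U_0\subsetneq U_{-1}\subsetneq U_{-2}\subsetneq\cdots$ is strictly increasing with union $K$). In fact positivity on the whole interior is false: on the event $T^+_R<T^-_{-R}$, every step before time $T^+_R$ is taken from a position at level in $[-R+1,R-1]$, so by Lemma \ref{lem:knts} we have $\kappa(X_{T^+_R})\in\kappa(g)U_{-R+1}$, and therefore $\kappa(X_{T^+_R})\in U_0$ forces $\kappa(g)\in U_{-R+1}$. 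Consequently $h_R(g)=0$ whenever $\kappa(g)\in K\setminus U_{-R+1}$, and such $g$ lie in the interior (take $g=k\cdot0\cdot e$ with $k\in K\setminus U_{-R+1}$ and $R$ larger than $\max|\zeta(TS)|$). You are in good company: the paper dismisses (i) with the assertion that positivity and harmonicity are clear, which is equally unjustified on this point. The flaw is immaterial downstream, because the proof of Proposition \ref{prop:gady'} needs only non-negativity, harmonicity on the interior (your first-step analysis is fine), and properties (ii)--(iv); positivity of the limit function then follows from non-negativity, property (iv) and harmonicity.
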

\begin{proof}The positivity and harmonicity of $h_R$ are clear, so we prove properties (ii), (iii) and (iv). We may rewrite $h_R(g)$ by conditioning on $M_R$ as follows:
\begin{equation}\label{eq:conditional}
\begin{split}
h_R(g)=\left(\sum_{n=0}^R\Prob_g[\,M_R=-n\,]\cdot\Prob_g[\,\kappa(X_{T^+_R})\in U_0\,|\,M_R=-n\,]\right)\qquad\\
+\,\Prob_g[\,M_R>0\,]\cdot\Prob_g[\,\kappa(X_{T^+_R})\in U_0\,|\,M_R>0\,].
\end{split}
\end{equation}
Let us examine these probabilities in turn, starting with $\Prob_g[\,M_R=-n\,]$. This corresponds to the event that $\zeta(X_t)$ hits $-n$ before reaching or exceeding $R$, but then reaches or exceeds $R$ before dropping below $-n$. In particular,
\begin{equation}\label{eq:first.prob.decomp}
\Prob_g[\,M_R=-n\,]\le\Prob_{\psi(g)}\left[\,\overline T^-_{-n}<\overline T^+_R\,\right]\cdot\max_{t\in T}\Prob_{(-n)\psi(t)}\left[\,\overline T^+_R<\overline T^-_{-(n+1)}\,\right]
\end{equation}
Applying Lemma \ref{lem:rand.walk.cyc.linear}, for each $n\ge0$ we have
\begin{equation}\label{eq:first.prob.1}
\Prob_{\psi(g)}\left[\,\overline T^-_{-n}<\overline T^+_R\,\right]=\frac{R-\zeta(g)}{R+n+O(1)}+O\left(\frac{1}{R}\right)
\end{equation}
and
\begin{equation}\label{eq:first.prob.2}
\max_{t\in T}\Prob_{(-n)\psi(t)}\left[\,\overline T^+_R<\overline T^-_{-(n+1)}\,\right]=\frac{1}{R+n+O(1)}+O\left(\frac{1}{R}\right).
\end{equation}
If $\zeta(g)\le0$ then of course $\Prob_g[\,M_R>0\,]=0$; another application of Lemma \ref{lem:rand.walk.cyc.linear} implies that more generally we have
\begin{equation}\label{eq:second.prob}
\Prob_g[\,M_R>0\,]=
\begin{cases}
\frac{\zeta(g)}{R+O(1)}+O\left(\frac{1}{R}\right) &\text{if $\zeta(g)>0$};\\
0 &\text{if $\zeta(g)\le0$}.
\end{cases}
\end{equation}
We now consider $\Prob_g[\,\kappa(X_{T^+_R})\in U_0\,|\,M_R=-n\,]$ when $n\in\N$ and $\Prob_g[\,M_R=-n\,]\ne0$. Let $l$ and $\alpha$ be as given by Lemma \ref{lem:change.coset}, noting in particular that $\alpha<1$, and let $m$ be such that $-(m+1)l<-n\le-ml$. Lemma \ref{lem:last.one} then implies that
\begin{equation}\label{eq:cond.prob.1}
\Prob_g[\,\kappa(X_{T^+_R})\in U_0\,|\,M_R=-n\,]\le\alpha^m.
\end{equation}
Finally, the condition that $M_R>0$ implies that $\zeta(X_t)$ does not drop below zero until after time $T^+_R$, which by Lemma \ref{lem:knts} means that $\kappa(X_t)$ is in the same left coset of $U_0$ as $\kappa(g)$ for every $t\le T^+_R$. We therefore have
\begin{equation}\label{eq:cond.prob.2}
\Prob_g[\,\kappa(X_{T^+_R})\in U_0\,|\,M_R>0\,]=
\begin{cases}
1 &\text{if $\kappa(g)\in U_0$}\\
0 &\text{otherwise}.
\end{cases}
\end{equation}
Properties (ii), (iii) and (iv) then follow from (\ref{eq:conditional}), (\ref{eq:first.prob.decomp}), (\ref{eq:first.prob.1}), (\ref{eq:first.prob.2}), (\ref{eq:second.prob}), (\ref{eq:cond.prob.1}) and (\ref{eq:cond.prob.2}) and the fact that $\alpha<1$.
\end{proof}
\begin{proof}[Proof of Proposition \ref{prop:gady'}]
Property (ii) of Lemma \ref{lem:gady.function} implies that $R\cdot h_R(g)=O(|\zeta(g)|)$, so for each $g$ there is a convergent subsequence of $R\cdot h_R(g)$ as $R\to\infty$. Since $G$ is countable, a simple diagonal argument therefore gives a subsequence of $R\cdot h_R$ that converges pointwise to a function $h:G\to\R$, which grows at most linearly in $|g|$ by the bound from Lemma \ref{lem:gady.function} (ii). The limit function $h$ is harmonic by property (i) of Lemma \ref{lem:gady.function}, and does not factor through $G/K$ by properties (iii) and (iv).
\end{proof}
\section{Groups with finite-dimensional spaces of harmonic functions}\label{sec:proof}
In this section we prove Theorem \ref{con:harm.fin.dim}. The group $G$ in Theorem \ref{con:harm.fin.dim} acts on the space $H$ of harmonic functions on $(G,\mu)$ via the linear transformations $g\cdot f(x)=f(g^{-1}x)$. This action defines a homomorphism $G\to GL(H)$, which we denote by $\psi:G\to GL(H)$ throughout this section.
\begin{lemma}\label{lem:harm.factors}
A function $h:G\to\R$ is harmonic with respect to $\mu$ if, and only if, there is some function $\overline{h}:\psi(G)\to\R$, harmonic with respect to $\psi(\mu)$, such that $h=\overline{h}\circ\psi$. Moreover, $h\in H^k(G,\mu)$ if and only if $\overline h\in H^k(\psi(G),\psi(\mu))$.
\end{lemma}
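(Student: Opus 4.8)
The plan is to use the fact that $\ker\psi$ acts trivially on $H$ \emph{by construction}, so that every harmonic function is automatically constant on cosets of $\ker\psi$, and then to invoke Lemma \ref{lem:harm.pullback} to pass harmonicity back and forth between $(G,\mu)$ and $(\psi(G),\psi(\mu))$.

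First I would unwind the definition of $\psi$. Since $\psi$ is a homomorphism, $\ker\psi$ is a normal subgroup, and directly from the definition $g\in\ker\psi$ if and only if $f(g^{-1}x)=f(x)$ for all $f\in H$ and $x\in G$; replacing $x$ by $gx$, this says $f(gx)=f(x)$ for all $f\in H$, $x\in G$. Hence if $h$ is harmonic with respect to $\mu$ — so that $h\in H$ — then $h(gx)=h(x)$ for every $g\in\ker\psi$, i.e.\ $h$ is constant on the cosets of $\ker\psi$, so it descends to a function $\overline h$ on $G/\ker\psi\cong\psi(G)$ with $h=\overline h\circ\psi$. (For the reverse implication of the first equivalence there is nothing to do: we are simply handed such an $\overline h$.) Applying Lemma \ref{lem:harm.pullback} to the surjective homomorphism $\psi\colon G\to\psi(G)$ and the function $\overline h\colon\psi(G)\to\R$ now says exactly that $h=\overline h\circ\psi$ is harmonic with respect to $\mu$ if and only if $\overline h$ is harmonic with respect to $\psi(\mu)$, which gives the first assertion in both directions.

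For the growth statement I would compare the word metrics. Since $\supp\psi(\mu)=\psi(\supp\mu)$, projecting any word in $\supp\mu$ representing $x$ gives $|\psi(x)|_{\psi(\mu)}\le|x|_\mu$ for every $x\in G$, while lifting a geodesic word in $\supp\psi(\mu)$ representing $\overline x$ letter by letter produces some $x\in\psi^{-1}(\overline x)$ with $|x|_\mu=|\overline x|_{\psi(\mu)}$. So if $h\in H^k(G,\mu)$, then for $\overline x\in\psi(G)$ of large word length, choosing such a lift $x$ gives $|\overline h(\overline x)|=|h(x)|\ll_h|x|_\mu^k=|\overline x|_{\psi(\mu)}^k$, whence $\overline h\in H^k(\psi(G),\psi(\mu))$. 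Conversely, if $\overline h\in H^k(\psi(G),\psi(\mu))$, then for $x\in G$ of large word length either $\psi(x)$ also has large word length, in which case $|h(x)|=|\overline h(\psi(x))|\ll_{\overline h}|\psi(x)|_{\psi(\mu)}^k\le|x|_\mu^k$, or $\psi(x)$ lies in a fixed finite ball of $\psi(G)$, in which case $|h(x)|$ is bounded by the finite maximum of $|\overline h|$ on that ball and hence is $\ll|x|_\mu^k$; either way $h\in H^k(G,\mu)$.

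I expect the only genuine subtlety to be this last point: when $\ker\psi$ is infinite, letting $x\to\infty$ in $G$ need not force $\psi(x)\to\infty$ in $\psi(G)$, so the backward direction of the growth claim really does require the case split above rather than a naive substitution into the bound for $\overline h$. The rest is routine bookkeeping with the definitions and Lemma \ref{lem:harm.pullback}.
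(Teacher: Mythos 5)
Your proposal is correct and follows essentially the same route as the paper: use the fact that $\ker\psi$ acts trivially on $H$ to see that any harmonic $h$ satisfies $h(kg)=h(g)$ for $k\in\ker\psi$ and hence descends to $\overline h$ on $\psi(G)$, then invoke Lemma \ref{lem:harm.pullback} for the harmonicity equivalence. The only difference is that the paper dismisses the growth comparison as ``easy to see'' while you spell it out (correctly, including the case where $\psi(x)$ stays bounded as $x\to\infty$).
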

\begin{proof}If $h:G\to\R$ is harmonic and $k\in\ker\psi$ then $h(kg)=h(g)$ for every $g$, so there exists $\overline{h}:\psi(G)\to\R$ such that $h=\overline{h}\circ\psi$. It is easy to see that $\overline h$ exhibits polynomial growth of degree at most $k$ if and only if $h$ does, so the desired result then follows from Lemma \ref{lem:harm.pullback}.
\end{proof}
\begin{proof}[Proof of Theorem \ref{con:harm.fin.dim}]
Suppose first that $G$ has a finite-index infinite nilpotent subgroup $N$ of \emph{rank} $d\in\N$ (the rank is defined in \cite{mpty}, for example, and is equal to $1$ if and only if $N$ is virtually cyclic). It follows from \cite{mpty} that $\dim H^k(G,\mu)\gg_dk^{d-1}$, which implies one direction of the theorem, the other direction being Proposition \ref{prop:direct}.

Now suppose that $G$ is not virtually nilpotent. If the space of harmonic functions is finite dimensional then $\psi$ may be viewed as a homomorphism $\psi:G\to GL_n(\R)$. By Lemma \ref{lem:harm.factors}, the space of harmonic functions on $(\psi(G),\psi(\mu))$ is finite dimensional, and so, by Corollary \ref{cor:linear}, $\psi(G)$ is virtually nilpotent. It is therefore virtually cyclic by the virtually nilpotent case of the theorem.

If $\psi(G)$ is finite then, by the maximum principle (Lemma \ref{lem:max.princ}) and Lemma \ref{lem:harm.factors}, every harmonic function on $(G,\mu)$ is constant, contradicting Corollary \ref{cor:harm.exists}. Thus $\psi(G)$ is infinite. If $\ker\psi$ is not finitely generated, Proposition \ref{prop:gady'} therefore gives a harmonic function on $(G,\mu)$ that does not factor through $G/\ker\psi$. On the other hand, since Proposition \ref{prop:varopoulos} implies that the random walk on $(G,\mu)$ is transient, if $\ker\psi$ is finitely generated and infinite then Proposition \ref{prop:harm.exists} gives a harmonic function that is not constant on $\ker\psi$. In either case this contradicts Lemma \ref{lem:harm.factors}, and so $\ker\psi$ must in fact be finite. Since $\psi(G)$ is virtually cyclic, it follows that $G$ is itself virtually cyclic, and the theorem holds.
\end{proof}
\begin{proof}[Proof of Corollary \ref{cor:osin}]
If $G$ is not amenable then the space of bounded harmonic functions is infinite dimensional \cite{kai-ver}. If $G$ is virtually nilpotent then the result follows from the same argument as for Theorem \ref{con:harm.fin.dim}. Osin \cite[Proposition 3.1]{osin} has shown that if $G$ is elementary amenable and not virtually nilpotent then it has a normal subgroup $H$ such that $G/H$ is virtually polycyclic, and virtually nilpotent only if $H$ is not finitely generated. If $G/H$ is not virtually nilpotent then the quotient has an infinite dimensional space of harmonic functions of linear growth \cite{mey-yad}, and so the corollary follows from Lemma \ref{lem:harm.factors}. If $G/H$ is virtually nilpotent and not virtually cyclic then the corollary follows from Lemma \ref{lem:harm.factors} and the virtually nilpotent case. Finally, if $G/H$ is virtually cyclic then the corollary follows from Proposition \ref{prop:gady'} (see Remark \ref{rem:inf.dim.lin}).
\end{proof}
\begin{remarks}
Meyerovitch and Yadin's result \cite{mey-yad} could also be used in place of Corollary \ref{cor:linear} in the proof of Theorem \ref{con:harm.fin.dim}.

It is conjectured that if $G$ is \emph{any} non-virtually nilpotent group with a symmetric, finitely supported generating probability measure $\mu$ then $\dim H^1(G,\mu)=\infty$ \cite{mey-yad}. A verification of this conjecture would immediately reduce both Theorem \ref{con:harm.fin.dim} and Conjecture \ref{con:osin} to the virtually nilpotent case, which, in each case, follows from the results of \cite{mpty} as described above.
\end{remarks}
\appendix
\section{Further applications of our Garden of Eden theorem}\label{ap:GofE}
In this appendix we use Theorem \ref{thm:GofE.neighbourly} to recover Theorem \ref{thm:GofE} and to reformulate a conjecture of I. Kaplansky.
\subsection*{The Ceccherini-Silberstein--Coornaert Garden of Eden theorem}
Theorem \ref{thm:GofE} follows immediately from Theorem \ref{thm:GofE.neighbourly} and the following result.
\begin{prop}\label{prop:pre-inj.iff.transp.pre-inj}Let $V$ be a finite-dimensional vector space and let $\tau:V^G\to V^G$ be a linear cellular automaton with memory set $M$ over an amenable group $G$. Then $\tau$ is pre-injective if and only if $\tau'$ is pre-injective.
\end{prop}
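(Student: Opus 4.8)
The plan is to prove this by the mean-dimension method available over amenable groups, \emph{without} invoking Theorem~\ref{thm:GofE} (which would be circular, as that theorem is exactly what we are about to deduce). Write $r=\dim V$ and fix a Følner sequence $(F_n)$ for $G$. For a $G$-invariant subspace $W\subseteq V^G$ and a finite $F\subseteq G$ let $\pi_F\colon V^G\to V^F$ be restriction; since $F\mapsto\dim\pi_F(W)$ is subadditive and $G$-invariant, the Ornstein--Weiss lemma produces a well-defined mean dimension $\mdim W:=\lim_n\dim\pi_{F_n}(W)/|F_n|$. Set $\Sigma=\tau(V^G)$, and let $\Sigma^\perp=\{\varphi\in V^G_0:\sum_x\varphi(x)\cdot\sigma(x)=0\text{ for all }\sigma\in\Sigma\}$ be its annihilator in the space $V^G_0$ of finitely supported configurations; note $\Sigma^\perp$ is $G$-invariant because $\Sigma$ is. I would then establish the chain
\[
\tau\text{ pre-injective}\ \Longleftrightarrow\ \mdim\Sigma=r\ \Longleftrightarrow\ \Sigma^\perp=\{0\}\ \Longleftrightarrow\ \tau'\text{ pre-injective},
\]
which yields the proposition.

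The two right-hand equivalences are soft. For the rightmost one: $\tau'$ preserves $V^G_0$ (being locally specifiable), so for $\varphi\in V^G_0$ and any $f\in V^G$ the identity $\sum_x(\tau'\varphi)(x)\cdot f(x)=\sum_x\varphi(x)\cdot(\tau f)(x)$ is a finite sum and holds by definition of the transpose; hence $\ker(\tau'|_{V^G_0})=\Sigma^\perp$, and $\tau'$ is pre-injective precisely when $\Sigma^\perp=\{0\}$. For the middle one: for each finite $F$ the space $\Sigma^\perp\cap V^F$ is exactly the annihilator of $\pi_F(\Sigma)$ inside $V^F$ (identifying a configuration supported in $F$ with its restriction), so $\dim\pi_F(\Sigma)+\dim(\Sigma^\perp\cap V^F)=r|F|$; dividing by $|F_n|$ shows $\mdim\Sigma=r$ iff $\dim(\Sigma^\perp\cap V^{F_n})=o(|F_n|)$. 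This holds trivially if $\Sigma^\perp=\{0\}$. Conversely, if $0\ne\varphi\in\Sigma^\perp$ has finite non-empty support $E$, then $G$-invariance puts every translate $g\varphi$ in $\Sigma^\perp$, and a greedy packing using the Følner property produces $\gg|F_n|$ elements $g$ with the sets $gE$ pairwise disjoint and contained in $F_n$; the corresponding $g\varphi$ then have pairwise disjoint supports, so they are linearly independent and $\dim(\Sigma^\perp\cap V^{F_n})\gg|F_n|$, contradicting $\mdim\Sigma=r$.

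The leftmost equivalence is where the structure of $\tau$ enters: $\tau$ is $G$-equivariant, and, being a cellular automaton, it is locally specifiable on $\Gamma=(G,M\cup M^{-1})$, so that $\supp(\tau v)\subseteq(\supp v)^+$ for every $v\in V^G_0$, where $A^+$ denotes the $1$-neighbourhood in $\Gamma$. Let $F_n^\circ$ be the interior of $F_n$ in $\Gamma$, so $(F_n^\circ)^+\subseteq F_n$ and $|F_n^\circ|=|F_n|-o(|F_n|)$ by the Følner property. If $\tau$ is pre-injective, then $v\mapsto\tau v$ is injective on the configurations supported in $F_n^\circ$ and maps them into configurations supported in $F_n$, so $\dim\pi_{F_n}(\Sigma)\ge r|F_n^\circ|=r|F_n|-o(|F_n|)$ and hence $\mdim\Sigma=r$. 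If instead $0\ne\varphi\in V^G_0$ satisfies $\tau\varphi=0$, with support $E$, then $\tau(g\varphi)=0$ for all $g$ by equivariance; packing $\gg|F_n|$ pairwise disjoint translates $gE$ inside $F_n^\circ$ exactly as above, the functions $g\varphi$ span a $\gg|F_n|$-dimensional subspace inside the kernel of the finite-dimensional map $V^{F_n}\to V^{F_n^\circ}$ induced by $\tau$, whose image is precisely $\pi_{F_n^\circ}(\Sigma)$; thus $\dim\pi_{F_n^\circ}(\Sigma)\le(r-c)|F_n|$ for a fixed $c>0$, and therefore $\mdim\Sigma\le r-c<r$.

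I expect the genuine content to lie in this last equivalence --- specifically, in the bookkeeping that converts (non-)pre-injectivity of $\tau$ on $V^G$ into dimension counts over the Følner windows $F_n$ --- although the estimates involved are routine greedy-packing arguments and the only substantial external ingredient is the Ornstein--Weiss lemma underlying $\mdim$. It seems worth remarking that, in contrast to the standard route to Theorem~\ref{thm:GofE}, this argument never uses that $\Sigma$ is closed in the prodiscrete topology, nor that $\tau'$ is itself a cellular automaton.
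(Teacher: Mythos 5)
Your proof is correct, and it uses the same basic tool as the paper --- mean dimension along a F\o lner sequence --- but it is organised around a different key lemma. The paper's pivot is the identity $\mdim\tau(V^G)=\mdim\tau'(V^G)$ (Proposition \ref{prop:new.mdim}), proved by observing that the finite matrices $\tau_{\Omega_n^+}^{\Omega_n^+}$ and $(\tau')_{\Omega_n^+}^{\Omega_n^+}$ are transposes and hence have equal rank; the equivalence ``pre-injective $\Leftrightarrow$ $\mdim=\dim V$'' is then cited from \cite[Lemmas 4.8 \& 4.9]{GofE} and applied to both $\tau$ and $\tau'$. Your pivot is instead the exact identification $\ker(\tau'|_{V^G_0})=\tau(V^G)^\perp$ (a sharpening of the easy direction of Theorem \ref{thm:GofE.neighbourly}), combined with the rank--nullity relation $\dim\pi_F(\Sigma)+\dim(\Sigma^\perp\cap V^F)=r|F|$; this buys you two things: you never need to apply the pre-injectivity/mean-dimension criterion to $\tau'$ (so you never need to know that $\tau'$ is itself a cellular automaton), and your argument is self-contained, since you re-prove the $\tau$-side of that criterion by the standard F\o lner packing argument rather than citing it. The cost is that you redo work the paper black-boxes, and the paper's version arguably isolates more cleanly where amenability enters (only in converting a statement about $\tau'$ into one about $\tau$, cf.\ Remark \ref{rem:GofE.proof}). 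Two cosmetic loose ends in your write-up: the final inequality $\mdim\Sigma\le r-c$ needs the one-line observation that $\dim\pi_{F_n}(\Sigma)\le\dim\pi_{F_n^\circ}(\Sigma)+r|F_n\setminus F_n^\circ|=\dim\pi_{F_n^\circ}(\Sigma)+o(|F_n|)$; and the appeal to Ornstein--Weiss is unnecessary, since the paper defines $\mdim$ as a $\liminf$ and every implication in your chain survives with that convention.
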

\begin{remark}\label{rem:countable}As we noted at the start of the proof of Theorem \ref{thm:GofE.neighbourly}, a locally specifiable map on a locally finite graph is pre-injective if and only if it is pre-injective on every connected component; in proving Proposition \ref{prop:pre-inj.iff.transp.pre-inj} we may therefore assume that $G$ is generated by $M$, and hence that $G$ is countable.
\end{remark}
From now on in this appendix, $G$ is a fixed countable amenable group and $V$ is a fixed finite-dimensional vector space.

In proving Theorem \ref{thm:GofE}, Ceccherini-Silberstein and Coornaert make use of the notion of \emph{mean dimension}, the use of which in connection to Theorem \ref{thm:GofE} appears to have been first suggested by Gromov \cite[\S8.J]{gromov'}.

Let $X$ be a subspace of $V^G$. Given a subset $\Omega$ of $G$ and an element $f$ of $V^G$, denote by $f_\Omega$ the function that agrees with $f$ on the subset $\Omega$ and takes the value $0$ elsewhere, and denote by $X_\Omega$ the subspace of $V^G$ defined by $X_\Omega=\{f_\Omega:f\in X\}$. Since $G$ is countable and amenable, it admits a \emph{F\o lner sequence}, which is to say a sequence $(\Omega_n)_{n\in\N}$ of subsets of $G$ with the property that for every $g\in G$ we have $\frac{|\Omega_n\,\triangle\,\Omega_n g|}{|\Omega_n|}\to0$ as $n\to\infty$ \cite{folner}. This implies in particular that
\begin{equation}\label{eq:Folner}
\frac{|\partial^+\Omega_n|}{|\Omega_n|}\to0.
\end{equation}
The \emph{mean dimension} of $X$ with respect to $(\Omega_n)_{n\in\N}$ is then denoted $\mdim X$, and defined by $\mdim X=\liminf_{n\to\infty}\frac{\dim X_{\Omega_n}}{|\Omega_n|}$.

For the remainder of this appendix, $(\Omega_n)_{n\in\N}$ is a fixed F\o lner sequence in $G$, and the mean dimension of a subspace $X$ of $V^G$ is always computed with respect to $(\Omega_n)_{n\in\N}$. We define the neighbourhood $\Omega^+$ of a subset $\Omega\subset G$ to be its neighbourhood in the Cayley graph $(G,M)$.

Ceccherini-Silberstein and Coornaert \cite{GofE} originally obtained Theorem \ref{thm:GofE} in the case of a countable amenable group as an immediate consequence of the following more precise statement.
\begin{prop}[Ceccherini-Silberstein--Coornaert {\cite[Theorem 4.10]{GofE}}]\label{prop:mdim}
Let $\tau:V^G\to V^G$ be a linear cellular automaton. Then the following statements are equivalent:
\begin{enumerate}
\renewcommand{\labelenumi}{(\arabic{enumi})}
\item $\tau$ is surjective;
\item $\tau$ is pre-injective;
\item $\mdim\tau(V^G)=\dim V$.
\end{enumerate}
\end{prop}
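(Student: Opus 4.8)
The plan is to follow Ceccherini-Silberstein and Coornaert \cite{GofE} and prove the two equivalences $(1)\Leftrightarrow(3)$ and $(2)\Leftrightarrow(3)$, which together give the proposition. The implication $(1)\Rightarrow(3)$ is immediate: if $\tau$ is surjective then $\tau(V^G)=V^G$ and $\mdim V^G=\dim V$. So the content is in three further implications, two of which I will establish by contraposition. Throughout I will use two elementary facts. First, for any subspace $X\le V^G$ and any finite $\Omega\subset G$ one has $\dim X_\Omega\le|\Omega|\dim V$, and the restriction map $X_{\Omega^+}\to X_\Omega$ is surjective with kernel of dimension at most $|\partial^+\Omega|\dim V$ (the kernel consists of functions supported in $\partial^+\Omega$), so $\dim X_\Omega\ge\dim X_{\Omega^+}-|\partial^+\Omega|\dim V$; by (\ref{eq:Folner}) this error is negligible against $|\Omega_n|$. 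Secondly, when $X=\tau(V^G)$, the value of $\tau(f)$ on $\Omega$ depends only on $f|_{\Omega^+}$, so $X_\Omega$ is exactly the image of the induced linear map $\tau_\Omega\colon V^{\Omega^+}\to V^\Omega$ and hence $\dim X_\Omega=|\Omega^+|\dim V-\dim\ker\tau_\Omega$.

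For $(2)\Rightarrow(3)$, suppose $\tau$ is pre-injective. Then $\tau$ is injective on the subspace of $V^G_0$ consisting of functions supported in $\Omega_n$, and by locality it maps that subspace into $\tau(V^G)_{\Omega_n^+}$; hence $\dim\tau(V^G)_{\Omega_n^+}\ge|\Omega_n|\dim V$, so $\dim\tau(V^G)_{\Omega_n}\ge|\Omega_n|\dim V-|\partial^+\Omega_n|\dim V$, and dividing by $|\Omega_n|$ and using (\ref{eq:Folner}) gives $\mdim\tau(V^G)\ge\dim V$; since $\mdim\tau(V^G)\le\dim V$ always, equality holds. This is the one easy non-trivial direction.

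For the two converses I argue by contraposition, and the engine in each case is the same packing estimate: given a fixed finite $E\subset G$, a maximal family $g_1,\dots,g_k$ of elements with the sets $g_iE$ contained in $\Omega_n$ and pairwise disjoint satisfies $k\gg_{|E|}|\Omega_n|$, because maximality forces every $g$ with $g E\subseteq\Omega_n$ into some $g_iEE^{-1}$, while (\ref{eq:Folner}) shows that the number of such $g$ is $\sim|\Omega_n|$. If $\tau$ is not pre-injective, fix a non-zero $\varphi\in V^G_0$ with $\tau\varphi=0$ and take $E=\supp\varphi$; the translates $g_i\cdot\varphi$ are linearly independent elements of $\ker\tau_{\Omega_n}$ (disjoint supports), so $\dim\ker\tau_{\Omega_n}\gg|\Omega_n|$ and therefore $\mdim\tau(V^G)<\dim V$, which proves $(3)\Rightarrow(2)$. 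If $\tau$ is not surjective, Lemma \ref{lem:cs-c.closure}, applied on the connected Cayley graph $(G,M)$ (legitimate by Remark \ref{rem:countable}), produces a ball $E=B(n_0)$ on which $\tau_E\colon V^{E^+}\to V^E$ fails to be surjective; since $\tau(V^G)$ is $G$-invariant, each $\tau(V^G)_{g_iE}$ is then (isomorphic to $\tau(V^G)_E$, hence) a proper subspace of $V^{g_iE}$, so a non-zero functional annihilating it extends by zero to a functional on $V^{\Omega_n}$ annihilating $\tau(V^G)_{\Omega_n}$, and the $k$ functionals obtained this way are linearly independent; thus $\dim\tau(V^G)_{\Omega_n}\le|\Omega_n|\dim V-k$ and again $\mdim\tau(V^G)<\dim V$, proving $(3)\Rightarrow(1)$. (Alternatively one can avoid Lemma \ref{lem:cs-c.closure} here: by Theorem \ref{thm:GofE.neighbourly}, non-surjectivity of $\tau$ yields a non-zero finitely supported $\varphi$ with $\tau'\varphi=0$, and then $\tau(V^G)_{\supp\varphi}$ lies in the hyperplane annihilated by $\varphi$, after which the same packing argument applies.)

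I expect the main obstacle to be the bookkeeping in the packing and averaging steps: one must carefully distinguish the set on which translates genuinely live ($\Omega_n$) from its neighbourhood ($\Omega_n^+$), and verify that every discrepancy introduced is of order $|\partial^+\Omega_n|$ or $|\Omega_n\setminus\Omega_n'|$, with $\Omega_n'$ the set of admissible translates, so that the F\o lner property kills it after normalisation. The single genuinely structural input is Lemma \ref{lem:cs-c.closure}, which reduces surjectivity of $\tau$, an infinitary condition, to surjectivity of one finite-dimensional map $\tau_E$; with that reduction in place the implication $(3)\Rightarrow(1)$ becomes formally the same counting argument as $(3)\Rightarrow(2)$.
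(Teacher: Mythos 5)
Your proof is correct, but note that the paper does not actually prove Proposition \ref{prop:mdim}: it is imported wholesale from Ceccherini-Silberstein--Coornaert, with only the remark that the equivalence $(2)\Leftrightarrow(3)$ follows from \cite[Lemmas 4.8 \& 4.9]{GofE}. What you have written is essentially a compressed, self-contained reconstruction of the argument of \cite{GofE}: the identification of $\tau(V^G)_\Omega$ with the image of $\tau_\Omega\colon V^{\Omega^+}\to V^\Omega$, the boundary estimate $\dim X_{\Omega^+}=\dim X_\Omega+O(|\partial^+\Omega|\dim V)$ (which is the paper's Lemma \ref{lem:mdim}), and the F\o lner packing of translates of a fixed finite set are all the right ingredients, and I checked that the counting goes through: maximality gives $|\{g:gE\subseteq\Omega_n\}|\le k|E|^2$ while $|\bigcap_{x\in E}\Omega_nx^{-1}|=(1-o(1))|\Omega_n|$, so $k\gg_{|E|}|\Omega_n|$; the translates $g_i\cdot\varphi$ are $G$-translates of a kernel element and hence again in the kernel because a cellular automaton is $G$-equivariant. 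Two points deserve explicit mention. First, your parenthetical alternative for $(3)\Rightarrow(1)$ via Theorem \ref{thm:GofE.neighbourly} is not circular within this paper (that theorem is proved in Section \ref{sec:GofE} independently of Proposition \ref{prop:mdim}), and it is arguably the cleaner route since it replaces Lemma \ref{lem:cs-c.closure} by a single finitely supported $\varphi$ with $\tau'\varphi=0$ whose translates cut out independent hyperplanes. Second, the appeal to Remark \ref{rem:countable} to get connectivity of $(G,M\cup M^{-1})$ is slightly delicate for statement (3), since $\mdim$ is computed with a F\o lner sequence of $G$ rather than of $\langle M\rangle$; this is harmless in your argument because the set $E$ on which $\tau_E$ fails to be surjective can be taken inside a single connected component and then translated over all of $G$ by equivariance, but it is worth saying so rather than leaning on the remark, which the paper states only for pre-injectivity.
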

The key observation that allows us to prove Proposition \ref{prop:pre-inj.iff.transp.pre-inj} is that the mean dimension of $\tau$ is equal to that of its transpose $\tau'$.
\begin{prop}\label{prop:new.mdim}
Let $\tau:V^G\to V^G$ be a locally specifiable linear map, with local specifiability defined in terms of the Cayley graph $(G,M)$. Then $\mdim\tau'(V^G)=\mdim\tau(V^G)$.
\end{prop}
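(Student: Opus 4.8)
The plan is to reduce both mean dimensions to rank computations for certain finite submatrices of the (doubly infinite) matrix $A$ of $\tau$, recalling that $\tau'$ is the locally specifiable map whose matrix is $A^T$, and then to observe that along the F\o lner sequence the two relevant families of submatrices differ only in rows and columns indexed by the boundary $\partial^+\Omega_n$, which is negligible by \eqref{eq:Folner}.

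First I would record the following elementary identity: for any finite $\Omega\subseteq G$ we have $\dim\tau(V^G)_\Omega=\mathrm{rank}(\tau_\Omega)$, where $\tau_\Omega:V^{\Omega^+}\to V^\Omega$ is the linear map induced by $\tau$. This map is well defined because $\tau$ is locally specifiable on $(G,M)$, so $\tau(f)|_\Omega$ depends only on $f|_{\Omega^+}$; since every $g\in V^{\Omega^+}$ extends to some $f\in V^G$, we get $\tau(V^G)|_\Omega=\mathrm{im}\,\tau_\Omega$, and restriction to $\Omega$ is an isomorphism from $\tau(V^G)_\Omega$ onto $\tau(V^G)|_\Omega$. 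In terms of the basis $\{\delta_x^i\}$, the matrix of $\tau_\Omega$ is exactly the submatrix $A[\Omega,\Omega^+]$ of $A$, with rows indexed by $\Omega$ (and the coordinates of $V$) and columns by $\Omega^+$. Because $\tau'$ is likewise locally specifiable on $(G,M)$ and has matrix $A^T$, the same argument yields $\dim\tau'(V^G)_\Omega=\mathrm{rank}\big(A^T[\Omega,\Omega^+]\big)=\mathrm{rank}\big(A[\Omega^+,\Omega]\big)$; note that passing to the transpose interchanges the index sets labelling rows and columns.

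The key step is then to compare $\mathrm{rank}(A[\Omega,\Omega^+])$ with $\mathrm{rank}(A[\Omega^+,\Omega])$ through the common square submatrix $A[\Omega^+,\Omega^+]$. Deleting rows or columns changes the rank of a matrix by at most the number deleted, and $A[\Omega,\Omega^+]$ (resp. $A[\Omega^+,\Omega]$) is obtained from $A[\Omega^+,\Omega^+]$ by deleting at most $(\dim V)\,|\partial^+\Omega|$ rows (resp. columns). Hence $\big|\mathrm{rank}(A[\Omega,\Omega^+])-\mathrm{rank}(A[\Omega^+,\Omega])\big|\le(\dim V)\,|\partial^+\Omega|$. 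Applying this with $\Omega=\Omega_n$, dividing by $|\Omega_n|$ and invoking \eqref{eq:Folner} shows that the sequences $\dim\tau(V^G)_{\Omega_n}/|\Omega_n|$ and $\dim\tau'(V^G)_{\Omega_n}/|\Omega_n|$ differ by a null sequence, so they have the same $\liminf$; that is, $\mdim\tau'(V^G)=\mdim\tau(V^G)$.

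The argument is essentially bookkeeping, and I do not expect a serious obstacle. The one point requiring care — and the only reason the statement is not immediate — is that replacing $\tau$ by $\tau'$ transposes the thin rectangular matrix $A[\Omega_n,\Omega_n^+]$, so that $\mdim\tau(V^G)$ and $\mdim\tau'(V^G)$ are computed from $A[\Omega_n,\Omega_n^+]$ and $A[\Omega_n^+,\Omega_n]$ respectively rather than from the same matrix; reconciling these is exactly what the sandwich by the square block $A[\Omega_n^+,\Omega_n^+]$ together with the F\o lner condition accomplishes. One must simply be attentive throughout to which index set labels rows and which labels columns.
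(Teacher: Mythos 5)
Your argument is correct and is essentially the paper's proof in matrix-rank language: the paper likewise reduces both $\dim\tau(V^G)_{\Omega_n}$ and $\dim\tau'(V^G)_{\Omega_n}$, up to an error of size $O(\dim V\cdot|\partial^+\Omega_n|)$ controlled by the F\o lner condition (this is Lemma \ref{lem:mdim}), to the rank of the common square block $\tau_{\Omega_n^+}^{\Omega_n^+}=A[\Omega_n^+,\Omega_n^+]$, which is transpose-invariant. No gaps.
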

In proving Proposition \ref{prop:new.mdim}, we make use of the following straightforward lemma.
\begin{lemma}\label{lem:mdim}
Let $X$ be a subspace of $V^G$. Then $\dim X_{\Omega_n^+}=\dim X_{\Omega_n}+o(|\Omega_n|)$.
\end{lemma}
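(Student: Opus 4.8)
The plan is to compare $X_{\Omega_n^+}$ with $X_{\Omega_n}$ via the linear restriction map that forgets the values on the outer boundary $\partial^+\Omega_n$, and then invoke the Følner estimate \eqref{eq:Folner}.

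First I would fix $n$ and consider the restriction map $r:V^{\Omega_n^+}\to V^{\Omega_n}$ obtained by forgetting the coordinates in $\Omega_n^+\setminus\Omega_n=\partial^+\Omega_n$. For any $f\in X$, the function $f_{\Omega_n^+}$ restricted to $\Omega_n$ is exactly $f_{\Omega_n}$, so $r$ maps $X_{\Omega_n^+}$ \emph{onto} $X_{\Omega_n}$; in particular $\dim X_{\Omega_n^+}\ge\dim X_{\Omega_n}$.

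Next I would bound the discrepancy from above. The kernel of $r$ (as a map on all of $V^{\Omega_n^+}$) consists precisely of the functions supported on $\partial^+\Omega_n$, so it has dimension $|\partial^+\Omega_n|\dim V$; hence $\dim(\ker r\cap X_{\Omega_n^+})\le|\partial^+\Omega_n|\dim V$. Applying rank–nullity to $r$ restricted to $X_{\Omega_n^+}$ gives
\[
\dim X_{\Omega_n^+}=\dim X_{\Omega_n}+\dim\bigl(\ker r\cap X_{\Omega_n^+}\bigr)\le\dim X_{\Omega_n}+|\partial^+\Omega_n|\dim V.
\]
Combining this with the trivial lower bound yields $0\le\dim X_{\Omega_n^+}-\dim X_{\Omega_n}\le|\partial^+\Omega_n|\dim V$, and since $\dim V$ is a fixed constant, \eqref{eq:Folner} shows the right-hand side is $o(|\Omega_n|)$, which is the claim.

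There is essentially no serious obstacle here: the statement is a soft consequence of rank–nullity together with the Følner boundary estimate, and the only point requiring a moment's care is checking that $r$ indeed carries $X_{\Omega_n^+}$ onto $X_{\Omega_n}$ (rather than merely into it), which is immediate from the definition of $X_\Omega$ as $\{f_\Omega:f\in X\}$.
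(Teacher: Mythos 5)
Your proof is correct and follows essentially the same route as the paper: the paper's one-line argument uses the containment $X_{\Omega_n^+}\subset X_{\Omega_n}\oplus V^G_{\partial^+\Omega_n}$ to get the upper bound $\dim X_{\Omega_n^+}\le\dim X_{\Omega_n}+|\partial^+\Omega_n|\dim V$ and then invokes \eqref{eq:Folner}, which is exactly your rank--nullity computation in different packaging. If anything, your write-up is slightly more complete, since you make explicit the (trivial but necessary) lower bound $\dim X_{\Omega_n^+}\ge\dim X_{\Omega_n}$ via surjectivity of the restriction map, which the paper leaves implicit.
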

\begin{proof}
We have $X_{\Omega_n^+}\subset X_{\Omega_n}\oplus V^G_{\partial^+\Omega_n}$, and so $\dim X_{\Omega_n^+}\le\dim X_{\Omega_n}+\dim V^G_{\partial^+\Omega_n}=\dim X_{\Omega_n}+|\partial^+\Omega_n|\dim V$ and the desired result follows from (\ref{eq:Folner}).
\end{proof}
Given a locally specifiable linear map $\tau:V^G\to V^G$ and finite subsets $A,B\subset G$, we denote by $\tau_B^A$ the $|B|\times|A|$ matrix formed by taking the rows of $\tau$ corresponding to elements of $B$ and the columns of $\tau$ corresponding to elements of $A$.
\begin{proof}[Proof of Proposition \ref{prop:new.mdim}]
Note that $\dim\tau(V^G)_{\Omega_n}=\dim\tau(V^G_{\Omega_n^+})_{\Omega_n}$ and
\[
\dim\tau'(V^G)_{\Omega_n}=\dim\tau'(V^G_{\Omega_n^+})_{\Omega_n},
\]
which, by Lemma \ref{lem:mdim}, implies that
\begin{equation}\label{eq:dims.close}
\dim\tau(V^G)_{\Omega_n}-\dim\tau'(V^G)_{\Omega_n}
=\dim\tau(V^G_{\Omega_n^+})_{\Omega_n^+}-\dim\tau'(V^G_{\Omega_n^+})_{\Omega_n^+}+o(|\Omega_n|).
\end{equation}
However, $\tau(V^G_{\Omega_n^+})_{\Omega_n^+}$ is isomorphic to the image of $\tau_{\Omega_n^+}^{\Omega_n^+}$, and $\tau'(V^G_{\Omega_n^+})_{\Omega_n^+}$ is isomorphic to the image of $(\tau')_{\Omega_n^+}^{\Omega_n^+}$. Since $\tau_{\Omega_n^+}^{\Omega_n^+}$ and $(\tau')_{\Omega_n^+}^{\Omega_n^+}$ are finite and transposes of one another, this implies that $\dim\tau(V^G_{\Omega_n^+})_{\Omega_n^+}=\dim\tau'(V^G_{\Omega_n^+})_{\Omega_n^+}$, and so (\ref{eq:dims.close}) implies that $\dim\tau(V^G)_{\Omega_n}-\dim\tau'(V^G)_{\Omega_n}=o(|\Omega_n|)$. The desired result then follows immediately from the definition of mean dimension.
\end{proof}
\begin{proof}[Proof of Proposition \ref{prop:pre-inj.iff.transp.pre-inj}]
By Remark \ref{rem:countable} we may assume that $G$ is generated by $M$ and, in particular, that $G$ is countable. Proposition \ref{prop:pre-inj.iff.transp.pre-inj} then follows directly from Lemma \ref{prop:new.mdim} and the equivalence (2) $\Leftrightarrow$ (3) of Proposition \ref{prop:mdim}. The equivalence (2) $\Leftrightarrow$ (3) of Proposition \ref{prop:mdim} follows from \cite[Lemmas 4.8 \& 4.9]{GofE}.
\end{proof}
\begin{remark}\label{rem:GofE.proof}
It would be stretching reality somewhat to claim that this represented a new proof of Theorem \ref{thm:GofE}, since there is considerable overlap between our proof of Proposition \ref{prop:pre-inj.iff.transp.pre-inj} and  Ceccherini-Silberstein and Coornaert's original proof of Theorem \ref{thm:GofE}. However, arranging the proof in this way probably shortens the proof slightly, and perhaps makes clearer the role of amenability; note, in particular, that it is only in using the mean-dimension to convert a statement about $\tau'$ to a statement about $\tau$ that we use the amenability of $G$.
\end{remark}
\subsection*{Kaplansky's stable-finiteness conjecture}
A group $G$ is called \emph{linear surjunctive} if every injective linear cellular automaton is surjective. Since injectivity is stronger than pre-injectivity, Theorem \ref{thm:GofE} immediately implies that an amenable group is linear surjunctive. Ceccherini-Silberstein and Coornaert \cite[Theorem 8.14.4]{LCA} have shown, more generally, that every sofic group is linear surjunctive. They also note that linear surjunctivity of a group $G$ is related to a certain condition on group algebras, called \emph{stable finiteness}, as follows. We refer the reader to \cite[\S8]{LCA} for a definition of stable finiteness, and for further background.
\begin{prop}[{\cite[Corollary 8.15.6]{LCA}}]
Let $G$ be a group and let $\K$ be a field. Then the following conditions are equivalent.
\begin{enumerate}
\renewcommand{\labelenumi}{(\arabic{enumi})}
\item For every finite-dimensional vector space $V$ over $\K$, every injective linear cellular automaton $\tau:V^G\to V^G$ is surjective.
\item The group algebra $\K[G]$ is stably finite.
\end{enumerate}
\end{prop}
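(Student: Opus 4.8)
The plan is to translate the whole statement out of the analytic language of cellular automata and into the algebraic language of the group algebra $R:=\K[G]$, where the equivalence becomes a statement about one-sided inverses of matrices. Since every finite-dimensional $V$ is isomorphic to some $\K^r$, it suffices to treat $V=\K^r$, and I would begin by recalling the standard isomorphism of $\K$-algebras between the algebra of linear cellular automata over $G$ on $\K^r$ and the matrix ring $M_r(R)$ (see \cite[\S8]{LCA}), under which $\tau$ corresponds to a matrix $A$ acting by multiplication on $V^G_0\cong R^r$. The transpose of Section \ref{sec:background} corresponds on the matrix side to the anti-automorphism $A\mapsto A^\dagger$ obtained by transposing $A$ and applying the involution $g\mapsto g^{-1}$ of $R$ entrywise; thus $\tau'\leftrightarrow A^\dagger$ and $(XY)^\dagger=Y^\dagger X^\dagger$. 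Finally I would record that $\K[G]$ is stably finite precisely when $M_r(R)$ is directly finite for every $r$, that is, precisely when every left-invertible matrix over $\K[G]$ is invertible.

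The heart of the argument is a dictionary between injectivity and surjectivity of $\tau$ on $V^G$ and one-sided conditions on $A$. For injectivity I would use that $V^G$ is the full algebraic dual of $V^G_0$ and that, under this duality, $\tau$ is exactly the dual of the restriction $(\tau')_0:=\tau'|_{V^G_0}$; hence $\tau$ is injective if and only if $(\tau')_0$ is surjective as an endomorphism of the free module $R^r$. A surjective endomorphism of a finitely generated free module splits, so this holds exactly when $A^\dagger$ is right-invertible, i.e. (applying $\dagger$) exactly when $A$ is left-invertible. For surjectivity I would invoke the paper's main tool: by Theorem \ref{thm:GofE.neighbourly}, $\tau$ is surjective if and only if $\tau'$ is pre-injective, i.e. if and only if $(\tau')_0$ is injective on $R^r$, which says precisely that $A^\dagger$ is left-regular (a non-left-zero-divisor).

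With this dictionary the equivalence is short. If $\K[G]$ is stably finite and $\tau$ is injective, then $A$ is left-invertible, hence invertible, so $\tau$ is a bijection and in particular surjective, giving $(2)\Rightarrow(1)$. Conversely, assume $(1)$ and let $A$ be left-invertible, say $BA=1$. By the dictionary $\tau$ is injective, so by $(1)$ it is surjective, whence $A^\dagger$ is left-regular. Now $(AB-1)A=A(BA)-A=0$, and applying $\dagger$ gives $A^\dagger(AB-1)^\dagger=0$; left-regularity of $A^\dagger$, extended from vectors to matrices columnwise, forces $(AB-1)^\dagger=0$, i.e. $AB=1$. Thus $A$ is invertible, so every left-invertible matrix over $\K[G]$ is invertible and $\K[G]$ is stably finite, giving $(1)\Rightarrow(2)$.

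I expect the main obstacle to lie in the injectivity half of the dictionary rather than in the final algebra: one must identify $\tau$ with the dual of $(\tau')_0$, keep careful track of the left/right bookkeeping introduced by the anti-automorphism $\dagger$, and use that surjections of finitely generated free modules split. The surjectivity half, by contrast, is handed over essentially for free by Theorem \ref{thm:GofE.neighbourly}, which is exactly the point of recording this reformulation: the role of $\dagger$ is precisely to convert the pre-injectivity of the transpose supplied by that theorem into the left-regularity of $A$ that drives the final deduction.
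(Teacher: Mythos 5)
The first thing to say is that the paper contains no proof of this proposition to compare you against: it is quoted, with attribution, from \cite[Corollary 8.15.6]{LCA}, and the paper's own contribution in this appendix is only the corollary that follows it. Your proposal is therefore really in competition with the proof in the cited book, and as far as I can check it is correct. The key verifications all go through: the $G$-equivariance of the matrix entries, $A_{(gx,i),(gy,j)}=A_{(x,i),(y,j)}$, shows that the graph-theoretic transpose of Section \ref{sec:background} does correspond to your anti-automorphism $A\mapsto A^\dagger$; the pairing identity $\langle\tau'(f_1),f_2\rangle=\langle f_1,\tau(f_2)\rangle$, in which both sides are finite sums by local finiteness, identifies $\tau$ with the algebraic dual of $\tau'|_{V^G_0}$, so that $\tau$ is injective precisely when $\tau'|_{V^G_0}$ is surjective, and the latter is equivalent to a one-sided matrix inverse because surjections onto the projective module $\K[G]^r$ split; Theorem \ref{thm:GofE.neighbourly} does apply to $\tau$ as a locally specifiable map on the locally finite Cayley graph $(G,M\cup M^{-1})$; and the closing computation $(AB-1)A=0\Rightarrow A^\dagger(AB-1)^\dagger=0\Rightarrow AB=1$ is sound. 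The one caveat, which you flag yourself, is that the natural correspondence between linear cellular automata and $M_r(\K[G])$ reverses composition, so it is an anti-isomorphism unless the involution $g\mapsto g^{-1}$ is built into it; under a different convention your `left' and `right' labels swap throughout, which is harmless (direct finiteness of $M_r(\K[G])$ is a left--right symmetric condition) but must be fixed once and used consistently. As for what each route buys: the argument in \cite{LCA} extracts the crucial implication that an injective linear cellular automaton has a one-sided invertible matrix from their group-ring representation and structure theory, whereas you obtain it from elementary duality plus the splitting of surjections onto free modules, and you delegate everything about surjectivity to Theorem \ref{thm:GofE.neighbourly}; this makes your proof a genuinely apt addition to this appendix, and it renders the paper's subsequent corollary (the reformulation of Kaplansky's question) an immediate by-product of the same dictionary. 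It is worth observing that your duality identity actually yields both halves of the dictionary at once, since for a linear map $u$ of vector spaces $u^*$ is injective if and only if $u$ is surjective, and $u^*$ is surjective if and only if $u$ is injective; applied to $u=\tau'|_{V^G_0}$, this would give an alternative (choice-dependent) proof of Theorem \ref{thm:GofE.neighbourly} itself in the group case, so your use of that theorem is a stylistic choice rather than a logical necessity.
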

In particular, if $G$ is a sofic group and $\K$ is a field then the group algebra $\K[G]$ is stably finite. It is natural to ask whether this holds for more general groups; Ceccherini-Silberstein and Coornaert \cite[p. 418, (OP-15)]{LCA} attribute this question to Kaplansky.
\begin{question}[Kaplansky]\label{qu:kap}
Do either, and hence both, of the following equivalent statements hold?
\begin{enumerate}
\renewcommand{\labelenumi}{(\arabic{enumi})}
\item For any group $G$ and field $\K$ the group algebra $K[G]$ is stably finite.
\item Every group is linear surjunctive.
\end{enumerate}
\end{question}
By Theorem \ref{thm:GofE.neighbourly} this question can be reformulated as follows.
\begin{corollary}
Statements (1) and (2) of Question \ref{qu:kap} are equivalent to the following statement.
\begin{enumerate}
\renewcommand{\labelenumi}{(\arabic{enumi})}
\setcounter{enumi}{2}
\item If $\tau$ is an injective linear cellular automaton over an arbitrary group then its transpose $\tau'$ is pre-injective.
\end{enumerate}
\end{corollary}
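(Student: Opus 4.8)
The plan is to read off the equivalence of (2) and (3) as an immediate application of Theorem \ref{thm:GofE.neighbourly}, and then to combine it with the equivalence of (1) and (2) recorded just above the statement.

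First I would observe that if $\tau:V^G\to V^G$ is a linear cellular automaton with memory set $M$ then, as noted in Section \ref{sec:background}, $\tau$ is a locally specifiable linear map on the locally finite Cayley graph $(G,M\cup M^{-1})$, and that its transpose $\tau'$, defined via the transpose of the matrix of $\tau$, is again a locally specifiable linear map on the same graph; in particular $\tau'$ is a linear map on $V^G$, so the notion of pre-injectivity applies to it. Theorem \ref{thm:GofE.neighbourly} then asserts, for this fixed $\tau$, that $\tau$ is surjective if and only if $\tau'$ is pre-injective.

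Next I would unpack statement (2): it asserts that for every group $G$, every finite-dimensional vector space $V$, and every linear cellular automaton $\tau:V^G\to V^G$, injectivity of $\tau$ implies surjectivity of $\tau$. Substituting, for each individual such $\tau$, the equivalence ``$\tau$ surjective $\Leftrightarrow$ $\tau'$ pre-injective'' supplied by Theorem \ref{thm:GofE.neighbourly}, statement (2) becomes precisely statement (3), namely that injectivity of $\tau$ implies pre-injectivity of $\tau'$ for every such $\tau$. Hence (2) and (3) are equivalent, and together with the equivalence of (1) and (2) already recorded, all three statements are equivalent.

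There is no genuine difficulty here; the only point requiring a moment's care is that Theorem \ref{thm:GofE.neighbourly} is a statement about a single fixed map, so one must note that the substitution of ``$\tau'$ pre-injective'' for ``$\tau$ surjective'' is performed uniformly within the quantifier ranging over all injective linear cellular automata. One could also observe that $\tau'$ is itself a linear cellular automaton, though this is not needed for the statement.
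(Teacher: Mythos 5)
Your argument is correct and is exactly the one the paper intends: the paper offers no written proof beyond the remark that the reformulation follows ``by Theorem \ref{thm:GofE.neighbourly}'', and your substitution of the equivalence ``$\tau$ surjective $\Leftrightarrow$ $\tau'$ pre-injective'' inside the universal quantifier of statement (2), combined with the already-recorded equivalence of (1) and (2), is precisely that intended argument.
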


\end{document}